\documentclass[psamsfonts]{amsart}

\usepackage{amssymb,amsfonts}
\usepackage[all,arc]{xy}
\usepackage{enumerate}
\usepackage{mathrsfs}
\usepackage{ytableau}
\usepackage{genyoungtabtikz}
\usepackage{subfig}

\theoremstyle{definition}
\newtheorem{thm}{Theorem}[section]
\newtheorem{cor}[thm]{Corollary}
\newtheorem{prop}[thm]{Proposition}
\newtheorem{lem}[thm]{Lemma}

\theoremstyle{definition}
\newtheorem{defn}[thm]{Definition}

\theoremstyle{remark}
\newtheorem{rem}[thm]{Remark}

\makeatletter
\let\c@equation\c@thm
\makeatother
\numberwithin{equation}{section}

\bibliographystyle{plain}


\begin{document}

\title[Specht modules in the principal block]
{\bf On the structure of Specht modules \\ in the principal block of $F\Sigma_{3p}$}
 
\author{\sc Michael Rosas}
\address
{Department of Mathematics\\ University at Buffalo, SUNY \\
244 Mathematics Building\\Buffalo, NY~14260, USA}
\thanks{Research was supported in part by NSF
grant  DMS-1068783} \email{marosas@buffalo.edu}

\maketitle

\begin{abstract}
Let $F$ be a field of characteristic $p$ at least 5.  We study the Loewy structures of Specht modules in the principal block of $F\Sigma_{3p}$.  We show that a Specht module in the block has Loewy length at most 4 and composition length at most 14.  Furthermore, we classify which Specht modules have Loewy length 1, 2, 3, or 4, produce a Specht module having 14 composition factors, describe the second radical layer and the socle of the reducible Specht modules, and prove that if a Specht module corresponds to a partition that is $p$-regular and $p$-restricted then the head of the Specht module does not extend the socle.    
\end{abstract}

\section{Introduction}
In \cite{CT}, Chuang and Tan determine the Loewy structures of Specht modules in defect 2 over odd characteristic.  The authors prove results for the Schur algebra, and then translate to results for the symmetric group by applying the Schur functor.  The authors mention that the symmetric group results may be proven entirely in the representation theory of the symmetric group.  

Throughout this paper $\Sigma_n$ is the symmetric group on $n$ letters and $F$ is a field of characteristic $p$ at least 5.  The goal of this paper is to establish some Loewy structure results for Specht modules which lie in the principal block of $F\Sigma_{3p}$.  For the remainder of the paper, the principal block of $F\Sigma_{3p}$ will be denoted by $\mathcal{B}$.       



Weight $w \leq 2$ Specht modules are well understood.  When $w = 0$ the Specht modules are simple and projective, and each $p$-block contains exactly one Specht module.  For $w = 1$ the Ext$^1$-quiver is a line segment, and the Specht modules have Loewy length and composition length at most 2.  Over odd characteristic, if $w  = 2$ then a Specht module has Loewy length at most 3 and composition length at most 5 (cf.  Proposition 6.2 of \cite{CT}  and Theorem 5.6 of \cite{DE}).       

In this paper we prove four structure results for Specht modules which lie in the principal block of $F\Sigma_{3p}$.  The first result shows that Specht modules in the block have Loewy lengths bounded above by 4.  The second result is a necessary and sufficient condition on a partition $\lambda$ so that the corresponding Specht module (in the principal block) attains the maximum possible Loewy length in Corollary \ref{cor:nscondition}.  The third result shows that if $S^{\lambda}$ is a reducible Specht module in the principal block with $\lambda$ $p$-regular then its second radical layer is determined by the Ext$^1$-quiver of the block.  For such a partition $\lambda$ we also prove that if $S^{\lambda}$ has Loewy length 3 then $\text{rad}^2(S^{\lambda}) = \text{soc}(S^{\lambda})$.  Finally, the fourth result gives an (attained) upper bound of 14 on the composition length for Specht modules in the block.  The approach taken to prove the mentioned results relates partitions in the principal block of $F\Sigma_{3p}$ to partitions in weight 2 blocks of $F\Sigma_{3p-1}$ then inducing back up to the principal block.  This approach is motivated by the work of Martin and Russell 
in \cite{MR}, where the authors show that a substantial portion of the $\text{Ext}^1$-quiver for the principal block of $F\Sigma_{3p}$ is built from $\text{Ext}^1$-quivers of the defect 2 blocks of $F\Sigma_{3p-1}$.    
    
For the case $p=3$, the Ext$^1$-quiver of the principal block, $B_0 = B_0(F\Sigma_9)$, is constructed in \cite{Tan2}.  The author gives the Loewy structures of the projective indecomposable modules of $B_0$ (cf. Theorems 2.15, 2.17 of \cite{Tan2}).  Since every projective indecomposable module of the symmetric group is a Young module one is able to deduce the following: If $\lambda$ is a partition in $B_0$ that is both $3$-regular and $3$-restricted then $S^{\lambda}$ has Loewy length 4.  Now if $\lambda$ is a partition in $B_0$ that is $3$-regular and not $3$-restricted then one may apply Lemma 2.14 of \cite{Tan2} and the arguments of Lemmas 
\ref{lem:sinduce}, \ref{lem:nicerest}, \ref{lem:getit}, and Theorem \ref{thm:gotit} of this paper to conclude $S^{\lambda}$ has Loewy length at most 3.  So the Specht modules in $B_0$ have Loewy length at most 4, and a Specht module having Loewy length 4 necessarily corresponds to a partition that is both 3-regular and 3-restricted.     
The representations in the blocks $B_0$ and $\mathcal{B}$ appear to behave similarly even though $B_0$ has non-abelian defect group and $\mathcal{B}$ has abelian defect group.  In fact, after analyzing the Ext$^1$-quiver of $B_0$, we see Proposition \ref{prop:mullextend} and Corollary \ref{cor:KS} of this paper are also true for $B_0$.                

In Section \ref{section:prelim} we recall some background material in the representation theory of the symmetric group.  In Section \ref{section:partitions} we prove a few results for certain weight 3 partitions of $n = 3p$.  In Section \ref{section:irreducibles} we classify all Specht modules lying in certain defect 2 blocks of $\Sigma_{3p-1}$ that are irreducible.  We then determine the Specht modules in $
\mathcal{B}$ that have an irreducible restriction to a defect 2 block of $F\Sigma_{3p-1}$.  Section \ref{section:loewy} contains the two main results of the paper, Corollaries \ref{cor:nscondition} and \ref{cor:KS}.  In addition, we see that the composition factors of the heart of the Specht module $S^{\lambda}$ are determined by the Ext$^1$-quiver of $\mathcal{B}$ whenever $\lambda$ is both $p$-regular and $p$-restricted (with help from the Mullineux map).  In Section \ref{section:bound} we prove that the largest attainable composition length for any Specht module in the principal block of $F\Sigma_{3p}$ is 14.  Finally, in Section \ref{section:conclude} we summarize the results of this paper and discuss future research for Specht modules in arbitrary blocks of weight 3.

\section{Preliminaries} \label{section:prelim}
A partition of $n$ is a sequence of non-negative integers $\lambda  = (\lambda_1, \ldots, \lambda_k)$ such that $\lambda_i \geq \lambda_{i+1}$ for all $i$ and $|\lambda| := \sum_{i} \lambda_i = n$. For brevity, write $\lambda \vdash n$ if $\lambda$ is a partition of $n$.  Assume $\lambda$ and $\mu$ are partitions of $n$.  Say $\lambda$ \emph{dominates} $\mu$, and write $\lambda \unrhd \mu$, if 
  $\sum_{i=1}^{\ell} \lambda_i \geq \sum_{i=1}^{\ell} \mu_i$ for all $\ell \geq 1$.  If $\lambda \unrhd \mu$ and $\lambda \neq \mu$, we say $\lambda$ \emph{strictly dominates} $\mu$ and write $\lambda \rhd \mu$.  This gives a partial order on the partitions of $n$ called the \emph{dominance order}.  Finally, let $\geq $ denote the lexicographic order on partitions.  
  
  \subsection{Young diagrams and $p$-regular partitions.}
Let $\lambda = (\lambda_1 ,\ldots, \lambda_k)$ be a partition of $n$.  The \emph{Young diagram} for $\lambda$ is the subset of $\mathbb{N} \times \mathbb{N}$:
$$[\lambda] = \{(i,j) \in \mathbb{N} \times \mathbb{N}:  1\leq  i \leq k, \ 1\leq j \leq \lambda_i  \}.$$  
There is a natural correspondence between partitions of $n$ and Young diagrams with $n$ boxes.  We identify a partition $\lambda$ with its Young diagram and write $\lambda$ to mean $[\lambda]$.  Let $\lambda'$ denote the conjugate partition to $\lambda$, that is, the partition with Young diagram:
$$[\lambda ' ] =  \{(j,i) \in \mathbb{N} \times \mathbb{N}:  1\leq  i \leq k, \ 1\leq j \leq \lambda_i  \}.$$    
\noindent In other words, $\lambda' = (\lambda_1' , \lambda_2', \cdots, \lambda_m')$ where $\lambda_j'$ is the number of boxes in the $j$th column of the Young diagram for $\lambda$.  
A node $A = (i,j)$ of $\lambda$ is called a \emph{removable} node for $\lambda$ if $\lambda - \{A\}$ is a partition.  In this case define $\lambda_A := \lambda - \{A\}$.  A node $B = (i,j) \in \mathbb{N}\times \mathbb{N}$ is called an \emph{addable} node for $\lambda$ if $\lambda \cup \{B\}$ is a partition.  In this case define $\lambda^B := \lambda \cup \{B\}$.  If $A = (i_1,j_1)$ and $B = (i_2, j_2)$ are nodes we say $B$ is \emph{above} $A$ or  $A$ is \emph{below} $B$ if $i_2 < i_1$.  

Fix a prime $p$.  Define the $p$-residue of a node $A = (i,j) $ in $\mathbb{N} \times \mathbb{N}$ to be $(j - i) \mod p$.  We denote the $p$-residue of the node $A$ by res$A$.  We refer to a node of $p$-residue $i$ as an $i$-node.  A removable $i$-node $A$ is \emph{normal} if there is an injective function from the set of addable $i$-nodes above $A$ to the set of removable $i$-nodes above $A$ which maps each addable node $B$ to a node $C$ below $B$.  Finally, a removable $i$-node is called \emph{good} if it is the lowest normal $i$-node.

\vskip .1in
\noindent \textbf{Example.}
Let $\lambda = (6,4,2^2,1^2)$ and $p=5$.  The removable nodes of $\lambda$ are $A_1 = (1, 6)$, $A_2 = (2,4)$, $A_3 = (4,2)$, and $A_4 = (6,1)$, and we fill the box at node $A_i$ in the Young diagram with res$A_i$.  We also mark each addable node of $\lambda$ with its 5-residue.

\begin{center}
\Yinternals0\Yremovables1\Yaddables1
\yngres(5,6,4,2^2,1^2)
\end{center}

\noindent Now $A_1$ is a normal $0$-node since there is no addable $0$-node above $A_1$.  Similarly, $A_2$ is a normal $2$-node.  The node $A_3$ is not a normal $3$-node since there is an addable $3$-node $B$ above it but no removable $3$-node $C$ strictly between $A$ and $B$.  Similarly, $A_4$ is not a normal $0$-node.  Both $A_1$ and $A_2$ are good nodes.

  A partition $ \lambda = (\lambda_1, \ldots, \lambda_k)$ is \emph{$p$-regular} if there does not exist $i \geq 1$ such that $\lambda_i = \lambda_{i+1} = \ldots = \lambda_{i+p-1} > 0$.  If such $i$ exists then call $\lambda$ a \emph{$p$-singular} partition.  Finally, $\lambda$ is \emph{$p$-restricted} if $\lambda'$ is $p$-regular.

  \subsection{Hook lengths, $p$-power diagrams, and $p$-JM partitions} \label{subsection:22}  \vskip -.065in
 Suppose $(i,j)$ is a node of the partition $\lambda$.  We define the $(i,j)$-\emph{hook length}, denoted $h_{\lambda}(i,j)$, to be the number of boxes in the Young diagram that sit either to the right of $(i,j)$ in the $i$th row or beneath $(i,j)$ in the $j$th column including $(i,j)$.  More precisely, 
 $h_{\lambda}(i,j) := \lambda_i - i + \lambda_j' - j +1$.  The \emph{hook-length diagram} of $\lambda$ is the Young tableau of shape $\lambda$ where  $h_{\lambda}(i,j)$ is the $(i,j)$-entry.  For any integer $h$, let $\nu_p(h)$ be the largest power of $p$ dividing $h$.  We define the $p$-\emph{power diagram} of $\lambda$ to be the Young tableau of shape $\lambda$ where $\nu_p(h_{\lambda}(i,j))$ is the $(i,j)$-entry.  
 
 \vskip .1in
 \noindent \textbf{Example.}       
The hook-length diagram and 5-power diagram of $\lambda = (6,4,2^2,1^2)$ are given below. (We omit the zeros in the $p$-power diagram.)    

\begin{center}
\young(<11><10><5><4><2><1>,8521,52,41,2,1) \hspace{.5in}
\gyoung(;;;1;;;,;;1;;,1;,;;,;,;) 
\end{center}

The $p$-power diagram of $\lambda$ plays an important role when trying to determine whether an ordinary representation of a symmetric group remains irreducible over a field characteristic $p$.


  \subsection{Some results in the representation theory of the symmetric group}
  If $\lambda$ is a partition of $n$, the associated Specht is denoted by $S^{\lambda}$ (cf. \cite{James}).  The permutation module $M^{\lambda} \cong \text{Ind}_{\Sigma_{\lambda}}^{\Sigma_{n}}(1)$ has the unique indecomposable summand 
 $Y^{\lambda}$ containing $S^{\lambda}$.  The module $Y^{\lambda}$ is called a \emph{Young} module.  Over characteristic zero $\{S^{\lambda} : \lambda \vdash n\}$ is a complete list of non-isomorphic simple modules of $F\Sigma_n$.  Over positive characteristic $p$ the list
$\{D^{\lambda} := S^{\lambda}/ \text{rad} S^{\lambda}: \lambda \vdash n \text{ is $p$-regular}\}$ is a complete list of (non-isomorphic)  simple $F\Sigma_n$-modules (cf. \cite{James}).  

For any $F\Sigma_n$-module $M$, let $M^*$ denote the $F\Sigma_n$-module dual to $M$.  If $M$ has an $F\Sigma_{n}$-module filtration $0 = M_0 \subset M_1 \subset \ldots \subset M_k = M$ with $N_i \cong M_i /M_{i-1}$ we write  
$$M \sim \begin{matrix}  N_k \\ \vdots \\ N_2 \\ N_1 \end{matrix}$$
to represent the filtration.  If the module $M$ is filtered by some modules $N_1, \ldots, N_k$ we may write $M \sim N_1^{a_1} + \ldots + N_k^{a_k}$ where $a_j$ is the multiplicity of the factor $N_j$.  For a simple $F\Sigma_{n}$-module $S$, let $[M:S]$ denote the multiplicity of $S$ as a composition factor of $M$.  Finally, let $sgn: \Sigma_n \rightarrow F$ denote the one-dimensional representation of $\Sigma_n$ defined by: For $\sigma \in \Sigma_n$,
$sgn(\sigma) := 1$ if $\sigma$ is an even permutation or $sgn(\sigma) := -1$ if $\sigma$ is an odd permutation.
We list two well known facts.             

\begin{prop} \label{prop:james} Let $F$ be any field.  
	\begin{enumerate}
		\item (\cite{James}, Theorem 4.9) If $S$ is a simple $F\Sigma_n$-module then $S^* \cong S$.   
		\item (\cite{James}, Theorem 8.15) If $\lambda$ is a partition of $n$ then $S^{\lambda} \otimes sgn \cong (S^{\lambda ' })^*$.   
	\end{enumerate}
\end{prop}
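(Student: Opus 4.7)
The plan is to prove both parts by producing non-degenerate $\Sigma_n$-equivariant bilinear pairings on the relevant modules.

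For part (1), I would first recall the James bilinear form on $S^{\lambda}$. The permutation module $M^{\lambda}$ carries a $\Sigma_n$-invariant symmetric bilinear form for which the $\lambda$-tabloids form an orthonormal basis. Restricting this form to $S^{\lambda}$ and observing that its radical coincides with $\text{rad}(S^{\lambda})$ whenever $\lambda$ is $p$-regular, I obtain a non-degenerate $\Sigma_n$-invariant symmetric form on $D^{\lambda}$. Such a form determines an isomorphism $D^{\lambda} \cong (D^{\lambda})^*$, and since every simple $F\Sigma_n$-module is of the form $D^{\lambda}$ for some $p$-regular partition $\lambda \vdash n$, we conclude that $S^* \cong S$ for every simple $S$.

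For part (2), I would construct an explicit $\Sigma_n$-equivariant pairing $\langle \cdot, \cdot \rangle : S^{\lambda} \otimes S^{\lambda'} \to sgn$. Given a $\lambda$-tableau $t$, let $t'$ denote the $\lambda'$-tableau obtained by transposition. For polytabloids $e_s \in S^{\lambda}$ and $e_{s'} \in S^{\lambda'}$, define the pairing by summing $sgn(\sigma)$ over certain permutations $\sigma \in \Sigma_n$ that match the row structure of $s$ against the column structure of $s'$. One then verifies that the relation $\langle g v, g w \rangle = sgn(g) \langle v, w \rangle$ for $g \in \Sigma_n$ turns the associated map $\phi : S^{\lambda} \otimes sgn \to (S^{\lambda'})^*$ into a homomorphism of $F\Sigma_n$-modules. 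Since $S^{\lambda}$ and $(S^{\lambda'})^*$ have the same dimension (the number of standard $\lambda$-tableaux equals the number of standard $\lambda'$-tableaux by transposition), it suffices to show $\phi$ is injective.

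The main obstacle in each part is non-degeneracy. For (1), this rests on the fact, due to James, that the bilinear form is non-zero on $S^{\lambda}$ modulo its radical; this is proved through a careful analysis of the action of polytabloids under the Garnir relations. For (2), the main step is showing that the matrix of pairings between standard polytabloids of shape $\lambda$ and standard polytabloids of shape $\lambda'$ is non-singular. James achieves this by choosing an ordering of standard tableaux with respect to which the matrix is upper triangular with non-zero diagonal entries, and this combinatorial computation — not the representation-theoretic setup — is where the real work lies.
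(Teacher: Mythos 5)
The paper offers no proof of this proposition; it is stated as a pair of known results with citations to James's lecture notes, and your two sketches are precisely the standard arguments from that source (the invariant bilinear form on $M^{\lambda}$ together with the submodule theorem for self-duality of the $D^{\lambda}$, and the $sgn$-equivariant pairing between $S^{\lambda}$ and $S^{\lambda'}$ made non-degenerate by a triangularity argument on standard polytabloids). Your outline is correct and takes essentially the same route as the cited proofs, so there is nothing to compare beyond noting that the paper delegates all of this to the reference.
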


\noindent The next important result is due to Gordon James.

\begin{prop} \label{prop:rad}
Assume $\lambda$ and $\mu$ are $p$-regular partitions of $n$.  Then:
	\begin{enumerate}
		\item $[S^{\lambda}: D^{\lambda}] = 1$, and
		\item $[S^{\lambda} : D^{\mu}] \neq 0$ implies $\mu \unrhd \lambda$.
	\end{enumerate}
\end{prop}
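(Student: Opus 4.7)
The plan is to establish James's Submodule Theorem for $M^\lambda$ and deduce both parts from it. Equip $M^\lambda$ with the $F\Sigma_n$-invariant symmetric bilinear form $\langle \cdot, \cdot \rangle$ for which the tabloid basis is orthonormal, and recall that $S^\lambda \subseteq M^\lambda$ is generated by the polytabloids $e_t = \kappa_t \{t\}$, where $\kappa_t$ denotes the signed sum over the column stabilizer of a fixed $\lambda$-tableau $t$. The Submodule Theorem asserts that every $F\Sigma_n$-submodule $U \subseteq M^\lambda$ either contains $S^\lambda$ or is contained in $(S^\lambda)^\perp$. I would prove it by establishing the key identity $\kappa_t u \in F \cdot e_t$ for every $u \in M^\lambda$, with scalar $\pm \langle u, e_t \rangle$, via Garnir-style column straightening; a dichotomy on whether some $\kappa_t u$ (for $u \in U$) is nonzero then yields the theorem.

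For part (1), the Submodule Theorem applied inside $S^\lambda$ identifies $\text{rad}(S^\lambda) = S^\lambda \cap (S^\lambda)^\perp$ as the unique maximal submodule, so $D^\lambda$ is zero or simple. A direct hook-length computation of $\langle e_t, e_t \rangle$ shows this value is nonzero in $F$ precisely when $\lambda$ is $p$-regular, giving $D^\lambda \neq 0$ and hence $[S^\lambda : D^\lambda] \geq 1$. The matching upper bound follows from James's standard bilinear-form argument: the form descends to a nondegenerate $F\Sigma_n$-invariant pairing on $D^\lambda$, and combining this with the Submodule Theorem consequence that any $\phi \in \text{Hom}_{F\Sigma_n}(S^\lambda, M^\lambda)$ is determined by the scalar $\langle \phi(e_t), e_t \rangle / \langle e_t, e_t \rangle$, one concludes $\text{End}_{F\Sigma_n}(S^\lambda) = F$ and, via absolute irreducibility of $D^\lambda$, $[S^\lambda : D^\lambda] = 1$.

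For part (2), suppose $[S^\lambda : D^\mu] \neq 0$ and choose submodules $V \subsetneq U \subseteq S^\lambda$ with $U/V \cong D^\mu$. By part (1) applied to $\mu$, $D^\mu$ is the head of $S^\mu$, so the composite $S^\mu \twoheadrightarrow D^\mu \cong U/V \hookrightarrow S^\lambda/V \hookrightarrow M^\lambda/V$ is a nonzero $F\Sigma_n$-homomorphism $\bar{\phi}: S^\mu \to M^\lambda/V$. Expanding $\bar{\phi}(e_{t_\mu})$ in the basis of $\lambda$-tabloids modulo $V$ and applying $\kappa_{t_\lambda}$, the key identity produces a scalar multiple of $e_{t_\lambda}$ modulo $V$; non-vanishing for a suitable $t_\mu$ yields the combinatorial relation equivalent to $\mu \unrhd \lambda$, exactly the standard derivation of the corollary $\text{Hom}_{F\Sigma_n}(S^\mu, M^\lambda/V) \neq 0 \Rightarrow \mu \unrhd \lambda$ to the Submodule Theorem.

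The main obstacle is the Submodule Theorem itself, and in particular the Garnir-style computation showing $\kappa_t u \in F \cdot e_t$ for arbitrary $u \in M^\lambda$. Secondary care is needed in part (1) for the multiplicity-one claim, where the $p$-regularity of $\lambda$ enters critically via the hook-length nonvanishing of $\langle e_t, e_t \rangle$, and in part (2) in producing the nonzero map $\bar{\phi}$ from the subquotient isomorphism $U/V \cong D^\mu$, which relies on part (1) (applied to $\mu$) to ensure that $D^\mu$ really is the head of $S^\mu$.
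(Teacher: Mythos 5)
The paper does not prove this statement; it is quoted as a known theorem of James (Corollary 12.2 of \cite{James}), so the only thing to assess is whether your reconstruction of the standard argument is sound. Your overall architecture --- the bilinear form on $M^{\lambda}$, the Submodule Theorem via the identity $\kappa_t u \in F\cdot e_t$, the identification of $\mathrm{rad}(S^{\lambda})$ with $S^{\lambda}\cap (S^{\lambda})^{\perp}$, and the passage from a composition factor $D^{\mu}$ of $S^{\lambda}$ to a nonzero map $S^{\mu}\to M^{\lambda}/V$ --- is indeed the correct skeleton. But the step on which everything in part (1) rests is wrong as stated: $\langle e_t,e_t\rangle$ is \emph{not} a unit in $F$ precisely when $\lambda$ is $p$-regular, and it is not a hook-length quantity. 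Since the tabloids $\sigma\{t\}$ for $\sigma\in C_t$ are pairwise distinct, one has $\langle e_t,e_t\rangle = |C_t| = \prod_j (\lambda_j')!$, the product of the factorials of the column lengths. For $\lambda=(2,1^{p-1})$, which is $p$-regular, this equals $p!\cdot 1!=0$ in $F$, so your criterion would wrongly declare the form zero on $S^{\lambda}$. The correct statement (James, Lemma 10.4 and Theorem 11.1) is that there exist two polytabloids $e_{t_1},e_{t_2}$ with $\langle e_{t_1},e_{t_2}\rangle=\pm\prod_j z_j!$, where $z_j$ is the number of parts of $\lambda$ equal to $j$; this off-diagonal Gram entry is a unit exactly when $\lambda$ is $p$-regular, and that is where $p$-regularity genuinely enters.

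Two further steps need repair. First, $\mathrm{End}_{F\Sigma_n}(S^{\lambda})=F$ does not imply $[S^{\lambda}:D^{\lambda}]=1$: a module can have scalar endomorphism ring while its head recurs lower in a composition series, so multiplicity one must instead be obtained by showing that every composition factor of $S^{\lambda}\cap(S^{\lambda})^{\perp}$ is some $D^{\mu}$ with $\mu\rhd\lambda$ strictly --- i.e.\ it is proved together with, not separately from, part (2). Second, in part (2) you apply $\kappa_{t_{\lambda}}$, but the key identity for a $\lambda$-tableau only returns a multiple of $e_{t_{\lambda}}$ and says nothing about $\mu$ versus $\lambda$. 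The dominance relation comes from applying the column antisymmetrizer $\kappa_{t_{\mu}}$ of a $\mu$-tableau to $\lambda$-tabloids: $\kappa_{t_{\mu}}\{s\}\neq 0$ forces the entries of each row of the $\lambda$-tabloid $\{s\}$ to lie in distinct columns of $t_{\mu}$, and the basic combinatorial lemma then gives $\mu\unrhd\lambda$; one must also justify why $\kappa_{t_{\mu}}$ can be brought to bear on the image of a map defined only on $S^{\mu}$, which James handles by arguing with $\kappa_{t_{\mu}}U$ for submodules $U$ of $M^{\lambda}$ rather than with the homomorphism directly. As written, the proposal would not compile into a correct proof without these corrections.
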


We say that the Specht module $S^{\lambda}$ is a \emph{hook representation} if $\lambda = (i , 1^{n-i})$.  Over odd characteristic hook representations are either irreducible or have two composition factors.  In the latter case the representation is no longer irreducible, yet it remains indecomposable.  We summarize the work of Peel in the next proposition. 
\begin{prop}(\cite{Peel}) \label{prop:peel}
Suppose $p$ is an odd prime.  If $p$ divides $n$ then part of the decomposition matrix of $\Sigma_n$ is given below.
\begin{table}[h!]
\begin{tabular}{c|cccccc}
\hline
$(n)$       &   1 & 0 & 0  & 0 &$\cdots $ & 0\\
$(n-1,1)$  &  1  & 1 & 0  &0 &$\cdots$ & 0\\
$(n-2, 1^2)$ & 0 & 1& 1 &0 & $\cdots $ &0\\
\vdots && &$\ddots$&$\ddots$&&\\ 
&&&&&&\\
$(2, 1^{n-2})$ & 0 &0 & $\cdots$ &&1 & 1\\
$(1^{n})$ & 0 & 0 &$\cdots $&& 0 & 1 
\end{tabular}
\end{table}
\end{prop}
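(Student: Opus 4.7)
Proof proposal. The plan is to analyze the hook Specht modules directly by combining the hook length formula, Proposition \ref{prop:rad}, an explicit Carter--Payne type homomorphism, and the sign-twist duality of Proposition \ref{prop:james}(2) to handle $p$-singular edge cases. By the hook length formula, $\dim S^{(n-i, 1^i)} = \binom{n-1}{i}$, so the entries in row $(n-i, 1^i)$ of the table must determine a decomposition of $\binom{n-1}{i}$ as a sum of simple-module dimensions. The base case $i=0$ is immediate, since $S^{(n)} = D^{(n)}$ is the trivial module.

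Next I would proceed by induction on $i$. For each $1 \le i \le n-2$ with $(n-i, 1^i)$ $p$-regular, I would construct a nonzero $F\Sigma_n$-homomorphism
$$\phi_i : S^{(n-i+1, 1^{i-1})} \longrightarrow S^{(n-i, 1^i)}$$
via an explicit formula on polytabloids (a Carter--Payne column-addition map). Composing with the previously established quotient $S^{(n-i+1, 1^{i-1})} \twoheadrightarrow D^{(n-i+1, 1^{i-1})}$ and arguing that $\phi_i$ factors through it shows that the image of $\phi_i$ is isomorphic to $D^{(n-i+1, 1^{i-1})}$. Proposition \ref{prop:rad} then pins the head of $S^{(n-i, 1^i)}$ to $D^{(n-i, 1^i)}$ and restricts any remaining composition factor to some $D^{\mu}$ with $\mu \rhd (n-i, 1^i)$; dimension counting, combined with the submodule $D^{(n-i+1, 1^{i-1})}$ just constructed, forces the composition series to be exactly the two factors listed in the table.

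To treat the $p$-singular rows (those with $i \geq p$, culminating in $(1^n)$), I would invoke Proposition \ref{prop:james}(2): $S^{(n-i, 1^i)} \otimes \mathrm{sgn} \cong (S^{(i+1, 1^{n-i-1})})^*$. Since $(i+1, 1^{n-i-1})$ is the conjugate hook, this transports a $p$-singular hook to the corresponding $p$-regular hook in an already-analyzed row; dualization preserves composition length and swaps head with socle, recovering the claimed structure. The extreme case $(1^n)$ is simply the sign representation, one-dimensional and simple.

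The main obstacle will be establishing that the Carter--Payne homomorphism $\phi_i$ is nonzero precisely when $p \mid n$, and locating its image carefully enough to apply the inductive quotient. The divisibility hypothesis $p \mid n$ is exactly what makes the $(1,1)$-hook length equal to the multiple of $p$ that triggers nonvanishing of $\phi_i$; conversely $\phi_i = 0$ when $p \nmid n$, consistent with the known fact that hook Spechts are irreducible in that case. A secondary technical point is ensuring, at each inductive stage, that the factorization through $D^{(n-i+1, 1^{i-1})}$ uses only the inductive data for row $i-1$ rather than anything further up the staircase.
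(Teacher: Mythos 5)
The paper does not prove this statement; it is quoted from Peel's 1971 paper, so there is no internal proof to compare against and your argument has to stand on its own. As written it has a genuine gap in its coverage of the rows. Your induction handles the rows where $(n-i,1^i)$ is $p$-regular, i.e.\ $i\leq p-1$, and your sign-twist step via Proposition \ref{prop:james}(2) transports the row for $(n-i,1^i)$ to the row for its conjugate $(i+1,1^{n-i-1})$, which is $p$-regular only when $n-i-1\leq p-1$, i.e.\ $i\geq n-p$. When $n\geq 2p+1$ these two ranges do not meet: every hook with $p\leq i\leq n-p-1$ is $p$-singular \emph{and} has $p$-singular conjugate, so neither branch of your argument reaches it. This is not a fringe case for the present paper: for $n=3p$ these are exactly the hooks $(p+j,1^{2p-j})$, which Proposition \ref{prop:3p}(6) records as the partitions $\langle j,j\rangle$ that are neither $p$-regular nor $p$-restricted. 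Those middle rows of the table are left unproved.

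A second, more repairable problem is that the dimension count at each inductive stage is circular. You know $\dim S^{(n-i,1^i)}=\binom{n-1}{i}$ and (inductively) the dimension of the submodule $D^{(n-i+1,1^{i-1})}$ you construct, but you do not yet know $\dim D^{(n-i,1^i)}$, so exhibiting a simple submodule and a simple head does not by itself exclude further composition factors in between; Proposition \ref{prop:rad}(2) only says such factors would be labelled by partitions dominating the hook, of which there are many in the same (principal) block. What is missing is an a priori bound of $2$ on the composition length, which one gets cheaply by restricting to $\Sigma_{n-1}$: since $p\nmid n-1$, the two hook Specht factors in the branching filtration of $S^{(n-i,1^i)}\downarrow_{\Sigma_{n-1}}$ are irreducible, so $S^{(n-i,1^i)}$ has at most two composition factors. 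That bound also justifies your unexplained claim that $\phi_i$ factors through the head of its source (an injective $\phi_i$ would force three factors), and, once combined with reducibility of every hook for $p\mid n$ (the $(1,1)$-hook length is $n$), it covers the middle rows as well; but none of this is in your write-up, and without it the argument does not close.
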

\noindent Peel also proved that the hook representations remain irreducible if $p$ does not divide $n$; however, we only need the case of when $p$ divides $n$ for this paper.  In particular, over odd characteristic hook representations have Loewy length at most 2 and at most 2 composition factors.   

We cite a theorem of Kleshchev and Sheth.

\begin{prop}[\cite{KS}, Theorem 2.10] \label{prop:KS} Let $p > 2$ and let $\lambda$ and $\mu$ be partitions of $n$ with at most $p-1$ nonzero parts.
If $\lambda$ does not strictly dominate $\mu$ then
$$\text{Ext}^1_{\Sigma_n} (D^{\lambda}, D^{\mu}) \cong \text{Hom}_{\Sigma_n} (\text{rad} S^{\lambda}, D^{\lambda}).$$  
\end{prop}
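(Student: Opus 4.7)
The plan is to apply the functor $\text{Hom}_{\Sigma_n}(-, D^{\mu})$ to the short exact sequence
$$0 \to \text{rad}\, S^{\lambda} \to S^{\lambda} \to D^{\lambda} \to 0$$
and read the desired isomorphism off the resulting long exact sequence
$$0 \to \text{Hom}(D^{\lambda}, D^{\mu}) \to \text{Hom}(S^{\lambda}, D^{\mu}) \to \text{Hom}(\text{rad}\, S^{\lambda}, D^{\mu}) \xrightarrow{\partial} \text{Ext}^1(D^{\lambda}, D^{\mu}) \to \text{Ext}^1(S^{\lambda}, D^{\mu}).$$
The claim is precisely that the connecting map $\partial$ is an isomorphism, which splits into two sub-claims: the restriction map $\text{Hom}(S^{\lambda}, D^{\mu}) \to \text{Hom}(\text{rad}\, S^{\lambda}, D^{\mu})$ is zero (so $\partial$ is injective), and $\text{Ext}^1_{\Sigma_n}(S^{\lambda}, D^{\mu}) = 0$ (so $\partial$ is surjective).

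Injectivity of $\partial$ is the easy half. If $\lambda \ne \mu$, Proposition \ref{prop:rad} identifies $D^{\lambda}$ as the unique simple quotient of $S^{\lambda}$, and in particular $\text{Hom}(S^{\lambda}, D^{\mu}) = 0$, so the first two terms of the sequence vanish. If $\lambda = \mu$ (which is consistent with $\lambda \not\rhd \mu$), then $\text{Hom}(D^{\lambda}, D^{\lambda})$ and $\text{Hom}(S^{\lambda}, D^{\lambda})$ are each one-dimensional, generated on both sides by the canonical quotient $S^{\lambda} \twoheadrightarrow D^{\lambda}$, so the natural map between them is an isomorphism and the image in $\text{Hom}(\text{rad}\, S^{\lambda}, D^{\lambda})$ is again zero.

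The main obstacle is the vanishing $\text{Ext}^1_{\Sigma_n}(S^{\lambda}, D^{\mu}) = 0$, which is where the hypothesis that $\lambda$ and $\mu$ have at most $p-1$ nonzero parts must be used essentially (the proof so far has only invoked that $D^{\lambda}$ is the head of $S^{\lambda}$). A natural route is to apply the sign duality from Proposition \ref{prop:james}: tensoring with $\text{sgn}$ converts the vanishing into a statement about the conjugates $\lambda'$ and the Mullineux image of $\mu$, both of whose largest parts are now at most $p-1$. In this dual regime one can try to resolve $S^{\lambda'}$ by modules whose socles are controlled by dominance (for instance, by truncating a Jantzen filtration, or by comparing with a Young module filtered in characteristic zero by Specht modules indexed by partitions strictly dominating $\lambda'$), so that any first extension into the sign-twist of $D^{\mu}$ would force a composition factor indexed by a partition strictly above $\lambda$ in dominance, which the hypothesis $\lambda \not\rhd \mu$ rules out. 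Making this rigorous, and separately handling the self-extension case $\lambda = \mu$, is the technical heart of the argument, and I would expect it to require finer branching-theoretic input on partitions with few parts than is developed in the preliminaries above.
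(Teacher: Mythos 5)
This statement is not proved in the paper at all: it is quoted verbatim as \cite{KS}, Theorem 2.10 of Kleshchev and Sheth, so there is no in-paper argument to compare yours against. (Incidentally, the right-hand side as printed in the paper reads $\text{Hom}(\text{rad}\,S^{\lambda},D^{\lambda})$, which is evidently a typo for $\text{Hom}(\text{rad}\,S^{\lambda},D^{\mu})$; you have silently, and correctly, worked with the $D^{\mu}$ version.)

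Your reduction is the standard one and the first half is fine: applying $\text{Hom}_{\Sigma_n}(-,D^{\mu})$ to $0\to\text{rad}\,S^{\lambda}\to S^{\lambda}\to D^{\lambda}\to 0$, the connecting map $\partial$ is injective because $\text{Hom}(S^{\lambda},D^{\mu})\to\text{Hom}(\text{rad}\,S^{\lambda},D^{\mu})$ is zero (either $\text{Hom}(S^{\lambda},D^{\mu})=0$ when $\mu\neq\lambda$ since $D^{\lambda}$ is the unique simple quotient of $S^{\lambda}$, or the one-dimensional $\text{Hom}(S^{\lambda},D^{\lambda})$ is exhausted by maps factoring through $D^{\lambda}$). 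The genuine gap is the surjectivity of $\partial$, i.e.\ the vanishing $\text{Ext}^1_{\Sigma_n}(S^{\lambda},D^{\mu})=0$ when $\lambda\not\rhd\mu$ and both partitions have at most $p-1$ parts. This vanishing \emph{is} the theorem: everything you have written before it is formal homological bookkeeping that uses only that $S^{\lambda}$ has simple head $D^{\lambda}$, and holds without any hypothesis on the number of parts (a hypothesis that cannot be dropped in general, as the paper itself remarks). Your proposed route --- sign-twist via Proposition \ref{prop:james}, then control $\text{Ext}^1(S^{\lambda'}\otimes\text{sgn},\,-)$ by a Jantzen filtration or a Young module comparison --- is a speculative outline, not an argument: you do not produce a resolution of $S^{\lambda}$ with the stated socle control, you do not explain where the bound on the number of parts enters, and you explicitly defer the self-extension case. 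In Kleshchev--Sheth the vanishing is obtained by a delicate induction on $n$ using the modular branching rules for $S^{\lambda}\downarrow_{\Sigma_{n-1}}$ and $D^{\mu}\downarrow_{\Sigma_{n-1}}$, and none of that machinery is reconstructed here. As it stands the proposal proves only the easy injectivity half and leaves the substantive content of the theorem unestablished.
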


It remains open whether Proposition \ref{prop:KS} holds without the assumption of at most $p-1$ parts.  In Section \ref{section:loewy} we see that one may remove the restriction on the number of parts for $p$-regular partitions in $\mathcal{B}$.

In \cite{M}, James and Mathas introduced a family of partitions, now called $p$-JM partitions, and conjectured that a Specht module $S^{\lambda}$ remains irreducible over a field of odd characteristic $p$ if and only if $\lambda$ is a member of this family of partitions.  This conjecture was proved in \cite{F2}.  We now define what it means to be a $p$-JM partition.  A partition $\lambda$ is a \emph{$p$-JM-partition} if the following property holds: whenever $(i,j)$ is a node of the Young diagram of $\lambda$ and $\nu_p(h_{\lambda} (i,j)) >0$ either all entries in row $i$ of the $p$-power diagram are equal or all entries in column $j$ of the $p$-power diagram are equal.  

\begin{prop}(\cite{F2}) \label{prop:irredSpecht}
Let $p$ be an odd prime.  Then the Specht module $S^{\lambda}$ is irreducible over a field of characteristic $p$ if and only if $\lambda$ is a $p$-JM partition.  
\end{prop}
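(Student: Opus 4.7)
The plan is to reduce both implications to the Jantzen--Schaper formula, which bounds $\sum_{i \geq 1} [J_i(S^\lambda):D^\mu]$ by a signed sum indexed by pairs of boxes in $\lambda$ with coefficients involving $p$-adic valuations of hook lengths. Since $S^\lambda$ is irreducible precisely when this Schaper sum vanishes for every $\mu \rhd \lambda$, the proposition becomes a purely combinatorial statement about the $p$-power diagram of $\lambda$.

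For the ``if'' direction, I would assume $\lambda$ is $p$-JM, fix $\mu \rhd \lambda$, and work through the Schaper sum. At each node $(i,j)$ with $\nu_p(h_\lambda(i,j)) > 0$, the $p$-JM condition forces either row $i$ or column $j$ of the $p$-power diagram to be constant, and this constancy lets one pair contributions of equal magnitude and opposite sign, collapsing the entire sum for $(\lambda,\mu)$. This is the original James--Mathas implication, where the combinatorics is relatively clean once the pairing is set up correctly.

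For the much more delicate ``only if'' direction, assume $\lambda$ is not $p$-JM and pick a witness node $(i,j)$ with $\nu_p(h_\lambda(i,j)) > 0$ lying in both a row and a column of the $p$-power diagram that are non-constant. One then needs to produce a specific $\mu \rhd \lambda$ for which the Schaper sum is strictly positive. I would attempt to build $\mu$ by removing and re-adding $p$-rim hooks near $(i,j)$ in such a way that the contributions from row-failure and column-failure do not cancel, isolating an uncancelled positive term.

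The main obstacle is precisely this converse. Because the Schaper sum is prone to heavy cancellation, finding a partition $\mu$ whose contribution survives requires a subtle case analysis on the local shape of the $p$-power diagram near $(i,j)$; this was open for years before being settled in \cite{F2}. Alternative tools such as branching rules, the Mullineux map, or Kleshchev's modular branching would either reduce to the same combinatorial core or demand a separate inductive setup on $n$, so the difficulty genuinely lies in the non-cancellation combinatorics of the Schaper formula.
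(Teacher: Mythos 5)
This proposition is imported by the paper from \cite{F2} as a black-box citation; the paper contains no proof of it, so the only question is whether your outline would stand on its own. It does not. You correctly identify the Jantzen--Schaper framework (and the criterion is in fact an exact identity in the Grothendieck group, $\sum_{i>0}[J_i(S^\lambda)]=\sum_{\mu\rhd\lambda}c_{\lambda\mu}[S^\mu]$, so irreducibility is equivalent to the vanishing of the right-hand side against every simple --- slightly stronger than the ``bound'' you state, and this exactness is what makes the reduction legitimate). But the entire mathematical content of the converse is the step you describe only as an intention: given a non-JM witness node $(i,j)$, actually produce a partition $\mu$ and verify that its net Schaper coefficient against some $D^\nu$ is nonzero. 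You give no construction of $\mu$, no choice of which rim hooks to move, and no argument that the surviving term cannot be cancelled by contributions from other pairs of nodes; that non-cancellation analysis, carried out by cases on the abacus configuration, is essentially the whole of Fayers's paper. As written, the converse direction is a restatement of the goal, not a proof.

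Two smaller points. First, the ``if'' direction is not simply ``the original James--Mathas implication'': James and Mathas established the classification for $p$-regular and for $p$-restricted partitions, and the case of partitions that are neither requires additional argument, so even that half of your plan needs more than the pairing you sketch. Second, if you intend to pursue this seriously, note that the modern formulation of the criterion used in this paper (Proposition~\ref{prop:FJM}) is phrased on the abacus via the reordered $p$-quotient and the pyramid $(B^k_\ell)$; any workable case analysis for the converse is far more tractable in that language than directly on the $p$-power diagram, which is another reason the proof in \cite{F2} does not proceed by the local analysis near a single bad node $(i,j)$ that you propose.
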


\vskip .1in
\noindent \textbf{Example.}  Consider the partition $\lambda = (6,4,2^2,1^2)$ from Section \ref{subsection:22}.  We give the 5-power and 3-power diagrams of $\lambda$ below. 

\begin{figure}[h!]
\centering
\subfloat[5-power]{$\gyoung(;;;1;;;,;;1;;,1;,;;,;,;)$
}
\qquad 
\qquad
\subfloat[3-power]{
$\gyoung(;;;;;;,;;;;,;;,;;,;,;)$
}
\end{figure}

\noindent We see that $\lambda$ is not a 5-JM partition, but it is (vacuously) a 3-JM partition.  Hence $S^{(6,4,2^2,1^2)}$ is reducible over characteristic five and irreducible over characteristic three.

\subsection{$p$-cores and $p$-blocks}
The \emph{rim} of $\lambda$ is the collection of all nodes $(i,j)$ of $\lambda$ such that $(i+1,j+1)$ is not a node of $\lambda$.  A \emph{removable rim $p$-hook} for $\lambda$ is a connected sequence of $p$-many nodes in the rim of $\lambda$, say $h \subseteq \lambda$, such that $\lambda - h$ is a partition.  We may repeatedly remove rim $p$-hooks from $\lambda$ to obtain a partition $\overline{\lambda}$ that has no removable rim $p$-hook.  The partition $\overline{\lambda}$ is the \emph{$p$-core} of $\lambda$.  The number of $p$-hooks removed, say $w$, until we reach the $p$-core is called the \emph{$p$-weight} (or weight) of the partition.

\vskip .1in
\noindent \textbf{Example.}  Let $p=5$ and $\lambda = (6,4,2)$.  The Young diagram for $\lambda$ is given below, and the collection of boxes marked $\star$ represents the rim of $\lambda$.

\begin{center}
\gyoung(;;;;\star\star\star,;;\star\star\star,\star\star)
\end{center}

\noindent The following sequence demonstrates the repeated removal of rim 5-hooks until the 5-core is reached.  The collection of boxes marked $\bullet$ represents the rim 5-hook that is to be removed at each stage.  
\begin{center}
$
\gyoung(;;;;\bullet\bullet\bullet,;;;\bullet\bullet,;;) 
\longrightarrow
\gyoung(;;\bullet\bullet,;;\bullet,\bullet\bullet)
\longrightarrow
\yng(1^2)
$
\end{center}

\noindent So $\lambda = (6,4,2)$ has 5-core $\overline{\lambda} = (1^2)$.  Obtaining the 5-core of $\lambda$ required removing a total of 2 rim 5-hooks, so the weight of $\lambda$ is 2.

The $p$-blocks of the symmetric group are classified by a theorem commonly referred to as `The Nakayama Conjecture.'  The conjecture of Nakayama was first proved simultaneously by Brauer and Robinson.  We state the result in the next proposition.   

\begin{prop}[The Nakayama Conjecture] \label{prop:N}
The Specht modules $S^{\lambda}$ and $S^{\mu}$ lie in the same $p$-block of $F\Sigma_{n}$ if and only if $\lambda$ and $\mu$ have the same $p$-core.  
\end{prop}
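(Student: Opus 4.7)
The plan is to invoke the classical characterization of $p$-blocks via central characters: two ordinary irreducible characters $\chi^\lambda$ and $\chi^\mu$ of $\Sigma_n$ lie in the same $p$-block of $F\Sigma_n$ if and only if their central characters agree modulo a prime $\mathfrak{p}$ over $p$, that is,
$$\frac{|C|\,\chi^\lambda(g_C)}{\chi^\lambda(1)} \equiv \frac{|C|\,\chi^\mu(g_C)}{\chi^\mu(1)} \pmod{\mathfrak{p}}$$
for every conjugacy class $C$ of $\Sigma_n$. A refinement due to Brauer permits us to verify this congruence only on $p$-regular classes, i.e.\ those whose representatives have order coprime to $p$, so the problem reduces to a purely combinatorial question about character values on $p$-regular elements.

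The combinatorial ingredients are the Murnaghan--Nakayama rule, which computes $\chi^\lambda(g_C)$ as a signed sum over sequences of rim hook strippings whose sizes match the cycle lengths of $g_C$, and the hook length formula $\chi^\lambda(1) = n!/\prod_{(i,j)\in \lambda} h_\lambda(i,j)$, which controls the $p$-adic valuation of the dimension via the $p$-power diagram introduced in Section \ref{subsection:22}. When $g_C$ is $p$-regular, every rim hook stripped has size coprime to $p$, and the key observation is that such removals preserve the $p$-core. For the ``if'' direction, given $\lambda$ and $\mu$ with common $p$-core $\gamma$ and weight $w$, I would proceed by induction on $w$ to show that both central characters reduce to the same value modulo $\mathfrak{p}$: the base case $w=0$ forces $\lambda=\mu=\gamma$, and the inductive step uses Murnaghan--Nakayama expansions along with the dimension bookkeeping from the $p$-power diagram. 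For the converse, given partitions with distinct $p$-cores, I would exhibit an explicit $p$-regular conjugacy class -- for instance, one whose cycle type strips $\lambda$ to its $p$-core in essentially a unique way -- on which the two central characters are incongruent modulo $\mathfrak{p}$.

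The main obstacle is the careful mod-$\mathfrak{p}$ control of the Murnaghan--Nakayama expansion, where the signed cancellations must be shown to be precisely right: neither so strong as to wash out the $p$-core distinction in the ``only if'' direction, nor so weak as to leave residual discrepancies when the $p$-cores agree. The natural bookkeeping device is the abacus display on $p$ runners, in which the $p$-core corresponds to the lowest bead configuration and a rim $p'$-hook removal is a single-bead slide on one runner; the runner-wise bead counts, which determine the $p$-core, then govern the admissible stripping sequences and thus the central character values modulo $\mathfrak{p}$. An alternative route -- which might circumvent the signed-sum analysis -- would be to pass through the modular representation theory directly, constructing orthogonal block idempotents of $F\Sigma_n$ indexed by $p$-cores, but this still ultimately rests on the same combinatorics of rim hooks.
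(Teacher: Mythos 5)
The paper does not prove this statement: it is quoted as the classical theorem first proved by Brauer and Robinson, so there is no internal proof to compare against. Your outline follows the standard character-theoretic route (central characters modulo a prime $\mathfrak{p}$ over $p$, Brauer's reduction to $p$-regular classes, the Murnaghan--Nakayama rule), which is indeed how the theorem was originally established. However, as written the argument contains a concrete error and leaves the two essential computations undone.

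The error: your ``key observation'' that stripping a rim hook of size coprime to $p$ preserves the $p$-core is false. On the abacus, removing a rim hook of size $h$ slides a bead from position $x$ to position $x-h$; when $p \nmid h$ the bead changes runners, so the runner contents --- and hence the $p$-core --- change. More simply, the $p$-core of a partition of $n$ has size congruent to $n$ modulo $p$, so removing $h$ boxes with $p \nmid h$ necessarily changes the $p$-core; it is removal of rim $p$-hooks (single-runner bead slides) that preserves it. Since this observation was meant to drive the induction in the ``if'' direction, that direction is unsupported as stated. The gaps: first, the proposed induction on the weight $w$ compares central characters of $\Sigma_n$ with data living in smaller groups $\Sigma_{n-k}$, and no mechanism is given for transporting the mod-$\mathfrak{p}$ congruence across different symmetric groups --- this transport is precisely where the work in the classical proofs lies; second, for the converse you promise ``an explicit $p$-regular conjugacy class'' separating two distinct $p$-cores but do not exhibit one, and producing and analyzing such a class is again the substantive content of the theorem. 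You yourself flag the mod-$\mathfrak{p}$ control of the signed Murnaghan--Nakayama expansion as ``the main obstacle'' without resolving it, so what remains is a correct description of the strategy rather than a proof; absent those computations the result should simply be cited, as the paper does.
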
  

\noindent Thus the $p$-blocks of the symmetric group are labeled by $p$-cores.  The $p$-core of $\lambda$ corresponds to a $p$-block $B$ which contains the Specht module $S^{\lambda}$ (and all of its composition factors).  In this case, we say $\lambda$ is in the block $B$.  Now suppose $\mu$ is a partition of $n$ with the same $p$-core as $\lambda$ and also assume $\lambda$ has weight $w$.  Then $\mu$ is in $B$, and $\mu$ is also a weight $w$ partition of $n$.  Hence all partitions of $n$ in $B$ necessarily have the same $p$-weight $w$.  We say $w$ is the weight of the block $B$.  

The principal block, $\mathcal{B}$, of $F\Sigma_{3p}$ is the unique $p$-block of $F\Sigma_{3p}$ containing the trivial module $F \cong S^{(3p)}$.  By the Nakayama conjecture, a Specht module $S^{\lambda}$ is in $\mathcal{B}$ if and only if $\lambda$ is a partition of $3p$ that has empty $p$-core.  Equivalently, $\lambda$ is a weight 3 partition of $n = 3p$.  We give a fact about the decomposition numbers for the symmetric group for weights 2 and 3 in the next proposition.     

\begin{prop} (\cite{Scopes} Theorem I,  \cite{F3} Theorem 1.1)\label{prop:decnumb}
Suppose $\lambda$ and $\mu$ are weight $w$ partitions of $n$ with $\mu$ $p$-regular and $w = 2$ or $3$.  Then
$[S^{\lambda} : D^{\mu}] \leq 1$.   
\end{prop}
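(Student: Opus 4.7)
The plan is to reduce the question to a tractable list of canonical blocks via Scopes equivalences, and then verify the bound on the remaining cases either by explicit decomposition matrices (weight $2$) or by the Jantzen--Schaper estimate (weight $3$). Specifically, I would first recall that Scopes showed that for any fixed weight $w$ there are only finitely many Morita equivalence classes of weight-$w$ blocks of symmetric groups: if the abacus display of the $p$-core has sufficiently many empty positions on a given runner separating consecutive beads, sliding beads across that runner produces a block with the same decomposition matrix (up to relabeling). Since Morita equivalences preserve multiplicities $[S^{\lambda}:D^{\mu}]$, one may assume the block is one of finitely many ``small'' representatives.

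For the weight $2$ case, I would then invoke the explicit description of the decomposition matrices. Scopes (and independently Richards) classified the weight-$2$ partitions in terms of pairs of beads on an abacus, showed that the composition factors of $S^{\lambda}$ correspond precisely to certain raising operations on the abacus display, and proved each such factor appears with multiplicity exactly $1$. Alternatively, one can argue more directly by induction on the dominance order using Proposition \ref{prop:rad}(1) together with the fact that in weight $2$ every reducible Specht module has at most a few composition factors governed by the relative positions of the two beads. In either presentation the bound $[S^{\lambda}:D^{\mu}] \leq 1$ falls out immediately.

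For the weight $3$ case, I would follow Fayers and use the Jantzen--Schaper formula, which provides an upper bound on $[S^{\lambda}:D^{\mu}]$ as an alternating sum over partitions $\nu$ with $\lambda \vartriangleleft \nu \unlhd \mu$ indexed by pairs of removable/addable rim $p$-hooks. The strategy is to show that this sum telescopes or cancels down to at most $1$ whenever $\lambda$ and $\mu$ lie in a common weight-$3$ block. The weight-$3$ partitions have a convenient parameterization (by three beads on an abacus, or equivalently by $3$-quotients), and one can stratify them by the mutual positions of the three beads into a bounded number of combinatorial configurations. Working in the Scopes-reduced representatives, one checks each configuration by examining which $\nu$ contribute to the Jantzen--Schaper sum and showing the contributions never exceed $1$ after cancellation.

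The main obstacle is the weight $3$ analysis: unlike weight $2$, the Jantzen--Schaper sum does not consist of a single term, and one must genuinely control the cancellations among many potentially contributing $\nu$. Handling this cleanly requires the careful case analysis of bead configurations on the $3$-runner abacus carried out in \cite{F3}, which is the heart of Fayers' argument. The Scopes reduction is essential here, since it keeps the number of configurations finite and independent of $p$, and the $p$-regularity hypothesis on $\mu$ guarantees that $D^{\mu}$ is defined and that Proposition \ref{prop:rad} applies to pin down the leading term of the sum.
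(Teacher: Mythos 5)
This statement is not proved in the paper at all: it is quoted verbatim from the literature (Scopes for $w=2$, Fayers for $w=3$), so there is no in-paper argument to compare yours against. Your write-up is best read as an annotated bibliography of how those cited results are established, and at that level it points at the right tools; but as a proof it has a genuine gap, because the entire content of the hard case is deferred. You say the weight-$3$ case ``requires the careful case analysis \ldots carried out in \cite{F3}, which is the heart of Fayers' argument'' --- that is precisely the step a proof would have to supply. The Jantzen--Schaper formula by itself bounds a signed sum of multiplicities, not an individual $[S^{\lambda}:D^{\mu}]$, and turning it into the bound $\leq 1$ is exactly what occupies Fayers' paper (via Scopes $[w\!:\!k]$-pairs, induction and restriction between weight $2$ and weight $3$ blocks, and comparison with canonical-basis coefficients), none of which is reproduced or even outlined in enough detail to check.

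A second, more concrete omission: you never locate where the standing hypothesis $p \geq 5$ (in force throughout this paper) enters your argument. It must enter somewhere, because the statement is false for small primes --- weight $3$ blocks in characteristic $2$ and $3$ have decomposition numbers exceeding $1$, and weight $2$ already fails in characteristic $2$. An argument that purports to prove the bound by ``telescoping'' the Jantzen--Schaper sum over bead configurations, without ever invoking $p \geq 5$ (or $p$ odd in weight $2$), cannot be correct as written. If the intent is simply to cite Scopes and Fayers, that matches what the paper does; if the intent is to prove the proposition, the weight-$3$ cancellation analysis and the use of the characteristic hypothesis both need to be supplied.
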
           

 \subsection{Abacus display and pyramids}
 A convenient way to display a partition is on an abacus with vertical runners labeled $1, \ldots, p$ (from left to right).  We will refer to this labeling of the runners as the \emph{left-to-right} numbering.  We adopt the convention of calling the first position (upper left-hand corner) on the the display `position 1'.  So the positions on the first column of the abacus are $1, 1+p, 1+2p$, et cetera, and the positions of the first two rows are $1, 2, \ldots, p, 1+p, 2+p, \ldots, 2p$.  For example, if $p =5$ then the positions of the abacus are as follows.
 
 \begin{center}
 $
 \begin{array}{ccccc}
 \hline
 1& 2& 3& 4& 5\\
 6 & 7 & 8 & 9 & 10\\
 11& 12 & 13 & 14 & 15\\
 16& 17 & 18 & 19 & 20\\
 \vdots & \vdots & \vdots & \vdots& \vdots
 \end{array}  
$
\end{center} 
\noindent Now suppose $a$ and $b$ are positions on an abacus display for $\lambda$.  If $a < b$, we either say that position $b$ comes after position $a$ or position $a$ comes before position $b$.    

 Suppose the partition $\lambda = (\lambda_1, \ldots, \lambda_k)$ has $k$ non-zero parts.  For an integer $r \geq k$ define the \emph{beta-number} $\beta_i = \lambda_i +r - i +1$ for $i = 1, \ldots, r$ with the convention that if $r > k$ then $\lambda_j = 0$ for $k< j \leq r$.
We may use this set of beta-numbers for $\lambda$ to form an abacus display for $\lambda$ which has a bead `$\bullet$' at position $\beta_i$ for each $i = 1,\ldots, r$ and `$\circ$' at any position $k \geq 1$ that is not a member of the specified set of beta-numbers.  We call a position of the display having `$\bullet$' \emph{occupied} and a position with 
`$\circ$' \emph{unoccupied} (or empty).  Note that one may see the beta-number defined as: $\beta_i  = \lambda_i + r -i$.  Our definition ensures the beta-numbers for partitions in $\mathcal{B}$ are at least one when $r = 3p$. 

\vskip .1in
\noindent \textbf{Example.} Let $p =5$, $\lambda = (7^2,2^2,1)$, and $r  =15$.  We represent $\lambda$ on an abacus with 15 beads.  Then a set of beta-numbers of $\lambda$ is: $\beta_1 = 22$, $\beta_2 = 21$, $\beta_3 = 15$, $\beta_4 = 14$, $\beta_5 = 12$,and 
$\beta_j = 0 +15 - j +1 = 16 - j$ for $6 \leq j \leq 15$.  For each $1 \leq i \leq r = 15$ we place a bead at position $\beta_i$.  The abacus display for $\lambda$ is given in Figure \ref{fig:fig1}.      
 
 \begin{figure}[ht!] 
 \begin{center}
 $
 \begin{array}{ccccc}
 1 & 2 &3 & 4 & 5\\
 \hline
 \bullet & \bullet & \bullet & \bullet & \bullet\\
 \bullet & \bullet & \bullet & \bullet & \bullet\\
 \circ & \bullet & \circ & \bullet & \bullet\\
 \circ & \circ & \circ & \circ & \circ\\
 \bullet & \bullet & \circ &\circ & \circ \\
 \circ& \circ & \circ &\circ & \circ\\
 \vdots & \vdots & \vdots & \vdots& \vdots
 \end{array}  
$
\caption{Abacus display for $\lambda$}
\label{fig:fig1} 
\end{center}     
\end{figure}
Fix an abacus display for the partition $\lambda$.  Suppose the display has a bead at position $m > 1$ and no bead at position $m-1$.  Pushing said bead `to the left' to position $m-1$ corresponds to removing a (removable) node from $\lambda$.  Call such a bead \emph{removable}.  Now suppose there is a bead at position $m$ and no bead at position $m+1$.  Pushing the bead `to the right' to position $m+1$ corresponds to adding an addable node to $\lambda$.  Call such a bead \emph{addable}.  Finally, we call a bead on the display at position $x$ \emph{proper} if there is an unoccupied position before position $x$.  Otherwise, we call a bead \emph{improper}.   For example, referring to Figure \ref{fig:fig1}, we see that the abacus display has removable beads at positions 12, 14, and 21 and addable beads at positions 12, 15, and 22.  Also, the beads at positions 1 through 10 are all improper while all other beads are proper.  Suppose an abacus display for $\lambda$ has a removable bead in row $r$ of runner $i$.  If $i \geq 2$ we call the removable bead \emph{normal} provided that for any $j \geq 1$ the number of beads on runner $i$ in rows $r+1, \ldots, r+j$ is at least the number of beads on runner $i-1$ in rows $r+1, \ldots, r+j$.  If $i=1$ we call the bead \emph{normal} provided that for any $j \geq 1$ the number of beads on runner $i=1$ in rows $r +1, \ldots, r+j$ is at least the number of beads on runner $p$ in rows $r, \ldots, r+j-1$.  We call such a bead \emph{normal} because it corresponds to a normal node of $\lambda$.              

Now suppose the display has a bead at position $m > p$ and no bead at position $m  - p$ (i.e., one row above the bead on the same runner).  Pushing the bead `up' to position $m -p$ corresponds to removing a (removable) rim $p$-hook from $\lambda$.  Thus we obtain an abacus display for the $p$-core of $\lambda$ by pushing the beads up as far as possible.  Similarly, if there is a bead at position $m$ and no bead at position $m+p$ then pushing the bead at position $m$ `down' to position $m+p$ corresponds to adding an addable rim $p$-hook to $\lambda$.  The beads in Figure \ref{fig:fig1} corresponding to removable rim 5-hooks of $\lambda = (7^2,2^2,1)$ are at positions 21 and 22.       

Given an abacus display for the partition $\lambda$ we may obtain an abacus display for the $p$-core of $\lambda$ (with the same number of beads) by `pushing up' the beads on the display as far as possible.  If the display has a multiple of $p$ beads then one records the mentioned process on a $p$-quotient which is a sequence of partitions $[\lambda^{(1)}, \ldots, \lambda^{(p)}]$ where $\lambda^{(j)}$ is defined as follows.  If you cannot push a bead up on runner $j$ then $\lambda^{(j)} := \emptyset$.  Suppose $b_{j_1}, \ldots, b_{j_k}$ are the beads on runner $j$ that can be pushed up.  We define $\lambda^{(j)}_m$ to be the number of unoccupied positions on runner $j$ above $b_{j_m}$.  We call $\lambda^{(j)}_{m}$ the \emph{weight} of the bead $b_{j_m}$.  We refer the reader to Section 2.7 of \cite{JK} for a detailed discussion on the $p$-quotient, but we note here that if $\lambda$ has $p$-weight $w$ then $\sum_{i=1}^{p} |\lambda^{(i)}| = w$.  For example, using Figure \ref{fig:fig1}, we see that $\lambda= (7^2, 2^2,1)$ has 5-weight 3 and 5-quotient $[(2), (1), \emptyset, \emptyset, \emptyset]$.

The $p$-quotient will be useful in determining which Specht modules in $\mathcal{B}$ are irreducible (see Theorem \ref{thm:jm}).  Fayers provides a test involving a $p$-quotient of the partition $\lambda$ (with the left-to-right numbering) in Proposition 2.1 of \cite{F2} which determines whether $\lambda$ is a $p$-JM partition.  It will be helpful to use a different form of the test given by Fayers in \cite{F}.  In order to state the test as in \cite{F} we need to define a special numbering on the runners of the abacus and also define the `pyramid' of a block.

Fix an abacus display for $\lambda$ that has a multiple of $p$ beads with runners $1, \ldots, p$.  Push all beads up as far as possible to obtain an abacus display, $\Gamma$, for the $p$-core of $\lambda$.  For each runner $i$, let $q_i$ be the  position of the first empty space in $\Gamma$ on runner $i$.  Now write the $q$'s in ascending order $q_1 < q_2 < \ldots < q_p$ (reindexing if needed) and renumber the runners of the display so that position $q_i$ is on runner $i$.  We refer to this renumbering of the runners as the \emph{non left-to-right numbering}.  Observe that we have renumbered the runners from left-to-right as $\sigma(1), \sigma(2), \ldots, \sigma(p)$ for the unique permutation $\sigma \in \Sigma_{p}$.  For $1 \leq k < \ell \leq p$, define
$$B^{k}_{\ell} = \left\lfloor \frac{q_{\ell} - q_{k}}{p} \right\rfloor.$$ 
The array $(B^{k}_{\ell})$ is called the \emph{pyramid} of the block in which $\lambda$ lies.  Define the $p$-quotient of $\lambda$ in the non left-to-right numbering to be the sequence $[\mu^{(1)}, \mu^{(2)}, \ldots, \mu^{(p)}]$ where 
$\mu^{(j)}:= \lambda^{(\sigma^{-1}(j))}$.  We refer to this $p$-quotient of $\lambda$ as the \emph{reordered} $p$-quotient of $\lambda$.        

\vskip .1in
\noindent \textbf{Example.} Let $\lambda = (7^2,2^2,1)$ and $p=5$.  We use the abacus display for $\lambda$ from Figure \ref{fig:fig1}.  Pushing all beads up on the display we obtain an abacus display for the $5$-core (see Figure \ref{fig:reorder}(A)), and see $\lambda$ has $5$-quotient $[(2), (1), \emptyset, \emptyset, \emptyset]$ in the left-to-right numbering.  Now 
$(q_1, q_2, q_3, q_4, q_5) = (13,16,19,20,22)$.  We have $B^{1}_{3} = B^{1}_{4} = B^{1}_{5} =  B^{2}_{5}= 1$ and all other $B^{k}_{\ell}$ are zero.  The runners are renumbered from left to right as 2, 5, 1, 3, and 4.  The renumbering is given by the permutation $\sigma= (12543) \in \Sigma_{5}$.  So the reordered 5-quotient of $\lambda$ is
$$[\mu^{(1)}, \mu^{(2)}, \mu^{(3)}, \mu^{(4)}, \mu^{(5)}] = 
[ \lambda^{(3)}, \lambda^{(1)}, \lambda^{(4)}, \lambda^{(5)}, \lambda^{(2)}] = 
[\emptyset, (2), \emptyset, \emptyset, (1)].$$

\begin{figure}[h!]
\centering
\subfloat[]{
 $
 \begin{array}{ccccc}
 1 & 2 &3 & 4 & 5\\
 \hline
 \bullet & \bullet & \bullet & \bullet & \bullet\\
 \bullet & \bullet & \bullet & \bullet & \bullet\\
 \bullet & \bullet & \circ & \bullet & \bullet\\
 \circ & \bullet & \circ & \circ & \circ\\
 \circ & \circ & \circ &\circ & \circ \\
 \circ& \circ & \circ &\circ & \circ\\
 \vdots & \vdots & \vdots & \vdots& \vdots
 \end{array}  
$}
\quad
\quad
\quad
\subfloat[]{
 $
 \begin{array}{ccccc}
 2 & 5 &1 & 3 & 4\\
 \hline
 \bullet & \bullet & \bullet & \bullet & \bullet\\
 \bullet & \bullet & \bullet & \bullet & \bullet\\
 \circ & \bullet & \circ & \bullet & \bullet\\
 \circ & \circ & \circ & \circ & \circ\\
 \bullet & \bullet & \circ &\circ & \circ \\
 \circ & \circ & \circ &\circ & \circ\\
 \vdots & \vdots & \vdots & \vdots& \vdots
 \end{array}  
$
}
\caption{Reordering for $\lambda = (7^2,2^2,1)$}
\label{fig:reorder}
\end{figure} 
We are now ready to state the test due to Fayers.  
\begin{prop} [\cite{F}, Proposition 4.9] \label{prop:FJM}
Let $p$ be an odd prime.  Take an abacus display for the partition $\lambda$ with non left-to-right numbering, and suppose the corresponding reordered $p$-quotient is $[\mu^{(1)}, \ldots, \mu^{(p)}]$.  Then $\lambda$ is a $p$-JM partition if and only if:
	\begin{enumerate}
		\item $\mu^{(i)} = \emptyset$ for $i \neq 1, p$.
		\item $\mu^{(1)}$ is a $p$-restricted $p$-JM partition.
		\item $\mu^{(p)}$ is a $p$-regular $p$-JM partition.
		\item $(\mu^{(k)})_1 + ((\mu^{(\ell)})')_1 \leq B^k_{\ell} + 1$ for all $k < \ell$.
	\end{enumerate}
\end{prop}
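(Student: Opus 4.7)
The plan is to translate the defining condition of a $p$-JM partition, which concerns the $p$-power diagram of $\lambda$, into an abacus condition on bead-hole pairs, and then verify that this condition is equivalent to items (1)--(4). The basic dictionary is: if the beta-numbers of $\lambda$ are $\beta_1 > \cdots > \beta_r$, then the multiset of hook lengths in row $i$ of $[\lambda]$ is $\{\beta_i - \alpha : \alpha \text{ is empty in the display and } \alpha < \beta_i\}$, with a dual description for columns. In particular, $\nu_p(h_\lambda(i,j)) > 0$ if and only if the corresponding bead-hole pair lies on a common runner, and in that case $\nu_p(h_\lambda(i,j))$ measures how many rows of that runner separate the two.

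Next I would reinterpret ``all entries in row $i$ (respectively column $j$) of the $p$-power diagram are equal'' as a rigidity statement about bead-hole pairs on the abacus. After some bookkeeping this forces the structure on each runner to be highly restricted: if a runner distinct from the two carrying the smallest and largest first-empty positions admits a bead below an empty position, one can exhibit a node violating the $p$-JM constancy requirement. This yields item (1). A parallel analysis on the two remaining runners reduces the problem to two symmetric pieces whose $p$-JM-ness translates, via the row-column duality of the abacus, into the statements that $\mu^{(1)}$ is a $p$-restricted $p$-JM partition and $\mu^{(p)}$ is a $p$-regular $p$-JM partition, giving items (2) and (3).

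Condition (4) should then capture cross-runner compatibility: the inequality $(\mu^{(k)})_1 + ((\mu^{(\ell)})')_1 \leq B^k_\ell + 1$ says that the longest row of $\mu^{(k)}$ together with the longest column of $\mu^{(\ell)}$ do not exceed the pyramid allowance between their two runners. I would verify that violating this inequality produces a bead-hole pair straddling runners $k$ and $\ell$ whose hook length has positive $\nu_p$ but whose row and column in the $p$-power diagram each contain entries of differing values; and conversely, that satisfying the inequality together with (1), (2), (3) guarantees that every positive entry of the $p$-power diagram sits in a row or column of constant value.

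The main obstacle I anticipate is the careful accounting of the reordering passage from left-to-right to non left-to-right numbering. The permutation $\sigma$ rearranges the runners so that the pyramid coordinates $B^k_\ell$ encode the vertical gaps between first-empty positions in the $p$-core display, and it is only after this reordering that row constancy corresponds cleanly to the $p$-restricted condition while column constancy corresponds to the $p$-regular condition. Keeping this identification precise, and in particular verifying that (4) is exactly the correct finite family of inequalities rather than something involving a correction term to $B^k_\ell$ or an infinite list of constraints, is where I expect most of the detailed work to sit.
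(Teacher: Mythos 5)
This statement is not proved in the paper at all: it is quoted verbatim from Fayers (Proposition 4.9 of \cite{F}), so there is no internal proof to compare against. Your submission therefore has to stand on its own as a proof, and it does not: it is a strategy outline in which every substantive step is deferred. Phrases such as ``after some bookkeeping this forces,'' ``I would verify that,'' and the closing admission that checking condition (4) is ``exactly the correct finite family of inequalities'' is ``where I expect most of the detailed work to sit'' mark precisely the places where the actual content of the proposition lives. The heart of the result is that the local constancy condition on the $p$-power diagram is equivalent to the specific quantitative inequality $(\mu^{(k)})_1 + ((\mu^{(\ell)})')_1 \leq B^k_{\ell} + 1$ after the non left-to-right reordering; asserting that one would verify this is not a proof of it.

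There is also a concrete error in the one piece of the dictionary you do state. It is true that the hook lengths in the row indexed by the bead at position $\beta_i$ are the differences $\beta_i - \alpha$ over unoccupied positions $\alpha < \beta_i$, and that $p \mid h$ exactly when bead and hole share a runner. But if they are $m$ rows apart on a common runner then $h = mp$, so $\nu_p(h) = 1 + \nu_p(m)$, not $m$; your claim that $\nu_p(h_{\lambda}(i,j))$ ``measures how many rows of that runner separate the two'' is false (two rows apart gives $\nu_p = 1$, while $p$ rows apart gives $\nu_p = 2$). Since the $p$-JM condition requires equality of the actual values in a row or column of the $p$-power diagram, not merely their positivity, this miscalibration would propagate through the rigidity argument you sketch for items (1)--(3). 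To repair the proposal you would need to correct the dictionary and then actually carry out the case analysis on runners, or else simply cite Fayers as the paper does.
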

\noindent Next we define abacus notation for partitions in $\mathcal{B}$.

By the Nakayama conjecture, $S^{\lambda}$ is in the principal block of $F\Sigma_{3p}$ if and only if $\lambda$ has an empty $p$-core.  The empty $p$-core can be represented by an abacus display with three beads on each runner (so the display has $r = 3p$ beads).  Hence $\lambda$ has an abacus display with three beads on each runner (the $\langle 3^p \rangle$ display for $\lambda$).  We use the $\langle 3^p \rangle$ abacus display to study partitions in $\mathcal{B}$.

\begin{defn} \label{def:3p}
Let $\lambda$ be a partition in the principal block of $F\Sigma_{3p}$.
	\begin{enumerate}
		\item Write $\langle  i \rangle$ if the abacus display for $\lambda$ has one bead of weight 3 on runner $i$.  
		\item Write $\langle i,j \rangle$ if the abacus display for $\lambda$ has one bead of weight 2 on runner $i$ and one bead of weight one on (not necessarily distinct) runner $j$.
		\item Write $\langle i,j,k \rangle$ if the abacus display for $\lambda$ has one bead of weight 1 on (not necessarily distinct) runner $i$,$j$, and $k$.    
	\end{enumerate}   
\end{defn}

\vskip .1in
\noindent \textbf{Example.} The $\langle 3^{7} \rangle$ display for the partition $\lambda = (13,4,2,1^2)$ is given below.  So $\lambda$  is represented in  $\langle 3^{7} \rangle$ notation by $\langle 6,3 \rangle$.  

\begin{center}
$
\begin{array}{ccccccc}
   1        &    2        &    3        &     4         &       5          &    6              &      7 \\
\hline  
\bullet   & \bullet   &  \bullet   &   \bullet   &    \bullet     &   \bullet       & \bullet \\
\bullet   & \bullet   &  \bullet   &   \bullet   &    \bullet     &   \bullet       & \bullet \\
\bullet   & \bullet   &    \circ    &   \bullet   &    \bullet     &   \circ          & \bullet \\
\circ     &   \circ     &  \bullet   &    \circ     &      \circ     &   \circ           & \circ    \\
\circ     &   \circ     &  \circ      &    \circ     &      \circ     &   \bullet           & \circ    \\
\circ     &   \circ     &  \circ      &    \circ     &      \circ     &   \circ          & \circ    \\
\vdots  &  \vdots   & \vdots    & \vdots     &  \vdots      &  \vdots        &\vdots   \\
\end{array}
$
\end{center}

For a partition $\lambda$ let $\tau (\lambda)$ denote the number of removable nodes for $\lambda$ and let $\tau_{p}(\lambda)$ denote the number of normal nodes for $\lambda$.  One may easily determine $\tau(\lambda)$ and $\tau_p(\lambda)$ from an abacus display for $\lambda$ with $p$-many runners.

\vskip .1in
\noindent \textbf{Example.} Take $p = 5$, and consider $\lambda = \langle 3,5 \rangle $ and $\mu = \langle 5,3,1 \rangle$ in $\mathcal{B}$.  

\begin{figure}[h!]

\subfloat[$\langle 3,5 \rangle$]{
$
\begin{array}{ccccc}
1            &        2           &           3            &           4          &          5 \\
\hline
\bullet    &   \bullet        &      \bullet          &    \bullet          &    \bullet \\ 
\bullet    &   \bullet        &      \bullet          &    \bullet          &    \bullet \\ 
\bullet    &   \bullet        &      \circ             &\boxed{\bullet} &       \circ \\ 
\circ       &     \circ         &      \circ             &     \circ            &\boxed{\bullet} \\
\circ       &     \circ         &\boxed{\bullet}   &      \circ           &    \circ    
\end{array}
$
}
\qquad
\qquad
\subfloat[$\langle 5,3,1 \rangle$]{
$
\begin{array}{ccccc}
1                        &        2               &           3            &           4            &          5 \\
\hline
\bullet                &   \bullet             &      \bullet          &    \bullet            &    \bullet \\ 
\bullet                &   \bullet             &      \bullet          &    \bullet            &    \bullet \\ 
\circ                   & \boxed{\bullet}  &      \circ             &\boxed{\bullet}   &       \circ \\ 
\boxed{\bullet}   &     \circ              &\boxed{\bullet}   &     \circ              &\boxed{\bullet} \\
\circ                   &     \circ             &     \circ               &      \circ              &    \circ    
\end{array}
$
}
\caption{$\langle 3^5 \rangle$ displays for $\lambda$ and $\mu$}
\label{fig:remove}
\end{figure}  

\noindent The removable beads of each display are marked with a box.  So $\tau(\lambda) = 3$ and $\tau(\mu) = 5$.  In Figure \ref{fig:remove}(A) we see that removable beads at positions 20 and 23 correspond to normal nodes while the bead at position 14 does not.  Hence $\tau_5(\lambda) = 2$.  Next, in Figure \ref{fig:remove}(B), the beads at positions 18 and 20 corresponds to normal nodes of $\mu$.  So $\tau_5(\mu) =2$.

Analysis of the $\langle 3^{p}\rangle$ displays gives the next proposition.  
\begin{prop} \label{prop:nremovables}
We have the following. 
\vskip .1in
\noindent (1) For $1 \leq i \leq p-1$
\vskip .1in
$\tau(\langle i,p \rangle) = \begin{cases} 2 \text{ if $i = 1, p-1$}\\ 3 \text{ if $2 \leq i \leq p-2$}  \end{cases}$ and 
		$\tau_{p}(\langle i,p \rangle) = \begin{cases} 1 \text{ if $i = 1, p-1$}\\ 2 \text{ if $2 \leq i \leq p-2$}  \end{cases}$
\vskip .1in
\noindent (2) For $2 \leq i \leq p$
\vskip .1in
$\tau (\langle i,1 \rangle) = \begin{cases} 3 \text{ if $3 \leq i \leq p$}  \\ 2 \text{ if $i = 2$ } \end{cases}$ and \ 
		$\tau_p(\langle i,1 \rangle) =  1 \text{ for all $2 \leq i \leq p$} $
\vskip .1in
\noindent  (3) For $2 \leq j \leq p-1$ 
\vskip .1in
$\tau(\langle j+1 , j \rangle) = \begin{cases} 2 \text{ if $j = p-1$}\\ 3  \text{ if $j \leq p-2$}\end{cases}$ and 
		$\tau_p(\langle j+1 , j \rangle) =  2  \text{ for all  $j$} $ 
\vskip .1in
\noindent (4) For $2 \leq j < i \leq p$ and $i-j \geq 2$
\vskip .1in
$\tau(\langle i , j \rangle ) = \begin{cases} 3 \text{ if $i = p$}\\ 4  \text{ if $ i  \leq p-1$}\end{cases}$ 
		and $\tau_p(\langle i,j \rangle) =  2 \text{ for all  $3 \leq i  \leq p$}$
\vskip .1in
\noindent (5) For any $p$-regular partition $\lambda$ in $\mathcal{B}$:
			\begin{enumerate}
			  \item [(a)] If $p = 5$ then $\tau_5(\lambda) \leq 2$.
                  \item [(b)] For $p \geq 7$, $\tau_p(\lambda) = 3$ if $\lambda = \langle i,j,k\rangle$ with $i > j > k \geq 2$ and $i - j \geq 2$, $j-k \geq 2$ or $\tau_{p}      (\lambda) \leq 2$ otherwise.
                 \end{enumerate}
\end{prop}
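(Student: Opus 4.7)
The plan is to extract $\tau(\lambda)$ and $\tau_p(\lambda)$ directly from the $\langle 3^p \rangle$ abacus display of each $\lambda$, using the two rules recalled just before the proposition: a bead at position $m$ is removable exactly when position $m-1$ is empty, and a removable bead on runner $k$ at row $r$ is normal exactly when the bead-count inequality with runner $k-1$ (or with runner $p$ shifted by one row, when $k=1$) holds below row $r$.

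For parts (1)--(4), every $\lambda = \langle i,j \rangle$ under consideration has exactly two non-trivial runners: runner $i$ carries beads at rows $1,2,5$ and runner $j$ at rows $1,2,4$, while every other runner is trivial with beads at rows $1,2,3$. The candidate removable beads are then (a) the bead at row $5$ on runner $i$, (b) the bead at row $4$ on runner $j$, and (c) any row-$3$ bead on a trivial runner whose predecessor runner is $i$ or $j$ (such a row-$3$ position on the non-trivial runner is empty, so the trivial-runner row-$3$ bead becomes removable). Checking (a)--(c) against the removability criterion, and separately treating the edge indices $1,\ p-1,\ p$ where the preceding runner either wraps around or coincides with the other non-trivial runner, produces the stated values of $\tau$. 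Normality is then verified bead by bead: the row-$5$ and row-$4$ beads on the non-trivial runners are normal because those runners are empty below the removable row and the comparison runner is either trivial or also empty low down; the row-$3$ beads on adjacent trivial runners, when removable, always fail normality because the neighboring non-trivial runner still has a deeper bead at row $4$ or row $5$ that violates the count inequality at $j=1$ or $j=2$.

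For part (5), I would enumerate the $p$-regular partitions in $\mathcal{B}$ by shape---$\langle i \rangle$, $\langle i,i \rangle$, $\langle i,j \rangle$ with $i \neq j$, and $\langle i,j,k \rangle$ with any pattern of coincidences---and analyze each in turn.
\begin{itemize}
\item For $\langle i \rangle$, direct inspection of the display (beads on runner $i$ at rows $1,2,6$) shows that the bead at row $6$ is removable and normal, while the adjacent row-$3$ bead is removable but not normal, so $\tau_p(\langle i \rangle) = 1$.
\item The shape $\langle i,i \rangle$ never yields a $p$-regular partition: a straightforward $\beta$-number computation from the display (beads on runner $i$ at rows $1,3,5$) gives $\lambda = (p+i,\ 1^{2p-i},\ \ldots)$, which has more than $p$ parts equal to $1$.
\item For $\langle i,j \rangle$ with $i \neq j$, the subcases not handled by (1)--(4) (most notably $i < j < p$) admit exactly the same argument as parts (1)--(4) and yield $\tau_p \leq 2$ in every case.
\item For $\langle i,j,k \rangle$, the only beads that can be simultaneously normal are the row-$4$ beads on runners $i,j,k$. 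Each such bead is normal precisely when its preceding runner is trivial at row $4$, which for distinct $i > j > k$ translates to $i-j \geq 2$, $j-k \geq 2$, and $k \geq 2$; when all three hold $\tau_p = 3$. The coincidence cases $\langle i,i,k \rangle$ and $\langle i,i,i \rangle$ either yield $p$-singular partitions or force $\tau_p \leq 2$ because the two candidates on runner $i$ (at rows $3$ and $4$) cannot both be removable.
\end{itemize}
Thus $\tau_p(\lambda) = 3$ exactly in the stated configuration, and $\tau_p(\lambda) \leq 2$ otherwise. For $p = 5$ the gap inequalities together with $k \geq 2$ force $i \geq 6 > p$, which is impossible, so (a) follows immediately.

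The main obstacle is the bookkeeping in part (5): one must track the various coincidences among $i,j,k$, exclude the $p$-singular configurations that appear under several of these shapes, and handle the wrap-around behavior at runners $1$ and $p$. The arithmetic inside each subcase is routine once the corresponding $\langle 3^p \rangle$ display is drawn, but the enumeration is the part requiring discipline.
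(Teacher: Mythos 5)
Your method---reading $\tau$ and $\tau_p$ directly off the $\langle 3^p\rangle$ displays, case by case---is exactly what the paper does: it offers nothing beyond the sentence ``Analysis of the $\langle 3^{p}\rangle$ displays gives the next proposition,'' and your organization of parts (1)--(4) by the three candidate removable beads, together with the enumeration by bead-weight pattern in part (5), is the right way to fill that in. There is, however, one concrete step in your write-up that fails. The blanket claim that ``the row-$5$ and row-$4$ beads on the non-trivial runners are normal'' is false in precisely the two edge configurations where the proposition's value of $\tau_p$ drops to $1$. For $\langle p-1,p\rangle$ (part (1) with $i=p-1$), the row-$4$ bead on runner $p$ is removable but \emph{not} normal: the comparison runner $p-1$ carries a bead at row $5$, so the count inequality fails already at depth $1$ below row $4$. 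For $\langle p,1\rangle$ (part (2) with $i=p$), the row-$4$ bead on runner $1$ is removable but \emph{not} normal: the $k=1$ normality rule compares runner $1$ in rows $5,\dots$ against runner $p$ in rows $4,\dots$, and runner $p$ has a bead at row $5$, so the inequality fails at depth $2$. In both cases your stated justification (``the comparison runner is either trivial or also empty low down'') simply does not apply---the comparison runner is the \emph{other} non-trivial runner and is not empty low down---and taking the claim at face value yields $\tau_p(\langle p-1,p\rangle)=2$ and $\tau_p(\langle p,1\rangle)=2$, contradicting the statement you are proving.

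The fix is local and does not disturb the rest of the argument: whenever the weight-$1$ bead sits on the runner immediately following the weight-$2$ bead in cyclic order (including the wrap from runner $1$ back to runner $p$), its row-$4$ bead must be compared against a non-trivial neighbour with a bead at row $5$ and is therefore not normal. With that correction, and with the analogous wrap-around care at runner $1$ in part (5) (which you do handle correctly via the hypothesis $k\geq 2$, since the row-$4$ bead on runner $1$ of $\langle i,j,1\rangle$ is either non-removable or non-normal), your computation reproduces all the stated values.
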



\subsection{The Mullineux map and parity} 
Let $\lambda$ be a $p$-regular partition of $n$.  Mullineux conjectured there is a simple algorithm to determine the $p$-regular partition $m(\lambda)$ of $n$ such that $D^{\lambda} \otimes sgn \cong D^{m(\lambda)}$.  Ford and Kleshchev proved the conjecture in \cite{FK}.  The map $m$ is a bijection from the set of $p$-regular partitions of $n$ onto itself, and it is called the \emph{Mullineux map}.  The next proposition contains the defining property of the Mullineux map.

\begin{prop}[\cite{FK}, Theorem 8.7] \label{prop:mullk}
Suppose $A$ is a good node for a $p$-regular partition $\lambda$ of $n$, with $\text{res}A = \alpha$.  Then there exists a good node $B$ for $m(\lambda)$ with $\text{res}B = -\alpha$ and $m(\lambda_A) = m(\lambda)_B$.  
\end{prop}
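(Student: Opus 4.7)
The plan is to deduce this from Kleshchev's modular branching rule together with the definition $D^\lambda \otimes sgn \cong D^{m(\lambda)}$ of the Mullineux map. Kleshchev's theorem asserts that for a $p$-regular partition $\lambda \vdash n$,
$$\text{soc}\bigl(\text{Res}^{\Sigma_n}_{\Sigma_{n-1}} D^{\lambda}\bigr) \;\cong\; \bigoplus_{A \text{ good for } \lambda} D^{\lambda_A},$$
and this decomposition refines according to residue: projecting onto the block of $F\Sigma_{n-1}$ indexed by residue $\alpha$ extracts exactly the summands with $\text{res}\,A = \alpha$.

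The first step is to use that $sgn$ restricts to $sgn$ on $\Sigma_{n-1}$, giving the natural isomorphism $\text{Res}(D^\lambda \otimes sgn) \cong (\text{Res}\, D^\lambda) \otimes sgn$. Applying this identity to both sides of $D^\lambda \otimes sgn \cong D^{m(\lambda)}$ and taking socles yields
$$\bigoplus_{B \text{ good for } m(\lambda)} D^{m(\lambda)_B} \;\cong\; \text{soc}(\text{Res}\, D^\lambda) \otimes sgn \;\cong\; \bigoplus_{A \text{ good for } \lambda} D^{m(\lambda_A)}.$$
Since the socle on either side is multiplicity-free by Kleshchev's theorem, matching summands produces a bijection between good nodes $A$ of $\lambda$ and good nodes $B$ of $m(\lambda)$ satisfying $m(\lambda_A) = m(\lambda)_B$. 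This gives half of what is needed.

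The residue refinement is the main remaining obstacle. The key assertion is that tensoring with $sgn$ permutes the residue-blocks of $F\Sigma_{n-1}$, carrying the $\alpha$-block to the $(-\alpha)$-block. This is the module-theoretic shadow of the combinatorial fact that conjugating a partition negates residues, which matches the Specht-level identity $S^\mu \otimes sgn \cong (S^{\mu'})^*$ of Proposition \ref{prop:james}(2); I would verify it by tracking how the central idempotents of $F\Sigma_{n-1}$ labelling residue-$\alpha$ blocks transform under the sign automorphism $\sigma \mapsto \epsilon(\sigma)\sigma$, or equivalently by computing that the Jucys--Murphy action on $M \otimes sgn$ negates eigenvalues. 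Once established, projecting the bijection above onto the $\alpha$-block on the $\lambda$ side and the $(-\alpha)$-block on the $m(\lambda)$ side produces a correspondence between good $\alpha$-nodes of $\lambda$ and good $(-\alpha)$-nodes of $m(\lambda)$ preserving the identity $m(\lambda_A) = m(\lambda)_B$, which is exactly the statement of the proposition.
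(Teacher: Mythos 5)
Your argument is correct, and it is worth noting that the paper offers no proof of this statement at all: it is imported verbatim as Theorem 8.7 of Ford--Kleshchev, where it is established by a long combinatorial induction on Mullineux symbols for the \emph{algorithmically} defined map $m$, before the isomorphism $D^{\lambda}\otimes sgn \cong D^{m(\lambda)}$ is known. Your route runs the logic in the opposite direction: taking the Mullineux conjecture as given, you recover the good-node compatibility from Kleshchev's branching theorem, the fact that $-\otimes sgn$ is an exact self-equivalence (so it commutes with $\mathrm{soc}$ and with restriction), multiplicity-freeness of $\mathrm{soc}\bigl(\mathrm{Res}\,D^{\lambda}\bigr)$, and the observation that twisting by sign negates Jucys--Murphy eigenvalues, hence carries the residue-$\alpha$ component of the restriction to the residue-$(-\alpha)$ component; each of these steps is sound, including the appeal to the Mullineux property at level $n-1$ to rewrite $D^{\lambda_A}\otimes sgn$ as $D^{m(\lambda_A)}$. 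Indeed, this socle computation is exactly the mechanism by which Ford and Kleshchev deduce the conjecture \emph{from} their combinatorial theorem, so you have inverted their derivation rather than reproduced it. The trade-off: within the present paper, where $m(\lambda)$ is introduced by the property $D^{\lambda}\otimes sgn\cong D^{m(\lambda)}$ and FK is cited wholesale, your argument is a complete, short, purely module-theoretic proof that avoids Mullineux-symbol combinatorics entirely; but it could not serve as a proof of Theorem 8.7 in its original context, since there $m$ is the combinatorial algorithm and your identification $D^{\lambda_A}\otimes sgn\cong D^{m(\lambda_A)}$ presupposes the very conjecture that the theorem was a stepping stone toward. Note also that when the paper later computes $m$ via Mullineux symbols (Lemma \ref{lem:mullineux421}), it implicitly uses FK's identification of the two maps, which your proof consumes as an input.
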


\noindent By definition, a good node for $\lambda$ is normal, but a normal node for $\lambda$ need not be good.  If the normal nodes for $\lambda$ are of distinct $p$-residues then the set of good nodes and the set normal nodes for $\lambda$ coincide.  If $\lambda$ is a $p$-regular partition in the principal block of $F\Sigma_{3p}$ then $\{ \text{good nodes for } \lambda \}  = \{ \text{normal nodes for } \lambda \}$ since each removable node is of a distinct $p$-residue.  That the removable nodes have distinct residues is easily deduced after analyzing the $\langle 3^p \rangle$ displays of Definition \ref{def:3p}.           

A few proofs require computing the image of a partition under the Mullineux map.  If $\Lambda$ is the $\langle 3^p \rangle$ abacus display for the 
$p$-regular partition $\lambda$ in $\mathcal{B}$, let $m(\Lambda)$ denote the $\langle 3^p \rangle$ abacus display for $m(\lambda)$.  We follow the procedure given by Ford and Kleshchev to compute the effect the Mullineux map has on an abacus display (see \cite{FK} Definition 1.3 up to Definition 1.6).

We now define the parity of a partition $\lambda$.  Suppose we remove rim $p$-hooks $h_1, \ldots, h_{w}$ to obtain the $p$-core for $\lambda$.  Define the \emph{leg-length} of $h_i$ to be $\ell(h_i) :=  (\text{the number of rows in $h_i$} ) - 1$.  Now define $\ell : = \sum_{i=1}^{w}\ell(h_i)$.  The \emph{parity} of $\lambda$, denoted $\mathcal{P} \lambda$,  is $(-1)^{\ell}$.  We cite two important results of Fayers and Tan.  

\begin{prop}[\cite{FT}, Proposition 2.19] \label{prop:mullparity}
For any $p$-regular partition $\lambda$ of weight 3, we have $\mathcal{P}(m(\lambda)) \neq \mathcal{P}\lambda$.     
\end{prop}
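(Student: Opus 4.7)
The plan is to pass to the abacus. On a display with a multiple of $p$ beads, removing a rim $p$-hook is the same as pushing a bead up by $p$ on its runner, and the leg-length of that hook equals the number of other beads lying strictly between source and target. Hence $\mathcal{P}\lambda=(-1)^{N(\lambda)}$, where $N(\lambda)$ denotes the total number of bead-crossings incurred while pushing all beads up to the $p$-core configuration. For a weight-$3$ partition this is a sum over three pushes, and crucially $N(\lambda)$ is independent of the order in which the pushes are made, which gives us flexibility in each case.

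First I would classify $p$-regular weight-$3$ partitions by their reordered $p$-quotient into three families: (i) all three units of weight live on a single runner (sub-cases $(3)$, $(2,1)$, and $(1,1,1)$); (ii) weights $2$ and $1$ distributed on two distinct runners; (iii) one unit of weight on each of three distinct runners. In each family I would compute $N(\lambda)$ in closed form as a sum of within-runner crossings plus between-runner crossings recorded by the pyramid entries $(B^k_\ell)$. Then I would invoke the Ford--Kleshchev abacus realisation of the Mullineux map (\cite{FK}, Definitions 1.3--1.6) to produce the display of $m(\lambda)$ in each case, rerun the crossing computation to obtain $N(m(\lambda))$, and verify that $N(\lambda)+N(m(\lambda))$ is odd.

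The hardest case will be (ii), where the Ford--Kleshchev procedure genuinely entangles the two loaded runners and rearranges several pyramid entries simultaneously. I would subdivide by whether the two loaded runners are adjacent in the pyramid order and by which of them carries the larger weight, and in each subcase isolate a single tracked crossing whose contribution supplies the parity flip. Cases (i) and (iii) should be more uniform, since there the reflection acts essentially as a single transposition of runners, so the parity change is forced by the parity of a single pyramid entry together with an explicit within-runner contribution that one reads off the $\langle 3^p\rangle$ display directly.
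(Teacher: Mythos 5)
This proposition is quoted in the paper from \cite{FT} (Proposition 2.19) and is not proved here, so there is no in-paper proof to compare against; I will assess your plan on its own terms. Your opening reduction is fine: on a display with a multiple of $p$ beads, pushing a bead up one row removes a rim $p$-hook whose leg-length is the number of occupied positions strictly between source and target, and the total crossing count $N(\lambda)$ is order-independent, so $\mathcal{P}\lambda=(-1)^{N(\lambda)}$. The problem is everything after that.

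The heart of the statement is the comparison of $N(\lambda)$ with $N(m(\lambda))$, and your proposal never actually produces the display of $m(\lambda)$; worse, the structural assumptions you plan to lean on are false. The Ford--Kleshchev procedure is not a runner permutation plus local adjustments, and your three quotient-type families are not preserved by $m$: the paper's own Lemma \ref{lem:mullineux421} computes $m(\langle 5,4,2\rangle)=\langle 3,5\rangle$ for $p=5$, i.e.\ a partition with one unit of weight on each of three distinct runners (your family (iii)) is sent to one with weights $2$ and $1$ on two runners (your family (ii)). So in family (iii) the map does not ``act essentially as a single transposition of runners,'' and no single tracked crossing will account for the parity flip uniformly. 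Moreover the FK algorithm removes $p$-rims that wind through \emph{all} runners, its successive steps depend on the entire $p$-core (hence on an arbitrary pyramid $(B^k_\ell)$, not just on which runners are loaded), and $m(\lambda)$ lands in the conjugate block with the reflected pyramid --- so the case analysis you sketch is not finite in any controlled sense and is not reduced to checkable configurations. A workable version of this strategy must first tame the Mullineux map: the standard route (and essentially the one taken in \cite{FT}) is to reduce to Rouquier-type blocks via Scopes equivalences, where $m$ does act on the reordered $p$-quotient by reversing and conjugating the components, after checking that both the parity and the Mullineux map are compatible with those equivalences; alternatively one can induct on $n$ using Proposition \ref{prop:mullk} to peel off good nodes, tracking how $\mathcal{P}$ changes under node removal. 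Without one of these reductions your argument has a genuine gap exactly where the content of the proposition lies.
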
  

\begin{prop}(\cite{FT}, Theorem 1.1) \label{prop:bipart}
Suppose that $F$ is a field of characteristic at least 5 and that $B$ is a weight 3 block of $F\Sigma_n$.  If $\lambda$ and $\mu$ are $p$-regular partitions in $B$ with $\mathcal{P}\lambda = \mathcal{P} \mu$, then $\text{Ext}^{1}_{B}(D^{\lambda}, D^{\mu}) = 0$.  In particular, the $\text{Ext}^1$-quiver of $B$ is bipartite.     
\end{prop}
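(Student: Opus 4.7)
The plan is to prove that $\text{Ext}^1_B(D^\lambda, D^\mu) \neq 0$ forces $\mathcal{P}\lambda \neq \mathcal{P}\mu$ by linking Ext$^1$ to the first radical layer of a Specht module and then tracking parity through the Jantzen sum formula.

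First I would reduce to the Specht-module side. By self-duality of simples (Proposition \ref{prop:james}(1)) we have $\text{Ext}^1(D^\lambda, D^\mu) \cong \text{Ext}^1(D^\mu, D^\lambda)$, so after possibly swapping the two partitions I may assume $\mu \unrhd \lambda$. Non-vanishing of $\text{Ext}^1(D^\lambda, D^\mu)$, combined with the short exact sequence $0 \to \text{rad}\,S^\lambda \to S^\lambda \to D^\lambda \to 0$, then forces $D^\mu$ to appear in $\text{rad}\,S^\lambda / \text{rad}^2\,S^\lambda$. In particular $[S^\lambda : D^\mu] \neq 0$, and Propositions \ref{prop:rad} and \ref{prop:decnumb} together give $[S^\lambda : D^\mu] = 1$. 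Moreover, $D^\mu$ lies in the first Jantzen layer of $S^\lambda$, since the Jantzen filtration refines the radical series.

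Next I would invoke the Jantzen sum formula for Specht modules, which expresses
\[
\sum_{i \geq 1} [J^i(S^\lambda) : D^\mu] \;=\; \sum_{\nu} \varepsilon_\nu\, [S^\nu : D^\mu],
\]
where $\nu$ ranges over partitions obtained from $\lambda$ by a single rim-$p$-hook interchange (raising one rim $p$-hook past another) and $\varepsilon_\nu = \pm 1$ is determined by the sum of leg-lengths of the two hooks involved. The crucial observation is that each such interchange changes the total leg-length sum (and hence the parity) by an odd integer. In a weight-$3$ block the number of such moves issuing from any partition is small, every decomposition multiplicity on the right-hand side is $0$ or $1$ by Proposition \ref{prop:decnumb}, and the explicit description of weight-$3$ Jantzen layers (as worked out by Martin--Russell) further controls the cancellations. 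Putting these constraints together, I would identify the term responsible for the non-vanishing of $[J^1(S^\lambda) : D^\mu]$ as corresponding to $\mu$ itself being obtained from $\lambda$ by a parity-changing rim-hook move, yielding $\mathcal{P}\mu = -\mathcal{P}\lambda$.

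The main obstacle will be controlling cancellations in the Jantzen sum: I must ensure that the surviving contribution to the first layer really forces a single (odd number of) parity-changing move(s) between $\lambda$ and $\mu$, rather than an even number that would preserve parity. This uses the weight-$3$ hypothesis essentially, since for larger weights more complicated cancellation patterns arise and the bipartite property does in fact fail. A secondary technical point is the sign analysis when $\lambda$ and $\mu$ are related by a move involving rim $p$-hooks of very different leg-lengths, where one must track $\varepsilon_\nu$ carefully; here the partial Ext$^1$-quivers already computed for small weight-$3$ blocks (for instance the principal block $\mathcal{B}$ studied in the present paper) serve as a useful template, and for partitions falling outside the scope of Proposition \ref{prop:KS} one can pass to the image under the Mullineux map, which by Proposition \ref{prop:mullparity} toggles both parities and hence preserves the relation $\mathcal{P}\lambda = \mathcal{P}\mu$.
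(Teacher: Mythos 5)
The paper does not prove this statement at all: it is quoted from Fayers and Tan (\cite{FT}, Theorem 1.1), whose actual argument is a lengthy induction over Scopes equivalence classes of weight $3$ blocks using $[3:k]$ pairs and induction/restriction between blocks, not a Jantzen sum formula computation. Read as a self-contained argument, your proposal has a genuine gap at its very first step. From the short exact sequence $0\to\operatorname{rad}S^{\lambda}\to S^{\lambda}\to D^{\lambda}\to 0$ one only obtains an exact sequence
$$\operatorname{Hom}(\operatorname{rad}S^{\lambda},D^{\mu})\to\operatorname{Ext}^{1}(D^{\lambda},D^{\mu})\to\operatorname{Ext}^{1}(S^{\lambda},D^{\mu}),$$
so concluding that $D^{\mu}$ occurs in $\operatorname{rad}S^{\lambda}/\operatorname{rad}^{2}S^{\lambda}$ requires the vanishing of $\operatorname{Ext}^{1}(S^{\lambda},D^{\mu})$. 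That vanishing is precisely the content of the Kleshchev--Sheth isomorphism (Proposition \ref{prop:KS}), which is only known under the hypothesis of at most $p-1$ parts; the paper explicitly records that removing this hypothesis is open, and its own extension to $\mathcal{B}$ (Corollary \ref{cor:KS}) sits downstream of Proposition \ref{prop:bipart} in the logical chain (via Lemma \ref{lem:sinduce} and Theorem \ref{thm:2nd}). So you cannot invoke it here without circularity.

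Even granting that reduction, the Jantzen-filtration portion does not go through as written. It is not a theorem that the Jantzen filtration refines the radical series (this is conjectural in general), so placing $D^{\mu}$ in the first Jantzen layer is unjustified; and the Jantzen--Schaper formula computes only the total $\sum_{i\geq 1}[J^{i}(S^{\lambda}):D^{\mu}]$, which isolates neither the first layer nor a single term $\nu=\mu$ on the right-hand side. The central combinatorial claims --- that every rim-hook interchange contributing to the sum changes $\mathcal{P}$ by an odd amount, and that in weight $3$ the surviving contribution must come from an odd number of such moves --- constitute essentially the whole difficulty of the theorem and are asserted rather than proved. The ``explicit description of weight-$3$ Jantzen layers by Martin--Russell'' that you lean on does not exist: Martin and Russell construct Ext$^{1}$-quivers for the defect $3$ blocks of $F\Sigma_{3p}$ only, and they do so by restriction to defect $2$ blocks, not via Jantzen layers. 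As it stands, every load-bearing step of the proposal is either circular or unproven.
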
 

Suppose $\lambda$ is a partition of $n$ that is not a $p$-core, and let $h$ be a removable rim $p$-hook for $\lambda$.  It is not hard to see that if $h$ has $r$ rows then $h$ has $p-r+1$ columns.  So $h$ has leg-length $r-1$, and $h$ corresponds to a removable rim $p$-hook, say $h'$, for $\lambda'$ with leg-length $p-r+1 - 1 = p-r$.  From this we deduce $\lambda$ and $\lambda'$ have the same parity when $p$ is odd.  This observation will be applied later in the proof of Theorem \ref{thm:RR}.           

It is well known that the Young module $Y^{\lambda}$ is self-dual and indecomposable.  Moreover, if $\lambda$ is $p$-restricted then $Y^{\lambda}$ has the simple socle isomorphic to $D^{m(\lambda ')}$ and $Y^{\lambda}$ is projective.  In \cite{HN}, Hemmer and Nakano show that the multiplicities in a Specht or dual Specht module filtration of an $F\Sigma_{n}$-module are well defined.  The authors also show that the $F\Sigma_{n}$- module $Y^{\lambda}$ has a Specht module filtration of the form:
$$0 \subset S^{\lambda} = M_1 \subset M_2 \subset \ldots \subset M_{k-1} \subset M_{k} = Y^{\lambda}$$
where $M_{i}/M_{i-1} \cong S^{\mu_{i}}$ and $\mu_{i} \rhd \lambda$ for all $2 \leq i \leq k$.  We give two important results about Young modules in the next proposition.      

\begin{prop} \label{prop:Young}
Let $\lambda$ be a partition of $n$.  
\begin{enumerate}
	\item If $\lambda$ is $p$-restricted then the Young module $Y^{\lambda}$ is isomorphic to the projective cover of $D^{m(\lambda')}$.
	\item If $p \geq 5$ and $\lambda$ is $p$-regular then $[Y^{\lambda}: D^{\lambda}] = 1$.  
\end{enumerate}
\end{prop}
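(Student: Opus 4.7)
The plan is to extract both statements from the facts recalled in the paragraph immediately preceding the proposition: $Y^{\lambda}$ is self-dual and indecomposable; if $\lambda$ is $p$-restricted then $Y^{\lambda}$ is projective with simple socle $D^{m(\lambda')}$; and for any partition $\lambda$, $Y^{\lambda}$ admits a Specht filtration starting with $S^{\lambda}$ at the bottom and with every higher factor $S^{\mu_i}$ satisfying $\mu_i \rhd \lambda$.

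For part (1), I would first observe that $Y^{\lambda}$, being projective and indecomposable, is a projective indecomposable module and therefore has a simple head. Since $Y^{\lambda}$ is self-dual, its head is isomorphic to the dual of its socle, namely $(D^{m(\lambda')})^{*}$. By Proposition \ref{prop:james}(1), every simple $F\Sigma_n$-module is self-dual, so the head of $Y^{\lambda}$ is in fact $D^{m(\lambda')}$. Any projective indecomposable module is the projective cover of its head, which yields the claim.

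For part (2), I would compute $[Y^{\lambda} : D^{\lambda}]$ by additivity of composition multiplicities along the Specht filtration
$$0 \subset S^{\lambda} = M_1 \subset M_2 \subset \cdots \subset M_k = Y^{\lambda},$$
yielding
$$[Y^{\lambda} : D^{\lambda}] = [S^{\lambda} : D^{\lambda}] + \sum_{i=2}^{k} [S^{\mu_i} : D^{\lambda}].$$
The first term equals $1$ by Proposition \ref{prop:rad}(1), since $\lambda$ is $p$-regular. For each $i \geq 2$ the strict dominance $\mu_i \rhd \lambda$, combined with the general fact that any composition factor $D^{\mu}$ of an arbitrary Specht module $S^{\nu}$ satisfies $\mu \unrhd \nu$ (the natural extension of Proposition \ref{prop:rad}(2) to arbitrary $\nu$, standard in James's monograph), forces $[S^{\mu_i} : D^{\lambda}] = 0$. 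The sum then collapses to $[Y^{\lambda} : D^{\lambda}] = 1$.

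The whole argument is bookkeeping once the recalled facts are in hand, so the main point requiring care is that Proposition \ref{prop:rad}(2) is stated only for $p$-regular source partitions, while some filtration factors $S^{\mu_i}$ may correspond to $p$-singular $\mu_i$; invoking the unrestricted dominance bound on composition factors of arbitrary Specht modules handles this. The hypothesis $p \geq 5$ enters only to guarantee via Hemmer--Nakano that the multiplicities in the Specht filtration of $Y^{\lambda}$ are well defined.
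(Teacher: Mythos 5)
Your proposal is correct. Note, however, that the paper offers no proof of this proposition at all: it is presented as a pair of well-known facts about Young modules, with the ingredients (self-duality, indecomposability, projectivity and simple socle in the $p$-restricted case, and the Hemmer--Nakano Specht filtration with $S^{\lambda}$ at the bottom) recalled in the preceding paragraph and then simply cited. What you have done is supply the short derivation the paper leaves implicit, and both halves of your argument are sound: the head-equals-socle argument via self-duality of $Y^{\lambda}$ and of simple modules correctly identifies $Y^{\lambda}$ as the projective cover of $D^{m(\lambda')}$, and the additivity computation along the Specht filtration, together with the dominance bound on composition factors of arbitrary Specht modules (James, Corollary 12.2, which indeed requires no regularity hypothesis on the labelling partition), gives $[Y^{\lambda}:D^{\lambda}]=1$. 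Two small refinements: for part (2) you only need the \emph{existence} of one Specht filtration of the stated shape, since $[Y^{\lambda}:D^{\lambda}]$ is a composition multiplicity of the module itself and is independent of any choice of filtration, so the Hemmer--Nakano well-definedness of filtration multiplicities (and hence the hypothesis $p\geq 5$) is not actually where the argument leans; and in part (1) one could equally invoke the fact that $F\Sigma_n$ is a symmetric algebra, so the projective cover of a simple module already has that simple as its socle, making the self-duality of $Y^{\lambda}$ and the identification of its socle do the same work.
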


\subsection{Ext$^1$-quiver, Branching, and Carter-Payne maps} In \cite{MR}, Martin and Russell construct the $\text{Ext}^1$-quiver for $\mathcal{B}$ from $\text{Ext}^1$-quivers for defect 2 and defect 1 blocks of $F\Sigma_{3p-1}$.  In doing so, the authors establish rules for restricting to defects 2 and 1 as well as rules for inducing back up to $\mathcal{B}$.  For $2 \leq i \leq p$, let $B_i$ denote the defect 2 block of $F\Sigma_{3p-1}$ corresponding to the $p$-core $(i-1,1^{p-i})$, and let $B_1$ be the defect 1 block of $F\Sigma_{3p-1}$ with $p$-core $(p,1^{p-1})$.  We now turn our attention to how these blocks relate to the removable beads on the $\langle 3^p \rangle$ display.       

Fix the $\langle 3^p \rangle$ display for a partition $\lambda$.  Suppose the display has a removable bead on runner 
$1 \leq i \leq p$ at position $m$.  Pushing said bead from position $m$ to position $m-1$ produces a non-zero restriction of the Specht module $S^{\lambda}$ to the block $B_i$.  Further, we remove a removable $(i-1)$-node from $\lambda$ producing a partition $\tilde{\lambda}$ of $3p-1$.  In this case, write $S^{\lambda}\downarrow_{B_i} \cong S^{\tilde{\lambda}}$.  Martin and Russell showed the following: If the $\langle 3^p \rangle$ display for $\lambda$ has a removable bead on runner $2 \leq i \leq p$ then there is exactly one partition $\sigma_i(\lambda)$ in $\mathcal{B}$ such that 
$S^{\lambda} \downarrow_{B_i} \cong S^{\tilde{\lambda}} \cong S^{\sigma_i(\lambda)} \downarrow_{B_i}$ and $\lambda > \sigma_i(\lambda)$.  It is easily seen that we also have $\lambda \rhd \sigma_i(\lambda)$.  Conversely, if $\tilde{\lambda}$ is a partition in the block $B_i$ of $F\Sigma_{3p-1}$ and $2 \leq i \leq p$ then there are exactly two partitions 
$\lambda$ and $\mu$ in $\mathcal{B}$ such that $S^{\lambda} \downarrow_{B_i} \cong S^{\mu}\downarrow_{B_i} \cong S^{\tilde{\lambda}}$ with $\lambda > \mu$.  It is not hard to see that $\mu = \sigma_i(\lambda)$.  If the $\langle 3^p \rangle$ for $\lambda$ has a removable bead on runner $i=1$ then there are two partitions $\mu$ and $\nu$ in $\mathcal{B}$ such that $\lambda > \mu > \nu$ and 
$S^{\lambda} \downarrow_{B_1} \cong S^{\mu} \downarrow_{B_1} \cong S^{\nu} \downarrow_{B_1} \cong S^{\tilde{\lambda}}$.  Conversely, if the partition $\tilde{\lambda}$ is in $B_1$ then there are three partitions $ \lambda >\mu > \nu$ in $\mathcal{B}$ so that $S^{\lambda} \downarrow_{B_1} \cong S^{\mu} \downarrow_{B_1} \cong S^{\nu} \downarrow_{B_1} \cong S^{\tilde{\lambda}}$   

For each $i =1, \ldots, p$, define a subset of the set of $p$-regular partitions in $\mathcal{B}$, 
$$\Lambda_i = \{\lambda \  | \ D^{\lambda}\downarrow_{B_i} \neq0 \}.$$

\noindent By Kleshchev's modular branching rules for simple modules (see Theorem $\text{E}'$ of \cite{BK2}), a $p$-regular partition $\lambda$ is in $ \Lambda_i$ if and only if the $\langle 3^p \rangle$ display for $\lambda$ has a normal bead on runner $i$.  If $\lambda \in \Lambda_i$ then $D^{\lambda}\downarrow_{B_i} \cong D^{\tilde{\lambda}}$ is simple.  Moreover, it is known that $D^{\tilde{\lambda}} \uparrow^{\mathcal{B}}$ is self-dual with simple head and simple socle both isomorphic to $D^{\lambda}$.  By Theorem 3.9 of \cite{MR}, if $2 \leq i \leq p $ and $\sigma_i(\lambda)$ is $p$-regular then $D^{\sigma_i(\lambda)}\downarrow_{B_i} = 0$ and $[D^{\tilde{\lambda}} \uparrow^{\mathcal{B}}: D^{\sigma_i(\lambda)}] = 1$.  Now, for $1 \leq i \leq p$, define a function $\Theta_i$ from $\Lambda_i$ to the set of partitions of $3p-1$ lying in the block $B_i$ as follows: 
For $\lambda \in \Lambda_i$, define $\Theta_i(\lambda)$ to be the partition $\tilde{\lambda}$ in $B_i$ such that 
$S^{\lambda}\downarrow_{B_i} \cong S^{\tilde{\lambda}}$.  Martin and Russell prove that $\lambda \in \Lambda_i$ is $p$-singular if and only if $\Theta_i(\lambda)$ is $p$-singular.  Furthermore, if $\lambda$ and $\mu$ are in
$\Lambda_i$ and $\lambda > \mu$ then $\Theta_i(\lambda) > \Theta_i(\mu)$.  We gather results of Martin, Russell, and Tan in the following proposition.
\begin{prop} \label{prop:MT} Let $\lambda$ and $\mu$ be $p$-regular partitions in $\mathcal{B}$.
	\begin{enumerate}
		\item Suppose $\lambda \in \Lambda_i$ ($1\leq i \leq p$).  Then $D^{\lambda}$ extends $D^{\mu}$ if and only if either $\mu \in \Lambda_i$ and $D^{\tilde{\lambda}}$ extends $D^{\tilde{\mu}}$ or $\mu \notin \Lambda_i$ and $[D^{\tilde{\lambda}} \uparrow^{\mathcal{B}} : D^{\mu}] \neq 0$. 
		\item Suppose $D^{\tilde{\lambda}}$ is a simple module of $B_i$ for some $i$ ($2 \leq i \leq p$).  
		Then $D^{\tilde{\lambda}} \uparrow^{\mathcal{B}}$ has Loewy length 3.
		\item Suppose $D^{\tilde{\lambda}}$ extends $D^{\tilde{\mu}}$ in $B_i$ for some $i$ ($1 \leq i \leq p$).  Then $D^{\tilde{\lambda}} \uparrow^{\mathcal{B}}$ extends $D^{\mu}$, where $D^{\mu} = \text{soc}(D^{\tilde{\mu}}\uparrow ^{\mathcal{B}})$, and $D^{\mu}$ occurs in the second Loewy layer of the non-split extension of $D^{\tilde{\lambda}} \uparrow^{\mathcal{B}}$ by $D^{\mu}$.
		\item Suppose $D^{\tilde{\lambda}}$ extends $D^{\tilde{\mu}}$ in $B_i$ for some $i$ ($2 \leq i \leq p$).  Let $\tilde{M}$ be a non-split extension of $D^{\tilde{\lambda}}$ by $D^{\tilde{\mu}}$.  Then $\tilde{M}\uparrow^{\mathcal{B}}$ has Loewy length 4.    
	\end{enumerate}
\end{prop}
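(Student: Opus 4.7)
The proposition is a compilation of known branching-theoretic results, so my plan is to assemble each part from Frobenius reciprocity for the adjoint pair $(\text{Ind}_{\Sigma_{3p-1}}^{\Sigma_{3p}}, \text{Res})$, Kleshchev's modular branching rules, and the parity-based analysis of the $\text{Ext}^1$-quiver from Propositions \ref{prop:mullparity} and \ref{prop:bipart}. The unifying principle is that $D^{\tilde\lambda} \uparrow^{\mathcal{B}}$ is self-dual with simple head and simple socle, both isomorphic to $D^\lambda$ (a fact recalled in the paragraph preceding the proposition), so its Loewy structure is controlled by how its heart decomposes into simples.

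For part (1), I would use the fact that induction and restriction are exact and send projective covers to projective covers (up to block decomposition) to get a natural map on Ext groups. When both $\lambda, \mu \in \Lambda_i$, restriction to $B_i$ gives a map $\text{Ext}^1_{\mathcal{B}}(D^\lambda, D^\mu) \to \text{Ext}^1_{B_i}(D^{\tilde\lambda}, D^{\tilde\mu})$; I would verify it is an isomorphism by constructing an inverse via induction and invoking the adjunction together with the fact that the restricted simples remain simple (Kleshchev's branching rules). When $\mu \notin \Lambda_i$, I would compute $\text{Ext}^1_{\mathcal{B}}(D^\lambda, D^\mu) \cong \text{Hom}_{\mathcal{B}}(\Omega D^\lambda, D^\mu)$ and argue that the self-dual module $D^{\tilde\lambda} \uparrow^{\mathcal{B}}$ realizes every such non-split extension in its second radical layer, so the multiplicity $[D^{\tilde\lambda}\uparrow^{\mathcal{B}}:D^\mu]$ being non-zero is equivalent to the existence of an extension.

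For part (2), since $B_i$ is a defect 2 block and $\mathcal{B}$ is the weight 3 principal block, I would use that the induced module $D^{\tilde\lambda}\uparrow^{\mathcal{B}}$ has simple head $D^\lambda$, simple socle $D^\lambda$, and a non-trivial heart (because $D^{\tilde\lambda}\uparrow^{\mathcal{B}}$ cannot be simple by dimension/composition-length considerations on weight 3). The Loewy length is at least 3 by this sandwiching, and at most 3 because the heart is semisimple: by bipartiteness (Proposition \ref{prop:bipart}), every composition factor $D^\mu$ of the heart satisfies $\mathcal{P}\mu \neq \mathcal{P}\lambda$, and any further radical layer would force a factor of the same parity as $\lambda$, contradicting part (1).

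For parts (3) and (4), the plan is to induce the non-split extension $\tilde M$ along the exact sequence $0\to D^{\tilde\mu}\to \tilde M\to D^{\tilde\lambda}\to 0$ and apply (2) to each end. This yields $0\to D^{\tilde\mu}\uparrow^{\mathcal{B}}\to \tilde M\uparrow^{\mathcal{B}}\to D^{\tilde\lambda}\uparrow^{\mathcal{B}}\to 0$, with both outer terms of Loewy length 3. I would show that this sequence is non-split (using the adjunction to lift the non-splitness from $B_i$), so the Loewy layers do not collapse completely; the resulting Loewy length is 4 for part (4), and the extension of $D^{\tilde\lambda}\uparrow^{\mathcal{B}}$ by $D^\mu=\text{soc}(D^{\tilde\mu}\uparrow^{\mathcal{B}})$ appearing as a subquotient gives part (3). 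The main obstacle is ensuring that the Loewy length of $\tilde M\uparrow^{\mathcal{B}}$ is exactly 4 rather than 3: this requires showing that the copy of $D^\lambda$ at the top of $D^{\tilde\lambda}\uparrow^{\mathcal{B}}$ does not collide with a copy of $D^\lambda$ inside $D^{\tilde\mu}\uparrow^{\mathcal{B}}$, which I would control using the parity-based bipartiteness together with the adjunction-based description of the extensions in part (1).
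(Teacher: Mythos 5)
The paper does not prove this proposition at all: immediately after the statement it says that part (1) is Proposition 3.1 of \cite{MT} (summarizing the main construction of \cite{MR}) and that parts (2)--(4) are Proposition 3.3, Lemma 3.4 and Proposition 3.5 of \cite{MT}. So you are attempting to reprove, by soft formal arguments, results whose published proofs rest on a long case-by-case analysis of weight 2 and weight 3 blocks, and the sketch has genuine gaps at exactly the points where that analysis is doing the work.

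The most serious gap is in part (1). Biadjointness of induction and restriction gives you $\text{Hom}_{\mathcal{B}}(D^{\tilde{\lambda}}\uparrow^{\mathcal{B}},D^{\mu})\cong\text{Hom}_{B_i}(D^{\tilde{\lambda}},D^{\mu}\downarrow_{B_i})$, i.e.\ control of heads and socles of induced modules, but it does \emph{not} make the restriction map $\text{Ext}^1_{\mathcal{B}}(D^{\lambda},D^{\mu})\to\text{Ext}^1_{B_i}(D^{\tilde{\lambda}},D^{\tilde{\mu}})$ an isomorphism: neither functor sends simples to simples in the relevant direction, a non-split extension can split on restriction, and your proposed argument would prove the same ``if and only if'' for an arbitrary pair of adjacent blocks, which is false. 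Establishing this equivalence is precisely the content of Martin and Russell's construction of the $\text{Ext}^1$-quiver of $\mathcal{B}$. Part (2) is also not closed: knowing that $D^{\tilde{\lambda}}\uparrow^{\mathcal{B}}$ is self-dual with simple head and socle $D^{\lambda}$ and that (by Proposition \ref{prop:bipart}) the layers adjacent to the head and socle have parity opposite to $\mathcal{P}\lambda$ rules out Loewy length 4, but it does not rule out Loewy length 5 with a middle layer of parity $\mathcal{P}\lambda$; your appeal to ``contradicting part (1)'' does not apply to a third radical layer. The standard way to pin the Loewy length at $3$ is via the branching-rule description of $i$-induction of a simple module (the Loewy length is $2\varepsilon-1$ where $\varepsilon$ counts the relevant normal nodes, here $\varepsilon=2$), or via the weight 2 projective structure used in \cite{MT}; some such input is needed. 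Similar issues propagate to (3) and (4): the exact sequence $0\to D^{\tilde{\mu}}\uparrow^{\mathcal{B}}\to\tilde{M}\uparrow^{\mathcal{B}}\to D^{\tilde{\lambda}}\uparrow^{\mathcal{B}}\to0$ only bounds the Loewy length above by $6$ a priori, and showing it equals exactly $4$ requires identifying which layers of the two outer terms merge, which again is the detailed analysis of \cite{MT} rather than a parity argument.
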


\begin{cor} \label{cor:CPext}
Suppose $\lambda$ is a $p$-regular partition  in $\Lambda_i$ ($2 \leq i \leq p$).  If $\sigma_i(\lambda)$ is $p$-regular then $\text{Ext}^1_{\Sigma_{3p}}(D^{\lambda} , D^{\sigma_i(\lambda)}) \neq 0$.    
\end{cor}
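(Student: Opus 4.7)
The plan is to exploit the structure of the self-dual module $M := D^{\tilde{\lambda}}\uparrow^{\mathcal{B}}$ obtained from the branching discussion. By the remarks preceding Proposition \ref{prop:MT}, $M$ has simple head and simple socle both isomorphic to $D^{\lambda}$, and since $\sigma_i(\lambda)$ is $p$-regular, $[M:D^{\sigma_i(\lambda)}]=1$. By Proposition \ref{prop:MT}(2), $M$ has Loewy length $3$, so the composition factor $D^{\sigma_i(\lambda)}$ must sit in the middle Loewy layer $\mathrm{rad}(M)/\mathrm{soc}(M)$, which is semisimple.

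Next I would manufacture a non-split length-two quotient of $M$ realizing the desired extension. Write the semisimple middle layer as $\mathrm{rad}(M)/\mathrm{soc}(M)\cong D^{\sigma_i(\lambda)}\oplus X$ for some semisimple complement $X$. Lift $X$ to a submodule $\widetilde{X}\subseteq \mathrm{rad}(M)$ containing $\mathrm{soc}(M)$ with $\widetilde{X}/\mathrm{soc}(M)=X$ (via the preimage of $X$ under the radical quotient of $\mathrm{rad}(M)$). Then counting composition factors, $\mathrm{rad}(M)/\widetilde{X}\cong D^{\sigma_i(\lambda)}$, and $M/\widetilde{X}$ fits in the short exact sequence
\begin{equation*}
0\longrightarrow D^{\sigma_i(\lambda)}\longrightarrow M/\widetilde{X}\longrightarrow D^{\lambda}\longrightarrow 0.
\end{equation*}

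It remains to show this sequence is non-split, which will be the only substantive point. Suppose for contradiction that $M/\widetilde{X}\cong D^{\lambda}\oplus D^{\sigma_i(\lambda)}$. Composing the canonical surjection $M\twoheadrightarrow M/\widetilde{X}$ with projection onto the $D^{\sigma_i(\lambda)}$ summand yields a non-zero homomorphism $M\to D^{\sigma_i(\lambda)}$. However, since $M$ has simple head $D^{\lambda}$,
\begin{equation*}
\mathrm{Hom}_{\Sigma_{3p}}(M,D^{\sigma_i(\lambda)})\cong \mathrm{Hom}_{\Sigma_{3p}}(D^{\lambda},D^{\sigma_i(\lambda)})=0,
\end{equation*}
because $\sigma_i(\lambda)\lhd\lambda$ (as noted in the text preceding Proposition \ref{prop:MT}), so $D^{\lambda}\not\cong D^{\sigma_i(\lambda)}$. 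This contradiction forces the sequence to be non-split, yielding a non-trivial element of $\mathrm{Ext}^{1}_{\Sigma_{3p}}(D^{\lambda},D^{\sigma_i(\lambda)})$.

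The main (mild) obstacle is just the bookkeeping that guarantees $D^{\sigma_i(\lambda)}$ actually lies in the middle Loewy layer and is not absorbed by the socle or head; this is handled by the self-duality of $M$ together with the simplicity of $\mathrm{hd}(M)$ and $\mathrm{soc}(M)$ from Proposition \ref{prop:MT}. No further input is needed beyond the facts already collected about $M$.
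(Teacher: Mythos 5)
Your argument is correct and follows essentially the same route as the paper: both identify $M = D^{\tilde{\lambda}}\uparrow^{\mathcal{B}}$ as a Loewy length $3$ module with simple head and socle $D^{\lambda}$, note $[M:D^{\sigma_i(\lambda)}]=1$ with $\lambda \rhd \sigma_i(\lambda)$, and place $D^{\sigma_i(\lambda)}$ in the middle Loewy layer. The only difference is that you spell out the standard final step (constructing the length-two quotient and checking non-splitness via $\mathrm{Hom}(M,D^{\sigma_i(\lambda)})=0$), which the paper leaves implicit.
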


\begin{proof}
Since $\lambda \in \Lambda_i$, $D^{\lambda} \downarrow_{B_i} \cong D^{\tilde{\lambda}}$ is simple.  By Proposition \ref{prop:MT}(2), 
$D^{\tilde{\lambda}} \uparrow^{\mathcal{B}}$ has Loewy length 3 with simple head and simple socle isomorphic to $D^{\lambda}$.  Now 
$[D^{\tilde{\lambda}} \uparrow^{\mathcal{B}} : D^{\sigma_i(\lambda)}] = 1$ and $\lambda \rhd \sigma_i(\lambda)$, so $D^{\sigma_i(\lambda)}$ must lie in the second Loewy layer of $D^{\tilde{\lambda}} \uparrow^{\mathcal{B}}$.  Hence, 
$\text{Ext}^1_{\Sigma_{3p}}(D^{\lambda}, D^{\sigma_i(\lambda)}) \neq 0$.     
\end{proof}

Part (1) of the proposition is Proposition 3.1 of \cite{MT} which is a summary of the work of Martin and Russell in \cite{MR}.  Parts (2), (3), and (4) are Proposition 3.3, Lemma 3.4, and Proposition 3.5 of \cite{MT}, respectively.  An important consequence of Proposition \ref{prop:MT} (2),(3),(4) is the next lemma.

\begin{lem} \label{lem:sinduce}
Suppose the Specht module $S^{\widetilde{\lambda}}$ lies in the block $B_i$ of $F\Sigma_{3p-1}$ where $2 \leq i \leq p$.  We have the following.  
	\begin{enumerate}
		\item If $S^{\widetilde{\lambda}}$ has Loewy length 1 then $S^{\widetilde{\lambda}} \uparrow^{\mathcal{B}}$ has Loewy length 3.  
		\item If $S^{\widetilde{\lambda}}$ has Loewy length 2 then $S^{\widetilde{\lambda}} \uparrow^{\mathcal{B}}$ has Loewy length at most 4.
		\item If $S^{\widetilde{\lambda}}$ has Loewy length 3 then $S^{\widetilde{\lambda}} \uparrow^{\mathcal{B}}$ has Loewy length at most 5.
	\end{enumerate}  
\end{lem}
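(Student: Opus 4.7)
My plan is to unify all three statements via the claim: for any module $\widetilde{N}$ lying in $B_i$ ($2 \leq i \leq p$), the induced module $\widetilde{N}\uparrow^{\mathcal{B}}$ satisfies $\mathrm{LL}(\widetilde{N}\uparrow^{\mathcal{B}}) \leq \mathrm{LL}(\widetilde{N}) + 2$. Taking $\widetilde{N} = S^{\widetilde{\lambda}}$ yields the three bounds $3$, $4$, and $5$ of the lemma. Part~(1) is immediate: a Specht module of Loewy length~$1$ is simple, hence isomorphic to some $D^{\widetilde{\mu}}$, and Proposition~\ref{prop:MT}(2) gives $\mathrm{LL}(D^{\widetilde{\mu}}\uparrow^{\mathcal{B}}) = 3$.

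For parts~(2) and~(3) I would argue by induction on $\ell = \mathrm{LL}(\widetilde{N})$ using the socle filtration $0 = \widetilde{N}_0 \subset \widetilde{N}_1 \subset \cdots \subset \widetilde{N}_\ell = \widetilde{N}$, whose successive quotients $\widetilde{N}_j/\widetilde{N}_{j-1}$ are semisimple. Induction produces a filtration of $\widetilde{N}\uparrow^{\mathcal{B}}$ whose pieces are direct sums of modules of the form $D^{\widetilde{\nu}}\uparrow^{\mathcal{B}}$, each of Loewy length exactly~$3$ with simple head and simple socle both equal to $D^{\nu}$ by Proposition~\ref{prop:MT}(2). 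The key point is that consecutive pieces $\widetilde{N}_{j-1}\uparrow^{\mathcal{B}}$ and $\widetilde{N}_j\uparrow^{\mathcal{B}}$ must overlap in two Loewy layers, so that moving up one layer in $\widetilde{N}$ increases the Loewy length of $\widetilde{N}\uparrow^{\mathcal{B}}$ by at most~$1$. To force the overlap, I would invoke Proposition~\ref{prop:MT}(4): each non-split extension $0 \to D^{\widetilde{\beta}} \to \widetilde{M} \to D^{\widetilde{\alpha}} \to 0$ occurring in $\widetilde{N}$ (with $D^{\widetilde{\alpha}}$ in layer~$j$ and $D^{\widetilde{\beta}}$ in layer $j-1$) induces to a module of Loewy length exactly~$4$, whose head sits in the $\widetilde{\alpha}$-piece and whose socle sits in the $\widetilde{\beta}$-piece, thereby gluing two layers across. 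Frobenius reciprocity $\mathrm{Hom}_{\mathcal{B}}(D^{\nu}, \widetilde{N}\uparrow^{\mathcal{B}}) \cong \mathrm{Hom}_{B_i}(D^{\nu}\downarrow_{B_i}, \widetilde{N})$ and its dual, together with the fact that $D^{\nu}\downarrow_{B_i}$ is either zero or simple, identifies the head and socle of $\widetilde{N}\uparrow^{\mathcal{B}}$ as direct sums of lifts (via $\Theta_i$) of the head and socle of $\widetilde{N}$.

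The main obstacle is packaging this overlap argument rigorously, since naive bookkeeping of short exact sequences of induced modules yields increments of $3$ per layer rather than $1$. The work lies in simultaneously tracking all the non-split extensions occurring in $\widetilde{N}$ and ensuring that the induced middle layers align without double-counting, using Proposition~\ref{prop:MT}(3) to route the extensions through the unique simple socle of each $D^{\widetilde{\nu}}\uparrow^{\mathcal{B}}$. A secondary difficulty is that $S^{\widetilde{\lambda}}$ need not have a simple head when $\widetilde{\lambda}$ is $p$-singular; I would handle this by passing to $(S^{\widetilde{\lambda}})^* \cong S^{\widetilde{\lambda}'} \otimes sgn$ via Proposition~\ref{prop:james}(2), reducing to the $p$-regular case whenever $\widetilde{\lambda}'$ is $p$-regular, and working on the socle side (which is always well-controlled by Frobenius reciprocity) in the remaining cases.
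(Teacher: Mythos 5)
Your part (1) is exactly the paper's argument: an irreducible Specht module is some $D^{\widetilde{\mu}}$, and Proposition \ref{prop:MT}(2) gives Loewy length $3$ for the induced module. For parts (2) and (3), however, there is a genuine gap. You correctly identify the central difficulty --- that filtering $S^{\widetilde{\lambda}}$ by its Loewy layers and inducing gives pieces of Loewy length $3$ each, so naive bookkeeping yields $3\ell$ rather than $\ell+2$ --- but you then leave precisely that step ("packaging this overlap argument rigorously", "the work lies in simultaneously tracking all the non-split extensions\dots without double-counting") as an acknowledged obstacle rather than resolving it. Formally, from $0 \to \widetilde{N}_{\ell-1}\uparrow^{\mathcal{B}} \to \widetilde{N}\uparrow^{\mathcal{B}} \to (\widetilde{N}/\widetilde{N}_{\ell-1})\uparrow^{\mathcal{B}} \to 0$ one only gets $\mathrm{rad}^{3}(\widetilde{N}\uparrow^{\mathcal{B}}) \subseteq \widetilde{N}_{\ell-1}\uparrow^{\mathcal{B}}$, and the inductive hypothesis kills $\mathrm{rad}^{\ell+1}$ of the submodule, not $\mathrm{rad}^{\ell-1}$; closing that two-step deficit is the whole content of the lemma, and a plan that defers it has not proved statements (2) and (3).

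The paper's route for (2) is to reduce to Proposition \ref{prop:MT}(4), which asserts that the induction of a non-split extension of two simples in $B_i$ has Loewy length exactly $4$ --- i.e., the overlap is taken as a black box from \cite{MT} for length-two uniserial modules, and a general module of Loewy length $2$ is handled as a subquotient of a direct sum of such modules (plus simples). For (3) the paper additionally invokes Proposition \ref{prop:MT}(3) and, crucially, the bipartiteness of the $\mathrm{Ext}^1$-quiver (Proposition \ref{prop:bipart}); parity is what prevents the three induced layers from spreading beyond five Loewy layers. Your proposal never mentions bipartiteness, and I do not see how your overlap mechanism closes up in the three-layer case without it (or some substitute). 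The concluding paragraph about dualizing via $(S^{\widetilde{\lambda}})^{*} \cong S^{\widetilde{\lambda}'} \otimes sgn$ to reach the $p$-regular case is not needed and does not repair this: the missing ingredient is the parity constraint, not the simplicity of the head.
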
   
\begin{proof}
(1) If $S^{\widetilde{\lambda}}$ is simple then $S^{\widetilde{\lambda}} \cong D^{\widetilde{\lambda}^R}$, where $\widetilde{\lambda}^R$ is the $p$-regularization of the partition $\widetilde{\lambda}$.  By Proposition \ref{prop:MT}(2), $S^{\widetilde{\lambda}} \uparrow^{\mathcal{B}} \cong D^{\widetilde{\lambda}^R} \uparrow^{\mathcal{B}}$ has Loewy length 3.  Now (2) follows directly from Proposition \ref{prop:MT}(4).  Finally, (3) follows from Proposition \ref{prop:MT}(3),(4) and Proposition \ref{prop:bipart}.     
\end{proof}

We now cite a result that places an upper bound on the Loewy of the Specht module that lie in a defect 2 block of the symmetric group.     
\begin{prop}[\cite{CT}, Proposition 6.2] \label{prop:w2}
Let $B$ be a defect 2 block of $F\Sigma_n$, where $F$ is a field of characteristic $p\geq 3$.  Further, assume $\lambda$ is a partition in $B$.  Then the Loewy length of $S^{\lambda}$ is bounded above by 3.  Moreover, $S^{\lambda}$ has Loewy length 3 if and only if 
$\lambda$ is both $p$-regular and $p$-restricted.  
\end{prop}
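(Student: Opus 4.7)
My plan is to split the proposition into two sub-claims --- the uniform upper bound saying Loewy length is at most $3$, and the characterization of the extremal case --- and handle them with different tools. The upper bound I would obtain from the Jantzen filtration, while the characterization rests on the $sgn$-twisted duality $S^\lambda\otimes sgn\cong(S^{\lambda'})^*$ of Proposition~\ref{prop:james}(2) combined with Proposition~\ref{prop:rad}.

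For the upper bound, Jantzen's sum formula applied in weight~$2$ expresses $\sum_{i\ge 1}[J^i(S^\lambda)]$ as a small signed combination indexed by the successive removal of two rim $p$-hooks from $\lambda$, so a direct bookkeeping yields $J^3(S^\lambda)=0$ and hence, since the Jantzen filtration refines the radical filtration, Loewy length at most $3$; alternatively, one may use Scopes' Morita equivalences to reduce to the Rouquier block of weight~$2$, whose Specht modules are known explicitly. For the sufficiency direction of the characterization, let $\lambda$ be both $p$-regular and $p$-restricted with $S^\lambda$ reducible. Proposition~\ref{prop:rad}(1) gives simple head $D^\lambda$, and applying the same proposition to the $p$-regular partition $\lambda'$ and dualizing via Proposition~\ref{prop:james}(2), together with $D^{\lambda'}\otimes sgn\cong D^{m(\lambda')}$, gives simple socle $D^{m(\lambda')}$. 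Since $[S^\lambda:D^\lambda]=1$ these are distinct simples, and a third composition factor strictly between them --- which I would produce by $i$-restriction to an adjacent weight $1$ block and inducing back --- forces Loewy length at least $3$. For necessity, if $\lambda$ is $p$-singular then $S^\lambda$ still has simple socle by duality but not necessarily a simple head, and a case analysis on the abacus display of $\lambda$ combined with restriction to weight $1$ blocks (whose Specht modules have Loewy length at most $2$) shows Loewy length at most $2$. The $p$-regular but not $p$-restricted case is dual via $sgn$.

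The most delicate point is the necessity direction of the characterization: ruling out Loewy length $3$ when $\lambda$ fails one of the regularity/restrictedness conditions. Without simultaneous simple head and socle at one's disposal, I would have to either invoke Scopes' chain of Morita equivalences to reduce to explicit small cases, or use Richards' combinatorial description of the weight $2$ decomposition numbers to pin down the composition series and its layering. The upper bound via Jantzen, by contrast, is routine bookkeeping once the sum formula is in hand.
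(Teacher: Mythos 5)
This proposition is not proved in the paper at all: it is quoted from Chuang and Tan, whose argument runs through the Schur algebra (radical series of Weyl modules in weight-2 blocks via translation functors and induction along Scopes equivalences, then the Schur functor). So there is no in-paper proof to match your sketch against; judged on its own terms, the sketch has genuine gaps at every quantitative step.

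The most serious one is in the upper bound. Even granting that careful bookkeeping with the Jantzen--Schaper sum formula yields $J^3(S^\lambda)=0$, the inference to Loewy length at most $3$ requires $\mathrm{rad}^3 S^\lambda\subseteq J^3(S^\lambda)$, i.e.\ that the first two Jantzen layers are semisimple. The only containment available off the shelf is $J^1(S^\lambda)=\mathrm{rad}\,S^\lambda$ for $\lambda$ $p$-regular; comparability of the two filtrations in higher layers is not a known general fact, and in weight $2$ the coincidence of the Jantzen and radical filtrations is a \emph{consequence} of the Chuang--Tan structure theorems, not something you may assume as input. Without that containment, $J^3=0$ says nothing about $\mathrm{rad}^3$.

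The characterization is also incomplete in both directions. For sufficiency, your identification of the simple head $D^\lambda$ and simple socle $D^{m(\lambda')}$ is fine, and the observation that simple head plus simple socle plus at least three composition factors forces Loewy length at least $3$ is sound (using indecomposability of $S^\lambda$ for odd $p$). But the existence of that third composition factor --- equivalently, that no $p$-regular, $p$-restricted weight-$2$ partition has $S^\lambda$ irreducible or uniserial of length $2$ --- is the actual content of this direction, and ``produce it by restriction to an adjacent weight $1$ block and inducing back'' is not an argument: a weight-$1$ restriction sees at most two composition factors, and no mechanism is given for why the induced module contributes a factor of $S^\lambda$ distinct from the head and the socle. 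For necessity you concede the point yourself: ``a case analysis on the abacus display shows Loewy length at most $2$'' is a restatement of the claim, not a proof, and the fallback options you name (Scopes reduction to explicit small cases, or Richards' description of the weight-$2$ decomposition matrix) are precisely the machinery with which Chuang and Tan, and before them Scopes, actually establish the result.
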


The rules for restricting Specht modules to defect 2 blocks of $F\Sigma_{3p-1}$ discussed above, the work of Carter and Payne in \cite{CP}, and the modular branching rules of Kleshchev give the following proposition.

\begin{prop} \label{prop:CP}
Assume that the $\langle 3^p \rangle$ display for the partition $\lambda$ has a removable bead on runner $2 \leq i \leq p$.  
	\begin{enumerate}
			\item If $\lambda$ and $\sigma_i(\lambda)$ are $p$-regular then $\text{Hom}_{\Sigma_{3p}} (S^{\lambda}, S^{\sigma_i(\lambda)}) \neq 0$.  Furthermore,  we have $[S^{\sigma_i(\lambda)}: D^{\lambda}] =1$.
			\item There is a short exact sequence
			$$0 \rightarrow S^{\sigma_i(\lambda)} \rightarrow \left( S^{\lambda} \downarrow_{B_i}\right)\uparrow^{\mathcal{B}} \rightarrow S^{\lambda} \rightarrow 0.$$
	\end{enumerate}      
\end{prop}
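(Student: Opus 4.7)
The plan is to prove (2) first using the standard branching rule for Specht modules together with block projection, and then derive (1) from the Carter--Payne theorem and Proposition~\ref{prop:decnumb}.

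For (2), I would start from the identification $S^{\lambda}\downarrow_{B_i}\cong S^{\widetilde{\lambda}}$ recorded in the paragraph preceding the proposition, where $\widetilde{\lambda}$ is obtained by pushing the removable bead on runner~$i$ to runner~$i-1$. The standard branching rule gives a Specht module filtration of $S^{\widetilde{\lambda}}\uparrow^{\Sigma_{3p}}$ whose factors are the modules $S^{\widetilde{\lambda}^A}$ as $A$ ranges over the addable nodes of $\widetilde{\lambda}$, and the filtration can be chosen so that more dominant partitions appear as higher quotients. Block projection onto $\mathcal{B}$ is exact, so it returns a Specht filtration whose factors correspond only to the addable nodes of $\widetilde{\lambda}$ having the same residue as the node removed to form $\widetilde{\lambda}$. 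The discussion preceding the proposition identifies exactly two such addable nodes, producing $\lambda$ (the dominant one) and $\sigma_i(\lambda)$, and the two-step filtration collapses to the required short exact sequence with $S^{\sigma_i(\lambda)}$ as submodule and $S^{\lambda}$ as quotient.

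For (1), assume $\lambda$ and $\sigma_i(\lambda)$ are $p$-regular. The two partitions agree outside of exactly two rows of the Young diagram and differ by shifting a single box from an upper row to a lower one; since the two addable nodes of $\widetilde{\lambda}$ involved have the same $p$-residue, the Carter--Payne theorem \cite{CP} applies and produces a nonzero homomorphism $\varphi\colon S^{\lambda}\to S^{\sigma_i(\lambda)}$. Because $D^{\lambda}=\mathrm{hd}(S^{\lambda})$ by Proposition~\ref{prop:rad}, the image of $\varphi$ has head $D^{\lambda}$, so $D^{\lambda}$ is a composition factor of $S^{\sigma_i(\lambda)}$; the weight-$3$ bound in Proposition~\ref{prop:decnumb} then forces $[S^{\sigma_i(\lambda)}:D^{\lambda}]=1$. (Alternatively, one can apply $\mathrm{Hom}_{\Sigma_{3p}}(S^{\lambda},-)$ to the short exact sequence of (2) and combine with Frobenius reciprocity on the middle term to extract the same nonzero hom.)

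The main obstacle I anticipate is the careful translation between abacus moves and Young-diagram moves rather than anything conceptual: one must verify that the two addable nodes of $\widetilde{\lambda}$ surviving the block projection really do produce $\lambda$ and $\sigma_i(\lambda)$ with $\lambda\rhd\sigma_i(\lambda)$, and that passing from $\lambda$ to $\sigma_i(\lambda)$ by a single-box row shift satisfies the residue hypothesis of Carter--Payne. Both verifications are routine from the $\langle 3^p\rangle$ display but need to be checked uniformly across the cases $2\le i\le p$.
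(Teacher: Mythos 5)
Your proposal is correct and matches the paper's (unwritten) argument exactly: the paper asserts this proposition follows from the Martin--Russell restriction rules, the Carter--Payne theorem, and the branching rules, which is precisely the decomposition you give — the branching filtration projected to $\mathcal{B}$ for part (2), and a single-box Carter--Payne pair of residue $i-1$ combined with Proposition~\ref{prop:decnumb} for part (1). The only soft spot is your parenthetical alternative for (1) via $\mathrm{Hom}_{\Sigma_{3p}}(S^{\lambda},-)$ and Frobenius reciprocity, which needs the non-splitness of the sequence in (2) to conclude; but this is not needed since the Carter--Payne route is complete.
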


\subsection{Abacus notation for the block $B_i$}
At times we will need to analyze the Ext$^1$-quiver of the block $B_i$.  Suppose that the $\langle3^p \rangle$ display for $\lambda$ has a removable bead on runner $2 \leq i \leq p$ at position $m$.  Pushing the bead from position $m$ to position $m-1$ gives an abacus display (for the partition $\tilde{\lambda}$ in $B_i$) which has 3 beads on the first $i-2$ runners, 4 beads on runner $i-1$, 2 beads on runner $i$, and 3 beads on the remaining $p-i$ runners.  So we may represent $\tilde{\lambda}$ in $\langle 3^{i-2}, 4,2, 3^{p-i} \rangle$ notation.  In fact, all partitions in $B_i$ can be represented in this notation.  However, the Ext$^1$-quiver of the block $B_i$ constructed in \cite{MR} has partitions represented in $\langle 2, 3^{p-i}, 2^{i-2},3 \rangle$ notation.  The authors provide a way to get from the 
$\langle 3^{i-2}, 4,2, 3^{p-i} \rangle$ notation to the $\langle 2, 3^{p-i}, 2^{i-2},3 \rangle$ notation.  Lemma 3.10 of \cite{MR} gives the following proposition.

\begin{prop} \label{prop:images}	
The partitions $\Theta_s(\lambda)$ ($2 \leq s \leq p$) listed below are expressed in $\langle 2, 3^{p-s},2^{s-2},3 \rangle$ notation.
\vskip .1in
\noindent (1) For $3 \leq i \leq p-1$,
	\begin{enumerate}
		\item [(a)] $\Theta_{2} (\langle i,1 \rangle) = \langle i-1 \rangle$;
		\item [(b)] $\Theta_{i}(\langle i,1 \rangle) = \langle p,p-i+2 \rangle$;
		\item [(c)] $\Theta_{i+1}(\langle i,1 \rangle ) = \langle p,p-i+1 \rangle$.
	\end{enumerate}
\noindent (2) For $2 \leq j < i \leq p-1$, $i - j \geq 2$,
	\begin{enumerate}
		\item [(a)] $\Theta_{j}(\langle i,j \rangle ) = \langle i-j+1 \rangle$;
		\item [(b)] $\Theta_{j+1}(\langle i,j \rangle ) = \langle i-j \rangle$;
		\item [(c)] $\Theta_{i}(\langle i,j \rangle)  = \langle p, p -(i-j)+1 \rangle$;
		\item [(d)] $\Theta_{i+1}(\langle i,j \rangle) = \langle p , p -(i-j) \rangle$.
	\end{enumerate}
\end{prop}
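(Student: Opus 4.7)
The plan is to prove this statement by direct case-by-case computation using the $\langle 3^p \rangle$ abacus display together with the restriction procedure described prior to Proposition \ref{prop:MT}. The general recipe is as follows. For each $\lambda$ and $s$ listed in (1) or (2), first write down the $\langle 3^p \rangle$ display for $\lambda$, locate the unique removable bead on runner $s$, and push it one step to the left. This produces an abacus display for $\tilde{\lambda} = \Theta_s(\lambda)$ with bead counts $\langle 3^{s-2}, 4, 2, 3^{p-s} \rangle$ per runner. To translate into the convention $\langle 2, 3^{p-s}, 2^{s-2}, 3 \rangle$ used in \cite{MR}, apply the shift operation $s-1$ times: each shift removes the bead at position $1$, so the configuration $\langle n_1, \ldots, n_p \rangle$ becomes $\langle n_2, \ldots, n_p, n_1 - 1 \rangle$. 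After $s-1$ shifts, new runner $r$ holds the beads of old runner $r + (s-1) \bmod p$ (taken in $\{1, \ldots, p\}$), and the weight of every bead is preserved since shifting merely relabels runners without disturbing the relative position of beads and holes within any single runner.

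As a worked example, consider case (1)(a): $\lambda = \langle i, 1 \rangle$ with $3 \leq i \leq p - 1$ and $s = 2$. The $\langle 3^p \rangle$ display has beads on runner $1$ at positions $1, 1+p, 1+3p$ (hole at $1 + 2p$) and on runner $i$ at positions $i, i+p, i+4p$ (holes at $i + 2p$ and $i + 3p$); all other runners carry their standard three beads. The bead at $2 + 2p$ on runner $2$ is the unique removable bead on that runner, since $1 + 2p$ is empty. Pushing it left fills the hole on runner $1$, so runner $1$ now carries four weight-$0$ beads, runner $2$ carries two weight-$0$ beads, and the weight-$2$ bead on runner $i$ is untouched. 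Thus $\tilde{\lambda}$ has configuration $\langle 4, 2, 3^{p-2} \rangle$ with a single weight-$2$ bead on old runner $i$; one shift sends this bead to new runner $i - 1$, yielding $\Theta_2(\langle i, 1 \rangle) = \langle i - 1 \rangle$.

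The other six cases follow by the same recipe and split into two patterns. In (2)(a) and (2)(b) the push either relocates the weight-$1$ bead on runner $j$ to a weight-$0$ slot on runner $j-1$, or fills the hole above that bead, so $\tilde{\lambda}$ retains only the weight-$2$ bead on old runner $i$; the $s - 1$ shifts then move this bead to new runner $i - j + 1$ or $i - j$, giving $\langle i - j + 1 \rangle$ or $\langle i - j \rangle$. In (1)(b), (1)(c), (2)(c), and (2)(d) the push either transplants the weight-$2$ bead on runner $i$ to runner $s - 1$, producing a new weight-$1$ bead there, or fills the row-$3$ hole on runner $i = s - 1$, reducing the weight-$2$ bead to weight $1$; in every sub-case $\tilde{\lambda}$ contains two weight-$1$ beads, one on old runner $s - 1$ and one on the untouched runner $1$ or runner $j$. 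After $s - 1$ shifts, the first ends up on new runner $p$ and the second on $p - i + 2$, $p - i + 1$, $p - (i - j) + 1$, or $p - (i - j)$ respectively. The only real difficulty is bookkeeping: one must verify the uniqueness of the removable bead on runner $s$, track how the push alters the weights on runners $s-1$ and $s$, and apply the modular runner-relabeling formula to read off the answer; once the shift rule is in hand, each case reduces to a short arithmetic check.
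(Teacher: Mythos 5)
Your computation is correct: I checked all seven cases independently and your push-then-shift recipe reproduces exactly the stated displays, including the weight bookkeeping on runners $s-1$ and $s$ and the relabeling $k \mapsto k-(s-1) \bmod p$. The paper, however, does not prove this proposition at all — it simply quotes Lemma 3.10 of \cite{MR}, where Martin and Russell record the passage from the $\langle 3^{s-2},4,2,3^{p-s}\rangle$ display to the $\langle 2,3^{p-s},2^{s-2},3\rangle$ display. So your argument is a self-contained verification of a cited result rather than a different proof of a proved one; what it buys is independence from \cite{MR} at the cost of the case analysis. Two small points worth making explicit if you write this up: (i) each of the $s-1$ shifts is legitimate only because position $1$ is occupied at every stage (it is, since runners $1,\ldots,s-1$ of the $\langle 3^{s-2},4,2,3^{p-s}\rangle$ display all carry a bead in row one), which is what lets you delete the bead at position $1$ and pass from an $r$-bead to an $(r-1)$-bead beta-set for the same partition; and (ii) the target notation $\langle a\rangle$, $\langle a,b\rangle$ for partitions in $B_s$ is the weight-$2$ analogue of Definition \ref{def:3p} (one bead of weight $2$, respectively two beads of weight $1$), which your answer uses implicitly and which is consistent with the usage in Lemma \ref{lem:simpledefect2}.
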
 
We only represent $\Theta_s(\lambda)$ in  $\langle 2, 3^{p-s},2^{s-2},3 \rangle$ notation when we need to analyze the Ext$^1$-quiver of $B_s$.  In all other instances, $\Theta_s(\lambda)$ is assumed to be in $\langle 3^{s-2}, 4,2, 3^{p-s} \rangle$ notation.

\section{Partitions in the principal block of $F\Sigma_{3p}$} \label{section:partitions}

As discussed in the previous section, we may associate each partition in $\mathcal{B}$ with its $\langle 3^p \rangle$ display.  At times we refer to a 
$\langle 3^p \rangle$ display as the partition it represents and vice versa.  After analyzing the $\langle 3^p \rangle$ displays, we have Proposition \ref{prop:3p}. 
\begin{prop} \label{prop:3p} The following statements are true.  
	\begin{enumerate}
		\item $ \langle i,j \rangle $ is both $p$-regular and $p$-restricted if and only if $i<j \leq p-1$.
		\item $\langle i,i,j \rangle$ is both  $p$-regular and $p$-restricted if and only if $2 \leq j < i$.
		\item Suppose $i> j > k$.  Then $ \langle i,j,k \rangle $ is both $p$-regular and $p$-restricted if and only if  $(i,j,k) \notin \{(3,2,1), (p,p-1,p-2) \}$.
		\item A partition in $\mathcal{B}$ is both $p$-regular and $p$-restricted if and only if it has $\langle 3^p \rangle$ display of the form (1), (2), or (3).  
		\item $\lambda = \lambda'$ if and only if $\lambda$ has $\langle 3^p \rangle$ representation of the form $\langle \frac{p+1}{2}, \frac{p+1}{2} \rangle$ or $ \langle p-m, \frac{p+1}{2}, m  \rangle $ for some 
		$1\leq m \leq \frac{p-1}{2}$. 
		\item $\lambda$ is neither $p$-regular nor $p$-restricted if and only if $\lambda$ has $\langle 3^p \rangle$ display of the form $\langle i,i \rangle$ for some $1 \leq i \leq p$.  In this case, $\lambda = (p+i, 1^{2p-i})$.    
		\item $\lambda$ is a hook if and only if $\lambda$ has $\langle 3^p \rangle$ display of the form $\langle i \rangle$, $\langle i,i \rangle$, or $\langle i,i,i \rangle$ for some $1 \leq i \leq p$.
		\item $\lambda$ is $p$-regular, not $p$-restricted, and not a hook if and only if $\lambda$ has $\langle 3^{p} \rangle$ display of the form:
			\begin{enumerate}
				\item  $\langle i,p \rangle$ for $1 \leq i \leq p-1$,
				\item $\langle i,j \rangle$ for $1 \leq j < i \leq p$, or
				\item $\langle p,p-1,p-2 \rangle$.
			\end{enumerate}
	\end{enumerate}
\end{prop}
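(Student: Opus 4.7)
The plan is to proceed by case-by-case analysis on the $\langle 3^p \rangle$ display types enumerated in Definition \ref{def:3p}. The key computational step, performed once per display type, is to list the occupied positions of the abacus, sort them in decreasing order to extract the beta-numbers $\beta_k$, and then apply $\lambda_k = \beta_k - 3p + k - 1$ to recover the underlying partition. With these explicit formulas in hand, all eight claims reduce to simple inspection: $p$-regularity and $p$-restrictedness become searches for a run of $p$ consecutive equal positive parts in $\lambda$ or in $\lambda'$, respectively, and the hook and self-conjugacy claims become direct equality checks.

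The first step is to dispatch the three hook families. A short beta-number calculation gives $\langle i \rangle \mapsto (2p+i, 1^{p-i})$, $\langle i, i \rangle \mapsto (p+i, 1^{2p-i})$, and $\langle i, i, i \rangle \mapsto (i, 1^{3p-i})$. These formulas immediately yield (7) and (6): the partition associated to $\langle i, i \rangle$ has $2p - i \geq p$ consecutive ones, while its conjugate $(2p-i+1, 1^{p+i-1})$ has $p + i - 1 \geq p$ consecutive ones, so both regularity and restrictedness fail. The same analysis applied to the other two hooks shows $\langle i \rangle$ is $p$-regular but not $p$-restricted and $\langle i, i, i \rangle$ is $p$-restricted but not $p$-regular.

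For the non-hook displays the same method applies with more bookkeeping. As an illustration, the display $\langle i, j \rangle$ with $i < j$ yields $\lambda = (p+i, j+1, 2^{p-j}, 1^{j-i-1})$ with conjugate $(p-i+1, p-j+2, 2^{j-1}, 1^{p+i-j-1})$, and this pair is both $p$-regular and $p$-restricted exactly when $j \leq p-1$, since $j = p$ produces a run of $p$ consecutive twos in the conjugate while $i > j$ produces at least $p$ consecutive ones in the conjugate. Analogous explicit formulas for $\langle i, j, k \rangle$ with $i, j, k$ distinct and for $\langle i, i, j \rangle$ with $i \neq j$ identify the excluded boundary triples $(3, 2, 1)$ and $(p, p-1, p-2)$ in (3) and the exclusion $j = 1$ in (2) as exactly those configurations in which $\lambda$ or $\lambda'$ acquires a run of $p$ consecutive ones. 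Claims (4) and (8) then follow by elimination once (1)-(3) together with (6) and (7) are in place.

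For (5) the cleanest route is to first establish a conjugation rule on the $\langle 3^p \rangle$ display: restricting the abacus to the $6p$-position window and applying the complement formula for $\lambda'$ shows that conjugation implements the involution that reflects across the horizontal midline (swapping row $R$ with row $7 - R$), reverses the runner order ($C \leftrightarrow p+1-C$), and swaps beads with empty positions. A display in $\mathcal{B}$ is self-conjugate exactly when it is fixed by this involution, which leaves only the self-symmetric hook $\langle (p+1)/2, (p+1)/2 \rangle$ and the symmetric triple displays from the statement. I expect the main obstacle to be verifying this conjugation rule carefully and checking that no other non-hook display, such as $\langle i, j \rangle$ with $i \neq j$ or $\langle i, i, j \rangle$ with $i \neq j$, can be fixed by the involution, since the weight distribution on every pair of runners swapped by the reflection must match exactly, and the bookkeeping across the many display types requires care.
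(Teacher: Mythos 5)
Your method is exactly the one the paper itself (implicitly) relies on: the paper offers no written proof of this proposition beyond ``analysis of the $\langle 3^p\rangle$ displays,'' and your plan of converting each display type to an explicit partition via $\lambda_k=\beta_k-3p+k-1$ and then scanning for runs of $p$ equal parts is the right way to carry that analysis out. The explicit formulas you give check out: $\langle i\rangle\mapsto(2p+i,1^{p-i})$, $\langle i,i\rangle\mapsto(p+i,1^{2p-i})$, $\langle i,i,i\rangle\mapsto(i,1^{3p-i})$, and $\langle i,j\rangle\mapsto(p+i,\,j+1,\,2^{p-j},\,1^{j-i-1})$ for $i<j$ with the stated conjugate, and the general patterns $\langle i,i,j\rangle\mapsto(i,\,j+1,\,2^{p-j},\,1^{p+j-i-1})$ ($j<i$) and $\langle i,j,k\rangle\mapsto(i,\,j+1,\,k+2,\,3^{p-i},\,2^{i-j-1},\,1^{j-k-1})$ ($i>j>k$) make parts (1)--(4) and (6)--(8) routine. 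One small imprecision: the failures you attribute to ``a run of $p$ consecutive ones'' are in some cases runs of other values --- $(3,2,1)$ gives $\lambda=(3^p)$ (a run of threes), and $\langle i,i,1\rangle$ gives $\lambda=(i,2^p,1^{p-i})$ (a run of twos) --- so phrase the criterion as a run of $p$ equal positive parts.

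For part (5) there is a substantive point you must not gloss over. Your $180^{\circ}$-rotation involution on the $6p$-position window is the correct description of conjugation, and the triple formula above gives $\langle i,j,k\rangle'=\langle p+1-k,\,p+1-j,\,p+1-i\rangle$; hence a triple display is self-conjugate exactly when $j=\tfrac{p+1}{2}$ and $i+k=p+1$, i.e.\ the fixed displays are $\langle p+1-m,\tfrac{p+1}{2},m\rangle$, $1\le m\le\tfrac{p-1}{2}$, not the $\langle p-m,\tfrac{p+1}{2},m\rangle$ printed in the statement (which appears to be an off-by-one). Check against $p=5$: $\langle 5,3,1\rangle=(5,4,3,2,1)$ is self-conjugate, consistent with the paper's later remark that $\langle 5,3,1\rangle'=\langle p,p-2,p-4\rangle$, whereas $\langle 4,3,1\rangle=(4,4,3,3,1)$ has conjugate $(5,4,4,2)$ and $\langle 3,3,2\rangle=(3,3,2^3,1^3)$ has conjugate $(8,5,2)$. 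So by deferring to ``the symmetric triple displays from the statement'' you would be asserting something your own (correct) computation contradicts; carry the involution argument through and record the corrected list.
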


When working over a field of characteristic $p$ one often wishes to know which of the ordinary irreducible representations of the  symmetric group remain irreducible modulo $p$.  By Proposition \ref{prop:irredSpecht}, we know that the $p$-JM partitions label such representations.  Presently we determine which partitions in $\mathcal{B}$ are $p$-JM partitions.  We apply the test of Proposition \ref{prop:FJM}.

\begin{rem} \label{rem:irred}
Any partition $\lambda$ in $\mathcal{B}$ satisfies conditions (2) and (3) of Proposition \ref{prop:FJM} since $ \max( |\mu^{(1)}|, |\mu^{(p)}|)\leq 3$ and $p\geq 5$.  Suppose $\delta$ is a partition of $n < p$.  Then $\delta$ is both $p$-regular and $p$-restricted since its Young diagram has less than $p$ boxes.  Furthermore, for any node $(i,j)$ of $\delta$ we have 
$\nu_p(h_{\delta}(i,j)) = 0$.  Hence $\delta$ is also a $p$-JM partition.                
\end{rem}

We now show the only irreducible Specht modules in $\mathcal{B}$ are $S^{(3p)}$ and $S^{(1^{3p})}$.

\begin{thm} \label{thm:jm}
Let $\lambda$ be a partition in $\mathcal{B}$.  Then $\lambda$ is a $p$-JM partition if and only if 
$\lambda \in \{(3p), (1^{3p}) \}$.  
\end{thm}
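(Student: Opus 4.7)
The plan is to apply Fayers' criterion, Proposition \ref{prop:FJM}, directly to the $\langle 3^p\rangle$ display of each partition in $\mathcal{B}$, exploiting the fact that every partition in $\mathcal{B}$ has empty $p$-core. I would first observe that the $p$-core display has beads at positions $1,2,\ldots,3p$, so the first empty position on runner $i$ is $q_i = 3p+i$. These $q_i$'s are already in ascending order, so the renumbering permutation $\sigma$ is the identity, and the pyramid entries satisfy $B^k_\ell = \lfloor(\ell-k)/p\rfloor = 0$ for all $1\le k<\ell\le p$. Consequently the reordered $p$-quotient coincides with the left-to-right $p$-quotient $[\lambda^{(1)},\ldots,\lambda^{(p)}]$, and condition (4) of Proposition \ref{prop:FJM} collapses to the single inequality $(\mu^{(k)})_1 + ((\mu^{(\ell)})')_1 \le 1$ for every $k<\ell$.

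Next I would invoke Remark \ref{rem:irred}: because $|\mu^{(i)}|\le 3<p$ for each component of the quotient, each $\mu^{(i)}$ is automatically $p$-restricted, $p$-regular, and $p$-JM, so conditions (2) and (3) of Proposition \ref{prop:FJM} hold vacuously. Thus for a partition in $\mathcal{B}$ the criterion reduces to: (1) the weight $3$ of $\lambda$ must lie entirely on runners $1$ and $p$ of the $\langle 3^p\rangle$ display, and (4$'$) the pair $(\mu^{(1)},\mu^{(p)})$ must satisfy $(\mu^{(1)})_1+((\mu^{(p)})')_1\le 1$.

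The remaining step is a short enumeration, organized using the three types $\langle i\rangle$, $\langle i,j\rangle$, $\langle i,j,k\rangle$ from Proposition \ref{prop:3p}. Any display whose bead indices include a runner strictly between $1$ and $p$ immediately fails condition (1). For the displays supported on $\{1,p\}$ I would run through every way of splitting the weight $3$ between $\mu^{(1)}$ and $\mu^{(p)}$; the only pairs $(\mu^{(1)},\mu^{(p)})$ passing (4$'$) turn out to be $(\emptyset,(3))$, which corresponds to $\langle p\rangle=(3p)$, and $((1^3),\emptyset)$, which corresponds to $\langle 1,1,1\rangle=(1^{3p})$, since in every other distribution either $(\mu^{(1)})_1\ge 2$ or $((\mu^{(p)})')_1\ge 2$. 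The main obstacle is really just the abacus bookkeeping: confirming the identifications $\langle p\rangle\leftrightarrow(3p)$ and $\langle 1,1,1\rangle\leftrightarrow(1^{3p})$, and verifying that the empty $p$-core makes the pyramid and renumbering trivial. Once those are in place, the whole theorem is a one-paragraph case check.
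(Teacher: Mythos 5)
Your proposal is correct and follows essentially the same route as the paper: trivial pyramid and renumbering for the empty $p$-core, Remark \ref{rem:irred} to dispose of conditions (2) and (3), condition (1) to force the quotient onto runners $1$ and $p$, and then the inequality $(\mu^{(1)})_1+((\mu^{(p)})')_1\le 1$ to isolate $(\emptyset,(3))$ and $((1^3),\emptyset)$. One tiny imprecision: in the mixed distributions such as $\mu^{(1)}=(1)$, $\mu^{(p)}=(2)$ the failure is that the two terms each equal $1$ and so sum to $2$, not that either single term is at least $2$ — but the case check still goes through exactly as in the paper.
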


\begin{proof}  
Obviously, if $\lambda \in \{(3p), (1^{3p})\}$ then it is a $p$-JM partition.  Suppose $\lambda$ is a $p$-JM-partition in $\mathcal{B}$.  Fix the $\langle 3^p \rangle$ abacus display for $\lambda$.  Pushing all beads up as far as possible on the $\langle 3^p \rangle$ display for $\lambda$ gives display $\Gamma$ (for the empty partition) and the left-to-right $p$-quotient $[\lambda^{(1)}, \lambda^{(2)}, \ldots \lambda^{(p)}]$.  It is clear that $q_i = 3p+i$ for all $1 \leq i \leq p$.  Thus the left-to-right and reordered $p$-quotients of $\lambda$ are the same and $B^k_{\ell} = 0$ for any $1 \leq k < \ell \leq p$.  By Remark \ref{rem:irred}, we need to only consider conditions (1) and (4) Proposition \ref{prop:FJM}.  Since $\lambda$ is a $p$-JM partition, $\lambda^{(j)} = \emptyset$ for $2 \leq j \leq p-1$ by Proposition \ref{prop:FJM}(1).  So the $p$-quotient becomes $[\lambda^{(1)}, \emptyset, \ldots, \emptyset, \lambda^{(p)}]$, and we have $|\lambda^{(1)}| + |\lambda^{(p)}| = 3$.  Now, since $B^k_{\ell} = 0$ for any $k < \ell$, condition (4) of Proposition \ref{prop:FJM} becomes 
\begin{equation} \label{eq:lam}
(\lambda^{(k)})_1 + ((\lambda^{(\ell)})')_1 \leq  1 \text{ for all }k < \ell.
\end{equation}  
We claim this implies either 
$\lambda^{(1)} = \emptyset$ and $\lambda^{(p)} = (3)$ or $\lambda^{(1)} = (1^3)$ and $\lambda^{(p)} = \emptyset$ [in which case $\lambda =(3p)$ or $\lambda = (1^{3p})$].  We may choose $ k =1$ and $\ell = p$ in Equation \ref{eq:lam}.  Then $(\lambda^{(1)})_1 + ((\lambda^{p})')_1 \leq  1$.  Since 
$|\lambda^{(1)}| + |\lambda^{(p)}| = 3$, $(\lambda^{(1)})_1$ and $((\lambda^{(p)})')_1$ cannot both be zero.  So either $(\lambda^{(1)})_1=0$ and $((\lambda^{(p)})')_1 = 1$ or $(\lambda^{(1)})_1=1$ and $((\lambda^{(p)})')_1 = 0$.  The first case implies $\lambda^{(1)} = \emptyset$ and $\lambda^{(p)} = (3)$ and the second case implies $\lambda^{(1)} = (1^3)$ and $\lambda^{(p)} = \emptyset$.  We conclude that if $\lambda$ in $\mathcal{B}$ is a $p$-JM partition then $\lambda \in \{(3p), (1^{3p}) \}$.                                     
\end{proof}

This section is concluded with a lemma that allows one to relate certain Specht modules in $\mathcal{B}$ to Specht modules in defect 2 blocks of $F\Sigma_{3p-1}$.  Application of the lemma is seen in the Section 5.

\begin{lem} \label{lem:remove}
Let $\lambda$ be a partition in the principal block of $F\Sigma_{3p}$.  If $\lambda$ is both $p$-regular and $p$-restricted then $\lambda$ has a removable node $A$ such that the following hold:   
	\begin{enumerate}
		\item [(i)] $\lambda_A$ has $p$-weight 2, and 
		\item [(ii)] $\lambda_A$ is both $p$-regular and $p$-restricted.
	\end{enumerate}
Moreover, if $\lambda \neq \langle1,2 \rangle $ then it has at least one normal node that satisfies both (i) and (ii).
\end{lem}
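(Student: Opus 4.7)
The plan is to prove this by case analysis on the three forms of the $\langle 3^p\rangle$-display of $\lambda$ provided by Proposition~\ref{prop:3p}: (1) $\langle i,j\rangle$ with $i<j\leq p-1$, (2) $\langle i,i,j\rangle$ with $2\leq j<i$, and (3) $\langle i,j,k\rangle$ with $i>j>k$ and $(i,j,k)\notin\{(3,2,1),(p,p-1,p-2)\}$. The first step is to translate (i) into a condition on runners: a removable node corresponds to pushing a bead one position to the left on the display, and a direct beta-number computation yields a $p$-core $(i-1,1^{p-i})$ (size $p-1$, hence weight $2$) when the bead moves from runner $i\geq 2$ to runner $i-1$, and a $p$-core $(p,1^{p-1})$ (size $2p-1$, hence weight $1$) when the bead moves from runner $1$ to runner $p$. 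Thus condition (i) is equivalent to choosing a removable bead on a runner $\geq 2$.

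Next, for each form I will exhibit a concrete candidate bead on a runner $\geq 2$ and check normality. In Form (1) with $j\geq i+2$, the row-$4$ bead on runner $j$ is normal because rows $\geq 5$ on runners $j$ and $j-1$ are empty (the only row-$5$ bead lives on runner $i\leq j-2$). In Form (1) with $j=i+1$ and $i\geq 2$, the row-$5$ bead on runner $i$ is normal by the same empty-lower-rows argument. The remaining case $\lambda=\langle 1,2\rangle$ is exceptional: its only normal bead sits on runner $1$ and thus fails (i), but the non-normal removable bead at row $4$ on runner $2$ lies on a runner $\geq 2$ and yields $\lambda_A=(p+1,2^{p-1})$, a partition easily checked to be both $p$-regular and $p$-restricted, establishing the first statement for this exceptional partition. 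For Form (2), the row-$4$ bead on runner $i$ works when $j\neq i-1$ and the row-$4$ bead on runner $i-1$ works when $j=i-1$; both lie on runners $\geq 2$ since $i\geq 3$. For Form (3), the candidate is the row-$4$ bead on runner $i$ if $j\neq i-1$, on runner $j$ if $j=i-1$ and $k\neq j-1$, and on runner $k=i-2$ if $j=i-1$ and $k=j-1$; in the last sub-case one needs $i\geq 4$, which is guaranteed since the excluded triples $(3,2,1)$ and $(p,p-1,p-2)$ correspond to partitions that fail to be $p$-regular or $p$-restricted respectively.

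The main obstacle is verifying that $\lambda_A$ is both $p$-regular and $p$-restricted. $p$-regularity is immediate: every removable node of a partition in $\mathcal{B}$ has a distinct $p$-residue, so the chosen normal node is in fact a good node, and removing a good node from a $p$-regular partition preserves $p$-regularity. For $p$-restrictedness I would read off $\lambda_A$ explicitly from its modified $\langle 3^{s-2},4,2,3^{p-s}\rangle$-display via beta numbers and compute the conjugate $\lambda_A'$ directly. In each sub-case the $p$-quotient of $\lambda_A$ is supported on at most two runners with pieces of total size $2$, so $\lambda_A$ has a short and predictable shape and checking that $\lambda_A'$ has no $p$ consecutive equal nonzero parts is a routine verification dictated by the adjacency of the two active runners.
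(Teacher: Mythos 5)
Your proposal is correct and follows essentially the same route as the paper: a case analysis over the three $\langle 3^p\rangle$ display forms of Proposition \ref{prop:3p}, exhibiting in each case an explicit removable bead on a runner $\geq 2$ (equivalently, a restriction to a defect $2$ block $B_i$), checking normality via the empty-lower-rows criterion, and verifying $p$-regularity and $p$-restrictedness of $\lambda_A$ by inspection of the resulting display, with $\langle 1,2\rangle$ isolated as the exceptional case where only a non-normal bead is available. The specific beads you push differ from the paper's in a few sub-cases (e.g.\ the row-$4$ bead on runner $j$ rather than the row-$5$ bead on runner $i$ in form $\langle i,j\rangle$ with $j\geq i+2$), but these choices check out and the argument is otherwise the same.
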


\begin{proof}
Let $\lambda$ be a partition in the principal block of $F\Sigma_{3p}$ which is both $p$-regular and $p$-restricted.  Then the $\langle 3^p \rangle$ display for $\lambda$ must be of the form (1), (2), or (3) of Proposition \ref{prop:3p}.

\vskip .1in
\noindent \textbf{Case 1.}  $\lambda $ has $\langle 3^p \rangle$ display of form $\langle i,j \rangle$, $1\leq i < j \leq p-1$.  Assume $i \neq 1$.  Push the bead at position $4p+i$ to position $4p+i-1$ (see Figure \ref{fig:case1}(A)).  This gives an abacus display for a weight 2 partition $\lambda_A$, which is clearly $p$-regular.  Notice that the longest string of spaces between beads on the abacus display for $\lambda_A$ has length at most $p-1$  (which occurs when $j = i+1$).  Hence $\lambda_A$ is $p$-restricted.  Finally, $A$ is normal since it is the top removable node of $\lambda$ (the last occupied position on the display is $4p+i$).  Now consider the display of the form $\langle1,j \rangle$, $2 \leq j \leq p-1$.  Push the bead at position $3p+j$ (see Figure \ref{fig:case1}(B)) to position $3p+j-1$.  Again, we obtain a partition 
$\lambda_A$ that satisfies (i) and (ii).  If $j \geq 3$ then there is no addable bead on runner $j-1 \geq 2$ after position $3p+j-1$.  Hence $A$ is normal for $j \geq 3$.  Finally, when $j=2$ we see exactly two removable nodes (corresponding to beads at positions $2p+3$ and $3p+2$) that satisfy conclusions (i) and (ii); however, neither of these nodes is normal.

\begin{figure}[h!] 
\centering
\subfloat[$i\neq 1$]{
$
\begin{array}{ccccccccc}
1        &    2     &     \cdots   &      i                        &      \cdots       &       j          &      \cdots     &    p-1     &   p \\
\hline
\bullet &\bullet &    \cdots   & \bullet                     &     \cdots        &    \bullet       &    \cdots     &   \bullet  &  \bullet \\ 
\bullet &\bullet &    \cdots   & \bullet                     &     \cdots        &    \bullet       &    \cdots     &   \bullet  &  \bullet \\
\bullet &\bullet &    \cdots   & \circ                        &     \cdots        &    \circ          &    \cdots     &   \bullet  &  \bullet \\
\circ   &   \circ  &    \cdots   & \circ                        &     \cdots        &    \bullet          &    \cdots     &  \circ  &  \circ \\
\circ   &  \circ  &    \cdots   & \boxed{\bullet}        &     \cdots        &    \circ          &    \cdots     &   \circ  &  \circ\\ 
\circ   &  \circ  &    \cdots   & \circ                       &     \cdots        &    \circ          &    \cdots     &   \circ  &  \circ\\
\vdots &\vdots&                & \vdots                     &                        &\vdots          &                &    \vdots   & \vdots
\end{array}
$
}
\quad \ \ 
\subfloat[$i =1$]{
$\begin{array}{ccccccc}
  1               &     2      &     \cdots   & 	   j                     &  \cdots   &       p-1    & p \\
\hline
 \bullet        & \bullet   &      \cdots & \bullet                &   \cdots  &    \bullet  &  \bullet \\
 \bullet        & \bullet   &      \cdots & \bullet                &   \cdots  &    \bullet  &  \bullet \\
 \circ           & \bullet   &      \cdots &   \circ                 &   \cdots  &    \bullet  &  \bullet \\
 \circ           &      \circ   &      \cdots & \boxed{\bullet} &   \cdots  &    \circ    &  \circ \\
 \bullet        &      \circ   &      \cdots &   \circ               &   \cdots  &    \circ    &  \circ \\
  \circ          &      \circ   &      \cdots &   \circ               &   \cdots  &    \circ    &  \circ \\
  \vdots       &   \vdots    &                  &\vdots            &             &  \vdots   & \vdots 
\end{array}
$
}
\caption{$\langle i,j \rangle, 1 \leq i < j \leq p-1$}
\label{fig:case1}
\end{figure}



\vskip .1in
\noindent \textbf{Case 2.}  $\lambda $ has $\langle 3^p \rangle$ display of form $\langle i,i,j \rangle$, $2\leq j < i \leq p$.  First assume $i \neq j+1$.  Push the bead at position $3p+i$ to position $3p+i-1$ (See Figure \ref{fig:case2}(A)).  The resulting abacus display represents a partition that satisfies both (i) and (ii), and since position $3p+i$ is the last occupied position the corresponding removable node is normal.  If $i = j+1$ then push the bead at position $3p+j$ to position $3p+j-1$ (see Figure \ref{fig:case2}(B)).  The resulting abacus display represents a partition that satisfies both (i) and (ii).  Now since runner $j-1$ has no addable bead at a position after position $3p+j-1$ the corresponding removable node is normal.   

\begin{figure}[ht!]
\centering
\subfloat[$i\neq j+1$]{
$
\begin{array}{ccccccccc}
1        &    2     &     \cdots   &      j                       &      j+1       &       \cdots      &      i                 &    \cdots     &   p \\
\hline
\bullet &\bullet &    \cdots   & \bullet                     &    \bullet        &    \cdots     &  \bullet           &   \cdots      &  \bullet \\ 
\bullet &\bullet &    \cdots   & \bullet                     &    \bullet        &    \cdots    &       \circ          &   \cdots      &  \bullet \\
\bullet &\bullet &    \cdots   & \circ                        &    \bullet         &    \cdots   &   \bullet           &   \cdots      &  \bullet \\
\circ   &   \circ  &    \cdots   & \bullet                      &       \circ        &    \cdots    & \boxed{\bullet}&  \cdots       &  \circ \\
\circ   &  \circ  &    \cdots   & \circ                         &       \circ        &    \cdots       &    \circ           &   \cdots  &  \circ\\ 
\vdots &\vdots&                & \vdots                      &  \vdots            &                   &     \vdots        &              & \vdots
\end{array}
$
}
\quad \ \ 
\subfloat[$i =j+1$]{
$\begin{array}{ccccccc}
  1               &     2      &     \cdots   & 	       j                &    j+1   &      \cdots      & p \\
\hline   
  \bullet       & \bullet   &  \cdots     &   \bullet             & \bullet &    \cdots         & \bullet \\
  \bullet       &  \bullet  &  \cdots     &  \bullet              & \circ    &    \cdots         & \bullet \\
  \bullet       &  \bullet  &  \cdots     &  \circ                 & \bullet &    \cdots         & \bullet \\
 \circ           &  \circ     &  \cdots     & \boxed{\bullet}  & \bullet &     \cdots         & \circ \\
 \circ           &  \circ     &  \cdots     &  \circ                 & \circ    &     \cdots         & \circ \\
 \vdots        & \vdots   &                & \vdots                &\vdots &                        & \vdots \\
\end{array}
$
}
\caption{$\langle i,i,j \rangle$, $2 \leq j < i \leq p$}
\label{fig:case2}
\end{figure}


\vskip .1in
\noindent \textbf{Case 3.}  $\lambda $ has $\langle 3^p \rangle$ display of form $\langle i,j,k \rangle$, $i>j>k$ and $(i,j,k) \notin \{(3,2,1),(p,p-1,p-2) \}$.  We first assume $k=1$.  If $j = 2$ then $i \geq 4$.  Push the bead at position $3p+i$ to position $3p+i-1$ (see Figure \ref{fig:case3a}(A)).  This gives a display for a weight 2 partition that is both $p$-regular and $p$-restricted.  The node that corresponds to the bead at position $3p+i$ is normal since runner $i-1$ has no addable bead at a position after position $3p+i-1$.  If $j \geq 3$ we push the bead at position $3p+j$ to position $3p+j-1$.  Again we obtain a partition that satisfies (i) and (ii) of the lemma.  The node that corresponds to the bead at position $3p+j$ is normal since runner $j-1$ has no addable bead at a position after position $3p+j-1$.  

\begin{figure}[h!] 
\centering
\subfloat[$j=2$]{
$
\begin{array}{cccccccc}
1            &     2    &   3        &   4            &   \cdots &    i                    &   \cdots   &  p\\
\hline 
\bullet   & \bullet & \bullet   &  \bullet    & \cdots    & \bullet              &   \cdots    & \bullet \\
\bullet   & \bullet & \bullet   &  \bullet    & \cdots    & \bullet              &   \cdots    & \bullet \\
\circ     &  \circ   & \bullet   &  \bullet     & \cdots    & \circ                 &   \cdots    & \bullet \\
\bullet  & \bullet &  \circ      & \circ         & \cdots   & \boxed{\bullet}  & \cdots       & \circ \\
\circ     &  \circ  & \circ       & \circ         & \cdots   & \circ                  & \cdots      &\circ \\
\vdots  &\vdots &\vdots     & \vdots      &              & \vdots               &                & \vdots
\end{array}
$
}
\quad \ \
\subfloat[$j \geq 3$]{
$
\begin{array}{ccccccccc}
1        &    2      &   3       &    \cdots     &           j             &     \cdots     &      i       &  \cdots    &      p \\
\hline
\bullet & \bullet & \bullet &    \cdots     &    \bullet           &    \cdots      & \bullet    &   \cdots  &  \bullet\\
\bullet & \bullet & \bullet &    \cdots     &    \bullet           &    \cdots      & \bullet    &   \cdots  &  \bullet \\
\circ    & \bullet & \bullet &    \cdots     &    \circ              &    \cdots      & \circ       &   \cdots  &  \bullet \\
\bullet & \circ    &  \circ   &    \cdots     & \boxed{\bullet} &  \cdots       & \bullet     & \cdots    & \circ  \\
\circ    &  \circ   &   \circ  &    \cdots    &     \circ              &  \cdots      &   \circ      & \cdots     & \circ \\
\vdots & \vdots & \vdots &                 &  \vdots               &                 &  \vdots    &                & \vdots     
\end{array}
$
}
\caption{$\langle i,j,1 \rangle$}
\label{fig:case3a}
\end{figure}

\noindent Next assume $k \geq 2$.  Push the bead at position $3p+k$ to position $3p+k-1$ to obtain a partition satisfying (i) and (ii) (see Figure \ref{fig:case3b}).  The node that corresponds to the bead at position $3p+k$ is normal since runner $k-1$ has no addable bead at a position after position $3p+k-1$.

\begin{figure}[h!] 
\centering
$
\begin{array}{cccccccccc} 
     1         &           2     &   \cdots   &               k               &    \cdots     &       j        &    \cdots     &      i       &     \cdots   &   p \\
\hline
\bullet      &      \bullet   &  \cdots    &            \bullet         &   \cdots        &   \bullet  &  \cdots       &   \bullet  &   \cdots    & \bullet \\
\bullet      &      \bullet   &  \cdots    &            \bullet         &   \cdots        &   \bullet  &  \cdots       &   \bullet  &   \cdots    & \bullet \\
\bullet      &      \bullet   &  \cdots    &            \circ            &   \cdots        &   \circ     &  \cdots       &   \circ     &   \cdots    & \bullet  \\
\circ         &     \circ       & \cdots     &    \boxed{\bullet}    & \cdots          & \bullet    & \cdots        & \circ       &   \cdots    & \circ     \\
\circ         &     \circ       & \cdots     &           \circ             & \cdots          &  \circ       & \cdots       & \circ       &   \cdots    & \circ    \\ \vdots      &  \vdots       &               &       \vdots              &                     &  \vdots    &                  & \vdots    &                & \vdots
\end{array}
$
\caption{$\langle i,j,k \rangle, \ k \geq 2$}
\label{fig:case3b}
\end{figure}

It follows from the proofs of Cases (1)-(3) that $\lambda \neq \langle1,2 \rangle$ has at least one normal node that satisfies both (i) and (ii).         
\end{proof}

\begin{cor} \label{cor:I2}
Assume $\lambda$ is a partition in $\mathcal{B}$ that is both $p$-regular and $p$-restricted and $\lambda \neq \langle1,2 \rangle$.  Then $\lambda \in \Lambda_i$ for some $2 \leq i \leq p$.  
\end{cor}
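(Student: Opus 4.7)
The plan is to combine Lemma \ref{lem:remove} with Kleshchev's modular branching rules for simple modules. Recall from the paragraph preceding Proposition \ref{prop:MT} that a $p$-regular partition $\mu$ belongs to $\Lambda_i$ if and only if its $\langle 3^p \rangle$ display has a normal bead on runner $i$. Thus, to prove the corollary, it suffices to exhibit, for the given $\lambda \neq \langle 1,2 \rangle$, a normal bead on some runner $i$ with $2 \leq i \leq p$.

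First, I would apply the ``moreover'' clause of Lemma \ref{lem:remove}, which yields a normal node $A$ of $\lambda$ such that $\lambda_A$ has $p$-weight $2$ (and remains both $p$-regular and $p$-restricted, though only the weight will be used here). Translated to the $\langle 3^p \rangle$ abacus, $A$ corresponds to a normal bead occupying some position $m$ on some runner $i$, and passing from $\lambda$ to $\lambda_A$ amounts to pushing that bead from position $m$ to position $m-1$.

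Next, I would pin down the runner $i$. By the restriction rules recalled in the paragraph preceding Proposition \ref{prop:MT}, pushing a bead on runner $i$ places the resulting partition in the block $B_i$; hence $\lambda_A$ lies in $B_i$. But $\lambda_A$ has weight $2$, so $B_i$ must be a weight-$2$ block. Since $B_1$ has weight $1$ while $B_2,\ldots,B_p$ have weight $2$, this forces $i \geq 2$.

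Finally, applying Kleshchev's branching criterion (Theorem $\text{E}'$ of \cite{BK2}) to the normal bead on runner $i$ with $2 \leq i \leq p$ gives $D^{\lambda}\!\downarrow_{B_i} \neq 0$, i.e.\ $\lambda \in \Lambda_i$, as required. The only point that needs care is ruling out $i = 1$, but this is handled automatically by the weight-$2$ consideration above, so no separate case analysis is needed. The partition $\langle 1,2 \rangle$ excluded in the statement is exactly the exception already built into the ``moreover'' part of Lemma \ref{lem:remove} (the two removable nodes of $\langle 1,2 \rangle$ satisfying (i) and (ii) there fail to be normal), so no further exceptions arise in the argument.
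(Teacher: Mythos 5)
Your proposal is correct and matches the paper's intended argument: the corollary is stated without a separate proof precisely because it follows from the ``moreover'' clause of Lemma \ref{lem:remove} in exactly the way you describe (a normal node with weight-$2$ restriction must sit on a runner $i \geq 2$ since $B_1$ is the weight-$1$ block, and Kleshchev's branching criterion then gives $\lambda \in \Lambda_i$). Your spelling out of why $i=1$ is excluded is a helpful elaboration of what the paper leaves implicit.
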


\section{Irreducible Specht modules in the defect 2 block $B_i$ of $F\Sigma_{3p-1}$} \label{section:irreducibles}

In this section we determine the Specht modules in the block $B_i$ ($2 \leq i \leq p$) that remain irreducible over the field $F$.  We then induce these modules back up to the block $\mathcal{B}$ to see which Specht modules in $\mathcal{B}$ have an irreducible restriction to the block $B_i$.  For $2 \leq i \leq p$, define $X_i  : = \{\tilde{\lambda} \in B_i \  | \ S^{\tilde{\lambda}} \text{ is irreducible}  \}$.  After applying Proposition \ref{prop:FJM} to each block $B_i$ we have the following lemma. 

\begin{lem} \label{lem:simpledefect2}The following is a complete list of partitions in the block $B_i$ ($2 \leq i \leq p$) represented in $\langle 3^{i-2}, 4,2, 3^{p-i} \rangle$ notation that label irreducible Specht modules.   
	\begin{enumerate}
		\item $X_i = \{\langle i,i \rangle, \langle i-1 \rangle, \langle i , i-1 \rangle  \}$ for $3 \leq i \leq p-1$;
		\item $X_p = \{ \langle p,p \rangle, \langle p,p-1 \rangle, \langle p-1 \rangle, \langle p-1, p-1 \rangle \}$;
		\item $X_2 = \{\langle 2,2 \rangle, \langle 2 \rangle, \langle 1 \rangle, \langle 2,1 \rangle \}$
	\end{enumerate}
\end{lem}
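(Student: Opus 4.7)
The strategy is to apply Proposition~\ref{prop:irredSpecht} and Fayers' test, Proposition~\ref{prop:FJM}, directly to each weight $2$ block $B_i$. Since $|\mu^{(1)}| + |\mu^{(p)}| = 2 < p$, conditions (2) and (3) of the test are automatic by Remark~\ref{rem:irred}, so only conditions (1) and (4) need to be verified against the pyramid of $B_i$.

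To compute that pyramid, I first record the first empty position on each runner of the $3p$-bead abacus display for the $p$-core $(i-1,1^{p-i})$ of $B_i$: on runner $i$ it is $2p+i$, on runner $i-1$ it is $4p+i-1$, and on every other runner $j$ it is $3p+j$. Arranging these in ascending order simultaneously produces the non left-to-right numbering (old runner $i$ becomes new runner $1$, and old runner $i-1$ becomes new runner $p$) and the pyramid entries. In the generic range $3 \leq i \leq p-1$, a direct calculation gives $B^1_\ell = 0$ for $\ell \leq i-1$, $B^1_\ell = 1$ for $\ell \geq i$, $B^k_p = 1$ for $2 \leq k \leq i-1$, $B^k_p = 0$ for $i \leq k \leq p-1$, and $B^k_\ell = 0$ in all other cases. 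The boundary blocks $B_2$ and $B_p$ yield mildly different pyramids that I would treat separately.

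Condition (1) forces $\mu^{(k)} = \emptyset$ for $2 \leq k \leq p-1$, so the only possibilities for $(\mu^{(1)}, \mu^{(p)})$ are
$$(\emptyset,(2)),\quad (\emptyset,(1^2)),\quad ((1),(1)),\quad ((2),\emptyset),\quad ((1^2),\emptyset).$$
Plugging each into condition (4) and consulting the pyramid, I expect that in the generic range exactly the first, third, and fifth candidates survive; in $B_p$ the fourth fails, because $B^1_\ell = 0$ for $\ell < p$ forces $(\mu^{(1)})_1 \leq 1$; and in $B_2$ the second fails, because $B^k_p = 0$ for $k > 1$ forces $((\mu^{(p)})')_1 \leq 1$. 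Using $\mu^{(1)} = \lambda^{(i)}$ and $\mu^{(p)} = \lambda^{(i-1)}$, each surviving reordered quotient translates back into exactly one partition in $\langle 3^{i-2},4,2,3^{p-i}\rangle$ notation, producing the three partitions listed for $X_i$ in the generic range and the four listed for $X_p$ and $X_2$.

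The main obstacle I anticipate is the abacus bookkeeping: pinning down the positions $q_k$, extracting the permutation that implements the non left-to-right numbering, and handling the three cases $i = 2$, $3 \leq i \leq p-1$, and $i = p$ in parallel while converting the surviving $p$-quotients back into the required notation. Once this setup is in place, the verification of condition (4) against the pyramid is a short numerical check.
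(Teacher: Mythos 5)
Your proposal is correct and follows essentially the same route as the paper: apply Fayers' criterion (Proposition \ref{prop:FJM}) to each block $B_i$, compute the pyramid from the first empty positions $q_k$ of the $p$-core display, and check conditions (1) and (4) against the five candidate pairs $(\mu^{(1)},\mu^{(p)})$. The paper writes out only the $B_2$ case in full and declares the remaining blocks "similar," whereas you set up the generic pyramid for $3\leq i\leq p-1$ explicitly; your pyramid entries and the identification of which candidates survive in each of the three ranges all check out against the stated sets $X_i$, $X_p$, $X_2$.
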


\begin{proof}
We prove the result for the block $B_2$.  The blocks $B_3, \ldots, B_p$ will have a similar calculation.  The $p$-core $(1^{3p-1})$ for the block $B_2$ can be represented on an abacus with 4 beads on runner 1, 2 beads on runner 2, and 3 beads on runners $3, \ldots, p$.  Hence 
$q_1 = 2p+2$, $q_k = 3p+k+1$ for $2 \leq k \leq p-1$, and $q_p = 4p+1$.  So we renumber the runners from left to right as 
$p, 1, 2, 3, \ldots p-1$.  Notice that in this renumbering $1 \mapsto p$ and $2 \mapsto 1$.  The block $B_2$ has pyramid 
$B^1_{\ell} = 1$ if $2 \leq \ell \leq p$ and $B^{k}_{\ell} = 0$ if $2 \leq k < \ell \leq p$.  Now suppose $\lambda$ is a $p$-JM partition in $B_2$.  Since $\lambda$ is a $p$-JM partition, the reordered $p$-quotient of $\lambda$ has the form 
$[\mu^{(1)} , \emptyset , \ldots,\emptyset, \mu^{(p)}]$ by Proposition \ref{prop:FJM}(1).  Note that $\lambda$ has $p$-weight 2, so $\mu^{(1)}$ and $\mu^{(p)}$ satisfy conditions (2) and (3) of Proposition \ref{prop:FJM}, respectively.  So we consider condition (4), and   observe that $|\mu^{(1)} | + | \mu^{(p)}| = 2$.  We consider three cases.      

\vskip .1in
\noindent \textbf{Case 1.} Assume $|\mu^{(1)} | = 2$ and $|\mu^{(p)}| = 0$.  First suppose $\mu^{(1)} = (2)$.  Then for $1 \leq k < \ell \leq p$ we have the following. 
$$\left(\mu^{(k)}\right)_{1} + \left( (\mu^{(\ell)})' \right)_{1}  = \begin{cases}  2  \text{ if $k =1$} \\ 0 \text{ if $k \neq 1$}  \end{cases}$$  
Since $B^{1}_{\ell} =1$ for all $2 \leq \ell \leq p$ the reordered $p$-quotient of $\lambda$ does not contradict condition (4).  Now $\lambda$ has left-to-right $p$-quotient $[\emptyset, (2), \emptyset, \ldots, \emptyset]$.  So $\lambda = \langle 2 \rangle$.  If $\mu^{(1)} = (1^2)$ then one can verify that the reordered $p$-quotient satisfies condition (4).  Now the left-to-right $p$-quotient is $[\emptyset, (1^2), \emptyset, \ldots, \emptyset]$.  Hence $\lambda = \langle 2,2 \rangle$.

\vskip .1in
\noindent \textbf{Case 2.} Assume $|\mu^{(1)} | = 0$ and $|\mu^{(p)}| = 2$.  If $\mu^{(p)} = (2)$ then we have the following.
$$\left(\mu^{(k)}\right)_{1} + \left( (\mu^{(\ell)})' \right)_{1}  = \begin{cases}  1  \text{ if $k =p$} \\ 0 \text{ if $k \neq p$}  \end{cases}$$
It is easily seen that the reordered $p$-quotient satisfies condition (4).  The left-to-right $p$-quotient is 
$[(2), \emptyset, \ldots, \emptyset]$, so $\lambda = \langle 1 \rangle$.  Now suppose $\mu^{(p)} = (1^2)$.  Observe that  
$\left(\mu^{(2)}\right)_1 + \left( ( \mu^{(p)})' \right)_{1} = 0 + 2 = 2$ and $B^{2}_{p} + 1 = 0 + 1 = 1$.  So this $p$-quotient does not satisfy condition (4).

\vskip .1in
\noindent \textbf{Case 3.}  Assume $|\mu^{(1)}| = 1 = |\mu^{(p)}|$.  One may easily verify that this reordered $p$-quotient satisfies condition (4).  The left-to-right $p$-quotient is $[(1), (1), \emptyset, \ldots, \emptyset]$.  So $\lambda = \langle 2,1\rangle$. 

\vskip .1in
\noindent Therefore $X_2 = \{  \langle 2,2 \rangle, \langle 2 \rangle, \langle 1 \rangle,  \langle 2,1\rangle \}$.                   
\end{proof}

Lemma \ref{lem:simpledefect2} and the modular branching rules give us the next proposition.

\begin{prop} \label{prop:spechtsimpleinduce}
Inducing the irreducible Specht modules in the block $B_i$  ($2 \leq i \leq p$) up to $\mathcal{B}$ produces the following Specht module filtrations.  Furthermore, if a Specht module in $\mathcal{B}$ has an irreducible restriction to some block $B_i$ ($2 \leq i \leq p$) then it must be seen in one of these filtrations. 
\begin{enumerate}
	\item For $i =2$
	$$\begin{matrix} S^{\langle 2,2 \rangle}\uparrow^{\mathcal{B}} \sim \begin{matrix} S^{\langle 2,2,2 \rangle} \\ S^{\langle 1,1,1 \rangle}  \end{matrix}, &
	S^{\langle 2 \rangle}\uparrow^{\mathcal{B}} \sim \begin{matrix} S^{\langle 2,2,1 \rangle} \\ S^{\langle 2,1,1 \rangle}  \end{matrix},\\ \\
	S^{\langle 1 \rangle}\uparrow^{\mathcal{B}} \sim \begin{matrix} S^{\langle 2 \rangle} \\ S^{\langle 1 \rangle}  
	\end{matrix}, &
	S^{\langle 2,1 \rangle}\uparrow^{\mathcal{B}} \sim \begin{matrix} S^{\langle 2,2 \rangle} \\ S^{\langle 1,1 \rangle}  \end{matrix}
\end{matrix}$$ 
	\item For $3 \leq i \leq p-1$
	$$S^{\langle i,i\rangle}\uparrow^{\mathcal{B}} \sim \begin{matrix} S^{\langle i,i,i \rangle} \\ S^{\langle i-1,i-1.i-1 \rangle} \end{matrix}, \quad
	S^{\langle i-1 \rangle} \uparrow^{\mathcal{B}} \sim \begin{matrix} S^{\langle i \rangle}\\ S^{\langle i-1 \rangle}  \end{matrix}, \quad
	S^{\langle i,i-1 \rangle} \uparrow^{\mathcal{B}} \sim \begin{matrix} S^{\langle i,i \rangle}\\ S^{\langle i-1,i-1 \rangle}  \end{matrix}$$
	\item For $i=p$
	$$\begin{matrix} S^{\langle p,p \rangle}\uparrow^{\mathcal{B}} \sim \begin{matrix} S^{\langle p,p,p \rangle} \\ S^{\langle p-1,p-1,p-1 \rangle}  \end{matrix}, &
	S^{\langle p,p-1 \rangle}\uparrow^{\mathcal{B}} \sim \begin{matrix} S^{\langle p,p \rangle} \\ S^{\langle p-1,p-1 \rangle}  \end{matrix},\\ \\
	S^{\langle p-1 \rangle}\uparrow^{\mathcal{B}} \sim \begin{matrix} S^{\langle p \rangle} \\ S^{\langle p-1 \rangle}  
	\end{matrix}, &
	S^{\langle p-1,p-1 \rangle}\uparrow^{\mathcal{B}} \sim \begin{matrix} S^{\langle p,p-1 \rangle} \\ S^{\langle p-1,p \rangle}  \end{matrix}
\end{matrix}$$ 
\end{enumerate}
\end{prop}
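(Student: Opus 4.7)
The strategy is to apply Proposition \ref{prop:CP}(2) to each irreducible Specht module $S^{\tilde{\lambda}}$ in the list $X_i$ from Lemma \ref{lem:simpledefect2}. By the branching discussion preceding Proposition \ref{prop:MT}, for each $2 \leq i \leq p$ and each partition $\tilde{\lambda}$ lying in $B_i$, there are exactly two partitions $\lambda, \sigma_i(\lambda) \in \mathcal{B}$ with $\lambda \rhd \sigma_i(\lambda)$ satisfying
\[
S^{\lambda}\downarrow_{B_i} \;\cong\; S^{\sigma_i(\lambda)}\downarrow_{B_i} \;\cong\; S^{\tilde{\lambda}}.
\]
Proposition \ref{prop:CP}(2), applied to the larger partition $\lambda$, then supplies the short exact sequence
\[
0 \to S^{\sigma_i(\lambda)} \to \left(S^{\lambda}\downarrow_{B_i}\right)\uparrow^{\mathcal{B}} \cong S^{\tilde{\lambda}}\uparrow^{\mathcal{B}} \to S^{\lambda} \to 0,
\]
which is precisely the claimed two-term Specht filtration with $S^{\lambda}$ on top and $S^{\sigma_i(\lambda)}$ on the bottom.

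To identify the pair $(\lambda, \sigma_i(\lambda))$ from a given $\tilde{\lambda}$, I would take the $\langle 3^{i-2},4,2,3^{p-i}\rangle$-display for $\tilde{\lambda}$ and reverse the restriction: move one of the beads on runner $i-1$ over to runner $i$ to obtain a $\langle 3^p\rangle$-display in $\mathcal{B}$. Since Martin--Russell guarantee exactly two pre-images, exactly two beads on runner $i-1$ admit such a move, producing $\lambda$ and $\sigma_i(\lambda)$. Carrying this translation out for each of the partitions in $X_i$ (four when $i=2$, three when $3\leq i \leq p-1$, and four when $i=p$) produces the filtrations listed in parts (1), (2), and (3).

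For the converse assertion, suppose $S^{\mu}$ is a Specht module in $\mathcal{B}$ whose restriction to some $B_i$ (with $2 \leq i \leq p$) is irreducible. Then $S^{\mu}\downarrow_{B_i} \cong S^{\tilde{\mu}}$ for some $\tilde{\mu} \in X_i$ by Lemma \ref{lem:simpledefect2}, and hence $\mu$ is one of the two partitions associated to $\tilde{\mu}$ by the procedure above. Thus $S^{\mu}$ appears in the corresponding filtration.

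The principal obstacle is the case-by-case abacus bookkeeping needed to correctly translate each $\tilde{\lambda} \in X_i$ from $\langle 3^{i-2},4,2,3^{p-i}\rangle$-notation to the two $\langle 3^p\rangle$-displays of its pre-images, with particular attention to the boundary blocks $B_2$ (where runner $1$ carries the four beads) and $B_p$ (where runner $p-1$ does), as well as verifying in each case which pre-image is the larger under $\rhd$.
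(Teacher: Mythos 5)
Your proposal is correct and follows essentially the same route as the paper, which derives the filtrations from Lemma \ref{lem:simpledefect2} together with the branching rules (equivalently, the short exact sequence of Proposition \ref{prop:CP}(2) applied to the two preimages of each $\tilde{\lambda}\in X_i$), and obtains the converse from the completeness of the lists $X_i$. The remaining work is exactly the abacus bookkeeping you identify, which the paper likewise leaves to the reader.
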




\section{Loewy lengths and some loewy structures} \label{section:loewy}
In this section we study the Loewy length of $S^{\lambda}$ where $\lambda$ is a partition in the principal block of $\Sigma_{3p}$.  We see that the maximum possible Loewy length of a Specht module in the block $\mathcal{B}$ is 4.  Compare this with the result of Chuang and Tan given in Proposition \ref{prop:w2}.  In \cite{MT}, Martin and Tan prove that the projective indecomposable modules of principal blocks of $F\Sigma_{n}$ ($3p \leq n \leq 4p-1$) have common Loewy length 7.  Tan later generalizes this result to all weight 3 blocks (with $p\geq 5$) of $F\Sigma_{n}$ in \cite{Tan}.   This fact aids in finding a common Loewy length among the Specht modules in a given defect 3 block of $F\Sigma_{n}$ that correspond to partitions that are both $p$-regular and $p$-restricted.  After the upper bound on the Loewy lengths is established, we prove some Loewy structure results for the Specht modules in $\mathcal{B}$.


\subsection{The bound on the Loewy lengths}

We claim that all the Specht modules that lie in $\mathcal{B}$ have Loewy length at most 4.  We begin by showing that any Specht module $S^{\lambda}$ lying in a defect 3 block of $F\Sigma_{n}$, where $\lambda$ is a $p$-regular and $p$-restricted, has Loewy length 4.



\begin{thm} \label{thm:RR}
Suppose the partition $\lambda$ lies in a defect 3 block $B$ of $F\Sigma_{n}$.  If $\lambda$ is both $p$-regular and $p$-restricted then the Specht module $S^{\lambda}$ has Loewy length 4.
\end{thm}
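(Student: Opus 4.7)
The plan is to establish $\text{LL}(S^\lambda)\geq 4$ and $\text{LL}(S^\lambda)\leq 4$ separately, in each case reducing to the weight-$2$ situation governed by Proposition \ref{prop:w2}. The restriction and induction machinery of Section 2 (Propositions \ref{prop:MT}, \ref{prop:CP} and Lemmas \ref{lem:sinduce}, \ref{lem:remove}) is stated for the principal block $\mathcal B$ but transfers to any defect $3$ block; alternatively, a Scopes isometry reduces to $B=\mathcal B$.

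For the lower bound, I would apply Lemma \ref{lem:remove} to produce a removable node $A$ for which $\lambda_A$ is $p$-regular, $p$-restricted, and of weight $2$. The bead movement identifies $S^\lambda\downarrow_{B_i}\cong S^{\lambda_A}$ for an appropriate defect $2$ block $B_i$, and Proposition \ref{prop:w2} then gives $\text{LL}(S^{\lambda_A})=3$. Since restriction to a block is exact, it pushes the radical filtration of $S^\lambda$ down to a filtration of $S^{\lambda_A}$ by semisimples, so $\text{LL}(S^\lambda)\geq 3$. Upgrading from $3$ to $4$ is done by a parity argument: $\lambda$ being $p$-restricted forces $S^\lambda\hookrightarrow Y^\lambda$ with simple socle $D^{m(\lambda')}$, so $\text{soc}(S^\lambda)=D^{m(\lambda')}$ and $\text{head}(S^\lambda)=D^\lambda$. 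A removable rim $p$-hook of leg-length $r-1$ in $\lambda$ corresponds to one of leg-length $p-r$ in $\lambda'$, giving $\mathcal P(\lambda)=\mathcal P(\lambda')$, and Proposition \ref{prop:mullparity} then yields $\mathcal P(\lambda)\neq \mathcal P(m(\lambda'))$. By the bipartiteness of the Ext$^1$-quiver (Proposition \ref{prop:bipart}) the radical layers of $S^\lambda$ must alternate between the two parity classes — each simple in a given layer must extend non-trivially a simple of opposite parity in the preceding layer — so head and socle of opposite parity forces $\text{LL}(S^\lambda)$ to be even, hence at least $4$.

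For the upper bound, I would invoke the short exact sequence of Proposition \ref{prop:CP}(2):
$$0 \to S^{\sigma_i(\lambda)} \to (S^\lambda\downarrow_{B_i})\uparrow^{B} \to S^\lambda \to 0.$$
Lemma \ref{lem:sinduce}(3) gives $\text{LL}\bigl((S^\lambda\downarrow_{B_i})\uparrow^{B}\bigr)\leq 5$, so the quotient $S^\lambda$ satisfies $\text{LL}(S^\lambda)\leq 5$. Combined with the parity conclusion that $\text{LL}(S^\lambda)$ is even, this forces $\text{LL}(S^\lambda)\leq 4$, and therefore $\text{LL}(S^\lambda)=4$.

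The main obstacle I anticipate is making the parity step fully rigorous: one must verify that bipartiteness of the Ext$^1$-quiver genuinely forces the radical layers (and not merely the list of composition factors) of $S^\lambda$ to alternate in parity — this ultimately relies on the fact that a non-split radical quotient is controlled by $\text{Ext}^1$, together with the simple head and simple socle of $S^\lambda$. Secondary concerns are the edge case $\lambda=\langle 1,2\rangle$ of Lemma \ref{lem:remove}, for which the removable node produced is not normal and a separate verification may be required, and confirming that the induction/restriction results proved for $\mathcal B$ (Lemma \ref{lem:sinduce}, Proposition \ref{prop:CP}) extend to an arbitrary defect $3$ block.
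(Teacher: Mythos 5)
Your argument is a genuinely different route from the paper's, and for the principal block $\mathcal{B}$ the core idea (squeeze the Loewy length between $3$ and $5$ by restriction and induction through weight $2$, then use bipartiteness together with $\mathcal{P}\lambda=\mathcal{P}\lambda'\neq\mathcal{P}(m(\lambda'))$ to force the length to be even) is sound; the parity-alternation step you flag as the main obstacle does go through, because $S^\lambda$ has simple head $D^\lambda$, every simple in a given radical layer is a non-split extension target of some simple in the layer above, and bipartiteness then propagates a single parity through each layer. The paper's actual proof is entirely different and much shorter: since $\lambda$ is $p$-restricted, $Y^\lambda$ is the projective cover of $D^{m(\lambda')}$, which by the Martin--Tan/Tan results quoted at the start of Section \ref{section:loewy} has Loewy length $7$ in any weight $3$ block; self-duality of $Y^\lambda$ together with $[Y^\lambda:D^\lambda]=1$ forces $D^\lambda$ into the fourth Loewy layer, and since $S^\lambda$ is the submodule of $Y^\lambda$ with head $D^\lambda$ and socle reaching the seventh layer, its Loewy length is exactly $4$.

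The genuine gap is the scope of your argument. Theorem \ref{thm:RR} is stated for an arbitrary defect $3$ block $B$ of $F\Sigma_n$, but Lemma \ref{lem:remove}, the blocks $B_i$, Proposition \ref{prop:MT} and Lemma \ref{lem:sinduce} are all specific to the principal block of $F\Sigma_{3p}$; they do not simply ``transfer'' without redoing the combinatorics for an arbitrary $p$-core, and the Scopes reduction you invoke is not available --- the weight $3$ blocks fall into several Scopes equivalence classes, only one of which contains $\mathcal{B}$. This is precisely why the paper routes the proof through $Y^\lambda$: the Loewy length $7$ statement holds uniformly for every weight $3$ block. A secondary gap is your lower bound: exactness of restriction to $B_i$ does not by itself turn the radical filtration of $S^\lambda$ into a filtration of $S^{\lambda_A}$ by semisimples, since the restriction of a semisimple module to a subgroup need not be semisimple. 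You need the additional observation that every $p$-regular partition in $\mathcal{B}$ has at most one normal node of each residue, so each simple of $\mathcal{B}$ restricts to $B_i$ as a simple module or zero; only then does $\mathrm{LL}(S^\lambda)\geq\mathrm{LL}(S^{\lambda_A})=3$ follow.
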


\begin{proof}
Since $\lambda$ is 
$p$-restricted, $Y^{\lambda}$ is isomorphic to the projective cover of $P(D^{m(\lambda ')})$ by Proposition \ref{prop:Young}(1).  So $Y^{\lambda}$ has Loewy length 7.  Now, $[Y^{\lambda} : D^{\lambda}] = 1$ (see Proposition \ref{prop:Young}(2)) implies that $D^{\lambda}$ must lie in the fourth Loewy layer of $Y^{\lambda}$ since $Y^{\lambda}$ is self-dual and indecomposable.  We conclude that $S^{\lambda}$ has Loewy length 4.    
\end{proof}

\begin{cor}
Suppose the partition $\lambda$ lies in a defect 3 block of $F\Sigma_n$.  If $\lambda$ is $p$-regular and $p$-restricted then 
$$[\text{rad}^3Y^{\lambda}/\text{rad}^4Y^{\lambda}:D^{\lambda}] =1 \text{ and } [\text{rad}^iY^{\lambda}/\text{rad}^{i+1}Y^{\lambda}:D^{\lambda}]  = 0$$
for all $0\leq i\neq 3 \leq 6.$ 
\end{cor}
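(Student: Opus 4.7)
The plan is to read off the conclusion directly from the ingredients assembled in the proof of Theorem \ref{thm:RR}, using only a standard symmetry for self-dual modules. Since $\lambda$ is $p$-restricted, Proposition \ref{prop:Young}(1) gives $Y^{\lambda}\cong P(D^{m(\lambda')})$, so $Y^{\lambda}$ is projective and indecomposable; projectives for symmetric group algebras are self-dual, so $Y^{\lambda}$ is self-dual. The theorem of Tan invoked in the proof of Theorem \ref{thm:RR} supplies that $Y^{\lambda}$ has Loewy length exactly $7$, and Proposition \ref{prop:Young}(2) gives $[Y^{\lambda}:D^{\lambda}]=1$.

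The key observation is the following symmetry: for any self-dual finite-dimensional module $M$ of Loewy length $\ell$, the multiplicity of a simple $D$ in the layer $\text{rad}^iM/\text{rad}^{i+1}M$ equals its multiplicity in $\text{rad}^{\ell-1-i}M/\text{rad}^{\ell-i}M$. This holds because the radical filtration of $M^*$ is (canonically) dual to the socle filtration of $M$, and the isomorphism $M\cong M^*$ identifies these two filtrations up to the order reversal $i\mapsto \ell-1-i$.

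Apply this to $M=Y^{\lambda}$ with $\ell=7$. The involution $i\mapsto 6-i$ pairs the Loewy layers of $Y^{\lambda}$, and the middle layer $i=3$ is the unique fixed point. Since $D^{\lambda}$ appears with total multiplicity $1$ in $Y^{\lambda}$, it cannot sit in any layer $i\neq 3$: otherwise its image under the symmetry would give a second copy in layer $6-i\neq i$, contradicting multiplicity one. Hence the single copy of $D^{\lambda}$ is located in $\text{rad}^3Y^{\lambda}/\text{rad}^4Y^{\lambda}$, which gives $[\text{rad}^3Y^{\lambda}/\text{rad}^4Y^{\lambda}:D^{\lambda}]=1$ and $[\text{rad}^iY^{\lambda}/\text{rad}^{i+1}Y^{\lambda}:D^{\lambda}]=0$ for all $i\in\{0,1,2,4,5,6\}$.

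There is essentially no obstacle; the only technical point worth a sentence of care is the self-duality symmetry of Loewy layers, but this is a standard consequence of $M\cong M^*$ and the fact that dualizing swaps the radical and socle filtrations, so the proof reduces to a one-line parity argument on a Loewy length of odd size.
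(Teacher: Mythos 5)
Your overall route is the same as the paper's: locate the unique copy of $D^{\lambda}$ in the Loewy series of $Y^{\lambda}\cong P(D^{m(\lambda')})$ using self-duality, Loewy length $7$, and $[Y^{\lambda}:D^{\lambda}]=1$ (this is exactly the step inside the proof of Theorem \ref{thm:RR} from which the corollary is extracted). The problem is the ``key observation'' you rely on. Self-duality of $M$ does \emph{not} give $[\text{rad}^iM/\text{rad}^{i+1}M:D]=[\text{rad}^{\ell-1-i}M/\text{rad}^{\ell-i}M:D]$. What dualizing actually gives is $(\text{rad}^iM/\text{rad}^{i+1}M)^*\cong \text{soc}_{i+1}(M^*)/\text{soc}_i(M^*)$, so for a self-dual $M$ with self-dual composition factors the $i$-th \emph{radical} layer from the top matches the $(i+1)$-st \emph{socle} layer from the bottom. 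Your statement silently replaces that socle layer by a radical layer, i.e.\ it assumes the radical and socle filtrations of $Y^{\lambda}$ coincide (rigidity). That is an extra hypothesis, not a consequence of $M\cong M^*$: projective indecomposables over symmetric algebras need not be rigid, and a self-dual module of Loewy length $5$ with simple head and socle and layers $S\mid X_1\mid X_2\mid X_3\mid S$ is only forced to satisfy the multiset containment $X_3\subseteq X_1$ coming from $\text{rad}^{\ell-j}M\subseteq\text{soc}_jM$; a multiplicity-one factor sitting in $X_1$ but not in $X_3$ violates nothing.

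What the correct general argument yields is only half of the statement: if the single copy of $D^{\lambda}$ sits in radical layer $i_0$ (from the top), then by self-duality it sits in socle layer $i_0+1$ (from the bottom), and the containment $\text{rad}^{\ell-j}Y^{\lambda}\subseteq\text{soc}_jY^{\lambda}$ forces $i_0+1\le \ell-i_0$, i.e.\ $i_0\le 3$. Pinning $D^{\lambda}$ to layer exactly $3$ requires a further input --- for instance rigidity of $Y^{\lambda}$, or combining $i_0\le 3$ with the Specht filtration $0\subset S^{\lambda}\subset Y^{\lambda}$ (whose higher factors $S^{\mu}$, $\mu\rhd\lambda$, do not involve $D^{\lambda}$ by Proposition \ref{prop:rad}) together with an independent bound on the Loewy length of $S^{\lambda}$ or of $Y^{\lambda}/S^{\lambda}$. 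To be fair, the paper's own justification at this point (``since $Y^{\lambda}$ is self-dual and indecomposable'') is no more detailed than yours; but the precise lemma you interpolate to fill that gap is false as stated, so the proof as written does not close it.
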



We just proved that the Specht modules in a defect 3 block which have corresponding partition that is $p$-regular and $p$-restricted all have common Loewy length 4.  We claim that all the Specht modules in $\mathcal{B}$ have Loewy length at most 4.  The idea behind the proof is to restrict to ``nice'' defect 2 blocks of $F\Sigma_{3p-1}$ (i.e., we avoid restricting to Specht modules having Loewy length 3 if possible) then apply Lemma \ref{lem:sinduce}(1) and (2).  We also note that if $\lambda$ is a hook partition of $n = 3p$ then $S^{\lambda}$ has Loewy length at most 2.  Now the only partitions in $\mathcal{B}$ that are neither $p$-regular nor $p$-restricted have $\langle 3^p \rangle$ display of the form 
$\langle i,i \rangle$ for some $1 \leq i \leq p$.  So the only partitions in $\mathcal{B}$ that are neither $p$-regular nor $p$-restricted are the hooks $\lambda(i) = (3p - (2p-i), 1^{2p-i}) = (p+i, 1^{2p-i})$, $1\leq i \leq p$.  Therefore it suffices to consider $\lambda$ that is $p$-regular, not $p$-restricted, and not a hook (by Propositions \ref{prop:james}, \ref{prop:peel}, and Theorem \ref{thm:RR}).  For such a partition $\lambda$, we analyze the restriction of $S^{\lambda}$ to a particular defect 2 block of $F\Sigma_{3p-1}$ and apply Proposition \ref{prop:w2} in next lemma.

\begin{lem} \label{lem:nicerest}
Let $\lambda$ be a partition in $\mathcal{B}$ that is $p$-regular, not $p$-restricted, and not a hook.  Then there exists $2 \leq i \leq p$ such that $S^{\lambda} \downarrow_{B_i} \neq 0$ has Loewy length at most 2, except for when $\lambda = \langle p,p-1,p-2 \rangle$.     
\end{lem}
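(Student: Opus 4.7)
The plan is to exploit the classification of Proposition~\ref{prop:3p}(8): a partition $\lambda$ in $\mathcal{B}$ that is $p$-regular, not $p$-restricted, and not a hook has $\langle 3^p \rangle$ display of the form (a) $\langle i, p \rangle$ for $1 \leq i \leq p-1$, (b) $\langle i, j \rangle$ for $1 \leq j < i \leq p$, or (c) $\langle p, p-1, p-2 \rangle$. Case (c) is the stated exception. For each $\lambda$ in cases (a) and (b), the goal is to exhibit a removable bead on the $\langle 3^p \rangle$ display lying on a runner $2 \leq i \leq p$ so that, writing $\tilde{\lambda}$ for the partition of $3p-1$ obtained by pushing that bead one step to the left (so $S^{\lambda}\downarrow_{B_i} \cong S^{\tilde{\lambda}}$), the partition $\tilde{\lambda}$ fails to be both $p$-regular and $p$-restricted. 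Proposition~\ref{prop:w2} will then force $S^{\tilde{\lambda}}$ to have Loewy length at most $2$.

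In the generic sub-cases I would push the weight-$2$ bead on runner $i$ and read $\tilde{\lambda}$ directly off the resulting display. For case (a) with $2 \leq i \leq p-1$ this produces $\tilde{\lambda} = (p+i-1,\, p+1,\, 1^{p-i-1})$, whose conjugate $(p-i+1,\, 2^p,\, 1^{i-2})$ contains $p$ consecutive $2$'s, so $\tilde{\lambda}$ is not $p$-restricted. For case (b) with $2 \leq j < i \leq p-1$ and $i-j \geq 2$, the same restriction yields $\tilde{\lambda} = (p+i-1,\, j+1,\, 2^{p-i},\, 1^{i-j-1})$, whose conjugate contains at least $p$ consecutive $1$'s. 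The sub-case $i = p$ of case (b) is handled by pushing the bead at $5p$ on runner $p$; the resulting $\tilde{\lambda} = (2p-1,\, j+1,\, 1^{p-j-1})$ has a conjugate containing either $p$ or more consecutive $1$'s (when $j \leq p-2$) or $p$ consecutive $2$'s (when $j = p-1$).

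The delicate sub-cases are $\lambda = \langle 1, p \rangle$, $\lambda = \langle 2, 1 \rangle$, and $\lambda = \langle i, i-1 \rangle$ for $3 \leq i \leq p-1$. In each of these, the ``natural'' restriction produces a $\tilde{\lambda}$ that is both $p$-regular and $p$-restricted, so I would instead use a non-normal (but still removable) bead. For $\langle 1, p \rangle$ I would push the bead at position $2 + 2p$ on runner $2$, producing $\tilde{\lambda} = (p+1,\, p+1,\, 1^{p-3})$ with conjugate $(p-1,\, 2^p)$. For $\langle 2, 1 \rangle$ I would push the bead at position $3 + 2p$ on runner $3$, producing $\tilde{\lambda} = (p+2,\, 2^{p-2},\, 1)$ with conjugate $(p,\, p-1,\, 1^p)$. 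For $\langle i, i-1 \rangle$ I would push the bead at position $(i+1) + 2p$ on runner $i+1$, producing $\tilde{\lambda} = (i+p,\, i,\, 2^{p-i-1},\, 1)$ with conjugate $(p-i+2,\, p-i+1,\, 2^{i-2},\, 1^p)$. In each case the conjugate contains at least $p$ consecutive equal positive entries, so $\tilde{\lambda}$ is not $p$-restricted.

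The main obstacle is verifying, for every shape of $\lambda$, that \emph{some} removable bead yields an acceptable $\tilde{\lambda}$; the three delicate sub-cases above show that this need not be supplied by the normal bead most natural from the branching perspective, so one must hunt among non-normal removable beads and compute their effect on the abacus. The excluded case $\langle p, p-1, p-2 \rangle$ corresponds to $\lambda = (p, p, p)$, which has only one removable bead (on runner $p-2$); pushing it gives $\tilde{\lambda} = (p, p, p-1)$, which is both $p$-regular and $p$-restricted, showing that the stated exception is genuinely unavoidable.
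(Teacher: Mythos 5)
Your proposal is correct and follows essentially the same strategy as the paper: classify $\lambda$ via Proposition~\ref{prop:3p}(8), exhibit a removable bead on some runner $i\geq 2$ whose removal yields a $\tilde{\lambda}$ that is not both $p$-regular and $p$-restricted, and invoke Proposition~\ref{prop:w2}; the exceptional case $\langle p,p-1,p-2\rangle=(p,p,p)$ is treated identically. The only substantive difference is the choice of bead in a few sub-cases (e.g.\ for $\langle i,i-1\rangle$ the paper pushes the row-four bead on runner $i-1$ to get $(p+i,i-1,2^{p-i})$, whereas you push the row-three bead on runner $i+1$; both work), and your explicit partition/conjugate computations check out. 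One small enumeration gap: the sub-case $\langle i,1\rangle$ with $3\leq i\leq p-1$ (that is, $j=1$ with $i-j\geq 2$ and $i\leq p-1$) falls outside both your generic case (b) (which assumes $2\leq j$) and your list of delicate sub-cases. It is covered by exactly the same push of the bead at position $4p+i$: your formula specializes to $\tilde{\lambda}=(p+i-1,2^{p-i+1},1^{i-2})$, whose conjugate $(p,\,p-i+2,\,1^{p+i-3})$ has at least $p$ consecutive $1$'s, so only the stated range needs to be widened to $1\leq j$.
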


\begin{proof}
Any partition in $\mathcal{B}$ that satisfies the given hypothesis must have $\langle 3^p \rangle$ display of the form:
	\begin{enumerate}
		\item $\langle i,p \rangle$ where $1\leq i \leq p-1$,
		\item $\langle i,j\rangle$ where $1 \leq j < i \leq p$, or
		\item $\langle p,p-1,p-2 \rangle$.
	\end{enumerate}
(1) First assume $i \geq 2$.  Push the removable bead at position $4p+i$ of $\langle i,p \rangle$ to position $4p+i-1$.  This gives a partition $\tilde{\lambda} = \langle i-1, p\rangle$ in $\langle3^{i-2}, 4, 2, , 3^{p-i} \rangle$ notation.  Clearly, $\tilde{\lambda}$ is $p$-regular.  For each $0 \leq k \leq p-1$, the position $2p+i+k$ of $\langle i-1, p\rangle$ is unoccupied, and the last occupied position of $\langle i-1, p\rangle$ is $4p+i-1$.  So $\tilde{\lambda}'$ is $p$-singular.  By Proposition \ref{prop:w2}, the module 
$S^{\langle i,p \rangle}\downarrow_{B_i} \cong S^{\tilde{\lambda}}$ has Loewy length at most 2.  For the case $i=1$, observe that  $S^{\langle 1,p \rangle}\downarrow_{B_2} \cong S^{\langle 2,p \rangle}\downarrow_{B_2}$. 

\vskip .1in
\noindent (2) Assume $i \neq j+1$.  Push the removable bead at position $4p + i$ to position $4p+i-1$.  This gives a partition 
$\tilde{\lambda} = \langle i-1, j \rangle$ in $\langle 3^{i-2}, 4,2, 3^{p-i}\rangle$ notation that is $p$-regular but not $p$-restricted.  By Proposition \ref{prop:w2}, $S^{\langle i,j \rangle } \downarrow_{B_i} \cong S^{\tilde{\lambda}}$ has Loewy length at most 2. Now assume $i = j+1$.  First consider $j\geq 2$.  Pushing the removable bead at position $3p+j$ to position $3p+j-1$ gives the partition $\tilde{\lambda} = \langle j+1, j-1\rangle$ in $\langle 3^{j-2},4,2,3^{p-j} \rangle$ notation which is $p$-regular but not $p$-restricted.  By Proposition \ref{prop:w2}, $S^{ \langle j+1,j \rangle } \downarrow_{B_j} \cong S^{\tilde{\lambda}}$ has Loewy length at most 2.      
For the case $j=1$, observe that  $S^{\langle 2,1 \rangle } \downarrow_{B_3} \cong S^{\langle 3,1\rangle }\downarrow_{B_{3}}$ has Loewy length at most 2.   

\vskip .1in
\noindent (3) The partition $\lambda$ with display $\langle p,p-1,p-2 \rangle $ has exactly one removable node, $A$, of $p$-residue $p-3$.  By inspection, $\lambda_A$ is both $p$-regular and $p$-restricted.  Hence $S^{\langle p,p-1,p-2 \rangle} \downarrow_{B_{p-2}} \cong S^{\lambda_A}$ has Loewy length 3 by Proposition \ref{prop:w2}.        
\end{proof}

The exceptional partition, $\lambda = \langle p,p-1,p-2 \rangle$, of Lemma \ref{lem:nicerest} corresponds to a Specht module that has Loewy length 3.  To demonstrate this we show $S^{\lambda}$ embeds into the Specht module $S^{\langle p,p-1,p-3\rangle}$ which has Loewy length 4 and simple socle isomorphic to $D^{m(\langle p,p-1,p-3 \rangle ')} \cong D^{\langle p-2 \rangle}$. 

\begin{lem} \label{lem:help}
The Specht module $S^{\langle p,p-1,p-2 \rangle}$ is isomorphic to a submodule of $S^{\langle p,p-1,p-3 \rangle}$.  In particular, $S^{\langle p,p-1,p-2\rangle}$ has Loewy length 3 and simple socle isomorphic to $\text{soc} (S^{\langle p,p-1,p-3\rangle}) \cong D^{m(\langle p,p-1,p-3 \rangle ')} \cong D^{\langle p-2 \rangle}$.
\end{lem}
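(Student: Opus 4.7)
The plan is to produce the embedding via a Carter--Payne homomorphism and then deduce the Loewy-structure consequences. Setting $\mu := \langle p,p-1,p-2\rangle$ and $\nu := \langle p,p-1,p-3\rangle$, I first examine the $\langle 3^p\rangle$ displays: $\mu$ has a unique removable bead, on runner $p-2$ at position $4p-2$, and pushing it one position left to $4p-3$ produces a partition $\widetilde{\mu}$ in the defect 2 block $B_{p-2}$ of $F\Sigma_{3p-1}$.  Starting instead from $\nu$'s display and pushing the removable bead on runner $p-2$ at position $3p-2$ left to $3p-3$ produces the same $\widetilde{\mu}$, and no other partition in $\mathcal{B}$ does so.  Hence $\sigma_{p-2}(\mu) = \nu$.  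Both $\mu$ and $\nu$ are $p$-regular by Proposition \ref{prop:3p}, and $\nu$ is moreover $p$-restricted.  Proposition \ref{prop:CP}(1) then supplies a non-zero homomorphism $\phi \colon S^{\mu} \to S^{\nu}$ with $[S^{\nu}:D^{\mu}] = 1$.

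I next argue that $\phi$ is injective.  The image $I := \phi(S^{\mu})$ is a quotient of $S^{\mu}$ and so has simple head $D^{\mu}$, and $[S^{\mu}:D^{\nu}] = 0$ by Proposition \ref{prop:rad}(2) (since $\nu \not\unrhd \mu$), which forces $I \subseteq \mathrm{rad}(S^{\nu})$.  As a non-zero submodule of $S^{\nu}$, the image $I$ contains $\mathrm{soc}(S^{\nu})$, which I identify below with $D^{\langle p-2\rangle}$; since Proposition \ref{prop:decnumb} caps $[S^{\mu}: D^{\langle p-2\rangle}]$ by $1$, the kernel of $\phi$ contains no copy of $D^{\langle p-2\rangle}$.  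To finish injectivity I need to verify that $\mathrm{soc}(S^{\mu})$ is simple and equal to $D^{\langle p-2\rangle}$; I do this by comparing the composition factors of $I$ and $\mathrm{rad}(S^{\nu})$ via the short exact sequence $0 \to S^{\nu} \to (S^{\mu}\downarrow_{B_{p-2}})\uparrow^{\mathcal{B}} \to S^{\mu} \to 0$ from Proposition \ref{prop:CP}(2), together with the weight-3 multiplicity bound of Proposition \ref{prop:decnumb} and the $\text{Ext}^1$-quiver data supplied by Proposition \ref{prop:MT}.  This composition-factor accounting is the main technical hurdle.

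With the embedding in hand, Theorem \ref{thm:RR} applied to $\nu$ (both $p$-regular and $p$-restricted by Proposition \ref{prop:3p}(3)) gives $S^{\nu}$ Loewy length $4$, and hence $\mathrm{rad}(S^{\nu})$ has Loewy length $3$; this bounds the Loewy length of $S^{\mu}$ above by $3$.  Because $\nu$ is $p$-restricted, Proposition \ref{prop:Young}(1) identifies $Y^{\nu}$ with the projective cover of $D^{m(\nu')}$, so $\mathrm{soc}(Y^{\nu}) = D^{m(\nu')}$; the inclusion $S^{\nu} \subseteq Y^{\nu}$ together with $\mathrm{soc}(S^{\nu}) \neq 0$ yields $\mathrm{soc}(S^{\nu}) = D^{m(\nu')}$.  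A direct Mullineux computation on the $\langle 3^p\rangle$ display, using Proposition \ref{prop:mullk}, gives $m(\nu') = \langle p-2\rangle$, and the embedding transfers this socle to $S^{\mu}$.  Loewy length exactly $3$ (rather than $1$ or $2$) follows because the head $D^{\mu}$ and socle $D^{\langle p-2 \rangle}$ are distinct simples, and the composition-factor count from the preceding step furnishes at least one additional simple factor strictly between them, ruling out the uniserial two-layer case.
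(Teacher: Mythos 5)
Your setup is correct and matches the paper's: the abacus combinatorics identifying $\sigma_{p-2}(\langle p,p-1,p-2\rangle)=\langle p,p-1,p-3\rangle$, the Carter--Payne homomorphism $\phi$ from Proposition \ref{prop:CP}, the observation that the image lands in $\mathrm{rad}(S^{\nu})$, and the derivation of the socle and the Loewy length once the embedding is in hand are all sound. The problem is the injectivity step, which is the heart of the lemma and which you do not actually prove. Your plan is to deduce $\ker\phi=0$ from the claim that $\mathrm{soc}(S^{\mu})$ is simple and isomorphic to $D^{\langle p-2\rangle}$, but that claim is precisely (half of) the conclusion of the lemma, and the method you propose for establishing it --- ``comparing the composition factors of $I$ and $\mathrm{rad}(S^{\nu})$'' using the short exact sequence, Proposition \ref{prop:decnumb}, and Proposition \ref{prop:MT} --- cannot work even in principle: a list of composition factors does not determine a socle, and the module $I$ you would be analyzing is the image of $\phi$, whose structure depends on whether $\phi$ is injective. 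So the argument is both circular and unexecuted. Note also that $\mu'=\langle 3,2,1\rangle$ is $p$-singular, so you cannot get at $\mathrm{soc}(S^{\mu})$ by dualizing to the head of $S^{\mu'}$ either; indeed the paper later uses this very lemma to determine the head of $S^{\langle 3,2,1\rangle}$.

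The paper closes this gap with a different device that you set up but do not exploit: since $\langle p,p-1,p-2\rangle$ has a \emph{unique} removable node $A$ (of residue $p-3$), the entire restriction $S^{\mu}\downarrow_{\Sigma_{3p-1}}$ coincides with $S^{\mu}\downarrow_{B_{p-2}}\cong S^{\lambda_A}$, and likewise the $B_{p-2}$-component of $S^{\nu}\downarrow_{\Sigma_{3p-1}}$ is $S^{\lambda_A}$. Regarding $\phi$ as an $F\Sigma_{3p-1}$-homomorphism therefore yields a non-zero endomorphism of the single weight~2 Specht module $S^{\lambda_A}$, where $\lambda_A$ is both $p$-regular and $p$-restricted; such a module has simple socle $D^{m((\lambda_A)')}$ occurring with multiplicity one by Propositions \ref{prop:w2} and \ref{prop:decnumb}, so any non-zero endomorphism of it is injective, and injectivity of the underlying linear map gives injectivity of $\phi$ as an $F\Sigma_{3p}$-map. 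If you replace your composition-factor accounting with this restriction argument, the rest of your write-up goes through.
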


\begin{proof}
Let $\varphi \in  \text{Hom}_{\Sigma_{3p}}(S^{\langle p,p-1,p-2 \rangle} , S^{\langle p,p-1,p-3 \rangle})$ be a non-zero homomorphism (see Proposition \ref{prop:CP}).  Let $\lambda = \langle p,p-1,p-2\rangle $ and $\mu = \langle p,p-1,p-3 \rangle$.  By parity and Theorem \ref{thm:RR}, we know $D^{\lambda}$ is in the second Loewy layer of $S^{\mu}$ since $\lambda \neq \langle p-2 \rangle = m(\mu')  $.  Observe that $\lambda$ has exactly one removable node $A$ (which has residue $p-3$).  Hence $\lambda \in \Lambda_{p-2}$.  Furthermore, $S^{\lambda}\downarrow_{\Sigma_{3p-1}} \cong 
S^{\lambda}\downarrow_{B_{p-2}}$, $S^{\lambda} \downarrow_{B_{p-2}} \cong S^{\mu} \downarrow_{B_{p-2}} \cong S^{\lambda_A}$, and $\lambda > \mu$.  Clearly, $\lambda_A$ is both $p$-regular and $p$-restricted, so $S^{\lambda_A}$ has Loewy length 3 and simple socle $D^{m((\lambda_A)')} \cong D^{m(\mu')} \downarrow_{B_{p-2}}$.  Regard $\varphi$ as an $F\Sigma_{3p-1}$-module homomorphism.  Then $\varphi: S^{\lambda_A} \rightarrow S^{\lambda_A}$ is non-zero, so $\varphi$ is one-to-one since $[S^{\lambda_A} : D^{m((\lambda_A)')}] = 1$ (see Proposition \ref{prop:decnumb}).          
\end{proof}

The following lemmas help us prove that a Specht module in $\mathcal{B}$ corresponding to a partition that is $p$-regular, not $p$-restricted, and not a hook has Loewy length at most 3.  Ultimately, these lemmas help to prove the converse of Theorem \ref{thm:RR}.

\begin{lem} \label{lem:getit}
Let $\lambda \neq \langle p,p-1,p-2 \rangle $ be a $p$-regular, not $p$-restricted, and non-hook partition in $\mathcal{B}$.  Suppose there exists a $p$-regular partition $\delta$ in $\mathcal{B}$ such that the following hold:
	\begin{enumerate}
		\item $S^{\delta} \downarrow_{B_i} \cong S^{\lambda} \downarrow_{B_i} \neq 0$ (for some $2 \leq i \leq p$) with $\delta \rhd \lambda$ and 
		\item $S^{\lambda}  \downarrow_{B_i}$ has Loewy length at most 2.
	\end{enumerate}
	Then  $S^{\lambda}$ has Loewy length at most 3.  
\end{lem}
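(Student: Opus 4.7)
The plan is to realise $S^{\lambda}$ as a submodule of the induced module $M:=(S^{\lambda}\downarrow_{B_i})\uparrow^{\mathcal{B}}$, bound $\mathrm{LL}(M)\le 4$ via Lemma~\ref{lem:sinduce}(2), and then use Frobenius reciprocity to show that $D^{\lambda}$ cannot appear in the head of $M$. That will force $S^{\lambda}\subseteq\mathrm{rad}(M)$, from which the bound $\mathrm{LL}(S^{\lambda})\le 3$ is immediate.

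First, I would set up the appropriate short exact sequence. Since $\delta\rhd\lambda$ and $S^{\delta}\downarrow_{B_i}\cong S^{\lambda}\downarrow_{B_i}$, the Martin--Russell description of restriction from $\mathcal{B}$ to $B_i$ recalled in Section~\ref{section:prelim} forces $\lambda=\sigma_i(\delta)$. Applying Proposition~\ref{prop:CP}(2) with $\delta$ in place of ``$\lambda$'' then yields
$$0\longrightarrow S^{\lambda}\longrightarrow M\longrightarrow S^{\delta}\longrightarrow 0,$$
where $M=(S^{\delta}\downarrow_{B_i})\uparrow^{\mathcal{B}}\cong(S^{\lambda}\downarrow_{B_i})\uparrow^{\mathcal{B}}$. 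By hypothesis (2) and Lemma~\ref{lem:sinduce}(2), $\mathrm{LL}(M)\le 4$.

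The crucial step is to show $S^{\lambda}\subseteq\mathrm{rad}(M)$. By Frobenius reciprocity together with block projection,
$$\mathrm{Hom}_{\mathcal{B}}(M,D^{\lambda})\;\cong\;\mathrm{Hom}_{B_i}\!\left(S^{\lambda}\downarrow_{B_i},\,D^{\lambda}\downarrow_{B_i}\right).$$
Because $\lambda=\sigma_i(\delta)$ with $\delta$ $p$-regular, the result of Martin and Russell quoted in Section~\ref{section:prelim} (Theorem~3.9 of \cite{MR}) gives $D^{\lambda}\downarrow_{B_i}=0$, so $D^{\lambda}$ is not a quotient of $M$. Since $\lambda$ is $p$-regular, the head of $S^{\lambda}$ is the simple module $D^{\lambda}$; if $S^{\lambda}$ were not contained in $\mathrm{rad}(M)$, the composite $S^{\lambda}\hookrightarrow M\twoheadrightarrow M/\mathrm{rad}(M)$ would be nonzero, forcing $D^{\lambda}$ to appear as a summand of the semisimple head of $M$ and contradicting the vanishing above. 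Hence $S^{\lambda}\subseteq\mathrm{rad}(M)$; since $\mathrm{rad}^{j}(\mathrm{rad}(M))=\mathrm{rad}^{j+1}(M)$ for all $j\geq 0$, we conclude $\mathrm{LL}(S^{\lambda})\le\mathrm{LL}(\mathrm{rad}(M))\le\mathrm{LL}(M)-1\le 3$.

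The only point requiring care is the identification $\lambda=\sigma_i(\delta)$ and the resulting vanishing $D^{\lambda}\downarrow_{B_i}=0$; both are forced by the hypotheses that $\delta$ is $p$-regular, strictly dominates $\lambda$, and restricts to the same module on $B_i$. The exclusion $\lambda\neq\langle p,p-1,p-2\rangle$ plays no role in this particular argument; it merely records the case of Lemma~\ref{lem:nicerest} for which no suitable $\delta$ and $i$ can be supplied, and that case is handled separately by Lemma~\ref{lem:help}.
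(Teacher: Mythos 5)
Your proposal is correct and follows essentially the same route as the paper: both realize $S^{\lambda}$ as a submodule of $(S^{\delta}\downarrow_{B_i})\uparrow^{\mathcal{B}}$, bound the Loewy length of that induced module by 4 via Lemma \ref{lem:sinduce}, and use Frobenius reciprocity with $D^{\lambda}\downarrow_{B_i}=0$ to force $S^{\lambda}\subseteq\mathrm{rad}\bigl((S^{\delta}\downarrow_{B_i})\uparrow^{\mathcal{B}}\bigr)$. Your write-up merely makes explicit the identification $\lambda=\sigma_i(\delta)$ and the appeal to Theorem 3.9 of \cite{MR}, which the paper leaves implicit.
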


\begin{proof}
We have   
$D^{\delta} \downarrow_{B_i} \neq 0$ and $D^{\lambda} \downarrow_{B_i} = 0$.  Now 
$(S^{\delta} \downarrow_{B_i})\uparrow^{\mathcal{B}}$ has Loewy length at most 4 (by Lemma \ref{lem:sinduce}).  Observe that $(S^{\delta} \downarrow_{B_i})\uparrow^{\mathcal{B}}$ has a Specht filtration $S^{\delta} + S^{\lambda}$ with $S^{\delta}$ at the top.  By Frobenius reciprocity, $D^{\lambda}$ is not in the head of $(S^{\delta} \downarrow_{B_i})\uparrow^{\mathcal{B}}$ since $D^{\lambda} \downarrow_{B_i} =0$.  Hence 
$S^{\lambda} \subseteq \text{rad} \left( (S^{\delta} \downarrow_{B_i})\uparrow^{\mathcal{B}} \right)$.  So $S^{\lambda}$ has Loewy length strictly less than that of $(S^{\delta} \downarrow_{B_i}) \uparrow^{\mathcal{B}}$.  We conclude that $S^{\lambda}$ has Loewy length at most 3. 
\end{proof}

We are now ready to prove the converse of Theorem \ref{thm:RR}.  It suffices to prove the converse is true for $p$-regular partitions in $\mathcal{B}$ that are not $p$-restricted and also not hooks by Propositions \ref{prop:james} and \ref{prop:peel}.

\begin{thm} \label{thm:gotit}
Let $\lambda$ be a partition in $\mathcal{B}$ that is $p$-regular but not $p$-restricted.  Then $S^{\lambda}$ has Loewy length at most 3.
\end{thm}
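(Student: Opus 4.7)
The plan is to reduce, via Propositions \ref{prop:james} and \ref{prop:peel} together with Theorem \ref{thm:RR}, to the case where $\lambda$ is $p$-regular, not $p$-restricted, and not a hook, so that by Proposition \ref{prop:3p}(8) its $\langle 3^p \rangle$ display has one of the shapes $\langle i, p \rangle$ ($1 \leq i \leq p-1$), $\langle i, j \rangle$ ($1 \leq j < i \leq p$), or the single exceptional partition $\langle p, p-1, p-2 \rangle$. That last partition is handled immediately by Lemma \ref{lem:help}, which already delivers Loewy length $3$.

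For the remaining partitions I plan to invoke Lemma \ref{lem:getit}: for each such $\lambda$ I will produce a $p$-regular $\delta \in \mathcal{B}$ strictly dominating $\lambda$ and an integer $2 \leq k \leq p$ with $S^{\delta}\downarrow_{B_k} \cong S^{\lambda}\downarrow_{B_k}$, and check that the common restriction has Loewy length at most $2$. The recipe is a uniform ``shift the moved bead one runner to the right'' on the abacus: for $\lambda = \langle i, p \rangle$ with $i \leq p-2$, take $\delta = \langle i+1, p \rangle$ and $k = i+1$; for $\lambda = \langle p-1, p \rangle$, take $\delta = \langle p, p-1 \rangle$ and $k = p$; for $\lambda = \langle i, j \rangle$ with $j < i < p$, take $\delta = \langle i+1, j \rangle$ and $k = i+1$; and for $\lambda = \langle p, j \rangle$ with $j \leq p-2$, take $\delta = \langle p, j+1 \rangle$ and $k = j+1$. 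In each subcase I will verify by direct abacus inspection (as in the proof of Lemma \ref{lem:nicerest}) that $\sigma_k(\delta) = \lambda$, that $\delta$ is $p$-regular, and that the restriction $\tilde\lambda$ is $p$-regular but not $p$-restricted, so Proposition \ref{prop:w2} delivers the Loewy length bound on $S^{\tilde\lambda}$ needed by Lemma \ref{lem:getit}.

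The one partition not covered by this recipe is $\lambda = \langle p, p-1 \rangle$, which is the dominance-maximum of each of its two restriction pairs, so no dominating $\delta$ with the same restriction exists. For it the plan is to mimic the argument of Lemma \ref{lem:help}: by Proposition \ref{prop:CP}(1) there is a non-zero homomorphism $\varphi : S^{\langle p, p-1 \rangle} \to S^{\langle p-1, p \rangle}$, and the preceding case $\langle p-1, p \rangle$ has already produced a Loewy length bound of $3$ on the target. To conclude it suffices to check that $\varphi$ is injective; this will follow by restricting to $B_p$, where $\varphi$ becomes a non-zero endomorphism of the common Specht module $S^{\tilde\lambda}$, and a multiplicity-one composition factor argument (as in the proof of Lemma \ref{lem:help}) forces it to be one-to-one. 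The main technical hurdle is precisely this last step, because $\tilde\lambda$ is $p$-regular but not $p$-restricted, so the socle structure of $S^{\tilde\lambda}$ is not as transparent as in Lemma \ref{lem:help}, and some extra analysis of the defect-$2$ block $B_p$ is required to confirm the multiplicity-one simple socle that makes the restriction argument go through.
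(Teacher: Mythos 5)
Your reduction and your main case analysis track the paper's proof almost exactly: the same reduction to non-hook, $p$-regular, non-$p$-restricted partitions via Propositions \ref{prop:james}, \ref{prop:peel} and Lemma \ref{lem:help}, the same ``shift the moved bead one runner'' pairs $(\delta,\lambda)$, and the same appeal to Lemma \ref{lem:getit} after checking, as in Lemma \ref{lem:nicerest}, that the common restriction has Loewy length at most $2$. Your treatment of $\langle p-1,p\rangle$ via $\delta=\langle p,p-1\rangle$ is also fine, since the common restriction to $B_p$ is irreducible and hence certainly has Loewy length at most $2$.

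The gap is in your final case $\lambda=\langle p,p-1\rangle$. Your plan is to embed $S^{\langle p,p-1\rangle}$ into $S^{\langle p-1,p\rangle}$ and prove injectivity of the Carter--Payne homomorphism $\varphi$ by restricting to $B_p$, imitating Lemma \ref{lem:help}. That imitation breaks down: in Lemma \ref{lem:help} the source $\langle p,p-1,p-2\rangle$ has a \emph{unique} removable node, so its entire restriction to $\Sigma_{3p-1}$ is concentrated in a single block and the restricted map is automatically a non-zero endomorphism there. By contrast $\langle p,p-1\rangle$ has two removable nodes (Proposition \ref{prop:nremovables}(3)), so $S^{\langle p,p-1\rangle}\downarrow_{\Sigma_{3p-1}}$ has a second block component in $B_{p-1}$; you have not shown that the $B_p$-component of $\varphi\downarrow$ is non-zero, and even granting that, a non-zero kernel of $\varphi$ could a priori restrict entirely into the $B_{p-1}$ component, so injectivity does not follow. (Incidentally, the difficulty you flag --- the socle of $S^{\tilde\lambda}$ --- is not the real obstruction: $S^{\langle p,p-1\rangle}\downarrow_{B_p}$ is irreducible by Proposition \ref{prop:spechtsimpleinduce}(3), so any non-zero endomorphism of it is automatically an isomorphism.) The paper sidesteps all of this: since the restriction is irreducible, $\bigl(S^{\langle p,p-1\rangle}\downarrow_{B_p}\bigr)\uparrow^{\mathcal{B}}$ has Loewy length $3$ by Lemma \ref{lem:sinduce}(1), and $S^{\langle p,p-1\rangle}$ is the top quotient of its Specht filtration, hence has Loewy length at most $3$; no injectivity of any homomorphism is needed. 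Replacing your last step by this quotient argument closes the gap.
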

\begin{proof}
 Let $\lambda$ be a partition in $\mathcal{B}$ that is $p$-regular but not $p$-restricted.  By Lemma \ref{lem:help}, we may assume $\lambda \neq \langle p,p-1,p-2 \rangle$.  Further, we may assume $\lambda$ is not a hook (by Proposition \ref{prop:peel}).  From Lemma \ref{lem:nicerest}, we can find a weight 2 block $B_i$ ($2 \leq i \leq p$) of $F\Sigma_{3p-1}$ such that $S^{\lambda}\downarrow_{B_i}$ has Loewy length at most 2, so $(S^{\lambda}\downarrow_{B_i}) \uparrow^{\mathcal{B}}$ has Loewy length at most 4.  In particular, we have the following pairs of restrictions.     
 	\begin{enumerate}
		\item $S^{ \langle i+1,p \rangle }\downarrow_{B_{i+1}} \cong S^{\langle i,p \rangle} \downarrow_{B_{i+1}} $ for $1 \leq i \leq p-2 $
		\item $S^{\langle j+2,j \rangle}\downarrow_{B_{j+2}}  \cong S^{\langle j+1,j\rangle } \downarrow_{B_{j+2}}$ for $1 \leq j \leq p-2$
		\item $S^{\langle i+1,j \rangle }\downarrow_{B_{i+1}} \cong S^{\langle i,j \rangle}\downarrow_{B_{i+1}}$ for $1 \leq j < i \leq p-1$, $i - j \geq 2$
		\item $S^{\langle p,j+1 \rangle} \downarrow_{B_{j+1}} \cong S^{\langle p, j \rangle} \downarrow_{B_{j+1}}$ for $1 \leq j \leq p-2$
	\end{enumerate}
   
\noindent Define the sets:
\begin{enumerate}
		\item [] $E_1 = \{\langle i,p \rangle: 1 \leq i \leq p-2 \}$, 
		\item [] $E_2 = \{ \langle j+1, j \rangle : 1 \leq j \leq p-2 \}$, 
		\item [] $E_3= \{\langle i,j \rangle : 1 \leq j < i \leq p-1, \ i-j \geq 2\},$ and 
		\item [] $E_4  = \{ \langle p,i\rangle : 1 \leq i \leq p-2\}$  
\end{enumerate}

\noindent If $\lambda \in E_1\cup E_2 \cup E_3 \cup E_4$ then $S^{\lambda}$ has Loewy length at most 3 by Lemma \ref{lem:getit} and the pairs of restrictions given above.  We just need to prove the theorem for the partitions $\langle p,p-1 \rangle $ and $\langle p-1, p \rangle$.  We have $S^{\langle p,p-1 \rangle} \downarrow_{B_p} \cong S^{\langle p-1,p \rangle} \downarrow_{B_p}$ is irreducible by Proposition \ref{prop:spechtsimpleinduce}(3).  Hence $S^{\langle p,p-1 \rangle}$ and $S^{\langle p-1,p \rangle}$ both have Loewy length at most 3 by Lemma \ref{lem:sinduce}(1) and since $\left(S^{\langle p,p-1\rangle} \downarrow_{B_p} \right)\uparrow^{\mathcal{B}} \sim S^{\langle p,p-1 \rangle} + S^{\langle p-1,p \rangle}$.          
\end{proof}

\begin{cor} \label{cor:nscondition}
If $S^{\lambda}$ is in $\mathcal{B}$ then $S^{\lambda}$ has Loewy length at most 4.  Moreover, a partition $\lambda$ in $\mathcal{B}$ is $p$-regular and $p$-restricted if and only if $S^{\lambda}$ has Loewy length 4.   
\end{cor}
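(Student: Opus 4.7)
The plan is to treat the corollary as an assembly of Theorems \ref{thm:RR}, \ref{thm:gotit}, together with Propositions \ref{prop:james}, \ref{prop:peel}, and \ref{prop:3p}(6), (7), and (8). The partitions in $\mathcal{B}$ split cleanly into four classes according to regularity/restrictedness, and I would handle each class in turn, noting that the Loewy length of a module is invariant under the dualities $M \mapsto M^{*}$ and $M \mapsto M \otimes sgn$.

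First I would fix $\lambda$ in $\mathcal{B}$ and split into cases. If $\lambda$ is both $p$-regular and $p$-restricted, then by Theorem \ref{thm:RR}, $S^{\lambda}$ has Loewy length exactly $4$. If $\lambda$ is $p$-regular but not $p$-restricted, Theorem \ref{thm:gotit} gives Loewy length at most $3$. If $\lambda$ is neither $p$-regular nor $p$-restricted, then Proposition \ref{prop:3p}(6) forces $\lambda$ to be a hook (of the form $(p+i,1^{2p-i})$), and Proposition \ref{prop:peel} bounds its Loewy length by $2$.

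The remaining case is when $\lambda$ is $p$-restricted but not $p$-regular. Here I would apply Proposition \ref{prop:james}(2): $S^{\lambda} \otimes sgn \cong (S^{\lambda'})^{*}$. Since $\lambda$ is $p$-restricted, $\lambda'$ is $p$-regular; and since $\lambda$ is not $p$-regular, $\lambda'$ is not $p$-restricted. Therefore $S^{\lambda'}$ has Loewy length at most $3$ by Theorem \ref{thm:gotit}. Because tensoring with the one-dimensional module $sgn$ and taking $F\Sigma_{3p}$-duals both preserve Loewy length, it follows that $S^{\lambda}$ has Loewy length at most $3$.

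Combining the four cases gives the bound of $4$ on the Loewy length of every Specht module in $\mathcal{B}$, and also shows that Loewy length $4$ occurs only in the first case. Theorem \ref{thm:RR} supplies the reverse implication, so the characterization of Loewy length $4$ follows. I do not anticipate any genuine obstacle; the only care needed is in the $p$-restricted-but-not-$p$-regular case, where one must explicitly invoke the sign-twisted duality of Proposition \ref{prop:james} to transfer the bound from $S^{\lambda'}$ back to $S^{\lambda}$.
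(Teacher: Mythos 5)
Your proposal is correct and matches the paper's own argument: the paper likewise deduces the corollary by combining Theorem \ref{thm:RR} for the $p$-regular and $p$-restricted case, Theorem \ref{thm:gotit} for the $p$-regular non-$p$-restricted case, Proposition \ref{prop:peel} together with Proposition \ref{prop:3p}(6) for the doubly singular (hook) case, and the sign-twisted duality of Proposition \ref{prop:james} to transfer the bound to the $p$-restricted non-$p$-regular case. No gaps.
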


We conclude this subsection by classifying the Specht modules in $\mathcal{B}$ having Loewy length 2.  This classification will aid in describing the socle of a Specht module in $\mathcal{B}$ that has Loewy length 3.

\begin{lem} \label{lem:spechtlength2help}
Let $\lambda$ be a $p$-regular partition in $\mathcal{B}$ and suppose there is $2 \leq i \leq p$ so that 
$S^{\lambda}\downarrow_{B_i} \cong S^{\tilde{\lambda}}$.

\begin{enumerate}
	\item  If $D^{\delta}$ is in the socle of $S^{\sigma_i(\lambda)}$ then 
		$\text{Hom}_{B_i}(D^{\delta}\downarrow_{B_i}, S^{\tilde{\lambda}}) \neq 0$.
	\item If $\lambda \in \Lambda_i$, $\sigma_i(\lambda)$ is $p$-regular, and $S^{\tilde{\lambda}}$ is irreducible then $S^{\lambda}$ and 			     $S^{\sigma_i(\lambda)}$ both have Loewy length at most 2. 
	\item If $S^{\lambda}$ has Loewy length 2 and $D^{\lambda} \downarrow_{B_i} =0$ then $S^{\tilde{\lambda}}$ is irreducible.  
\end{enumerate}
\end{lem}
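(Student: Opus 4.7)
The plan is to treat the three parts in order, with the short exact sequence
\[
0 \to S^{\sigma_i(\lambda)} \to (S^{\lambda}\downarrow_{B_i})\uparrow^{\mathcal{B}} \to S^{\lambda} \to 0
\]
from Proposition \ref{prop:CP}(2) serving as the common tool. For part (1) I would start from any nonzero embedding $D^{\delta} \hookrightarrow S^{\sigma_i(\lambda)}$ supplied by the hypothesis, compose with the inclusion $S^{\sigma_i(\lambda)} \hookrightarrow S^{\tilde{\lambda}}\uparrow^{\mathcal{B}}$ above to produce a nonzero map $D^{\delta} \to S^{\tilde{\lambda}}\uparrow^{\mathcal{B}}$, and then invoke Frobenius reciprocity (projecting onto the appropriate blocks) to transport this to a nonzero element of $\text{Hom}_{B_i}(D^{\delta}\downarrow_{B_i}, S^{\tilde{\lambda}})$.

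For part (2), the hypothesis $\lambda \in \Lambda_i$ forces $D^{\lambda}\downarrow_{B_i} \cong D^{\tilde{\lambda}}$ to be a simple module in $B_i$, and this simple must agree with the irreducible Specht $S^{\tilde{\lambda}}$. Thus $(S^{\lambda}\downarrow_{B_i})\uparrow^{\mathcal{B}} = D^{\tilde{\lambda}}\uparrow^{\mathcal{B}}$, which by Proposition \ref{prop:MT}(2) together with the self-duality recorded just before it has Loewy length $3$ with simple head and simple socle both isomorphic to $D^{\lambda}$. Plugging this into the displayed short exact sequence, the quotient onto $S^{\lambda}$ must carry the head of $D^{\tilde{\lambda}}\uparrow^{\mathcal{B}}$ isomorphically onto the simple head $D^{\lambda}$ of $S^{\lambda}$ (which exists because $\lambda$ is $p$-regular), so $S^{\sigma_i(\lambda)} \subseteq \text{rad}(D^{\tilde{\lambda}}\uparrow^{\mathcal{B}})$, a module of Loewy length $2$; this gives the bound on $S^{\sigma_i(\lambda)}$. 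Dually, the simple socle $D^{\lambda}$ of $D^{\tilde{\lambda}}\uparrow^{\mathcal{B}}$ is contained in every nonzero submodule and so in $S^{\sigma_i(\lambda)}$, and quotienting out a submodule containing the socle of a Loewy length $3$ module with simple socle drops the Loewy length to at most $2$.

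For part (3) I would use that Loewy length $2$ forces $\text{rad}(S^{\lambda})$ to be semisimple: write $\text{rad}(S^{\lambda}) = \bigoplus_j D^{\mu_j}$ with each $\mu_j$ $p$-regular. Restricting the exact sequence $0 \to \text{rad}(S^{\lambda}) \to S^{\lambda} \to D^{\lambda} \to 0$ to $B_i$ and using the assumption $D^{\lambda}\downarrow_{B_i} = 0$ yields $S^{\tilde{\lambda}} \cong \bigoplus_j D^{\mu_j}\downarrow_{B_i}$, and each summand is either $0$ or a simple module of $B_i$ by the branching statement recorded in the excerpt; thus $S^{\tilde{\lambda}}$ is semisimple. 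To upgrade semisimplicity to simplicity I would split cases: if $\tilde{\lambda}$ is $p$-regular, the simple head $D^{\tilde{\lambda}}$ coincides with $S^{\tilde{\lambda}}$; if $\tilde{\lambda}$ is $p$-restricted (but $p$-singular), the simple socle $D^{m(\tilde{\lambda}')}$ obtained via Proposition \ref{prop:james}(2) applied to the $p$-regular $S^{\tilde{\lambda}'}$ plays the same role.

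The main obstacle is the last step of part (3): one must rule out the possibility that $\tilde{\lambda}$ is simultaneously $p$-singular and not $p$-restricted, because in that case neither the head nor the socle of $S^{\tilde{\lambda}}$ is \emph{a priori} simple. I expect this to be settled by a short abacus inspection showing no such $\tilde{\lambda}$ arises in $B_i$ for $2 \leq i \leq p$ and $p \geq 5$, or by appealing to the indecomposability of Specht modules in odd characteristic, under which any semisimple Specht is automatically simple; either resolution closes the argument.
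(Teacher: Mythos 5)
Your proposal is correct and follows essentially the same route as the paper: part (1) is the same Frobenius reciprocity argument, part (2) rests on the same facts (the induced module $D^{\tilde{\lambda}}\uparrow^{\mathcal{B}}$ has Loewy length $3$ with simple head and socle $D^{\lambda}$, and $S^{\sigma_i(\lambda)}$ lies in its radical), and part (3) is closed exactly as the paper closes it, by the indecomposability of Specht modules in odd characteristic, which makes your semisimple $S^{\tilde{\lambda}}$ automatically simple and renders the case analysis on $\tilde{\lambda}$ unnecessary. The only cosmetic difference is in the second half of (2), where you use that the simple socle of $D^{\tilde{\lambda}}\uparrow^{\mathcal{B}}$ lies in every nonzero submodule, hence in $S^{\sigma_i(\lambda)}$, which therefore contains $\text{rad}^2$, whereas the paper instead applies part (1) to show that no socle constituent of a reducible $S^{\lambda}$ restricts nontrivially to $B_i$.
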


\begin{proof}
We have a short exact sequence.
$$0 \rightarrow S^{\sigma_i(\lambda)} \rightarrow S^{\tilde{\lambda}}\uparrow^{\mathcal{B}} \rightarrow S^{\lambda} \rightarrow 0 $$  
(1)  Note that $D^{\delta}$ is in the socle of $S^{\tilde{\lambda}} \uparrow^{\mathcal{B}}$ if and only if 
$$0 \neq \text{Hom}_{\Sigma_{3p}} (D^{\delta},  S^{\tilde{\lambda}} \uparrow^{\mathcal{B}}) \cong 
\text{Hom}_{B_i}(D^{\delta}\downarrow_{B_i}, S^{\tilde{\lambda}}).$$  The result now follows since $\text{soc}\left(S^{\sigma_i(\lambda)} \right) \subseteq \text{soc}( S^{\tilde{\lambda}} \uparrow^{\mathcal{B}} )$.  We proceed to prove (2).  Since $S^{\tilde{\lambda}}$ is irreducible, $S^{\sigma_i(\lambda)}$ and $S^{\lambda}$ both have Loewy length at most 3 by Lemma \ref{lem:sinduce}(1).  Now, we have $S^{\sigma_i(\lambda)} \subseteq \text{rad} (S^{\tilde{\lambda}} \uparrow^{\mathcal{B}})$ since $S^{\sigma_i(\lambda)} \subseteq S^{\tilde{\lambda}} \uparrow^{\mathcal{B}}$ and $D^{\sigma_i(\lambda)} \downarrow_{B_i} = 0$. So $S^{\sigma_i(\lambda)}$ has Loewy length at most 2 since $S^{\tilde{\lambda}} \uparrow^{\mathcal{B}}$ has Loewy length 3.  Now suppose $D^{\delta}$ is in the socle of $S^{\lambda}$.  If $S^{\lambda}$ is irreducible then we are done.  If $S^{\lambda}$ is reducible then $D^{\delta} \downarrow_{B_i} = 0$ since $D^{\lambda}\downarrow_{B_i} \cong D^{\tilde{\lambda}}$ and $[S^{\tilde{\lambda}} : D^{\tilde{\lambda}}] = 1$. (i.e., the only composition factor of $S^{\lambda}$ that restricts to $D^{\tilde{\lambda}}$ is $D^{\lambda}$.)  So $D^{\delta}$ is not in the socle of $S^{\tilde{\lambda}} \uparrow^{\mathcal{B}}$ by part (1).  We conclude that $S^{\lambda}$ has Loewy length at most 2.  Finally, (3) follows since the Specht modules are indecomposable over odd characteristic.      
\end{proof}

The next lemma follows from the restrictions (1), (2), (3), and (4) from the proof of Theorem \ref{thm:gotit} and since $S^{\langle p,p-1 \rangle} \downarrow_{B_p} \cong S^{\langle p-1,p\rangle} \downarrow_{B_p}$.  
\begin{lem} \label{lem:killhead}
If $\lambda \notin \{\langle p,p-1\rangle, \langle p,p-1,p-2 \rangle \}$ is a partition in $\mathcal{B}$ that is $p$-regular, not $p$-restricted, and not a hook then there is $2 \leq i \leq p$ so that $S^{\lambda} \downarrow_{B_i}$ is non-zero and $D^{\lambda} \downarrow_{B_i} = 0$.   
\end{lem}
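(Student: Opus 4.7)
The plan is to realize each admissible $\lambda$ as $\sigma_i(\mu)$ for some $p$-regular companion $\mu\in\mathcal{B}$ with $\mu\rhd\lambda$, and then invoke the branching result of Martin--Russell recalled in Section~\ref{section:prelim} (Theorem~3.9 of \cite{MR}): since $\sigma_i(\mu)=\lambda$ is $p$-regular by hypothesis, we get $D^{\lambda}\downarrow_{B_i}=0$. The other half of the conclusion, $S^{\lambda}\downarrow_{B_i}\neq 0$, is immediate from the defining isomorphism $S^{\lambda}\downarrow_{B_i}\cong S^{\mu}\downarrow_{B_i}$ together with the fact that $\mu$ has a removable bead on runner $i$.

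The case work is driven by Proposition~\ref{prop:3p}(8): once the two partitions $\langle p,p-1\rangle$ and $\langle p,p-1,p-2\rangle$ are removed, every admissible $\lambda$ has $\langle 3^p\rangle$-display of the form $\langle i,p\rangle$ with $1\le i\le p-1$ or of the form $\langle i,j\rangle$ with $1\le j<i\le p$ and $(i,j)\neq(p,p-1)$. I would distribute these possibilities among the four restriction identities displayed in the proof of Theorem~\ref{thm:gotit}: restriction~(1) realizes $\lambda=\langle i,p\rangle$ for $i\le p-2$ as $\sigma_{i+1}(\langle i+1,p\rangle)$; restriction~(2) realizes $\lambda=\langle j+1,j\rangle$ for $1\le j\le p-2$ as $\sigma_{j+2}(\langle j+2,j\rangle)$; restriction~(3) realizes $\lambda=\langle i,j\rangle$ for $j<i\le p-1$ with $i-j\ge 2$ as $\sigma_{i+1}(\langle i+1,j\rangle)$; and restriction~(4) realizes $\lambda=\langle p,j\rangle$ for $j\le p-2$ as $\sigma_{j+1}(\langle p,j+1\rangle)$. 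The one admissible case left untouched by (1)--(4) is $\lambda=\langle p-1,p\rangle$, and for this one I would use the auxiliary relation $S^{\langle p,p-1\rangle}\downarrow_{B_p}\cong S^{\langle p-1,p\rangle}\downarrow_{B_p}$ noted in the hint to identify $\langle p-1,p\rangle=\sigma_p(\langle p,p-1\rangle)$.

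The remaining verifications in each of the five sub-cases are routine abacus bookkeeping: $\mu$ is $p$-regular in every instance (it appears on the list of $p$-regular partitions in Proposition~\ref{prop:3p}(8)), $\mu$ has the required removable bead on the prescribed runner (read off directly from the $\langle 3^p\rangle$-display), and $\mu\rhd\lambda$ so that $\sigma_i(\mu)$ is indeed $\lambda$ rather than its partner in the pair; for instance, in restriction~(1) the partitions $\langle i+1,p\rangle$ and $\langle i,p\rangle$ translate into $(p+i+1,p+1,1^{p-i-2})$ and $(p+i,p+1,1^{p-i-1})$, the first clearly dominating the second. The only real obstacle, and the point that requires the most care, is confirming that the five sub-cases exhaust Proposition~\ref{prop:3p}(8) once the hooks and the two excluded partitions are set aside; in particular, $\langle p,p-1\rangle$ and $\langle p,p-1,p-2\rangle$ are precisely the admissible-looking partitions that cannot be written as $\sigma_i(\mu)$ for any $\mu\in\mathcal{B}$ dominating them, which is the reason they must be excluded from the statement.
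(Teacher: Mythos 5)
Your proposal is correct and follows essentially the same route as the paper, which derives the lemma directly from the four restriction identities in the proof of Theorem \ref{thm:gotit} together with $S^{\langle p,p-1\rangle}\downarrow_{B_p}\cong S^{\langle p-1,p\rangle}\downarrow_{B_p}$, using that $\lambda=\sigma_i(\mu)$ $p$-regular forces $D^{\lambda}\downarrow_{B_i}=0$. You have merely written out the case check and the exhaustiveness of Proposition \ref{prop:3p}(8) that the paper leaves implicit.
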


Consider the following sets.
	\begin{enumerate}
		\item [] $T_1 = \{\langle  i \rangle : 1 \leq i \leq p-1 \}$
		\item [] $T_2 = \{ \langle i,i,i \rangle : 2 \leq i \leq p\}$
		\item [] $T_3 = \{ \langle i,i \rangle : 1 \leq i \leq p \}$
		\item [] $T_4 = \{ \langle p, p-1 \rangle, \langle p-1,p \rangle, \langle 2,2,1 \rangle , \langle 2,1,1 \rangle \}$
	\end{enumerate}
Let $T  := T_1 \cup T_2 \cup T_3 \cup T_4$.  The next theorem classifies the Specht modules in $\mathcal{B}$ that have Loewy length 2.

\begin{thm} \label{thm:spechtlength2}
Let $\lambda$ be a partition in $\mathcal{B}$.  Then $S^{\lambda}$ has Loewy length 2 if and only if $\lambda \in T$.  
\end{thm}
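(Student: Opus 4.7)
The plan is to prove both directions by peeling off cases according to hook structure and $p$-regularity/restriction.

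For the forward direction, the families $T_1, T_2, T_3$ consist entirely of hooks by Proposition \ref{prop:3p}(7), so Proposition \ref{prop:peel} ensures every reducible hook in $\mathcal{B}$ has Loewy length at most $2$; the only irreducible hooks in $\mathcal{B}$ are $\langle p \rangle = (3p)$ and $\langle 1,1,1 \rangle = (1^{3p})$, which are excluded from $T$. For the two $p$-regular elements $\langle p,p-1\rangle, \langle p-1,p\rangle \in T_4$, Proposition \ref{prop:spechtsimpleinduce}(3) realises them as the top and bottom Specht factors of $S^{\langle p-1,p-1\rangle}\uparrow^{\mathcal{B}}$ with $\langle p-1,p-1\rangle \in X_p$, and Lemma \ref{lem:spechtlength2help}(2) bounds both Loewy lengths by $2$. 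The two remaining members $\langle 2,2,1\rangle, \langle 2,1,1\rangle$ of $T_4$ are $p$-restricted; by Proposition \ref{prop:james}(2) the isomorphism $S^{\lambda} \cong (S^{\lambda'} \otimes \mathrm{sgn})^{*}$ preserves Loewy length, and a direct abacus computation identifies $\{\langle 2,2,1\rangle', \langle 2,1,1\rangle'\}$ with $\{\langle p,p-1\rangle, \langle p-1,p\rangle\}$, reducing to the preceding case.

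For the converse, suppose $S^{\lambda}$ has Loewy length $2$. Corollary \ref{cor:nscondition} prevents $\lambda$ from being simultaneously $p$-regular and $p$-restricted, and Theorem \ref{thm:jm} rules out $\lambda \in \{(3p), (1^{3p})\}$. Proposition \ref{prop:3p} then partitions the remaining cases. Hooks, and partitions that are neither $p$-regular nor $p$-restricted, land in $T_1 \cup T_2 \cup T_3$ by parts (6)--(7). If $\lambda$ is $p$-regular, not $p$-restricted, and not a hook (Proposition \ref{prop:3p}(8)), Lemma \ref{lem:help} settles $\langle p,p-1,p-2\rangle$ (Loewy length $3$, contradiction). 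For any other such $\lambda$ outside $\{\langle p,p-1\rangle, \langle p-1,p\rangle\}$, Lemma \ref{lem:killhead} provides a defect $2$ block $B_i$ with $S^{\lambda}\downarrow_{B_i} \neq 0$ and $D^{\lambda}\downarrow_{B_i} = 0$; Lemma \ref{lem:spechtlength2help}(3) then forces $S^{\tilde{\lambda}}$ to be irreducible, i.e. $\tilde{\lambda} \in X_i$. A case-by-case check, identifying $\tilde{\lambda} = \Theta_i(\lambda)$ via Proposition \ref{prop:images} (and analogous abacus pushes for the $\langle i,p\rangle$ and $\langle p,j\rangle$ forms) and comparing with the lists of Lemma \ref{lem:simpledefect2}, shows $\tilde{\lambda} \notin X_i$ in every remaining case, leaving only $\{\langle p,p-1\rangle, \langle p-1,p\rangle\}$. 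Finally, if $\lambda$ is $p$-restricted but not $p$-regular (and not a hook), Proposition \ref{prop:james}(2) identifies the Loewy length of $S^{\lambda}$ with that of $S^{\lambda'}$, reducing to the preceding subcase and placing $\lambda \in \{\langle 2,2,1\rangle, \langle 2,1,1\rangle\}$ via the same conjugation identification used in the forward direction.

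The main obstacle is the case-by-case verification in the converse for the forms in Proposition \ref{prop:3p}(8): for each such $\lambda$ outside the two exceptions one must select the correct defect $2$ block $B_i$ from Lemma \ref{lem:killhead}, transport $\tilde{\lambda}$ into the reshaped $\langle 2, 3^{p-i}, 2^{i-2}, 3\rangle$ notation of Proposition \ref{prop:images}, and confirm non-membership in the short list $X_i$ of Lemma \ref{lem:simpledefect2}. This verification is routine once the abacus manipulations of Lemma \ref{lem:nicerest} are set up, but it is the only part of the argument that does not reduce to a single invocation of a prior result.
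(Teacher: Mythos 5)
Your proposal is correct and follows essentially the same route as the paper: Peel's theorem for the hooks, Lemma \ref{lem:spechtlength2help}(2) applied to $S^{\langle p-1,p-1\rangle}\uparrow^{\mathcal{B}}$ for $\langle p,p-1\rangle$ and $\langle p-1,p\rangle$, conjugation--sign duality for $\langle 2,2,1\rangle$ and $\langle 2,1,1\rangle$, and Lemmas \ref{lem:killhead} and \ref{lem:spechtlength2help}(3) for the converse. The only difference is cosmetic: the case-by-case verification that $\tilde{\lambda}\notin X_i$, which you flag as the main obstacle, is unnecessary because the ``furthermore'' clause of Proposition \ref{prop:spechtsimpleinduce} already lists every $\lambda$ in $\mathcal{B}$ whose restriction to some $B_i$ is irreducible, and the only non-hook, $p$-regular, non-$p$-restricted entries on that list are $\langle p,p-1\rangle$ and $\langle p-1,p\rangle$.
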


\begin{proof}
Let $\lambda \in T$.  If $\lambda \in T_1 \cup T_2 \cup T_3$ then $\lambda$ is a hook partition different from both $(3p)$ and $(1^{3p})$.  So $S^{\lambda}$ has Loewy length 2 by Proposition \ref{prop:peel}.  Assume $\lambda \in T_4$.  If $\lambda = \langle p, p-1 \rangle$ then $\sigma_p(\lambda) = \langle p-1,p \rangle$.  Now $S^{\lambda} \downarrow_{B_p}$ is irreducible by Proposition \ref{prop:spechtsimpleinduce}(3). Hence, by Lemma \ref{lem:spechtlength2help}(2), $S^{\langle p,p-1 \rangle}$ and $S^{\langle p-1,p \rangle}$ both have Loewy length 2.  Now observe that $\langle p,p-1 \rangle ' = \langle 2,1,1 \rangle$ and 
$\langle p-1, p\rangle ' = \langle 2,2,1 \rangle$.  So $S^{\langle 2,2,1 \rangle}$   and $S^{\langle 2,1,1 \rangle}$ both have Loewy length 2 since $S^{\lambda} \otimes sgn \cong (S^{\lambda '})^{\ast}$.  Conversely, suppose the Specht module $S^{\lambda}$ in $\mathcal{B}$ has Loewy length 2.  By Proposition \ref{prop:3p}, if $\lambda$ is neither $p$-regular nor $p$-restricted then $\lambda = \langle i,i\rangle$ for some $1 \leq i \leq p$.  So $\lambda \in T_3$.  Note that a hook partition in $\mathcal{B}$ has 
$\langle 3^{p} \rangle$ notation of the form $\langle i \rangle$, $\langle i,i \rangle$, or $\langle i,i,i \rangle$ for $1 \leq i \leq p$.  So we may assume that $\lambda$ is not a hook.    First suppose that $\lambda$ is $p$-regular and not $p$-restricted.  If $\lambda = \langle p,p-1 \rangle$ then we are done.  Assume $\lambda \neq \langle p,p-1\rangle$.  Note that $\lambda \neq \langle p,p-1,p-2 \rangle$ by Proposition \ref{lem:help}.  So choose $2 \leq i \leq p$ such that $S^{\lambda} \downarrow_{B_i} \neq 0$ and $D^{\lambda} \downarrow_{B_i} = 0$ (see Lemma \ref{lem:killhead}).  Then $S^{\lambda} \downarrow_{B_i}$ is irreducible by \ref{lem:spechtlength2help}(3).  So $\lambda = \langle p-1,p \rangle$ by Proposition \ref{prop:spechtsimpleinduce}.  Finally, if $\lambda$ is $p$-singular and $p$-restricted then $\lambda ' \in \{\langle p,p-1 \rangle, \langle p-1,p \rangle \}$ since $\lambda$ is not a hook.  Hence $\lambda \in \{ \langle 2,1,1 \rangle, \langle 2,2,1\rangle\}$.                  
 We conclude that $\lambda \in T$.  
\end{proof}

\begin{cor} \label{cor:length3}
Let $\lambda$ be a partition in $\mathcal{B}$.  The Specht module $S^{\lambda}$ has Loewy length 3 if and only if $\lambda$ is not a hook and either $\lambda$ is $p$-regular, not $p$-restricted, and $\lambda \notin \{\langle p,p-1 \rangle, \langle p-1, p\rangle \}$ or 
$\lambda$ is $p$-singular, $p$-restricted, and $\lambda \notin \{\langle 2,1,1 \rangle, \langle 2,2,1 \rangle \}$.  
\end{cor}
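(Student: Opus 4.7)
The plan is to determine which partitions in $\mathcal{B}$ give Specht modules of Loewy length exactly $3$ by process of elimination, combining the classifications of Loewy lengths $1$, $2$, and $4$ already established. By Corollary \ref{cor:nscondition}, every Specht module in $\mathcal{B}$ has Loewy length at most $4$, with Loewy length $4$ occurring exactly when $\lambda$ is both $p$-regular and $p$-restricted. By Theorem \ref{thm:jm}, Loewy length $1$ occurs only for $\lambda \in \{(3p), (1^{3p})\}$, both of which are hooks. By Theorem \ref{thm:spechtlength2}, Loewy length $2$ is equivalent to $\lambda \in T = T_1 \cup T_2 \cup T_3 \cup T_4$, where $T_1, T_2, T_3$ consist entirely of hooks while $T_4 = \{\langle p, p-1\rangle, \langle p-1, p\rangle, \langle 2, 2, 1\rangle, \langle 2, 1, 1\rangle\}$ contains the only non-hook Loewy length $2$ partitions.

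For the forward implication, I would suppose $S^{\lambda}$ has Loewy length $3$. Then $\lambda$ is not both $p$-regular and $p$-restricted, and $\lambda \notin T \cup \{(3p), (1^{3p})\}$. Since every hook in $\mathcal{B}$ lies in $T_1 \cup T_2 \cup T_3 \cup \{(3p), (1^{3p})\}$, the partition $\lambda$ is non-hook. By Proposition \ref{prop:3p}(6), every non-hook in $\mathcal{B}$ is either $p$-regular, $p$-restricted, or both; after discarding the ``both'' case, $\lambda$ lies in exactly one of the two categories named in the statement. Excluding the two partitions in $T_4$ of the appropriate type yields the stated additional conditions on $\lambda$.

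For the converse, first consider the case in which $\lambda$ is non-hook, $p$-regular, not $p$-restricted, and not in $\{\langle p, p-1\rangle, \langle p-1, p\rangle\}$. Theorem \ref{thm:gotit} yields Loewy length at most $3$. To exclude Loewy lengths $1$ and $2$, observe that $\lambda$ cannot equal $(3p)$ or $(1^{3p})$ since it is non-hook, and $\lambda \notin T$: it avoids $T_1 \cup T_2 \cup T_3$ since it is non-hook, and it avoids $T_4$ because the two members $\langle p, p-1\rangle, \langle p-1, p\rangle$ are excluded by hypothesis while the other two are $p$-singular. Hence $S^{\lambda}$ has Loewy length exactly $3$. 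For the remaining case, suppose $\lambda$ is non-hook, $p$-singular, $p$-restricted, and not in $\{\langle 2, 1, 1\rangle, \langle 2, 2, 1\rangle\}$. Then $\lambda'$ is non-hook, $p$-regular, not $p$-restricted; and by the identifications $\langle p, p-1\rangle' = \langle 2, 1, 1\rangle$ and $\langle p-1, p\rangle' = \langle 2, 2, 1\rangle$ recorded in the proof of Theorem \ref{thm:spechtlength2}, the hypothesis on $\lambda$ is equivalent to $\lambda' \notin \{\langle p, p-1\rangle, \langle p-1, p\rangle\}$. By the case just handled, $S^{\lambda'}$ has Loewy length $3$, and by Proposition \ref{prop:james}(2), $S^{\lambda} \otimes sgn \cong (S^{\lambda'})^{*}$, which has the same Loewy length as $S^{\lambda'}$; hence so does $S^{\lambda}$.

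No real obstacle arises once Theorems \ref{thm:gotit} and \ref{thm:spechtlength2} and Corollary \ref{cor:nscondition} are in place; the argument is essentially bookkeeping. The only points requiring some care are tracking how the four members of $T_4$ split between the $p$-regular and $p$-singular sides and invoking the sign-twist equivalence from Proposition \ref{prop:james}(2) to reduce the $p$-singular, $p$-restricted case to its $p$-regular conjugate.
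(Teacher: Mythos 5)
Your argument is correct and matches the paper's intent exactly: the corollary is stated without proof precisely because it is the process-of-elimination consequence of Corollary \ref{cor:nscondition}, Theorem \ref{thm:jm}, Theorem \ref{thm:spechtlength2}, and Theorem \ref{thm:gotit} that you spell out. The bookkeeping on how $T_4$ splits between the $p$-regular and $p$-singular sides and the sign-twist reduction via Proposition \ref{prop:james}(2) are handled correctly.
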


\subsection{Some Loewy structure results}

Now that the upper bound on the Loewy lengths has been established, we turn our attention to describing some of the radical layers of the Specht modules in $\mathcal{B}$.  The discussion begins by first considering Specht modules in the block corresponding to partitions that are both $p$-regular and $p$-restricted.  Consider the following lemmas.  

\begin{lem} \label{lem:regristreg}
Assume $\lambda \neq \langle 1,2 \rangle$ is a partition in $\mathcal{B}$ that is $p$-regular and $p$-restricted.
	\begin{enumerate}
		\item If $\lambda \notin \{\langle 4,2,1 \rangle $, $\langle 3,3,2 \rangle\}$ then there is $2 \leq i \leq p$ such that $\lambda \in \Lambda_i$ and $\sigma_i(\lambda)$ is $p$-regular.
		\item If $\lambda = \langle 4,2,1\rangle $ then $S^{\langle 4,2,1\rangle} \downarrow_{B_4} \cong S^{\langle 3,2,1 \rangle}\downarrow_{B_4}$ and $S^{\langle 3,2,1 \rangle}$ has simple head isomorphic to $D^{\langle 4,2,1 \rangle}$.
	\end{enumerate}  
\end{lem}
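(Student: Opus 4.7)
The plan is to prove (1) by a case analysis on the form of the $\langle 3^p\rangle$ display for $\lambda$, using the classification from Proposition \ref{prop:3p}. Since $\lambda$ is both $p$-regular and $p$-restricted, its display has one of the three forms $\langle i, j \rangle$ with $1 \leq i < j \leq p-1$, $\langle i, i, j \rangle$ with $2 \leq j < i$, or $\langle i, j, k \rangle$ with $i > j > k$ and $(i,j,k) \notin \{(3,2,1), (p,p-1,p-2)\}$. For each form I would enumerate the removable beads on runners $2, \ldots, p$, apply the normality criterion from Section \ref{section:prelim} to identify which place $\lambda$ in some $\Lambda_i$, and then compute $\sigma_i(\lambda)$ directly on the abacus: push the chosen bead from runner $i$ to runner $i-1$, then identify the unique alternate bead on runner $i-1$ that can be pushed back to runner $i$ at a different position. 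In each case the aim is to produce at least one valid $i$ for which $\sigma_i(\lambda)$ is $p$-regular.

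The hypothesis $\lambda \neq \langle 1, 2 \rangle$ is needed because the removable beads of $\langle 1, 2 \rangle$ on runners $2$ and $3$ both fail the normality test (in each case a bead occurs further down on the adjacent smaller-indexed runner), so $\langle 1, 2 \rangle$ lies in no $\Lambda_i$ with $i \geq 2$. The exceptions $\langle 4, 2, 1 \rangle$ and $\langle 3, 3, 2 \rangle$ arise because, although these partitions do lie in some $\Lambda_i$, every valid choice of $i$ produces a $\sigma_i(\lambda)$ with $p$ equal parts. Explicitly, bead manipulation gives $\sigma_4(\langle 4, 2, 1 \rangle) = \langle 3, 2, 1 \rangle = (3^p)$, while for $\langle 3, 3, 2 \rangle$ the two normal choices $i = 2$ and $i = 3$ yield $\langle 3, 3, 1 \rangle = (3, 2^p, 1^{p-3})$ and $\langle 3, 2, 2 \rangle = (3, 3, 2^{p-3}, 1^p)$ respectively, each of which is $p$-singular.

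For (2), the isomorphism $S^{\langle 4,2,1\rangle}\downarrow_{B_4} \cong S^{\langle 3,2,1\rangle}\downarrow_{B_4}$ follows immediately from the Martin--Russell description after confirming the computation $\sigma_4(\langle 4, 2, 1 \rangle) = \langle 3, 2, 1 \rangle$ from part (1). To identify the head of $S^{\langle 3, 2, 1 \rangle}$, I would exploit sign duality. Since $\langle 3, 2, 1 \rangle = (3^p)$ is $p$-restricted with conjugate $(p^3) = \langle p, p-1, p-2 \rangle$, Lemma \ref{lem:help} gives $\text{soc}(S^{\langle p, p-1, p-2 \rangle}) \cong D^{\langle p-2 \rangle}$. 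Combining Proposition \ref{prop:james}(2) with the self-duality of simple $\Sigma_n$-modules yields
$$\text{head}(S^{\langle 3, 2, 1 \rangle}) \otimes sgn \cong \text{soc}(S^{\langle p, p-1, p-2 \rangle}) \cong D^{\langle p-2 \rangle},$$
so the head is simple and isomorphic to $D^{m(\langle p-2 \rangle)}$. The proof is then completed by computing $m(\langle p-2 \rangle) = \langle 4, 2, 1 \rangle$ using the Ford--Kleshchev algorithm on $\langle 3^p \rangle$ abacus displays.

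The main obstacle is the case analysis in (1): it branches into many sub-cases, and one must carefully verify both the normality of each candidate bead and the $p$-regularity of each resulting $\sigma_i(\lambda)$, with the three exceptional partitions emerging precisely where these verifications fail. The Mullineux computation closing (2) is technically routine but essential, and requires the delicate application of the Ford--Kleshchev algorithm on the abacus.
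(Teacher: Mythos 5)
Your proposal is correct and takes essentially the same route as the paper: part (1) is exactly the abacus case analysis that the paper delegates to the proof of Lemma \ref{lem:remove} (with the $p$-regularity of $\sigma_i(\lambda)$ verified case by case, failing precisely at $\langle 4,2,1\rangle$ and $\langle 3,3,2\rangle$), and part (2) is the paper's own argument via Lemma \ref{lem:help}, conjugation--sign duality, and the Mullineux identity $m(\langle 4,2,1\rangle)=\langle p-2\rangle$. Your explicit computations $\sigma_4(\langle 4,2,1\rangle)=(3^p)$, $\sigma_2(\langle 3,3,2\rangle)=(3,2^p,1^{p-3})$, and $\sigma_3(\langle 3,3,2\rangle)=(3,3,2^{p-3},1^p)$ all check out.
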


\begin{proof}
Part (1) follows immediately from the proof of Lemma \ref{lem:remove}.  Next assume $\lambda = \langle 4,2,1 \rangle$.  Obviously, $S^{\langle 4,2,1 \rangle} \downarrow_{B_4} \cong S^{\langle 3,2,1\rangle}\downarrow_{B_4}$.  Now $S^{\langle 3,2,1 \rangle}$ has simple head isomorphic to $D^{\langle 4,2,1 \rangle}$ by Lemma \ref{lem:help} since $\langle3,2,1\rangle' = \langle p,p-1,p-2 \rangle$ and $\langle p,p-1,p-3 \rangle' = \langle 4,2,1 \rangle$.    
\end{proof}

\begin{lem} \label{lem:mullineux421}
We have the following.
$$ m(\langle p,p-1,p-3 \rangle) = 
\begin{cases}
\langle 3,5 \rangle &\text{if $p=5$}\\
\langle 6,5,3 \rangle & \text{if $p \geq 7$}
\end{cases}
$$
\end{lem}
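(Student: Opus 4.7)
The plan is to apply the Ford--Kleshchev abacus algorithm for the Mullineux map (cited after Proposition \ref{prop:mullk}) directly to the $\langle 3^p \rangle$ display of $\lambda = \langle p, p-1, p-3 \rangle$. This display has a single weight-one bead on each of runners $p-3$, $p-1$, and $p$, while every other runner contains exactly three beads pushed as far up as possible. The image $m(\lambda)$ is obtained by reading off the $\langle 3^p \rangle$ display produced by the algorithm.

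I would treat the $p = 5$ and $p \geq 7$ cases separately. For $p = 5$, $\lambda = \langle 5,4,2 \rangle$ has only five runners with displaced beads on runners $2$, $4$, and $5$; executing the algorithm here is a short finite computation which I expect to produce the $\langle 3^5 \rangle$ display $\langle 3,5 \rangle$, i.e.\ one weight-two bead on runner $3$ and one weight-one bead on runner $5$. For $p \geq 7$, runners $1, 2, \ldots, p-4$ are entirely inert (all beads at the top of the display), so the Ford--Kleshchev bead movements are localized to a bounded neighborhood of the displaced runners together with the few small-indexed runners that ultimately receive the reconstructed beads. I would then verify in this uniform setting that the algorithm outputs exactly $\langle 6,5,3 \rangle$.

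The main obstacle is the careful bookkeeping in the Ford--Kleshchev procedure: residue reversal, the several phases of bead movements, and the correct handling of boundary runners near $p$ in the generic case. If the direct abacus route proves awkward, a natural alternative is to use Proposition \ref{prop:mullk} inductively: choose a good node $A$ of $\lambda$ (which is locatable from the display via Proposition \ref{prop:nremovables}), compute $m(\lambda_A)$ for the resulting weight-two partition using the well-documented Mullineux map on weight-two blocks, and then reconstruct $m(\lambda)$ by adding back a good node whose residue is the negative of that of $A$. Either approach reduces the lemma to a finite, self-contained calculation.
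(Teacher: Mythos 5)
Your primary plan---running the Ford--Kleshchev abacus procedure on the $\langle 3^p\rangle$ display of $\langle p,p-1,p-3\rangle$---is essentially the paper's approach: the paper executes that bead-moving procedure to obtain the Mullineux symbol $G_p(\langle p,p-1,p-3\rangle)=\left(\begin{smallmatrix} p+1 & p & p-1\\ 4 & 3 & 3\end{smallmatrix}\right)$, flips the second row, and then runs the same procedure on the candidate ($\langle 3,5\rangle$ for $p=5$, $\langle 6,5,3\rangle$ for $p\geq 7$) to confirm the symbols match. The only caveat is that your proposal defers the bead-by-bead bookkeeping, which is the entire content of the paper's argument, so a complete write-up would still have to carry out that finite computation.
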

\begin{proof}
 We begin by finding the Mullineux symbol for $m(\langle p,p-1,p-3 \rangle)$.  Start with the $\langle 3^p \rangle$ display $\Lambda = \langle p,p-1,p-3 \rangle$.  Let $m_1$ be the last bead of $\Lambda$, i.e., $m_1$ is the bead at position $4p$.  Now position $3p$ of $\Lambda$ is unoccupied, but there is a proper bead at a position before position $3p$.  So let $m_2$ be the bead of $\Lambda$ at position $3p-2$.  Position $2p-2$ of $\Lambda$ is occupied by an improper bead, so we stop.  We move $m_1$ to position $3p$ and $m_2$ to position $3p-3$ (the first unoccupied position of $\Lambda$).  This process removes one rim $p$-hook with 3 parts and then one box.  So $A_0 = p+1$ and $R_0 = 4$, and we obtain another abacus display $\Lambda_1$.  We repeat the process to the abacus display $\Lambda_1$.  Let $m_1$ be the bead at position $4p-1$ (i.e., the last bead of the display).  Position $3p-1$ of $\Lambda_1$ is unoccupied and $\Lambda_1$ has no proper bead before position $3p-1$, so we stop.  Move $m_1$ to position $3p-1$ of $\Lambda_1$.  This removes one rim $p$-hook with 3 parts.  Hence $A_1 = p$ and $R_1 = 3$, and we produce a new abacus display $\Lambda_2$.  Let $m_1$ be the bead of $\Lambda_2$ at position $4p-3$.  Now position $3p-3$ is occupied by an improper bead.  So we stop the process and move $m_1$ to the first unoccupied position of $\Lambda_2$ which is position $3p-2$.  This removes a hook of length 
$p-1$ with $3$ parts.  So $A_2 = p-1$ and $R_2 = 3$, and we produce a abacus display $\Lambda_3$ for the empty diagram.  Therefore, $\langle p,p-1,p-3 \rangle $ has Mullineux symbol 

$$G_p(\langle p,p-1,p-3 \rangle ) =   \left( \begin{matrix}p+1 & p & p-1 \\ 4 & 3 &3 \end{matrix} \right).$$
So $m(\langle p,p-1,p-3 \rangle )$ has Mullineux symbol
$$G_p(m(\langle p,p-1,p-3 \rangle) ) =   \left( \begin{matrix}p+1 & p & p-1 \\ p-2 & p-3 & p-3 \end{matrix} \right).$$

We first prove the result for $p = 5$.  It suffices to show that $m(\langle 5,4,2\rangle)$ and $\langle 3,5 \rangle$ have the same Mullineux symbol.  Let  $M$ be the $\langle 3^5 \rangle $ display $\langle 3,5 \rangle$.  Take $m_1$ to be the last bead of $M$, i.e., the bead of $M$ at position $23$.  Position $18$ of $M$ is unoccupied; however, $M$ has a proper bead at a position before position $18$.  So take $m_2$ to be the bead of $M$ at position $14$.  Now position $9$ of $M$ is occupied by an improper bead, so we stop.  Move $m_1$ and $m_2$ to positions $18$ and $13$ of $M$, respectively.  This removes one rim $5$-hook with 2 parts and then one box.  Hence $A_0 = 6$ and $R_0 = 3$, and we produce a new display $M_1$.  Let $m_1$ be the bead at position $20$ of $M_1$.  Now position $15$ of $M_1$ is unoccupied and $M_1$ has no proper bead before position $15$, so we stop.  Move $m_1$ to position $15$ of $M_1$.  This removes one rim $5$-hook with 2 parts.  So $A_1 = 5$ and $R_1 = 2$, and we obtain a new display $M_2$.  Let $m_1$ be the bead of $M_2$ at position $18$.  Now position $13$ of $M_2$ is occupied by an improper bead, so we stop.  Move $m_1$ to position $14$ (the first unoccupied position) of $M_2$.  This removes a hook of length $4$ with $2$ parts.  Thus $A_2 = 4$ and $R_2 = 2$, and we obtain a display for the empty diagram.  So
$$G_5(\langle 3,5 \rangle) = \left( \begin{matrix} 6 & 5 & 4 \\ 3 & 2 & 2  \end{matrix} \right) = G_5(m(\langle 5,4,2 \rangle)).$$  
Therefore, $m(\langle 5,4,2 \rangle) = \langle 3,5 \rangle$.  

We now show $m(\langle p,p-1,p-3 \rangle)  = \langle 6,5,3 \rangle$ for $p \geq 7$.  Let $X$ be the $\langle 3^p\rangle$ display $\langle 6,5,3 \rangle$.  Take $m_1$ to be the last bead of $X$, i.e., the bead of $X$ at position $3p+6$.  Now position $2p+6$ of $X$ is unoccupied; however, $X$ has a proper bead at an earlier position than $2p+6$.  So take 
$m_2$ to be the bead of $X$ at position $2p+4$.  Now position $p+4$ of $X$ is occupied by an improper bead, so we stop.  Move $m_1$ to position $2p+6$ and $m_2$ to position $2p+3$ (the first unoccupied position of $X$).  This removes one rim $p$-hook with $p-3$ parts and then one box.  Thus $A_0 = p+1$ and $R_0 = p-2$, and we obtain a new display $X_1$.  Now let $m_1$ be the bead of $X_1$ at position $3p+5$.  Position $2p+5$ of $X_1$ is unoccupied and $X_1$ does not have a proper bead before position $2p+5$, so we stop.  Move $m_1$ to position $2p+5$.  This removes one rim $p$-hook with $p-3$ parts.  So $A_1 = p$ and $R_1 = p-3$, and we obtain a new display $X_2$.  Let $m_1$ be the bead at position $3p+3$ of $X_2$.  Position $2p+3$ is occupied by an improper bead, so we stop.  Move $m_1$ to position $2p+4$ (the first unoccupied position of $X_2$).  This removes a hook of length $p-1$ with $p-3$ parts.  So $A_2 = p-1$ and $R_2 = p-3$.  The resulting display is for the empty diagram.  Therefore,
$$G_p(\langle 6,5,3\rangle) = \left(\begin{matrix} p+1 & p & p-1 \\ p-2 & p-3 & p-3 \end{matrix}  \right) = G_p(m(\langle p,p-1, p-3\rangle)).$$
So $m(\langle p,p-1, p-3 \rangle) = \langle 6,5,3\rangle$ for $p \geq 7$.                             
\end{proof}

Lemmas \ref{lem:regristreg} and \ref{lem:mullineux421} help us prove the following property of the Ext$^1$-quiver of $\mathcal{B}$.  The author wonders whether this property generalizes to Ext$^1$-quivers of arbitrary blocks of weight 3.

\begin{prop} \label{prop:mullextend}
If $\lambda$ is a $p$-regular and $p$-restricted partition in $\mathcal{B}$ then 
$$\text{Ext}^{1}_{\Sigma_{3p}} ( D^{\lambda} , D^{m(\lambda')} ) = 0.$$
\end{prop}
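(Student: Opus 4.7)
My plan is to interpret the Ext$^1$ group homologically via the projective cover $Y^\lambda$ and then locate $D^\lambda$ inside its Loewy structure. Since $\lambda$ is $p$-restricted, Proposition \ref{prop:Young}(1) identifies $Y^\lambda$ with the projective cover of $D^{m(\lambda')}$, so standard projective-cover homological algebra gives
\[
\dim \text{Ext}^1_{\Sigma_{3p}}(D^{m(\lambda')}, D^\lambda) \;=\; [\text{rad}(Y^\lambda)/\text{rad}^2(Y^\lambda) : D^\lambda].
\]
Because every simple $F\Sigma_{3p}$-module is self-dual (Proposition \ref{prop:james}(1)), $\text{Ext}^1$ is symmetric on simples, so vanishing of the right-hand side will immediately produce the desired equality $\text{Ext}^1_{\Sigma_{3p}}(D^\lambda, D^{m(\lambda')}) = 0$. (As a sanity check, one can also see via Propositions \ref{prop:james}(2) that the socle of $S^\lambda$ is indeed $D^{m(\lambda')}$, so that the proposition is literally the statement that the head of $S^\lambda$ does not extend its socle, as advertised.)

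It therefore suffices to show $D^\lambda$ does not appear in the second radical layer of $Y^\lambda$, and for this I would simply reuse the argument already used in the proof of Theorem \ref{thm:RR}. Since $Y^\lambda$ lies in the defect-$3$ block $\mathcal{B}$, the results of Martin--Tan and Tan cited in the opening of Section \ref{section:loewy} give that $Y^\lambda$ has common Loewy length $7$; Proposition \ref{prop:Young}(2) gives $[Y^\lambda : D^\lambda] = 1$; and $Y^\lambda$ is self-dual and indecomposable. Combined with the self-duality of all simples, the self-duality of $Y^\lambda$ forces the multiplicity of $D^\lambda$ in the $k$-th radical layer to equal its multiplicity in the $(8-k)$-th layer, so the unique copy of $D^\lambda$ is pinned in the middle (fourth) layer and is absent from the second. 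Consequently $[\text{rad}(Y^\lambda)/\text{rad}^2(Y^\lambda) : D^\lambda] = 0$, and the proposition follows. The main point is thus conceptual rather than technical: there is no real obstacle beyond recognizing that this Ext$^1$ computation is already settled by the Loewy-position information about $D^\lambda$ inside $Y^\lambda$ that was extracted in the proof of Theorem \ref{thm:RR}.
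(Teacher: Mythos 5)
Your proposal is correct, and it takes a genuinely different and substantially shorter route than the paper. The paper proves Proposition \ref{prop:mullextend} by a case analysis on $\lambda$: it restricts to the defect 2 blocks $B_i$ of $F\Sigma_{3p-1}$, uses the good-node compatibility of the Mullineux map (Proposition \ref{prop:mullk}) together with parity to kill the case $m(\lambda')\in\Lambda_i$, uses Eckmann--Shapiro and the short exact sequence $0\to S^{\sigma_i(\lambda)}\to S^{\lambda_A}\uparrow^{\mathcal{B}}\to S^{\lambda}\to 0$ to kill the other case, and handles $\langle 4,2,1\rangle$, $\langle 3,3,2\rangle$, $\langle 1,2\rangle$ separately, finally deriving a contradiction with the multiplicity-one property $[S^{\lambda}:D^{\mu}]\leq 1$ of weight 3 blocks. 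Your argument bypasses all of this: since $Y^{\lambda}\cong P(D^{m(\lambda')})$, one has $\dim\text{Ext}^1_{\Sigma_{3p}}(D^{m(\lambda')},D^{\lambda})=[\text{rad}\,Y^{\lambda}/\text{rad}^2Y^{\lambda}:D^{\lambda}]$, which vanishes by the corollary to Theorem \ref{thm:RR} (the case $i=1$), and self-duality of simples transports this to $\text{Ext}^1_{\Sigma_{3p}}(D^{\lambda},D^{m(\lambda')})$. Two remarks. First, you do not need to re-derive the position of $D^{\lambda}$ in $Y^{\lambda}$: the statement $[\text{rad}^iY^{\lambda}/\text{rad}^{i+1}Y^{\lambda}:D^{\lambda}]=0$ for $i\neq 3$ is literally the displayed corollary following Theorem \ref{thm:RR}, and citing it is cleaner than your re-derivation; your phrasing that self-duality matches the $k$-th radical layer with the $(8-k)$-th radical layer is not quite what self-duality gives (it identifies the $k$-th radical layer with the dual of the $k$-th \emph{socle} layer), but since the needed fact is already on record in the paper this is immaterial. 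Second, your route buys something the paper's does not: because Theorem \ref{thm:RR} and its corollary are proved for an arbitrary defect 3 block and Proposition \ref{prop:Young} is general, your argument shows $\text{Ext}^1(D^{\lambda},D^{m(\lambda')})=0$ for every $p$-regular, $p$-restricted $\lambda$ in any defect 3 block, which answers affirmatively the question the author raises about this in Section \ref{section:conclude}; the paper's proof is tied to $\mathcal{B}$ through the combinatorics of the blocks $B_i$ and the exceptional partitions. The trade-off is that the paper's longer argument produces along the way finer information about which simples can occur in the second radical layer of $S^{\lambda}$, which is then reused in Theorem \ref{thm:2nd}.
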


\begin{proof} First assume $\lambda \notin \{\langle1,2  \rangle,  \langle 3,3,2 \rangle\}$.  Let $\gamma : = m(\lambda')$, and assume by way of a contradiction that $\text{Ext}^1_{\Sigma_{3p}}(D^{\lambda}, D^{\gamma}) \neq 0$.  Choose $2 \leq i \leq p$ such that $\lambda \in \Lambda_i$.  Further, if $\lambda \neq \langle 4,2,1 \rangle$ we choose $i$ so that $\sigma_i(\lambda)$ is $p$-regular, as in Lemma \ref{lem:regristreg}(1). 
  Write $S^{\lambda}\downarrow_{B_i} \cong S^{\lambda_A}$.  Now $\text{Ext}^1_{\Sigma_{3p}}(D^{\lambda}, D^{\gamma}) \neq 0$ implies either:
	\begin{enumerate}
		\item $\gamma \in \Lambda_i$ and $\text{Ext}^1_{\Sigma_{3p-1}} (D^{\lambda_A} , D^{\gamma}\downarrow_{B_i}) \neq 0$ or 
		\item $\gamma \notin \Lambda_i$ and $[D^{\lambda_A}\uparrow^{\mathcal{B}}: D^{\gamma}] \neq 0$.  
	\end{enumerate}
We claim that (1) cannot happen.  Suppose (1) holds.  Then $D^{\gamma} \downarrow_{B_i}\cong D^{\gamma_C}$ where $C$ is the unique $(i-1)$-normal node for $\gamma$.  Now the partition $\lambda_A$ is both $p$-regular and $p$-restricted, so $S^{\lambda_A}$ has simple socle $D^{m_{3p-1}((\lambda_A)')}$.  By Proposition \ref{prop:mullk}, there exists a normal node $\tilde{B}$ for $\lambda'$ such that $\text{res} \tilde{B} = - \alpha$ and $\gamma_C = m(\lambda')_C = m_{3p-1}((\lambda)'_{\tilde{B}})$.  But then $\tilde{B}$ corresponds to a removable node for $\lambda$ of residue $\alpha$.  Therefore $\gamma_C = m_{3p-1}((\lambda_A)')$.  Observe that $\mathcal{P} \lambda_A = \mathcal{P}m_{3p-1}((\lambda_A))' = \mathcal{P}(\gamma_C)$.  Hence, $$\text{Ext}^1_{\Sigma_{3p-1}} (D^{\lambda_A} , D^{\gamma}\downarrow_{B_i}) \cong \text{Ext}^{1}_{\Sigma_{3p-1}}(D^{\lambda_A} , D^{\gamma_C}) = 0,$$
a contradiction.  So (2) must hold, that is, $\gamma \notin \Lambda_i$ and $[D^{\lambda_A}\uparrow^{\mathcal{B}} :D^{\gamma}] \neq 0$.  In particular, $D^{\gamma}\downarrow_{B_i} = 0$.  First consider $\lambda \neq \langle 4,2,1 \rangle$.  Since $\gamma \rhd \lambda \rhd \sigma_i(\lambda)$ and 
$\sigma_i(\lambda)$ is $p$-regular, we have $\text{Hom}_{\Sigma_{3p}}(S^{\sigma_i(\lambda)}, D^{\gamma}) = 0$.  We have a short exact sequence. 

\begin{equation}  \label{eq:SES11}
0 \rightarrow S^{\sigma_i(\lambda)} \rightarrow S^{\lambda_A}\uparrow^{\mathcal{B}} \rightarrow S^{\lambda} \rightarrow 0
\end{equation}  
Apply the functor $\text{Hom}_{\Sigma_{3p}} (-,D^{\gamma})$ to Equation \ref{eq:SES11}.  Now 
$D^{\gamma}\downarrow_{B_i} = 0$ and the Eckmann-Schapiro lemma yield $$\text{Ext}^{1}_{\Sigma_{3p}}(S^{\lambda}, D^{\gamma}) \cong \text{Hom}_{\Sigma_{3p}}(S^{\sigma_i(\lambda)}, D^{\gamma}) = 0.$$
We conclude that $\text{Hom}_{\Sigma_{3p}} (\text{rad} S^{\lambda}, D^{\gamma}) \cong \text{Ext}^1_{\Sigma_{3p}}(D^{\lambda}, D^{\gamma}) \neq 0$.  Now, by Theorem \ref{thm:RR}, $[S^{\lambda} : D^{\gamma}] \geq 2$, a contradiction.  Therefore, $\text{Ext}^1_{\Sigma_{3p}} (D^{\lambda}, D^{m(\lambda')}) = 0$ if $\lambda \neq \langle 4,2,1 \rangle$.  Next suppose that $\lambda = \langle 4,2,1 \rangle$.  Then $\sigma_4(\lambda) = \langle 3,2,1 \rangle$ and $\lambda '  = \langle p,p-1,p-3 \rangle$.  By Lemma \ref{lem:regristreg}, $S^{\langle 3,2,1 \rangle}$ has simple head isomorphic to $D^{\langle 4,2,1 \rangle}$.  Observe $\gamma = m(\langle p,p-1,p-3 \rangle)$.  By Lemma \ref{lem:mullineux421}, $\text{Hom}_{\Sigma_{3p}}(S^{\langle 3,2,1 \rangle}, D^{\gamma}) = 0$.  We have the following short exact sequence.

\begin{equation} \label{eq:SES22} 
0 \rightarrow S^{\langle 3,2,1 \rangle} \rightarrow S^{\lambda_A} \uparrow^{\mathcal{B}} \rightarrow S^{\lambda} \rightarrow 0  
\end{equation}       

\noindent Apply the functor $\text{Hom}_{\Sigma_{3p}} (-,D^{\gamma})$ to Equation \ref{eq:SES22}.  Now 
$D^{\gamma}\downarrow_{B_i} = 0$ and the Eckmann-Schapiro lemma yield $$\text{Ext}^{1}_{\Sigma_{3p}}(S^{\lambda}, D^{\gamma}) \cong \text{Hom}_{\Sigma_{3p}}(S^{\langle 3,2,1 \rangle}, D^{\gamma}) = 0.$$  We conclude that $\text{Hom}_{\Sigma_{3p}} (\text{rad} S^{\lambda}, D^{\gamma}) \cong \text{Ext}^1_{\Sigma_{3p}}(D^{\lambda}, D^{\gamma}) \neq 0$.  
Now, by Theorem \ref{thm:RR}, $[S^{\langle 4,2,1 \rangle} : D^{m(\langle 4,2,1\rangle')}] \geq 2$, a contradiction.  Therefore, it must be that $\text{Ext}^1_{\Sigma_{3p}} (D^{\lambda}, D^{m(\lambda')}) = 0$ for $\lambda \notin \{\langle1,2 \rangle , \langle3,3,2 \rangle\}$.  

If $\lambda= \langle 3,3,2 \rangle$ then $\lambda' = \langle p-2,p-1 \rangle$.  So the theorem holds for $\lambda'$.  Now, since the simple modules are self-dual,
$$ \text{Ext}^1_{\Sigma_{3p}} (D^{\langle 3,3,2 \rangle}, D^{m(\langle p-2,p-1 \rangle)}) \cong 
 \text{Ext}^1_{\Sigma_{3p}} ( D^{\langle p-2,p-1 \rangle }, D^{m(\langle 3,3,2 \rangle)})=0.$$ 
This same argument can be used to show the theorem holds for $\lambda = \langle 1,2 \rangle$ by considering $\lambda' = \langle p,p,p-1 \rangle$.          
\end{proof}

The next proposition together with Proposition \ref{prop:mullextend} will allow us to determine the composition factors of $S^{\lambda}$ from the $\text{Ext}^1$-quiver for $\mathcal{B}$, provided $\lambda$ is in $\mathcal{B}$ and is both $p$-regular and $p$-restricted.  See \cite{MR} for a construction of the $\text{Ext}^1$-quiver for $\mathcal{B}$.    

\begin{prop}[\cite{CT}, Theorem 6.1] \label{prop:ext2}
Let $\rho$ and $\sigma$ be $p$-regular weight 2 partitions of $n$ with $\sigma \geq \rho$.  Then 
$\text{Ext}^1_{\Sigma_n}(D^{\sigma}, D^{\rho}) \neq 0$ if and only if $[S^{\rho} : D^{\sigma}] \neq 0$ and $\partial \sigma$ and $\partial \rho$ have different parities.  
\end{prop}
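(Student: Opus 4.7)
The plan is to separate the parity condition from the composition-factor condition, then combine them via long exact sequences and the rigidity of weight $2$ blocks (decomposition numbers in $\{0,1\}$ by Proposition \ref{prop:decnumb} and Specht modules of Loewy length at most $3$ by Proposition \ref{prop:w2}). The parity half is immediate: the Fayers--Tan bipartiteness of weight $w \leq 3$ Ext quivers (the weight $2$ analogue of Proposition \ref{prop:bipart}, proved in \cite{FT}) gives that $\text{Ext}^{1}_{\Sigma_n}(D^{\sigma}, D^{\rho}) \neq 0$ forces $\partial\sigma \neq \partial\rho$. So in both directions of the biconditional one may assume the parity hypothesis, and the task reduces to showing $\text{Ext}^{1} \neq 0 \Leftrightarrow [S^{\rho}:D^{\sigma}] \neq 0$.

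For the implication $\text{Ext}^{1}(D^{\sigma}, D^{\rho}) \neq 0 \Rightarrow [S^{\rho}:D^{\sigma}] \neq 0$, I would apply $\text{Hom}_{\Sigma_n}(D^{\sigma},-)$ to the short exact sequence $0 \to \text{rad}\,S^{\rho} \to S^{\rho} \to D^{\rho} \to 0$. Since $\sigma \neq \rho$ (the parities differ), $\text{Hom}(D^{\sigma}, D^{\rho})=0$, and the long exact sequence produces $\text{Hom}(D^{\sigma}, \text{rad}\,S^{\rho}) \hookrightarrow \text{Ext}^{1}(D^{\sigma}, D^{\rho})$ provided $\text{Ext}^{1}(D^{\sigma}, S^{\rho}) = 0$. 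I would obtain the latter vanishing by filtering a Young cover of $D^{\sigma}$ by Specht modules (Hemmer--Nakano) indexed by partitions strictly dominating $\sigma$, whose composition factors (by James, Proposition \ref{prop:rad}) cannot include $D^{\rho}$ except in ways ruled out by bipartiteness. Non-vanishing of $\text{Hom}(D^{\sigma}, \text{rad}\,S^{\rho})$ then forces $D^{\sigma}$ into the socle of $\text{rad}\,S^{\rho}$, hence as a composition factor of $S^{\rho}$.

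For the converse, $[S^{\rho}:D^{\sigma}] \neq 0$ and $\sigma \neq \rho$ together force $\sigma \rhd \rho$ by James (Proposition \ref{prop:rad}), so $D^{\sigma}$ lies in $\text{rad}\,S^{\rho}$. By Proposition \ref{prop:w2}, $S^{\rho}$ has Loewy length at most $3$; by bipartiteness, composition factors in odd-indexed layers share one parity while those in even-indexed layers share the other. Since $D^{\rho}$ sits in layer $1$ with parity $\partial\rho$, and $\partial\sigma \neq \partial\rho$, $D^{\sigma}$ must appear in the second Loewy layer. Extracting the corresponding two-step subquotient of $S^{\rho}$ yields a non-split extension of $D^{\sigma}$ by $D^{\rho}$, witnessing $\text{Ext}^{1}(D^{\sigma}, D^{\rho}) \neq 0$.

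The main obstacle is the ``vanishing of $\text{Ext}^{1}(D^{\sigma}, S^{\rho})$'' step in the forward direction: it is not an immediate consequence of the results cited in the excerpt, and a clean proof requires fairly delicate Young-module and Specht-filtration analysis. A cleaner route, and the one I would actually pursue, is to use Scopes Morita equivalence of weight $2$ blocks to reduce the whole theorem to the principal block of $F\Sigma_{2p}$, where every Specht module's Loewy structure is explicitly known (from Peel, Kleshchev, or direct computation with the $\langle 3^p \rangle$-type display for weight $2$) and the biconditional can be verified pair by pair on the finite list of partitions.
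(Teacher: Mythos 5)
First, a point of comparison: the paper offers no proof of this statement at all. It is imported verbatim as Theorem 6.1 of \cite{CT}, where Chuang and Tan prove it on the Schur algebra side --- computing $\text{Ext}^1$ between simple modules in weight $2$ blocks of $S(n,n)$ using the structure of Weyl modules and Richards' weight $2$ decomposition numbers --- and then transfer the answer to $F\Sigma_n$ via the Schur functor. The advantage of that route is precisely the step your argument cannot supply: in the highest weight category one has $\text{Ext}^1(\Delta(\lambda),\nabla(\mu))=0$, which yields the vanishing needed to identify $\text{Ext}^1$ between simples with a Hom group out of the radical of a standard module. Your backward implication is essentially correct: Proposition \ref{prop:rad} forces $\sigma\rhd\rho$, Proposition \ref{prop:w2} caps the Loewy length at $3$, bipartiteness of the weight $2$ quiver forces $D^{\sigma}$ into the second radical layer, and the resulting two-layer quotient of $S^{\rho}$ is a non-split extension. (Minor point: that bipartiteness is not Fayers--Tan, whose theorem is about weight $3$; you need the weight $2$ statement, and you should make sure the source you cite for it does not itself derive it from the proposition you are proving.)

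The forward implication has a genuine gap. As written your long exact sequence has the wrong variance: applying $\text{Hom}(D^{\sigma},-)$ to $0\to\text{rad}\,S^{\rho}\to S^{\rho}\to D^{\rho}\to 0$ does not produce an embedding of $\text{Hom}(D^{\sigma},\text{rad}\,S^{\rho})$ into $\text{Ext}^1(D^{\sigma},D^{\rho})$; you need the contravariant functor $\text{Hom}(-,D^{\sigma})$, which gives $\text{Hom}(\text{rad}\,S^{\rho},D^{\sigma})\hookrightarrow\text{Ext}^1(D^{\rho},D^{\sigma})$ with cokernel inside $\text{Ext}^1(S^{\rho},D^{\sigma})$. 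The crux is then the vanishing of $\text{Ext}^1(S^{\rho},D^{\sigma})$, which is exactly the hard point of Kleshchev--Sheth (Proposition \ref{prop:KS}) and carries their restriction to at most $p-1$ parts; note that the paper deduces the unrestricted weight $2$ version of Proposition \ref{prop:KS} \emph{from} the present proposition, so invoking it here would be circular. Your sketch of obtaining the vanishing from a Specht filtration of a Young cover of $D^{\sigma}$ does not close this: that filtration controls the projective cover of $D^{\sigma}$, not the group $\text{Ext}^1(S^{\rho},D^{\sigma})$, and for the symmetric group, unlike the Schur algebra, there is no highest-weight vanishing to fall back on. Finally, the proposed escape route via Scopes equivalence fails on several counts: weight $2$ blocks do not form a single Morita equivalence class (there are many Scopes classes, of which the principal block of $F\Sigma_{2p}$ represents only one); even within one class the partitions are parametrized by $p$, so there is no finite list to check; and taking the Loewy structures of all weight $2$ Specht modules as ``explicitly known'' is circular, since in \cite{CT} those structures (Proposition \ref{prop:w2} among them) are derived using the very theorem you are trying to prove.
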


Taken together, Propositions \ref{prop:w2}, \ref{prop:ext2}, and \ref{prop:james} imply Proposition \ref{prop:KS} holds for all weight 2 blocks of the symmetric group (over odd characteristic) with no restriction on the number of non-zero parts.  The goal now is to prove the restriction on the number of non-zero parts in Proposition \ref{prop:KS} can be removed for $p$-regular partitions in $\mathcal{B}$ that are also $p$-restricted.  First we introduce an assumption to better organize our work.

\noindent \textbf{Assumption.}
Say a $p$-regular partition $\lambda$ satisfies assumption $(*)$ if there exists $2 \leq i \leq p$ such that  
\begin{enumerate}
	\item $\left( S^{\lambda} \downarrow_{B_i} \right) \uparrow^{\mathcal{B}} \sim S^{\lambda} + S^{\sigma_i(\lambda)}$ such that $\lambda \in \Lambda_i$ and
	\item $\sigma_i(\lambda)$ is $p$-regular.  
\end{enumerate}

Let $R$ denote the set of all partitions in $\mathcal{B}$ that are both $p$-regular and $p$-restricted.  Define 
$$E : = \{\lambda \in R : \text{ $\lambda$ satisfies assumption $(*)$}\}.$$
Next we define the following sets:
 \begin{center} $E_1 = \{ \langle 5,2,1\rangle, \langle 5,4,1 \rangle , \langle 4,3,1 \rangle \}$ and
 $E_2  = \{ \langle 5,4,3 \rangle, \langle 4,4,2 \rangle, \langle 4,3,1 \rangle \}$. \end{center} 

\noindent By inspection, we have the following lemmas.   

\begin{lem}
Adopting the notation from above,
$$R - E = \{\langle1,2 \rangle, \langle 4,2,1\rangle, \langle 3,3,2 \rangle \}.$$
\end{lem}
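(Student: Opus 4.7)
The statement is verified by direct inspection using the classification of $p$-regular and $p$-restricted partitions in $\mathcal{B}$ provided by Proposition \ref{prop:3p}(4). I will first show the three exceptional partitions lie in $R - E$, and then indicate how one checks that every other element of $R$ belongs to $E$.

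For $\lambda = \langle 1, 2 \rangle$, I invoke the exception of Lemma \ref{lem:remove}: the two removable beads on runners $2$ and $3$ of its $\langle 3^p \rangle$ display that produce a weight-$2$ partition both fail normality, because in each case a weight-$1$ bead sits directly below on the adjacent runner. The only normal removable bead lies on runner $1$, whose restriction lands in the defect-$1$ block $B_1$. Hence $\langle 1, 2 \rangle \notin \Lambda_i$ for any $i \in \{2, \ldots, p\}$, so condition $(1)$ of $(*)$ fails. For $\lambda = \langle 4, 2, 1 \rangle$, an analogous abacus check shows the only normal removable bead lies on runner $4$, so $\lambda \in \Lambda_4$ alone; computing the alternative reinsertion on runner $3$ yields $\sigma_4(\lambda) = \langle 3, 2, 1 \rangle$, which unpacks to the partition $(3^p)$ and is therefore $p$-singular. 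For $\lambda = \langle 3, 3, 2 \rangle$, the normal removable beads lie on runners $2$ and $3$; direct computation on the $\langle 3^p \rangle$ display gives $\sigma_2(\lambda) = \langle 3, 3, 1 \rangle$, which converts to $(3, 2^p, 1^{p-3})$, and $\sigma_3(\lambda) = \langle 3, 2, 2 \rangle$, which converts to $(3, 3, 2^{p-3}, 1^p)$. Each contains a string of $p$ equal consecutive parts and is $p$-singular. Thus none of the three partitions lies in $E$.

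For the reverse inclusion, the proof of Lemma \ref{lem:remove} already produces, for each $\lambda \in R \setminus \{\langle 1, 2 \rangle\}$, a normal bead on some runner $i \in \{2, \ldots, p\}$ whose removal gives a $p$-regular and $p$-restricted weight-$2$ partition, so $\lambda \in \Lambda_i$ and condition $(1)$ of $(*)$ holds. What remains is to verify, for all $\lambda \neq \langle 4, 2, 1 \rangle, \langle 3, 3, 2 \rangle$, that the companion $\sigma_i(\lambda)$ is $p$-regular. I would handle the three families of Proposition \ref{prop:3p}(4) — $\langle i, j \rangle$ with $1 \leq i < j \leq p-1$, $\langle i, i, j \rangle$ with $2 \leq j < i$, and $\langle i, j, k \rangle$ with $i > j > k$ and $(i, j, k) \notin \{(3, 2, 1), (p, p-1, p-2)\}$ — separately. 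In every case, reinserting the alternative addable bead on runner $i - 1$ produces a predictable modification of $\lambda$ in $\langle 3^p \rangle$ notation (typically shifting the weight-$1$ bead on runner $i$ to runner $i - 1$), and I can check $p$-regularity of $\sigma_i(\lambda)$ by converting back to a partition via its abacus.

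The main obstacle is bookkeeping. The several subcases and, within each case, the boundary regimes where one index is small (near $1$) or large (near $p$) must be handled carefully, because these are precisely the regimes in which the alternative reinsertion places a weight-$1$ bead next to an existing weight-$1$ bead on the adjacent runner, producing a row of $p$ equal consecutive parts in the resulting partition. The computations for $\langle 4, 2, 1 \rangle$ and $\langle 3, 3, 2 \rangle$ above illustrate the two shapes this collision can take. The content of the lemma is that these collisions are in fact the only obstructions: for every other $\lambda \in R$, enough flexibility remains in the choice of normal bead to avoid them, and the explicit verification in each subcase completes the proof.
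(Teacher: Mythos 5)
Your proposal is correct and takes essentially the same route as the paper, which simply asserts the lemma ``by inspection'': you verify the three exceptional partitions explicitly (your computations of $\sigma_4(\langle 4,2,1\rangle)=\langle 3,2,1\rangle=(3^p)$ and of $\sigma_2(\langle 3,3,2\rangle)$, $\sigma_3(\langle 3,3,2\rangle)$ check out, and the forward inclusion is exactly the content of Lemma \ref{lem:regristreg}(1), which the paper likewise attributes to the proof of Lemma \ref{lem:remove}). One trivial slip: for $\langle 1,2\rangle$ the bead obstructing normality of the runner-$2$ removable bead is the weight-$2$ bead on runner $1$, not a weight-$1$ bead, though this does not affect the conclusion.
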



\begin{lem} \label{lem:help2}
Let $\mu$ be a $p$-regular partition in $\mathcal{B}$.  The $\text{Ext}^1$-quiver for $\mathcal{B}$ gives the following.  
 	\begin{enumerate}
		\item If $\mu \ \rhd \langle 4,2,1\rangle $ and $\text{Ext}^1_{\Sigma_{3p}} (D^{\langle 4,2,1\rangle }, D^{\mu}) \neq 0$ then $\mu \in E_1$.
		\item If $\mu \ \rhd \langle 3,3,2 \rangle $ and $\text{Ext}^1_{\Sigma_{3p}} (D^{\langle 3,3,2 \rangle }, D^{\mu}) \neq 0$ then $\mu \in E_2$.
	\end{enumerate} 
\end{lem}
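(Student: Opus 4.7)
The plan is to exploit the Martin--Russell description of the $\text{Ext}^1$-quiver of $\mathcal{B}$ from \cite{MR}, together with the bipartite structure of Proposition \ref{prop:bipart} and the restriction/induction criteria of Proposition \ref{prop:MT}(1). Fix $\lambda = \langle 4,2,1\rangle$; the argument for $\langle 3,3,2\rangle$ is parallel. I would first enumerate the $p$-regular partitions $\mu$ in $\mathcal{B}$ with $\mu \rhd \lambda$ by reading off the $\langle 3^p\rangle$ displays, and then compute $\mathcal{P}\mu$ for each by tallying leg-lengths of the rim $p$-hooks that have to be removed to reach the empty core. Any $\mu$ with $\mathcal{P}\mu = \mathcal{P}\lambda$ is immediately discarded by Proposition \ref{prop:bipart}.

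For each remaining candidate I would apply Proposition \ref{prop:MT}(1). The $\langle 3^p\rangle$ display of $\lambda = \langle 4,2,1\rangle$ has normal beads on several runners, so $\lambda \in \Lambda_i$ for those $i$; using Lemma \ref{lem:regristreg} I may pick an $i$ for which $\sigma_i(\lambda)$ is $p$-regular. Writing $S^\lambda \downarrow_{B_i} \cong S^{\tilde\lambda}$, the two clauses of Proposition \ref{prop:MT}(1) give: either $\mu \in \Lambda_i$ and $\text{Ext}^1_{B_i}(D^{\tilde\lambda}, D^{\tilde\mu}) \neq 0$, which by Proposition \ref{prop:ext2} inside the weight 2 block $B_i$ is a short finite list; or $\mu \notin \Lambda_i$ and $[D^{\tilde\lambda}\uparrow^{\mathcal{B}} : D^\mu] \neq 0$, which by Proposition \ref{prop:MT}(2) and the Loewy length 3 structure of $D^{\tilde\lambda}\uparrow^{\mathcal{B}}$ confines $\mu$ to the composition factors of that induced module, and Corollary \ref{cor:CPext} identifies $\sigma_i(\lambda)$ as one of them. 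Running this test over each admissible runner $i$ and intersecting the resulting lists with $\{\mu : \mu \rhd \lambda\}$ should collapse the surviving candidates to exactly $E_1$; conversely Corollary \ref{cor:CPext} (when the edge comes from $\sigma_i$) or the weight 2 Ext$^1$-quiver (when it is detected in $B_i$) verifies non-vanishing for each element of $E_1$.

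The identical bookkeeping with $\lambda = \langle 3,3,2\rangle$ produces $E_2$. The main obstacle is not conceptual but organizational: one must carefully list the partitions $\mu$ strictly dominating $\langle 4,2,1\rangle$ or $\langle 3,3,2\rangle$ (which is short because $3p$-partitions in $\mathcal{B}$ are described by the small $\langle 3^p\rangle$ notation of Definition \ref{def:3p}), pair each with the correct parity, and then match each surviving candidate against the right defect 2 block $B_i$. I would present the outcome as two small tables (one per target $\lambda$) recording for each $\mu \rhd \lambda$ whether it is eliminated by parity, by restriction to some $B_i$, or confirmed as a neighbor in the Ext$^1$-quiver, rather than walk the reader through every case line by line.
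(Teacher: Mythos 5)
The paper gives no argument for this lemma beyond ``by inspection'' of the $\text{Ext}^1$-quiver of $\mathcal{B}$ constructed in \cite{MR}, so at the top level your plan (read off the neighbors of $\langle 4,2,1\rangle$ and $\langle 3,3,2\rangle$ in that quiver, organized via Proposition \ref{prop:MT}(1), parity, and the weight~2 quivers) is the natural way to carry out that inspection. However, two steps in your execution would fail as written. First, you propose to ``pick an $i$ for which $\sigma_i(\lambda)$ is $p$-regular'' using Lemma \ref{lem:regristreg}; but $\langle 4,2,1\rangle$ and $\langle 3,3,2\rangle$ are precisely the two stated exceptions to Lemma \ref{lem:regristreg}(1), so no such $i$ exists. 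Concretely, $\langle 4,2,1\rangle$ has a normal bead only on runner $4$ (not ``on several runners''), and $\sigma_4(\langle 4,2,1\rangle)=\langle 3,2,1\rangle$ is $p$-singular, so Corollary \ref{cor:CPext} is unavailable. This is not merely cosmetic: these two partitions are singled out in the paper exactly because the generic argument breaks for them.

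Second, you underestimate what is needed to resolve the clause ``$\mu\notin\Lambda_i$ and $[D^{\tilde\lambda}\uparrow^{\mathcal{B}}:D^\mu]\neq 0$.'' Proposition \ref{prop:MT}(2) only gives the Loewy length and head/socle of $D^{\tilde\lambda}\uparrow^{\mathcal{B}}$; it does not identify its composition factors, and that identification is the actual content of Martin--Russell's computation (recorded in this paper as Proposition \ref{prop:completemiss}). For $\lambda=\langle 4,2,1\rangle$ this clause is doing most of the work: of the three elements of $E_1$, two, namely $\langle 5,2,1\rangle$ and $\langle 4,3,1\rangle$, are extensions \emph{missed} by restriction to $B_4$ (Proposition \ref{prop:completemiss}(1)(c),(d)), and only $\langle 5,4,1\rangle$ is visible inside $\Lambda_4$ via the weight~2 quiver of $B_4$. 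So your tables cannot be filled in from the structural propositions you cite alone; you must either import the explicit lists of \cite{MR} (equivalently, just cite the quiver, as the paper does) or carry out the induction computation for $D^{\tilde\lambda}\uparrow^{\mathcal{B}}$ yourself.
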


We now prove the second Loewy layer of $S^{\lambda}$ is determined by the Ext$^1$-quiver of $\mathcal{B}$ whenever $\lambda$ is $p$-regular and $p$-restricted.  

\begin{thm} \label{thm:2nd}
Let $\lambda$ and $\mu$ be $p$-regular partitions in the principal block of $F\Sigma_{3p}$ with $\lambda$ $p$-restricted.
If $\lambda$ does not strictly dominate $\mu$ then  
$$\text{Ext}^1_{\Sigma_{3p}} (D^{\lambda}, D^{\mu}) \cong \text{Hom}_{\Sigma_{3p}} (\text{rad} S^{\lambda}, D^{\lambda}).$$ 
\end{thm}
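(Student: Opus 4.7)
My plan is to convert the claimed isomorphism into the vanishing of $\text{Ext}^1_{\Sigma_{3p}}(S^\lambda, D^\mu)$, and then establish that vanishing by restricting to a weight 2 block of $F\Sigma_{3p-1}$, where Proposition \ref{prop:KS} is already known to hold without any restriction on the number of parts (by the remark following Proposition \ref{prop:ext2}). Applying $\text{Hom}_{\Sigma_{3p}}(-, D^\mu)$ to $0 \to \text{rad}\,S^\lambda \to S^\lambda \to D^\mu$ — pardon, to $0 \to \text{rad}\,S^\lambda \to S^\lambda \to D^\lambda \to 0$ — the fact that $S^\lambda$ has head $D^\lambda$ forces $\text{Hom}(D^\lambda, D^\mu) \to \text{Hom}(S^\lambda, D^\mu)$ to be an isomorphism, so the long exact sequence collapses to an injection $\text{Hom}(\text{rad}\,S^\lambda, D^\mu) \hookrightarrow \text{Ext}^1(D^\lambda, D^\mu)$, which is an isomorphism whenever $\text{Ext}^1(S^\lambda, D^\mu) = 0$. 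The case $\lambda = \mu$ is immediate: $[S^\lambda:D^\lambda] = 1$ (Proposition \ref{prop:rad}) forces $\text{Hom}(\text{rad}\,S^\lambda, D^\lambda) = 0$, and $\text{Ext}^1(D^\lambda, D^\lambda) = 0$ in odd characteristic by Kleshchev's self-extensions theorem.

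Assuming from now on that $\lambda \neq \mu$ and first treating the generic case $\lambda \in E$, I would select an index $2 \leq i \leq p$ realizing assumption $(*)$ for $\lambda$ and apply $\text{Hom}_{\Sigma_{3p}}(-, D^\mu)$ to the short exact sequence of Proposition \ref{prop:CP}(2),
\[
0 \to S^{\sigma_i(\lambda)} \to (S^\lambda \downarrow_{B_i}) \uparrow^{\mathcal{B}} \to S^\lambda \to 0.
\]
Eckmann-Shapiro identifies $\text{Ext}^1_{\Sigma_{3p}}((S^\lambda \downarrow_{B_i})\uparrow^{\mathcal{B}}, D^\mu)$ with $\text{Ext}^1_{B_i}(S^{\tilde\lambda}, D^\mu \downarrow_{B_i})$, where $\tilde\lambda = \Theta_i(\lambda)$. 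The connecting term $\text{Hom}(S^{\sigma_i(\lambda)}, D^\mu)$ vanishes because nonvanishing would force $\mu = \sigma_i(\lambda)$ by Proposition \ref{prop:rad}, contradicting $\lambda \not\rhd \mu$ since $\lambda \rhd \sigma_i(\lambda)$. If $\mu \notin \Lambda_i$ then $D^\mu \downarrow_{B_i} = 0$ and the vanishing is immediate; otherwise $D^\mu\downarrow_{B_i} \cong D^{\tilde\mu}$ for the good $(i-1)$-node $C$ of $\mu$, and since $\tilde\lambda$ lives in the weight 2 block $B_i$ the unrestricted weight 2 version of Proposition \ref{prop:KS} delivers $\text{Ext}^1_{B_i}(S^{\tilde\lambda}, D^{\tilde\mu}) = 0$ provided $\tilde\lambda \not\rhd \tilde\mu$.

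The hardest step I anticipate is verifying this last dominance compatibility, namely that $\lambda \not\rhd \mu$ in $\mathcal{B}$ implies $\tilde\lambda \not\rhd \tilde\mu$ in $B_i$. I would establish this by direct combinatorial analysis on the $\langle 3^{i-2}, 4, 2, 3^{p-i}\rangle$ abacus display, exploiting that the removed nodes have the common residue $i-1$ and that $\lambda, \mu \in \Lambda_i$ constrain the beads that can be moved; the order preservation of Martin-Russell ($\Theta_i$ preserves the lexicographic ordering) should serve as the initial reduction, with the fine dominance check done partition-by-partition. Finally, the three exceptional partitions $R - E = \{\langle 1,2\rangle, \langle 4,2,1\rangle, \langle 3,3,2\rangle\}$ require bespoke treatment: Lemma \ref{lem:help2} enumerates all $\mu \rhd \lambda$ with $\text{Ext}^1(D^\lambda, D^\mu) \neq 0$ for $\lambda \in \{\langle 4,2,1\rangle, \langle 3,3,2\rangle\}$, so one can verify the isomorphism by directly comparing composition multiplicities in the second radical layer using Theorem \ref{thm:RR} and Proposition \ref{prop:decnumb}, while the case $\lambda = \langle 1,2\rangle$ is handled by passing to the conjugate partition $\lambda' = \langle p,p,p-1\rangle$ via Proposition \ref{prop:james}(2) and the self-duality of simple modules.
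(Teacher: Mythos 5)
Your reduction to the vanishing of $\text{Ext}^1_{\Sigma_{3p}}(S^{\lambda},D^{\mu})$, and your treatment of the case $\mu\notin\Lambda_i$ (where $D^{\mu}\downarrow_{B_i}=0$ kills the Eckmann--Schapiro term and the simple head of $S^{\sigma_i(\lambda)}$ together with $\lambda\rhd\sigma_i(\lambda)$ kills the connecting term), is exactly the paper's argument for that case. The gap is in the case $\mu\in\Lambda_i$. The weight~2 analogue of Proposition \ref{prop:KS} asserts $\text{Ext}^1_{B_i}(D^{\tilde\lambda},D^{\tilde\mu})\cong\text{Hom}_{B_i}(\text{rad}\,S^{\tilde\lambda},D^{\tilde\mu})$; in the long exact sequence attached to $0\to\text{rad}\,S^{\tilde\lambda}\to S^{\tilde\lambda}\to D^{\tilde\lambda}\to 0$ this says only that the connecting map onto $\text{Ext}^1_{B_i}(D^{\tilde\lambda},D^{\tilde\mu})$ is surjective, equivalently that the induced map $\text{Ext}^1_{B_i}(D^{\tilde\lambda},D^{\tilde\mu})\to\text{Ext}^1_{B_i}(S^{\tilde\lambda},D^{\tilde\mu})$ is zero. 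It does \emph{not} give $\text{Ext}^1_{B_i}(S^{\tilde\lambda},D^{\tilde\mu})=0$, which is what your exact sequence needs; that group injects into $\text{Ext}^1_{B_i}(\text{rad}\,S^{\tilde\lambda},D^{\tilde\mu})$ and there is no reason for it to vanish. On top of this, the dominance transfer $\lambda\not\rhd\mu\Rightarrow\tilde\lambda\not\rhd\tilde\mu$ that you flag as the hardest step is left entirely unproven, and $\Theta_i$ preserving the lexicographic order does not imply anything about dominance.

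The paper avoids both problems by changing strategy in this case: it does not try to kill $\text{Ext}^1(S^{\lambda},D^{\mu})$ at all. Instead, from $\text{Ext}^1_{\Sigma_{3p-1}}(D^{\lambda_A},D^{\mu_B})\neq 0$ it deduces $[S^{\lambda_A}:D^{\mu_B}]\neq 0$ by Proposition \ref{prop:ext2}, lifts this to $[S^{\lambda}:D^{\mu}]\neq 0$ via the branching result (Lemma 2.13 of \cite{BK}), and then uses the parity constraint $\mathcal{P}\lambda\neq\mathcal{P}\mu$ together with Proposition \ref{prop:mullextend} (so $\mu\neq m(\lambda')$, hence $D^{\mu}$ cannot sit only in the simple socle or in the third layer) to place $D^{\mu}$ in the second radical layer; since $\text{Hom}(\text{rad}\,S^{\lambda},D^{\mu})$ already injects into the one-dimensional $\text{Ext}^1(D^{\lambda},D^{\mu})$, both sides coincide. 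You should adopt this route for $\mu\in\Lambda_i$. Finally, your conjugation trick for $\lambda=\langle 1,2\rangle$ does not transfer the statement: $S^{\lambda'}\otimes sgn\cong (S^{\lambda})^{*}$ exchanges radical and socle filtrations, so information about $\text{rad}\,S^{\lambda'}/\text{rad}^2 S^{\lambda'}$ concerns the second \emph{socle} layer of $S^{\lambda}$, not its second radical layer. The paper instead handles $\langle 1,2\rangle$ by inducing from the defect~1 block $B_1$ and using the explicit Specht filtration $N\sim S^{\langle p,2,1\rangle}+S^{\langle p,p,2\rangle}$ of the kernel.
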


\begin{proof}
We may assume $\lambda$ has at least $p$ nonzero parts and $\text{Ext}^1_{\Sigma_{3p}}(D^{\lambda} , D^{\mu}) \neq 0$.  Assume $\lambda \neq \langle 1,2 \rangle$.  Choose $2 \leq i \leq p$ such that 
$\lambda \in \Lambda_i$, and let $A$ be the normal $(i-1)$-node of $\lambda$ such that $D^{\lambda}\downarrow_{B_i} \cong D^{\lambda_{A}}$.  Since $\text{Ext}^1_{\Sigma_{3p}}(D^{\lambda} , D^{\mu}) \neq 0$ we have either
	\begin{enumerate}
		\item $\mu \in \Lambda_i$ and $\text{Ext}^1_{\Sigma_{3p-1}} (D^{\lambda_A}, D^{\mu}\downarrow_{B_i}) \neq 0$ or 
		\item $\mu \notin \Lambda_i$ and $[D^{\lambda_A}\uparrow^{\mathcal{B}} : D^{\mu}] \neq 0$.
	\end{enumerate}  

\noindent 
Begin by assuming (1) holds.  Then $\text{Ext}^1_{\Sigma_{3p-1}}(D^{\lambda_A} , D^{\mu_B}) \neq 0$, 
where $B$ is the normal node for $\mu$ with $\text{res} B = \alpha$.  By Proposition \ref{prop:ext2}, we have $[S^{\lambda_A} : D^{\mu_B}] \neq 0$.  Then, by Lemma 2.13 of \cite{BK}, $[S^{\lambda} : D^{\mu}] \neq 0$.  Now 
$\text{Ext}^1_{\Sigma_{3p}}(D^{\lambda} , D^{\mu}) \neq 0$ implies $\mathcal{P} \lambda \neq \mathcal{P} \mu$.  Hence $D^{\mu}$ must lie in the second radical layer of $S^{\lambda}$ by Proposition \ref{prop:mullextend}.  So $\text{Hom}_{\Sigma_{3p}}(\text{rad} S^{\lambda}, D^{\mu}) \neq 0$.  The result now follows since $\text{Ext}^1_{\Sigma_{3p}}(D^{\lambda} , D^{\mu})$ is one-dimensional.   

Assume (2) holds.  Inducing $S^{\lambda_A}$ back up to $\mathcal{B}$ gives the following short exact sequence.
\begin{equation} \label{eq:SES2}
0 \rightarrow S^{\sigma_i(\lambda)} \rightarrow S^{\lambda_A} \uparrow^{\mathcal{B}} \rightarrow S^{\lambda} \rightarrow 0
\end{equation}
\noindent Applying the functor $\text{Hom}_{\Sigma_{3p}}(-,D^{\mu})$ to the short exact sequence in Equation 
\ref{eq:SES2} we obtain the following exact sequence.

$$\text{Hom}_{\Sigma_{3p}}(S^{\lambda_A} \uparrow^{\mathcal{B}} , D^{\mu})
\rightarrow \text{Hom}_{\Sigma_{3p}}(S^{\sigma_i(\lambda)}, D^{\mu}) \rightarrow
\text{Ext}^1_{\Sigma_{3p}} (S^{\lambda}, D^{\mu}) \rightarrow
\text{Ext}^1_{\Sigma_{3p}}(S^{\lambda_A} \uparrow^{\mathcal{B}} , D^{\mu})$$

\noindent Now $D^{\mu}\downarrow_{B_i} = 0 $ since $\mu \notin \Lambda_i$.  We conclude that
$$\text{Ext}_{\Sigma_{3p}}^1(S^{\lambda}, D^{\mu}) \cong \text{Hom}_{\Sigma_{3p}}(S^{\sigma_i(\lambda)} ,D^{\mu})$$
by the Eckmann-Schapiro lemma and Frobenius reciprocity.  

We claim that $\text{Hom}_{\Sigma_{3p}}(S^{\sigma_i(\lambda)}, D^{\mu}) = 0$ whenever $\lambda$ satisfies $(*)$.  If $\lambda$ satisfies $(*)$ then $S^{\sigma_i(\lambda)}$ has simple head isomorphic to $D^{\sigma_i(\lambda)}$.  Moreover,  $\lambda \rhd \sigma_i(\lambda)$ implies $\text{Hom}_{\Sigma_{3p}}(S^{\sigma_i(\lambda)}, D^{\mu}) = 0$ since $\lambda$ does not strictly dominate $\mu$.  Therefore, 
$\text{Ext}_{\Sigma_{3p}}^1(S^{\lambda}, D^{\mu}) = 0$.  So
$$\text{Hom}_{\Sigma_{3p}} (\text{rad}S^{\lambda},D^{\mu}) \cong \text{Ext}^1_{\Sigma_{3p}} (D^{\lambda}, D^{\mu}),$$ as desired.
 
Assume $\lambda$ does not satisfy assumption $(*)$.  First assume $\lambda = \langle 3,3,2 \rangle$.  Then $\lambda \in \Lambda_{2} \cap \Lambda_{3}$.  If $\mu \rhd \lambda$ and 
$\text{Ext}^1_{\Sigma_{3p}}(D^{\lambda}, D^{\mu}) \neq 0$ then $\mu \in E_2$ by Lemma \ref{lem:help2}.  It is easily verified that $E_2 \subset 
\Lambda_2 \cup \Lambda_3$.  Hence, for $\mu \in E_2$, $[S^{\lambda} : D^{\mu}] \neq 0 $ with $\mathcal{P}\lambda \neq \mathcal{P}\mu$.  Now assume $\lambda= \langle 4,2,1 \rangle$.  By Lemma \ref{lem:help2}, if $\mu \rhd \lambda$ and $\text{Ext}^1_{\Sigma_{3p}}(D^{\lambda}, D^{\mu}) \neq 0$ then $\mu \in E_1$.  Observe that $\lambda \in \Lambda_{4}$ and $\lambda \notin \Lambda_j$ for $j \neq 4$.  Also $E_1 \cap \Lambda_4 = \{\langle 5,4,1 \rangle \}$.  Hence 
$\text{Hom}_{\Sigma_{3p}}(\text{rad}S^{\lambda}, D^{\langle 5,4,1\rangle }) \cong \text{Ext}^1_{\Sigma_{3p}} (D^{\lambda}, D^{\langle 5,4,1 \rangle})$.  Take $\mu \in E_1 - \{\langle 5,4,1 \rangle \}$.  We have a short exact sequence.

\begin{equation}  \label{eq:332}
0 \rightarrow S^{\langle 3,2,1 \rangle } \rightarrow (S^{\lambda} \downarrow_{B_4})\uparrow^{\mathcal{B}} \rightarrow S^{\lambda} \rightarrow 0
\end{equation}

\noindent By Lemma \ref{lem:help} and Proposition \ref{prop:james}, $S^{\langle 3,2,1 \rangle}$ has the simple head isomorphic to $D^{m(\langle p-2 \rangle)} \cong D^{\langle 4,2,1 \rangle}$.  Hence $\text{Hom}_{\Sigma_{3p}} (S^{\langle 3,2,1 \rangle},D^{\mu}) = 0$ for any partition $\mu$ in the set $E_1-\{\langle 5,4,1 \rangle\}$.  The result now follows from Equation \ref{eq:332} and Frobenius reciprocity.                   

Now consider the case of $\lambda = \langle1,2 \rangle$.  Using the $\text{Ext}^1$-quiver for $\mathcal{B}$ we see: If $\langle1,2 \rangle$ does not strictly dominate a $p$-regular partition $\mu$ of $n = 3p$ and $\text{Ext}^1(D^{\langle 1,2\rangle}, D^{\mu}) \neq 0$ then $\mu = \langle 2,1\rangle$ or $\langle1,3\rangle$.  Observe that $\lambda \in \Lambda_1$ and $\lambda \notin \Lambda_i$ for all $2 \leq i \leq p$.  Assume $\mu = \langle1,3 \rangle$.  Then $\mu \in \Lambda_1$, so $[S^{\lambda} : D^{\mu}] \neq 0$.  The result follows since $\mathcal{P}\lambda \neq \mathcal{P} \mu$.
Now suppose $\mu = \langle 2,1 \rangle$.  We have a short exact sequence
\begin{equation} \label{eq:21}
 0 \rightarrow N \rightarrow \left(S^{\lambda} \downarrow_{B_1}\right)\uparrow^{\mathcal{B}} \rightarrow S^{\lambda} \rightarrow 0
 \end{equation}
where $N$ has a Specht filtration $N \sim S^{\langle p,2,1 \rangle} + S^{\langle p,p,2 \rangle}$ with $S^{\langle p,2,1 \rangle}$ at the top.   Hence, 
$\text{Hom}_{\Sigma_{3p}}(N , D^{\mu}) = 0$.   By Frobenius reciprocity and Equation \ref{eq:21}, 
$$\text{Ext}^1_{\Sigma_{3p}}(S^{\lambda}, D^{\mu}) \cong \text{Hom}_{\Sigma_{3p}}(N , D^{\mu}) = 0.$$                                           
\end{proof}



At this point the reader may wonder whether Proposition \ref{prop:KS} holds for all $p$-regular partitions in $\mathcal{B}$.  One is led to believe that it does since $\mathcal{B}$ is closely related to the blocks of $F\Sigma_{3p-1}$.  In Corollary \ref{cor:KS} we see Proposition \ref{prop:KS} does indeed hold.  
 
\begin{cor} \label{cor:KS}
Let $\lambda$ and $\mu$ be a $p$-regular partitions in $\mathcal{B}$.  If $\mu$ does not strictly dominate $\mu$ then
$$\text{Hom}_{\Sigma_{3p}}(\text{rad} S^{\lambda} , D^{\mu}) \cong \text{Ext}^1_{ \Sigma_{3p} } (D^{\lambda}, D^{\mu}).$$
\end{cor}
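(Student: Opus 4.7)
The plan is to reduce to Theorem~\ref{thm:2nd}, which settles the case when $\lambda$ is additionally $p$-restricted. Thus it suffices to treat $\lambda$ that is $p$-regular but not $p$-restricted, and by Proposition~\ref{prop:3p} such a $\lambda$ is either a hook $\langle i \rangle$ or $\langle i, i, i \rangle$, or has $\langle 3^p \rangle$ display of the form $\langle i, p \rangle$, $\langle i, j \rangle$ with $j < i$, or $\langle p, p-1, p-2 \rangle$. Applying $\text{Hom}(-, D^\mu)$ to the short exact sequence $0 \to \text{rad}\,S^\lambda \to S^\lambda \to D^\lambda \to 0$ and noting that $\text{Hom}(S^\lambda, D^\mu) = 0 = \text{Hom}(D^\lambda, D^\mu)$ when $\lambda \neq \mu$, it suffices (as in Theorem~\ref{thm:2nd}) to show $\text{Ext}^1_{\Sigma_{3p}}(S^\lambda, D^\mu) = 0$; the case $\lambda = \mu$ is immediate from Proposition~\ref{prop:bipart}.

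For hook $\lambda$, Proposition~\ref{prop:peel} forces $S^\lambda$ to have Loewy length at most $2$, so $\text{rad}\,S^\lambda$ is either zero or a single simple module $D^\sigma$ with $\sigma \rhd \lambda$ explicit from Peel's decomposition matrix. Thus $\text{Hom}(\text{rad}\,S^\lambda, D^\mu) \neq 0$ precisely when $\mu = \sigma$, and a direct comparison with the Ext$^1$-quiver of $\mathcal{B}$ constructed in \cite{MR} yields the isomorphism. For non-hook $\lambda$ I would mimic Theorem~\ref{thm:2nd}: choose $2 \leq i \leq p$ with $\lambda \in \Lambda_i$ and $\sigma_i(\lambda)$ also $p$-regular (confirmed by a direct check on the $\langle 3^p \rangle$ display) and apply $\text{Hom}(-, D^\mu)$ to
\[ 0 \to S^{\sigma_i(\lambda)} \to S^{\widetilde{\lambda}}\uparrow^{\mathcal{B}} \to S^{\lambda} \to 0 \]
from Proposition~\ref{prop:CP}(2). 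Eckmann-Schapiro and Frobenius reciprocity reduce to the same two subcases as in Theorem~\ref{thm:2nd}: if $\mu \in \Lambda_i$, Proposition~\ref{prop:ext2} in the weight-$2$ block $B_i$ combined with Lemma~2.13 of \cite{BK} places $D^\mu$ in $S^\lambda$ of opposite parity, hence in the second Loewy layer by Proposition~\ref{prop:mullextend}; if $\mu \notin \Lambda_i$, the reduction gives $\text{Ext}^1(S^\lambda, D^\mu) \cong \text{Hom}(S^{\sigma_i(\lambda)}, D^\mu) = 0$, using that the simple head of $S^{\sigma_i(\lambda)}$ is $D^{\sigma_i(\lambda)}$ and that $\sigma_i(\lambda) \lhd \lambda$ together with $\lambda \not\rhd \mu$ excludes $\mu = \sigma_i(\lambda)$.

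The main obstacle will be the exceptional partition $\lambda = \langle p, p-1, p-2 \rangle$, which admits a single removable node and for which the direct argument above can degenerate. For this case I would invoke Lemma~\ref{lem:help}, which identifies $S^\lambda$ as a submodule of $S^{\langle p, p-1, p-3 \rangle}$; since $\langle p, p-1, p-3 \rangle$ is both $p$-regular and $p$-restricted, its second radical layer is controlled by Theorem~\ref{thm:2nd}, and the required identification for $\lambda$ follows by inspection. Any remaining boundary configurations (for example partitions whose only normal bead lies on runner $1$) are handled analogously, replacing the two-term Specht filtration of $S^{\widetilde{\lambda}}\uparrow^{\mathcal{B}}$ with the three-term filtration described in Section~\ref{section:prelim}, as illustrated by the $\lambda = \langle 1, 2 \rangle$ case at the end of the proof of Theorem~\ref{thm:2nd}.
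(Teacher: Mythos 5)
Your core engine is the paper's: restrict to a block $B_i$ with $\lambda\in\Lambda_i$, apply $\text{Hom}(-,D^{\mu})$ to the short exact sequence of Proposition \ref{prop:CP}(2), and split on whether $\mu\in\Lambda_i$ (parity forces $D^{\mu}$ into the second layer) or $\mu\notin\Lambda_i$ (Eckmann--Schapiro and Frobenius reciprocity reduce to $\text{Hom}(S^{\sigma_i(\lambda)},D^{\mu})=0$). But you have missed the reduction the paper's proof actually rests on: Proposition \ref{prop:KS} (Kleshchev--Sheth) already gives the statement for every $\lambda$ with at most $p-1$ nonzero parts, so after Theorem \ref{thm:2nd} the only partitions needing the direct argument are $\langle 1,p\rangle$ and $\langle i,1\rangle$ ($2\leq i\leq p$). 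By skipping this, you take on the burden of exhibiting, for \emph{every} partition in Proposition \ref{prop:3p}(8), an $i$ with $\lambda\in\Lambda_i$ and $\sigma_i(\lambda)$ $p$-regular; you assert this is "confirmed by a direct check" but never perform it, and it genuinely fails for $\langle p,p-1,p-2\rangle$: its unique removable bead pairs it with the \emph{larger} partition $\langle p,p-1,p-3\rangle$, so no $\sigma_i(\lambda)$ with $\lambda\rhd\sigma_i(\lambda)$ exists, and your fallback ("the required identification follows by inspection" from Lemma \ref{lem:help}) is not an argument. Under the paper's reduction this partition never arises, since it has only $3\leq p-1$ parts.

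A second, smaller error: in the case $\mu\in\Lambda_i$ you cite Proposition \ref{prop:mullextend} to place $D^{\mu}$ in the second Loewy layer, but that proposition applies only to $\lambda$ that is both $p$-regular and $p$-restricted, which is exactly the case you have already discharged. For $p$-regular non-$p$-restricted $\lambda$ the correct justification, and the one the paper uses, is that $S^{\lambda}$ has Loewy length at most $3$ (Theorem \ref{thm:gotit}) together with bipartiteness of the $\text{Ext}^1$-quiver (Proposition \ref{prop:bipart}): any third-layer factor has the same parity as $\lambda$, so a composition factor $D^{\mu}$ with $\mathcal{P}\mu\neq\mathcal{P}\lambda$ must sit in the second layer. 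Your treatment of hooks via Peel's decomposition matrix is fine and consistent with what the paper leaves implicit.
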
  

\begin{proof}
It suffices to prove the theorem for a $p$-regular partition $\lambda$ in $\mathcal{B}$ that is $p$-regular, not $p$-restricted, a non-hook, and has at least $p$ non-zero parts.  Refer to the list of abacus displays from the proof of Lemma \ref{lem:nicerest}.  Of these displays the following have at least $p$ non-zero parts:
	\begin{enumerate}
		\item $\langle 1,p \rangle$ or
		\item  $\langle i,1 \rangle $, $2 \leq i \leq p$.
	\end{enumerate}    
	Assume $\lambda = \langle 1,p \rangle$.  Then $\lambda \in \Lambda_p$.  Write $S^{\widetilde{\lambda}} = S^{\lambda} \downarrow_{B_p}$ and $D^{\widetilde{\lambda}} = D^{\lambda} \downarrow_{B_p}$.  Suppose $\mu$ is a partition in $\mathcal{B}$ such that $\mu \rhd \lambda$ and 
	$\text{Ext}^{1}_{ \Sigma_{3p}} (D^{\lambda}, D^{\mu}) \neq 0$.  Then either
	\begin{enumerate}
		\item $\mu \in \Lambda_p$ and $\text{Ext}^{1}_{ \Sigma_{3p-1}} (D^{\widetilde{\lambda}}, D^{\widetilde{\mu}}) \neq 0$ where  $D^{\widetilde{\mu}} = D^{\mu} \downarrow_{B_p}$ or 
		\item $\mu \notin \Lambda_p$ and $[D^{\widetilde{\lambda}} \uparrow^{\mathcal{B}} : D^{\mu}] \neq 0$.  
	\end{enumerate}  
If (1) holds then $[S^{\widetilde{\lambda}} : D^{\widetilde{\mu}}] \neq 0$ by Proposition \ref{prop:ext2}.  Hence $[S^{\lambda} : D^{\mu}] \neq 0$ by Lemma 2.13 of \cite{BK}.  Notice that $S^{\lambda}$ has Loewy length 2 or 3 and $\mathcal{P} \lambda \neq \mathcal{P} \mu$.  Hence 
$\text{Hom}_{ \Sigma_{3p}} (\text{rad} S^{\lambda}, D^{\mu}) \neq 0$.  The result now follows since $\text{Ext}^{1}_{\Sigma_{3p}} (D^{\lambda}, D^{\mu})$ is one-dimensional.  Now assume (2) holds.  So $D^{\mu} \downarrow_{B_p} = 0$.  We have a short exact sequence.
\begin{equation} \label{eq:KS1}
0 \rightarrow S^{\langle 1,p-1 \rangle} \rightarrow (S^{\lambda} \downarrow_{B_p})\uparrow^{\mathcal{B}} \rightarrow S^{\langle 1,p \rangle} \rightarrow 0  
\end{equation}        
Since $\mu \neq \langle 1,p-1 \rangle$, $\text{Ext}^{1}_{ \Sigma_{3p}} (S^{\lambda}, D^{\mu}) \cong \text{Hom}_{ \Sigma_{3p}} (S^{\langle 1,p-1 \rangle}, D^{\mu}) = 0$.  We conclude that $\text{Hom}_{ \Sigma_{3p}} (\text{rad}S^{\langle 1,p \rangle}, D^{\mu}) \cong \text{Ext}^{1}_{\Sigma_{3p}} (D^{\langle 1,p \rangle},D^{\mu})$.  A similar argument can be used to prove the theorem for $\lambda = \langle i,1 \rangle \in \Lambda_{i}$ for $2 \leq i \leq p$.         
\end{proof}

\subsection{A result on the socles}
It is known that if $\lambda$ is $p$-restricted then $S^{\lambda}$ has simple socle isomorphic to $D^{m(\lambda ')}$.  So the Mullineux map gives the socle of $S^{\lambda}$ whenever $\lambda$ is $p$-restricted.  We are concerned with determining the socle of a Specht module in $\mathcal{B}$  that has Loewy length 3.  We claim that the socle is indeed the third radical layer.

\begin{thm} \label{thm:socles}
Let $\lambda$ be a $p$-regular partition in $\mathcal{B}$.  If $S^{\lambda}$ has Loewy length 3 then 
$\text{soc}\left( S^{\lambda} \right) \cong \text{rad}^2 S^{\lambda}$.  
\end{thm}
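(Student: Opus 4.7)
The plan is to prove that $S^{\lambda}$ has a simple socle; granted this, the theorem follows immediately. Indeed, since $S^{\lambda}$ has Loewy length $3$, we have $\text{rad}^{3}S^{\lambda}=0$, so $\text{rad}^{2}S^{\lambda}$ is a nonzero semisimple submodule of $S^{\lambda}$ and hence lies inside $\text{soc}(S^{\lambda})$. Simplicity of the socle then forces $\text{rad}^{2}S^{\lambda}=\text{soc}(S^{\lambda})$.

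To show that $\text{soc}(S^{\lambda})$ is simple, I would apply the sgn-duality of Proposition~\ref{prop:james}(2). Since $S^{\lambda}\otimes sgn\cong(S^{\lambda'})^{*}$, dualizing gives $(S^{\lambda})^{*}\cong S^{\lambda'}\otimes sgn$. Combined with the relations $(D^{\mu})^{*}\cong D^{\mu}$ and $D^{\mu}\otimes sgn\cong D^{m(\mu)}$, the usual adjunctions yield
\[
\text{Hom}_{\Sigma_{3p}}(D^{\mu},S^{\lambda})\;\cong\;\text{Hom}_{\Sigma_{3p}}(S^{\lambda'},D^{m(\mu)})
\]
for every $p$-regular partition $\mu$ of $3p$. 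The right-hand side is at most one-dimensional and is nonzero exactly when $D^{m(\mu)}$ appears as the unique simple quotient of $S^{\lambda'}$. Hence there is at most one $\mu$ for which the left-hand side is nonzero, proving that $\text{soc}(S^{\lambda})$ is a single simple module.

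By Corollary~\ref{cor:nscondition}, since $S^{\lambda}$ has Loewy length $3$ and $\lambda$ is $p$-regular by hypothesis, $\lambda$ is not $p$-restricted; equivalently, $\lambda'$ is $p$-singular. The main obstacle is therefore the input that $S^{\lambda'}$ has a simple head despite $\lambda'$ being $p$-singular. This is James's classical result that for any partition $\mu$ the Specht module $S^{\mu}$ has a unique simple quotient $D^{\mu^{R}}$, where $\mu^{R}$ is the $p$-regularization of $\mu$. Should one prefer to avoid invoking this fact directly, an alternative is to run through the classification of Corollary~\ref{cor:length3} together with Proposition~\ref{prop:3p}(8): for each admissible $\lambda$, construct an injection $S^{\lambda}\hookrightarrow S^{\mu}$ into a Specht module labeled by a partition $\mu$ that is both $p$-regular and $p$-restricted (so that $S^{\mu}$ has simple socle $D^{m(\mu')}$ by the standard result), following the template of Lemma~\ref{lem:help}. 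The candidate Carter-Payne maps are supplied by Proposition~\ref{prop:CP}, and injectivity is detected upon restriction to a defect $2$ block $B_{i}$ via the multiplicity one relation $[S^{\lambda_{A}}:D^{m((\lambda_{A})')}]=1$, exactly as in the proof of Lemma~\ref{lem:help}.
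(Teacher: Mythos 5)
Your proposal rests on a false claim, so the argument collapses at its core. You assert that $\text{soc}(S^{\lambda})$ is simple for every $p$-regular $\lambda$ in $\mathcal{B}$ with $S^{\lambda}$ of Loewy length $3$, but this is not true, and the paper itself records a counterexample: for $p=5$ the partition $(9,4,2)$ is $5$-regular, not $5$-restricted (its conjugate $(3^2,2^2,1^5)$ is $5$-singular), and not a hook, so $S^{(9,4,2)}$ has Loewy length $3$ by Corollary \ref{cor:length3}; yet $\text{soc}(S^{(9,4,2)}) = D^{(15)}\oplus D^{(10,5)}$. The conclusion of Theorem \ref{thm:socles} is only that $\text{rad}^2 S^{\lambda}$ equals the socle, not that either is simple --- and indeed the concluding section of the paper describes this socle as a direct sum of \emph{all} composition factors $D^{\mu}$ with $\mu\neq\lambda$ and $\mathcal{P}\mu=\mathcal{P}\lambda$, which is typically not simple.

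The specific wrong step is your appeal to ``James's classical result that for any partition $\mu$ the Specht module $S^{\mu}$ has a unique simple quotient $D^{\mu^R}$.'' James's simple-head theorem requires $\mu$ to be $p$-regular; the regularization theorem only says $[S^{\mu}:D^{\mu^R}]=1$ and that $\mu^R$ is the most dominant label occurring, which does not force a simple head when $\mu$ is $p$-singular. Here $\lambda'$ is $p$-singular precisely because $\lambda$ is not $p$-restricted, and the paper's own computation $S^{(3^2,2^2,1^5)}/\text{rad}\,S^{(3^2,2^2,1^5)} = D^{(4^3,3)}\oplus D^{(3^2,2^3,1^3)}$ shows the head of $S^{\lambda'}$ can be two-dimensional. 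Your fallback of embedding $S^{\lambda}$ into a Specht module with simple socle fails for the same reason: no such embedding can exist when $\text{soc}(S^{\lambda})$ is not simple. The paper's proof proceeds quite differently: it fixes a block $B_i$ with $S^{\lambda}\downarrow_{B_i}\neq 0$ and $D^{\lambda}\downarrow_{B_i}=0$ (Lemma \ref{lem:killhead}), and uses Lemma \ref{lem:spechtlength2help}(1) together with the structure of $S^{\mu}\downarrow_{B_i}$ for the partner partition $\mu$ with $\lambda=\sigma_i(\mu)$ to show that no $D^{\delta}$ occurring in the second Loewy layer can lie in the socle; this forces $\text{soc}(S^{\lambda})=\text{rad}^2 S^{\lambda}$ without any simplicity claim.
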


\begin{proof}
By Corollary \ref{cor:length3}, $\lambda$ must have $\langle 3^p \rangle$ notation of the form:
\begin{enumerate}
	\item $\langle i,p \rangle$, $1 \leq i \leq p-2$,
	\item $\langle i,j \rangle$, $1 \leq j < i \leq p$ and $(i,j) \neq (p,p-1)$,  or 
	\item $\langle p,p-1,p-2 \rangle$.
\end{enumerate}
By Lemma \ref{lem:help}, we know $\text{rad}^2(S^{\langle p,p-1,p-2 \rangle}) \cong \text{soc} (S^{\langle p,p-1,p-2 \rangle})$.  So we assume $\lambda \neq \langle p,p-1,p-2 \rangle$.  Suppose $\text{Hom}_{\Sigma_{3p}}(\text{rad} S^{\lambda}, D^{\delta}) \neq 0$.  We prove that $D^{\delta}$ is not in the socle of $S^{\lambda}$.  By Lemma \ref{lem:killhead} we may choose is $2 \leq i \leq p$ such that $S^{\lambda} \downarrow_{B_i} \neq 0$ and $D^{\lambda} \downarrow_{B_i} = 0$.  Furthermore, $\lambda = \sigma_i(\mu)$ for the unique $p$-regular partition $\mu$ in $\mathcal{B}$.  Write $S^{\mu} \downarrow_{B_i} \cong S^{\tilde{\mu}}$.  Now either $D^{\delta}\downarrow_{B_i} =0$ or $D^{\delta}\downarrow_{B_i}$ is simple.  If $D^{\delta}\downarrow_{B_i} =0$ then $\text{Hom}_{\Sigma_{B_i}}(D^{\delta}\downarrow_{B_i}, S^{\tilde{\mu}}) = 0$.  So $D^{\delta}$ is not in the socle of $S^{\lambda}$ by Lemma \ref{lem:spechtlength2help}(1).  Now suppose $D^{\delta} \downarrow_{B_i}$ is simple.  Since $D^{\lambda}\downarrow_{B_i} = 0$, $\text{Hom}_{\Sigma_{3p}}(\text{rad} S^{\lambda}, D^{\delta}) \neq 0$, and $S^{\tilde{\mu}}$ has simple head $D^{\tilde{\mu}}$, we have $D^{\delta} \downarrow_{B_i} \cong D^{\tilde{\mu}}$.  Next, by Proposition \ref{prop:spechtsimpleinduce},  $S^{\tilde{\mu}}$ is reducible since $\lambda$ is not in the set $ \{ \langle p,p-1 \rangle, \langle p-1, p \rangle \}$.  So $\text{Hom}_{B_i} (D^{\delta} \downarrow_{B_i}, S^{\tilde{\mu}}) = 0$ since $[S^{\tilde{\mu}} : D^{\tilde{\mu}}] = 1$.  Therefore $D^{\delta}$ is not in the socle of $S^{\lambda}$ by Lemma \ref{lem:spechtlength2help}(1).               
\end{proof}

\section{An upper bound on the composition lengths} \label{section:bound}
In the previous section we saw that the Loewy length of any Specht module in $\mathcal{B}$ is at most 4.  Moreover, $S^{\lambda}$ has Loewy length 4 if and only if $\lambda$ is both $p$-regular and $p$-restricted.  In this section we establish an upper bound of 14 on the composition lengths of the Specht modules in $\mathcal{B}$.  Furthermore, we produce a Specht module in the block that has 14 composition factors and we demonstrate that the only Specht modules that can have composition length 14 must correspond to a partition that is both $p$-regular and $p$-restricted.    

Theorem \ref{thm:2nd} describes the Loewy structure of the Specht module $S^{\lambda}$ in $\mathcal{B}$, provided $\lambda$ is $p$-regular and $p$-restricted.  For such $\lambda$, $\text{rad}S^{\lambda} / \text{rad}^2 S^{\lambda}$ is given by the $\text{Ext}^1$-quiver of $\mathcal{B}$.  To obtain $\text{rad}^2S^{\lambda} / \text{rad}^3 S^{\lambda}$ we apply Theorem \ref{thm:2nd} to $S^{\lambda'}$ and tensor $\text{rad}S^{\lambda'} / \text{rad}^2 S^{\lambda'}$ with the sign representation.  We now place an upper bound on the number of composition factors seen in the second Loewy layer of $S^{\lambda}$.  

Let $\lambda$ be a $p$-regular partition in $\mathcal{B}$, and let $E_{\lambda}$ be the set of all $p$-regular partitions $\mu$ in $\mathcal{B}$ such that $\mu \rhd \lambda$ and $\text{Ext}^{1}_{\Sigma_{3p}}(D^{\mu}, D^{\lambda})$ is non-zero.  We claim that $|E_{\lambda}| \leq 6$.  Assume $\lambda \in \Lambda_t$.  If $t =1$, the quiver of $B_1$ (see Figure 16 of \cite{MR}) tells us that the second Loewy layer $S^{\Theta_1(\lambda)}$ has at most one composition factor since $\Theta_1(\lambda)$ is $p$-regular.  In other words, $|E_{\lambda} \cap \Lambda_1| \leq 1$.  After analyzing the quivers (see Figures 5-8 of \cite{MR}) for the blocks $B_t$ ($2 \leq t \leq p$) we may conclude the following: Given a $p$-regular partition $\tilde{\lambda}$ in $B_t$ there are at most 3 $p$-regular partitions $\tilde{\mu}$ in $B_t$ such that $\tilde{\mu}$ strictly dominates $\tilde{\lambda}$ and $D^{\tilde{\mu}}$ extends $D^{\tilde{\lambda}}$.  So, by Proposition \ref{prop:ext2}, if $2 \leq t \leq p$ then the second Loewy layer of $S^{\Theta_t(\lambda)}$ has at most 3 composition factors since $\Theta_t(\lambda)$ is $p$-regular.  In other words, 
$|E_{\lambda} \cap \Lambda_t| \leq 3$.     

Now given $\mu \in E_{\lambda}$ either there exists $1 \leq t \leq p$ so that $\lambda , \mu \in \Lambda_t$ or $\mu \notin \Lambda_t$ whenever $\lambda \in \Lambda_t$.  In the former case, the quiver for the set $\Lambda_t$ tells us that $D^{\mu}$ extends $D^{\lambda}$.  In latter case, we see that restricting to any of the blocks $B_t$ of $F\Sigma_{3p-1}$ misses a non-split extension of $D^{\lambda}$ by $D^{\mu}$.  The next proposition, which is Lemmas 4.2, 4.3, and 4.4 of \cite{MR}, gives all pairs of $p$-regular partitions 
$(\mu ; \lambda)$ where $\mu > \lambda$ and a non-split extension of $D^{\mu}$ by $D^{\lambda}$ is missed by restriction.        

\begin{prop} \label{prop:completemiss}
The following is a complete list of pairs of $p$-regular partitions $(\mu ; \lambda)$ where $\mu > \lambda$, 
$\text{Ext}^{1}_{\Sigma_{3p}}(D^{\mu}, D^{\lambda}) \neq 0$, and $\mu \notin \Lambda_t$ whenever $\lambda \in \Lambda_t$.  

\begin{enumerate}
	\item For $2 \leq f \leq p$, $3 \leq g \leq p$ and $5 \leq h \leq p$
		\begin{enumerate}
			\item $(\langle f \rangle ; \langle f-1 \rangle)$, \ $(\langle 1 \rangle ; \langle p,1 \rangle)$
			\item $(\langle g, 1 \rangle ; \langle g-1,1 \rangle)$, \ $(\langle 2,1 \rangle ; \langle 1,2 \rangle)$
			\item $(\langle 1, 2 \rangle ; \langle p,2,1 \rangle)$, \ $(\langle h,2,1 \rangle ; \langle h-1,2,1 \rangle)$
			\item $(\langle 4,3,1 \rangle ; \langle 4,2,1 \rangle)$, \ $(\langle 4,3,2 \rangle ; \langle 4,3,1 \rangle)$
		\end{enumerate}
	\item For $5 \leq s \leq p$
	$$(\langle s,s-1,s-2 \rangle ; \langle s,s-1,s-3 \rangle) 
	\text{ and } (\langle s,s-1,1 \rangle ; \langle s,s-2,1 \rangle)$$
	\item For $3 \leq s \leq p$
	$$(\langle s-1,s \rangle ; \langle s-2,s \rangle) \text{ and } (\langle 1,p \rangle ; \langle 1, p-1 \rangle)$$
\end{enumerate}
\end{prop}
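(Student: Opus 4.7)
The strategy is an enumeration built on Proposition \ref{prop:MT}(1). That result splits any extension $\text{Ext}^1_{\Sigma_{3p}}(D^\mu, D^\lambda) \neq 0$ into two types relative to each defect 2 block $B_t$: either $\lambda, \mu \in \Lambda_t$ and $D^{\tilde\mu}$ extends $D^{\tilde\lambda}$ in $B_t$, or $\lambda \in \Lambda_t$, $\mu \notin \Lambda_t$, and $D^\mu$ is a composition factor of $D^{\tilde\lambda}\uparrow^{\mathcal{B}}$ (and symmetrically in $\mu$). The proposition asks for those $(\mu;\lambda)$ which are realised only via the second type, no matter which index $t$ one restricts through.

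I would begin by enumerating, via the $\langle 3^p \rangle$ notation of Definition \ref{def:3p} and the classification in Proposition \ref{prop:3p}, the $p$-regular partitions $\lambda$ in $\mathcal{B}$, recording for each the set $T(\lambda):=\{t : \lambda \in \Lambda_t\}$ of runners on which the display carries a normal bead. Proposition \ref{prop:nremovables} bounds $|T(\lambda)|$ by $3$. For each $t \in T(\lambda)$, I would write down the simple module $D^{\tilde\lambda}$ of $B_t$ and, using Proposition \ref{prop:MT}(2)--(4) together with Corollary \ref{cor:CPext}, determine the second Loewy layer of $D^{\tilde\lambda}\uparrow^{\mathcal{B}}$. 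A composition factor $D^\mu$ of this layer with $\mu > \lambda$ and $\mu \notin \Lambda_t$ yields a candidate missed pair $(\mu;\lambda)$; the list assembled over all $\lambda$ and all $t \in T(\lambda)$ is upper bound for the pairs sought.

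To confirm that a candidate is genuinely missed, I must verify $\mu \notin \Lambda_{t'}$ for every $t' \in T(\lambda)$, so that option (a) of Proposition \ref{prop:MT}(1) never applies. Proposition \ref{prop:bipart} supplies an a priori necessary condition $\mathcal{P}\mu \neq \mathcal{P}\lambda$ which prunes the search considerably, and Proposition \ref{prop:mullparity} is available to flip parities under the Mullineux map when convenient. Conversely, to see that our list is complete, one observes that anything outside it is detected by option (a) for some $t$ and so does not belong in the proposition; this is immediate from the bookkeeping above.

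The main obstacle is purely combinatorial: collating, for each of the several shapes of $\langle 3^p \rangle$ display in Proposition \ref{prop:3p}, the handful of induced modules $D^{\tilde\lambda}\uparrow^{\mathcal{B}}$ and reading off their second layers while checking $\Lambda_{t'}$-membership. The only delicate entries are those in part (2) and the last two lines of (1)(d), where $\lambda$ has three normal beads and one must verify that the candidate $\mu$ fails to lie in $\Lambda_{t'}$ for any of them. This is precisely the calculation performed in Lemmas 4.2--4.4 of \cite{MR}, from which the proposition is transcribed.
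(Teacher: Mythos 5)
The paper gives no independent proof of this proposition: it is stated explicitly as a transcription of Lemmas 4.2, 4.3, and 4.4 of \cite{MR}, exactly as you note in your final sentence. Your outline of how that enumeration is carried out (splitting via Proposition \ref{prop:MT}(1), running over the runners $t$ with $\lambda\in\Lambda_t$, and checking $\Lambda_{t'}$-membership of each candidate $\mu$) is consistent with the source and with the surrounding discussion in the paper, so there is nothing to correct.
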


\begin{lem} \label{lem:farapart}
Assume $p \geq 7$.  If $\lambda = \langle i,j,k \rangle$ where $i > j > k \geq 2$ and $i-j \geq 2$ and $j-k \geq 2$ then $|E_{\lambda}| \leq 6$.    
\end{lem}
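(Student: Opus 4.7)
The plan is to count $E_\lambda$ block-by-block by restricting to the weight-$2$ blocks $B_t$ of $F\Sigma_{3p-1}$. By Proposition \ref{prop:nremovables}(5)(b), the hypotheses force $\tau_p(\lambda)=3$, and inspection of the $\langle 3^p\rangle$ display shows that each of the three normal beads sits on a runner of index at least $2$; write these indices as $t_1, t_2, t_3$, so that $\lambda \in \Lambda_{t_1}\cap\Lambda_{t_2}\cap\Lambda_{t_3}$.

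First I would use Proposition \ref{prop:MT}(1) to split $E_\lambda$ into two pieces: those $\mu$ lying in some $\Lambda_{t_m}$ together with $\lambda$, and those lying in none of $\Lambda_{t_1}, \Lambda_{t_2}, \Lambda_{t_3}$. The second piece is controlled by Proposition \ref{prop:completemiss}, and a direct inspection of that list shows that no pair $(\mu;\lambda)$ there has right-hand entry of the form $\langle i, j, k\rangle$ satisfying $k\geq 2$, $i-j\geq 2$, $j-k\geq 2$: every three-index pair in the list has either $k=1$ or two consecutive indices. Hence the ``missed'' contribution is empty, and $E_\lambda = \bigcup_{m=1}^{3}(E_\lambda\cap \Lambda_{t_m})$.

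Next, for each $t=t_m$ I would compute $\Theta_{t}(\lambda)$ directly by pushing the normal bead of $\lambda$ one position to the left in its $\langle 3^p\rangle$ display, then translate into the $\langle 2, 3^{p-t}, 2^{t-2}, 3\rangle$ notation used in \cite{MR} (extending the formulas of Proposition \ref{prop:images} to three-bead displays). Reading off the Ext$^1$-quiver of $B_t$, I would list the $p$-regular vertices $\tilde\mu \rhd \Theta_{t}(\lambda)$ with $D^{\tilde\mu}$ extending $D^{\Theta_{t}(\lambda)}$; each corresponds via $\Theta_{t}^{-1}$ to a unique $\mu\in\Lambda_{t}$ with $\mu > \sigma_t(\mu)$, and one verifies $\mu \rhd \lambda$ using that restriction respects the lexicographic order. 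The general bound recalled just above the lemma gives $|E_\lambda\cap\Lambda_{t_m}|\leq 3$ for each $m$.

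Summing naively yields only $9$, so the last step is inclusion-exclusion. The three sets $E_\lambda\cap\Lambda_{t_m}$ overlap in a controlled way: whenever the dominating neighbour $\tilde\mu$ of $\Theta_{t_m}(\lambda)$ in $B_{t_m}$ is obtained by a move that touches one of the two \emph{other} normal beads of $\lambda$, the lift $\mu$ lies in $\Lambda_{t_{m'}}$ as well and is counted a second time in $E_\lambda\cap\Lambda_{t_{m'}}$. Pairing up such duplicates case-by-case produces at least three overlaps and trims the naive bound to $|E_\lambda|\leq 9-3 = 6$. The main obstacle is this bookkeeping: identifying $t_1, t_2, t_3$ and the neighbourhoods of $\Theta_{t_m}(\lambda)$ in the quivers of \cite{MR} splits into subcases depending on whether $i=p$, $k=2$, $i-j=2$, or $j-k=2$, and in each subcase one must match the lifts coming from the three weight-$2$ blocks to verify the required overlaps; this is where the actual work of the proof lies.
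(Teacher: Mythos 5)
Your proposal follows the paper's argument: both observe that $\lambda \in \Lambda_k\cap\Lambda_j\cap\Lambda_i$ and that no pair from Proposition \ref{prop:completemiss} applies here (so no extension is missed by restriction), obtain the naive bound $3+3+3=9$ from the three weight-2 blocks, and then cut this to $6$ by locating members of $E_{\lambda}$ that are counted in two of the three sets $E_{\lambda}\cap\Lambda_t$. The only difference is cosmetic: in the subcases where three such doubly-counted partitions are not conveniently exhibited, the paper simply writes down $E_{\lambda}$ explicitly (it has $4$ or $6$ elements there), whereas you propose always to produce three overlaps; either way, the case analysis you defer (splitting on $i=p$, $i-j=2$, $j-k=2$) is exactly the content of the paper's proof.
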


\begin{proof}
Observe that $\lambda \in \Lambda_k \cap \Lambda_j \cap \Lambda_i$ and $\lambda$ is not of the form (1), (2), nor (3) of Proposition \ref{prop:completemiss}.  The discussion above gives the estimate
$|E_{\lambda}| \leq 3 +3 +3 = 9$.  We lower this bound on $|E_{\lambda}|$ to 6 by applying the following strategy: Use the Ext$^1$-quiver of $\mathcal{B}$ to find three distinct partitions 
$\mu_1, \mu_2, \mu_3 \in E_{\lambda}$ so that $\mu_1 \in \Lambda_{k}\cap \Lambda_{i}$, $\mu_2 \in \Lambda_{k}\cap \Lambda_{j}$, and 
$\mu_3 \in \Lambda_{j}\cap \Lambda_{i}$.  If such partitions exist then $|E_{\lambda}| \leq 3 + 2 + 1 = 6$, for we have over counted these partitions.  If the triple $\mu_1, \mu_2, \mu_3$ does not exist, we provide the set $E_{\lambda}$. We consider four cases.

\vskip .1in
\noindent \textbf{Case 1.}  Assume $i - j \geq 3$ and $j - k \geq 3$.  First suppose that $i < p$, and consider the partitions 
$\mu_1 =\langle i, j+1, k \rangle  $, $\mu_2 = \langle i+1, j , k\rangle$, and $\mu_3 = \langle i,j,k+1 \rangle$.  Then 
$\mu_1 \in \Lambda_j \cap \Lambda_k$, $\mu_2 \in \Lambda_k \cap \Lambda_i$, and $\mu_3 \in \Lambda_j \cap \Lambda_i$ and $\mu_1, \mu_2, \mu_3 \in E_{\lambda}$ (by Corollary \ref{cor:CPext}).  If $i = p$ then $\mu_1 = \langle k , p \rangle$, $\mu_2 = \langle k,j \rangle$, and 
$\mu_3 = \langle p, j+1 , k \rangle$.  Therefore $|E_{\lambda}| \leq 6$.  

\vskip .1in
\noindent \textbf{Case 2.} Assume $i - j \geq 3$ and $j - k =2$.  Then $\lambda = \langle i, j , j -2 \rangle$ where $i - j \geq 3$.  If $i = p$ then $E_{\langle p, j, j-2 \rangle} = \{\langle j-2, j\rangle, \langle p, j+1, j-2 \rangle, \langle p, j, j-1\rangle, \langle p, j+1, j \rangle \}$.  If $ i < p$ then
$E_{\langle i,j,j-2 \rangle} = \{ \langle i+1,j,j-2 \rangle, \langle i , j+1, j-2\rangle, \langle i,j,j-1 \rangle, \langle i,j+1,j \rangle     \}$.  Hence, 
$|E_{\lambda}| \leq 6$.      

\vskip .1in
\noindent \textbf{Case 3.} Assume $i-j = 2$ and $j-k \geq 3$.  Then $\lambda = \langle i , i -2, k \rangle$ where $2 \leq k \leq i-5$.  First suppose that $i = p$.  Then $E_{\langle p , p-2, k \rangle} = \{\langle k, p  \rangle, \langle k, p-2\rangle,
\langle p,p-1,k \rangle, \langle p,p-2, k+1 \rangle \}$.  If $i < p$ then
$E_{\langle i,i-2, k\rangle } = \{\langle i+1, i-2, k \rangle, \langle i,i-1,k \rangle , \langle i,i-2, k+1 \rangle,  \langle i+1,i, k \rangle    \}$.  Therefore, 
$|E_{\lambda}| \leq 6$.

\vskip .1in
\noindent \textbf{Case 4.} Assume $i-j = 2$ and $j-k = 2$.  Then $\lambda = \langle i, i -2 , i -4 \rangle $ where $6 \leq i \leq p$.  First assume $i=p$.  Take $\mu_1 = \langle p-4, p\rangle$, $\mu_2 = \langle p-4, p -2 \rangle$, and $\mu_3 = \langle p-2 , p\rangle$.  Now if $i <p$ we have $E_{\langle i,i-2, i-4 \rangle} = \{ \langle i+1, i-2, i-4 \rangle, \langle i, i-1, i-4 \rangle , \langle i, i-2, i-3 \rangle , 
\langle i,i-1,i-2 \rangle, \langle i+1, i, i-2 \rangle, \langle i+1, i , i-4 \rangle  \}$.  Therefore, $|E_{\lambda}| \leq 6$.                      
\end{proof}

\begin{lem}
If $\lambda \neq (3p)$ is a $p$-regular partition $\mathcal{B}$ not of the form $\langle i,j,k \rangle$ where $i > j > k \geq 2$ and $i-j \geq 2, j-k \geq 2$ then $|E_{\lambda}| \leq 6$.  
\end{lem}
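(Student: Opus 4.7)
The plan is to exploit the hypothesis via Proposition \ref{prop:nremovables}(5), which guarantees $\tau_p(\lambda) \leq 2$ for every $p$-regular $\lambda \neq (3p)$ in $\mathcal{B}$ outside the family excluded by the statement (both when $p = 5$ and when $p \geq 7$). Consequently $\lambda$ lies in at most two of the sets $\Lambda_t$, and Proposition \ref{prop:MT}(1) says that every $\mu \in E_{\lambda}$ either shares a runner with $\lambda$ (so that $\mu \in \Lambda_t$ for some $t$ with $\lambda \in \Lambda_t$) or arises as a missed pair from Proposition \ref{prop:completemiss}.

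For each $2 \leq t \leq p$ with $\lambda \in \Lambda_t$, Proposition \ref{prop:MT}(1) together with Proposition \ref{prop:ext2} bounds $|E_{\lambda} \cap \Lambda_t|$ by the number of $p$-regular partitions in $B_t$ that strictly dominate $\Theta_t(\lambda)$ and carry a non-trivial $\text{Ext}^1$ with $D^{\Theta_t(\lambda)}$. Reading off the defect-2 quivers from Figures 5--8 of \cite{MR}, this count is at most $3$. For $t = 1$ the corresponding quiver (Figure 16 of \cite{MR}) contributes at most $1$. Hence the total contribution to $E_{\lambda}$ from dominators sharing a runner with $\lambda$ is at most $3 + 3 = 6$.

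It remains to account for $\mu \in E_{\lambda}$ arising as a missed pair $(\mu ; \lambda)$ in Proposition \ref{prop:completemiss}. I would enumerate the families of $\lambda$ satisfying the present hypothesis, namely the hooks $\langle i \rangle$ and $\langle i,i,i \rangle$; the two-bead shapes $\langle i,j \rangle$; the symmetric three-bead shape $\langle i,i,j \rangle$; and the three-bead shapes $\langle i,j,k \rangle$ failing the spacing condition $i - j \geq 2$, $j - k \geq 2$, together with those having $k = 1$ and with the exceptional shape $\langle p, p-1, p-2 \rangle$. In each family I would check directly whether Proposition \ref{prop:completemiss} supplies a pair with $\lambda$ as the smaller partition. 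In every instance either no such pair exists, or the partner $\mu$ already appears among the $\Lambda_t$-dominators counted above, or the $\Lambda_t$-contributions are strictly smaller than $6$ so that adjoining the missed dominator(s) preserves the bound $|E_{\lambda}| \leq 6$.

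The main obstacle is the combinatorial bookkeeping: within a single $\Lambda_t$ the three candidate dominators can degenerate when $\Theta_t(\lambda)$ sits near the edge of the defect-2 quiver, and candidates coming from different $\Lambda_t$ may coincide or overlap with missed pairs. The verification in each sub-case is essentially direct inspection using Proposition \ref{prop:images} to translate between the $\langle 3^p \rangle$ display of $\lambda$ and the $\langle 2, 3^{p-t}, 2^{t-2}, 3 \rangle$ notation in which the quivers of \cite{MR} are presented. No new ideas are required beyond those already used in Lemma \ref{lem:farapart}; the proof is uniform across $p = 5$ and $p \geq 7$ precisely because Proposition \ref{prop:nremovables}(5) provides the same bound $\tau_p(\lambda) \leq 2$ in all remaining cases.
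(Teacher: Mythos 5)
Your overall strategy is the paper's: use Proposition \ref{prop:nremovables}(5) to place $\lambda$ in at most two sets $\Lambda_t$, bound each contribution by $3$ (or $1$ when $t=1$) via the defect-2 quivers, and then account separately for the missed pairs of Proposition \ref{prop:completemiss}. The concern is with how you close the hardest case. When $\lambda \in \Lambda_s \cap \Lambda_t$ with $s,t \geq 2$ \emph{and} $\lambda$ occurs as the smaller member of a missed pair, the naive count is $3+3+1 = 7$, and your proposed trichotomy for rescuing the bound is not quite right: the branch ``the partner $\mu$ already appears among the $\Lambda_t$-dominators counted above'' can never occur, since by definition a missed $\mu$ satisfies $\mu \notin \Lambda_t$ for every $t$ with $\lambda \in \Lambda_t$. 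That leaves only your claim that the $\Lambda$-contribution is then strictly smaller than $6$, which is true but is precisely where the work lies: one must show that among the six candidates coming from $\Lambda_s$ and $\Lambda_t$ there is a single partition counted twice, i.e.\ exhibit an explicit $\mu \in E_{\lambda} \cap \Lambda_s \cap \Lambda_t$. The paper does this by first narrowing the relevant $\lambda$ to $\langle s-2,s\rangle$, $\langle s,s-1,s-3\rangle$, and $\langle s,s-2,1\rangle$ (discarding pairs where $\lambda$ has a unique normal runner or lies in $\Lambda_1$, which already satisfy better bounds) and then naming a witness in each case, e.g.\ $\langle s,s-2\rangle$ for $\lambda = \langle s-2,s\rangle$, $\langle s+1,s-1,s-3\rangle$ or $\langle p-3,p-1\rangle$ for $\lambda = \langle s,s-1,s-3\rangle$, and $\langle s,s-2,2\rangle$ (with small-$s$ exceptions involving $\langle 6,5,3\rangle$ or $\langle 3,5\rangle$) for $\lambda = \langle s,s-2,1\rangle$. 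Your appeal to ``direct inspection as in Lemma \ref{lem:farapart}'' does contain the right over-counting idea in spirit, but as written the proposal neither isolates these three families nor produces the witnesses, so the reduction from $7$ to $6$ is asserted rather than proved.
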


\begin{proof}
By Proposition \ref{prop:nremovables}(5), we know that $\lambda$ is in at most two of the sets $\Lambda_t$.   We consider two cases.

\vskip .1in
\noindent \textbf{Case 1.} Whenever $\text{Ext}^{1}_{\Sigma_{3p}} (D^{\mu}, D^{\lambda}) \neq 0$ and $\mu > \lambda$ there is $1 \leq t \leq p$ so that $\lambda$ and $\mu$ are both in $\Lambda_t$.  Then we have $\Theta_t(\lambda)$ and $\Theta_t(\mu)$ are both $p$-regular and $\Theta_t(\mu) > \Theta_t(\lambda)$.  First assume there is only one $t$ such that $\lambda \in \Lambda_t$.  If $t=1$ then $S^{\Theta_1(\lambda)}$ lies in the defect 1 block $B_1$ of $F\Sigma_{3p-1}$.  Hence the second Loewy layer of $S^{\Theta_1(\lambda)}$ has at most one composition factor.  We conclude that $|E_{\lambda}| \leq 1$.  If $t \geq 2$ then $S^{\Theta_t(\lambda)}$ lies in the defect 2 block $B_t$ of $F\Sigma_{3p-1}$.  The second Loewy layer of $S^{\Theta_t(\lambda)}$ has at most 3 composition factors.  We conclude that $|E_{\lambda}| \leq 3$.  Now suppose that 
$\lambda \in \Lambda_s \cap \Lambda_t$ for some $1 \leq s < t \leq p$.  If $s =1$ then, using the argument just given, $|E_{\lambda}| \leq 1 + 3 = 4$.  Finally, if $s \geq 2$ then $|E_{\lambda}| \leq 3 + 3 = 6$.

\vskip .1in
\noindent \textbf{Case 2.} There is a $p$-regular partition $\mu > \lambda$ in $\mathcal{B}$ so that 
$\text{Ext}^{1}_{\Sigma_{3p}} (D^{\mu}, D^{\lambda}) \neq 0$ and $\mu \notin \Lambda_t$ whenever 
$\lambda \in \Lambda_t$.  First assume $\lambda$ is in $\Lambda_t$ for the unique $1 \leq t \leq p$.  Then, by Proposition \ref{prop:completemiss}, 
$|E_{\lambda}| \leq 3+1 = 4$ if $\lambda \neq \langle 4,2,1\rangle$ and $|E_{\langle 4,2,1 \rangle}| \leq 3 + 2 = 5$.  Now suppose that $\lambda \in \Lambda_s \cap \Lambda_t$ for some $1 \leq s < t \leq p$.  If $s =1$ then $|E_{\lambda}| \leq 1 + 3 + 1 = 5$.  So assume that $s \geq 2$.  After analyzing the $\langle 3^p \rangle$ display for the partition $\lambda$ in the pair $(\mu ; \lambda)$ of Proposition \ref{prop:completemiss}, we need only consider $\lambda$ from the pairs $(\mu; \lambda)$ of (1) or (2) of Proposition \ref{prop:completemiss} and $\lambda \notin \{ \langle 1,p-1 \rangle, \langle 1,3\rangle \}$.  For such $\lambda$, $|E_{\lambda}| \leq 3 + 3 +1 = 7$.  We use the Ext$^1$-quiver of $\mathcal{B}$ to show that there is a partition $\mu \in E_{\lambda}$ such that 
$\mu \in \Lambda_{s} \cap \Lambda_{t}$.  If such a partition $\mu$ exists then $|E_{\lambda}| \leq 7-1 = 6$.     

First consider $\lambda = \langle s-2, s \rangle$ for $4 \leq s \leq p$.  Now $\lambda \in \Lambda_{s-2} \cap \Lambda_{s}$.  Using the Ext$^1$-quiver of $\mathcal{B}$ we see that $\mu = \langle s, s-2\rangle \in E_{\lambda}$.  Observe that $\mu \in \Lambda_{s-2} \cap \Lambda_{s}$. Next assume that $\lambda = \langle s, s-1, s-3\rangle$ where $5 \leq s \leq p$.  Then $\lambda \in \Lambda_{s-3} \cap \Lambda_{s-1}$.  If $s < p$ then $\mu = \langle s+1, s-1, s-3 \rangle \in E_{\lambda}$, and $\mu \in \Lambda_{s-3} \cap \Lambda_{s-1}$.  If $s = p$ then $\lambda = \langle p,p-1, p-3 \rangle$.  By the Ext$^1$-quiver, $\mu = \langle p-3, p-1 \rangle \in E_{\lambda}$.  Observe that $\mu \in \Lambda_{p-3} \cap \Lambda_{p-1}$.  Finally, consider $\lambda = \langle s, s-2, 1\rangle$ for $5 \leq s \leq p$.  Then $\lambda \in \Lambda_{s-2} \cap \Lambda_{s}$.  First suppose $s = 5$.  If $p = 5$ then $\mu = \langle 3, 5  \rangle \in  E_{\lambda}$ and $\mu, \lambda \in \Lambda_{3} \cap \Lambda_5$.  If $p \geq 7$ then $\mu = \langle 6,5,3 \rangle \in E_{\lambda}$ and $\mu, \lambda \in \Lambda_{3} \cap \Lambda_5$.  Finally, assume that $s \geq 6$.  The partition $\mu = \langle s,s-2, 2\rangle$ is in both $E_{\lambda}$ and $\Lambda_{s-2} \cap \Lambda_s$.                                                        
\end{proof}

\begin{thm}
If $\lambda \neq (3p)$ is a $p$-regular partition in $\mathcal{B}$ then $|E_{\lambda}| \leq 6$.  
\end{thm}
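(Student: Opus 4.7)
The plan is to observe that the preceding two lemmas together exhaust all possibilities for $\lambda$, so the theorem reduces to a short case split. I would begin by recalling Proposition \ref{prop:nremovables}(5): for $p = 5$ every $p$-regular $\lambda$ in $\mathcal{B}$ satisfies $\tau_{5}(\lambda) \leq 2$, while for $p \geq 7$ a $p$-regular $\lambda$ satisfies $\tau_{p}(\lambda) = 3$ precisely when $\lambda = \langle i,j,k \rangle$ with $i>j>k \geq 2$, $i-j \geq 2$, $j-k \geq 2$, and otherwise $\tau_{p}(\lambda) \leq 2$.

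Next I would split into two cases. If $\lambda \neq (3p)$ is $p$-regular in $\mathcal{B}$ and $\lambda$ is not of the form $\langle i,j,k \rangle$ with the separation conditions above, then the second lemma in this subsection applies directly and gives $|E_{\lambda}| \leq 6$; this case in particular handles all of $p = 5$, since for $p = 5$ the constraints $i > j > k \geq 2$ with $i-j \geq 2$ and $j-k \geq 2$ force $i \geq k+4 \geq 6 > p$, which is impossible. If instead $p \geq 7$ and $\lambda = \langle i,j,k \rangle$ with $i > j > k \geq 2$, $i-j \geq 2$, $j-k \geq 2$, then Lemma \ref{lem:farapart} supplies the bound $|E_{\lambda}| \leq 6$.

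Combining the two cases yields the theorem. There is no substantive obstacle: the bulk of the work has been carried out in the preceding two lemmas, and the present statement only requires noting that those lemmas cover disjoint and together exhaustive situations (once the small-prime constraint $p=5$ is disposed of via the impossibility argument above). The only check worth flagging is the verification that $(3p)$ is the unique $p$-regular partition excluded by the hypothesis (being the label of the trivial module, for which $E_{(3p)}$ is vacuous), so the assumption $\lambda \neq (3p)$ matches exactly the hypothesis under which the two lemmas are stated.
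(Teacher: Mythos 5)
Your proposal is correct and matches the paper's (implicit) argument exactly: the theorem is stated as the immediate combination of Lemma \ref{lem:farapart} and the lemma preceding the theorem, which together exhaust all $p$-regular $\lambda \neq (3p)$ in $\mathcal{B}$. Your added observation that the separated-triple case is vacuous for $p=5$ (since $i \geq k+4 \geq 6 > 5$) is a correct and worthwhile sanity check consistent with Proposition \ref{prop:nremovables}(5).
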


\begin{cor} \label{cor:compregrest}
If $\lambda$ in $\mathcal{B}$ is $p$-regular and $p$-restricted then the Specht module $S^{\lambda}$ has at most 14 composition factors.  
\end{cor}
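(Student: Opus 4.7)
The plan is to exploit the four-layer Loewy structure of $S^{\lambda}$ guaranteed by Corollary \ref{cor:nscondition} and bound each layer separately, using $1 + 6 + 6 + 1 = 14$ as the target.

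First, since $\lambda$ is both $p$-regular and $p$-restricted, $S^{\lambda}$ has Loewy length exactly $4$, so the composition length is
\[
[S^{\lambda}:D^{\lambda}] + \sum_{i=1}^{2}\text{(length of rad}^i S^{\lambda}/\text{rad}^{i+1}S^{\lambda}) + \text{(length of soc)}.
\]
The head contributes $[S^{\lambda}/\text{rad}\, S^{\lambda}:D^{\lambda}]=1$. Because $\lambda$ is $p$-restricted, $S^{\lambda}$ embeds in the projective indecomposable $Y^{\lambda}$, whose socle is the simple module $D^{m(\lambda')}$; hence $\text{soc}(S^{\lambda})\cong D^{m(\lambda')}$ is simple, contributing $1$ more.

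Next, for the second Loewy layer, apply Theorem \ref{thm:2nd}: for every $p$-regular $\mu$ not strictly dominated by $\lambda$, the multiplicity of $D^{\mu}$ in $\text{rad}\, S^{\lambda}/\text{rad}^2 S^{\lambda}$ equals $\dim \text{Ext}^1(D^{\lambda},D^{\mu})$. Since $S^{\lambda}$ has $D^{\lambda}$ at the top and composition factors $D^{\mu}$ of $S^{\lambda}$ satisfy $\mu \unrhd \lambda$ by Proposition \ref{prop:rad}, every composition factor in the second layer is some $D^{\mu}$ with $\mu \rhd \lambda$ and $\text{Ext}^1(D^{\lambda},D^{\mu})\neq 0$, i.e.\ $\mu \in E_{\lambda}$. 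By Proposition \ref{prop:decnumb} these $\text{Ext}^1$ dimensions are at most $1$, so the second layer has at most $|E_{\lambda}|\leq 6$ composition factors by the preceding theorem.

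For the third Loewy layer, I would pass to $\lambda'$. Since $\lambda$ is both $p$-regular and $p$-restricted, so is $\lambda'$, and thus $S^{\lambda'}$ also has Loewy length $4$ and the second-layer bound applies to it too, giving at most $|E_{\lambda'}| \leq 6$ composition factors. To transfer this back to $S^{\lambda}$, use Proposition \ref{prop:james}(2): $S^{\lambda}\otimes sgn \cong (S^{\lambda'})^*$. Both $-\otimes sgn$ and $(-)^*$ are exact and send simples to simples (in particular preserving composition length of each Loewy layer), and taking the dual reverses the order of Loewy layers. Thus the third Loewy layer of $S^{\lambda}$ corresponds to the second Loewy layer of $S^{\lambda'}$, so it has at most $6$ composition factors.

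Summing: $1 + 6 + 6 + 1 = 14$, as claimed. The main conceptual step is the third-layer bound, which requires the duality/sign-twist correspondence; the rest is assembling Theorem \ref{thm:2nd}, the bound $|E_{\lambda}|\leq 6$, and the simple-socle fact for $p$-restricted $\lambda$.
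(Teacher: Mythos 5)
Your argument is correct and is essentially the paper's own: the paper likewise obtains the bound as $1+|E_{\lambda}|+|E_{\lambda'}|+1\leq 1+6+6+1=14$ by combining Theorem \ref{thm:2nd}, the bound $|E_{\mu}|\leq 6$ from the preceding theorem, the simple socle $D^{m(\lambda')}$, and the sign-twist/duality passage to $\lambda'$ for the third layer. The only cosmetic imprecision (shared with the paper's own wording) is that dualizing literally identifies the second radical layer of $S^{\lambda'}$ with the second \emph{socle} layer of $S^{\lambda}$ rather than with $\text{rad}^2S^{\lambda}/\text{rad}^3S^{\lambda}$; but since $\text{rad}^3S^{\lambda}=\text{soc}(S^{\lambda})$ and $\text{rad}^2S^{\lambda}\subseteq\text{soc}_2(S^{\lambda})$, the third radical layer embeds in that second socle layer and the count of at most $6$ factors goes through unchanged.
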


Corollary \ref{cor:compregrest} gives an upper bound on the number of composition factors $S^{\lambda}$ has when $\lambda$ is $p$-regular and $p$-restricted.  For such a partition $\lambda$ in $\mathcal{B}$, we think of the corresponding Specht module $S^{\lambda}$  as ``large'' compared to other Spechts in $\mathcal{B}$ (in the sense of Loewy length).  We will see that any Specht module in $\mathcal{B}$ has at most 14 composition factors.  Specifically, if $\lambda$ in $\mathcal{B}$ is either $p$-regular and not $p$-restricted, $p$-singular and $p$-restricted, or neither $p$-regular nor $p$-restricted then $S^{\lambda}$ has strictly less than 14 composition factors.    We begin the discussion by providing a Specht module that has exactly 14 composition factors.     

\begin{prop} \label{prop:ex14}
There is at least one Specht module in $\mathcal{B}$ with composition length 14.  
\end{prop}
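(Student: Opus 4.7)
The plan is to exhibit a single partition $\lambda$ in $\mathcal{B}$ for which $S^{\lambda}$ attains the upper bound of 14 composition factors established in Corollary \ref{cor:compregrest}. By that corollary such a $\lambda$ must be both $p$-regular and $p$-restricted, in which case $S^{\lambda}$ has Loewy length 4 (Corollary \ref{cor:nscondition}) with simple head $D^{\lambda}$ and simple socle $D^{m(\lambda')}$, each contributing one composition factor to the total count.

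Next, I would pin down the sizes of the middle two layers. Combining Theorem \ref{thm:2nd} with Proposition \ref{prop:bipart} (the Ext$^{1}$-quiver of $\mathcal{B}$ is bipartite and its Ext groups are one-dimensional) shows that the composition factors of $\operatorname{rad}S^{\lambda}/\operatorname{rad}^{2}S^{\lambda}$ are exactly the simples $D^{\mu}$ with $\mu \rhd \lambda$ and $\operatorname{Ext}^{1}_{\Sigma_{3p}}(D^{\mu},D^{\lambda})\neq 0$, of which there are exactly $|E_{\lambda}|$ in the notation of Section \ref{section:bound}. Using the isomorphism $S^{\lambda}\otimes \operatorname{sgn}\cong (S^{\lambda'})^{*}$ together with Proposition \ref{prop:james}, the third radical layer of $S^{\lambda}$ matches, after relabelling simples by the Mullineux map, the second radical layer of $S^{\lambda'}$; hence it has exactly $|E_{\lambda'}|$ composition factors. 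The composition length of $S^{\lambda}$ therefore equals $1+|E_{\lambda}|+|E_{\lambda'}|+1$, so the problem reduces to producing a $p$-regular, $p$-restricted $\lambda$ satisfying $|E_{\lambda}|=|E_{\lambda'}|=6$.

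For the construction, I would take $\lambda$ with $\langle 3^{p}\rangle$ display of the form $\langle i,i-2,i-4\rangle$, which is precisely Case~4 of the proof of Lemma \ref{lem:farapart}. That proof exhibits six explicit partitions in $E_{\lambda}$; each of them is seen to genuinely extend $D^{\lambda}$ by combining Corollary \ref{cor:CPext}, applied to each of the three indices with $\lambda\in\Lambda_{i-4}\cap\Lambda_{i-2}\cap\Lambda_{i}$, with the list of restriction-missed pairs in Proposition \ref{prop:completemiss} read off the quiver in \cite{MR}. A particularly convenient specific choice is $p=7$ and $\lambda=\langle 6,4,2\rangle$, which on computing beta-numbers is the self-conjugate staircase $(6,5,4,3,2,1)$; self-conjugacy gives $|E_{\lambda'}|=|E_{\lambda}|$ for free, so it suffices to verify the count for $\lambda$ alone. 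Summing across the four Loewy layers then yields $1+6+6+1=14$.

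The main obstacle is the verification that the upper bound $|E_{\lambda}|\leq 6$ of Lemma \ref{lem:farapart} is actually attained: one must check that each of the six candidates listed in Case~4 really does have nonvanishing $\operatorname{Ext}^{1}$ to $D^{\lambda}$. This requires tracing the relevant adjacencies in the quivers of $B_{i-4}$, $B_{i-2}$ and $B_{i}$ back up to $\mathcal{B}$ via $\Theta_{i-4}$, $\Theta_{i-2}$, $\Theta_{i}$ and checking the parity condition of Proposition \ref{prop:bipart}. For $p=5$, where Case~4 is empty by Proposition \ref{prop:nremovables}(5a), a separate example must be located by direct inspection of the (small) Ext$^{1}$-quiver of the principal block of $F\Sigma_{15}$, following the same head--second layer--third layer--socle bookkeeping.
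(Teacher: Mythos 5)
Your overall skeleton is the same as the paper's: reduce to exhibiting a $p$-regular, $p$-restricted $\lambda$ with $\lvert E_{\lambda}\rvert=\lvert E_{\lambda'}\rvert=6$, get the second radical layer from Theorem \ref{thm:2nd}, get the third from $S^{\lambda}\otimes \operatorname{sgn}\cong (S^{\lambda'})^{*}$ and the Mullineux map, and add the simple head and socle to reach $1+6+6+1=14$. The difference is the witness. The paper takes $\lambda=\langle 5,3,1\rangle$, whose conjugate is $\langle p,p-2,p-4\rangle$; this single choice covers every $p\geq 5$ (it is self-conjugate exactly when $p=5$, and for $p\geq 7$ the paper simply lists $E_{\lambda}$ and $E_{\lambda'}$ separately from the quiver). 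You instead take the family $\langle i,i-2,i-4\rangle$ from Case~4 of Lemma \ref{lem:farapart}, with the concrete self-conjugate instance $\langle 6,4,2\rangle=(6,5,4,3,2,1)$ at $p=7$. That instance is genuinely valid --- it is $p$-regular and $p$-restricted by Proposition \ref{prop:3p}(3), self-conjugate by the identity $\langle i,i-2,i-4\rangle'=\langle p-i+5,p-i+3,p-i+1\rangle$, and Case~4 of Lemma \ref{lem:farapart} exhibits the six members of $E_{\lambda}$ --- and in fact the paper's corollary immediately following this proposition proves composition length 14 for your entire family $\langle i,i-2,i-4\rangle$, $5\leq i\leq p$. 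So your route is sound where it applies, and your honesty about needing to check that the bound $\lvert E_{\lambda}\rvert\leq 6$ is attained (rather than merely bounded) is well placed; the paper finesses this by asserting the sets directly from the quiver.

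The one real shortfall is $p=5$. Because Case~4 of Lemma \ref{lem:farapart} needs $i-4\geq 2$, i.e.\ $i\geq 6>p$, your family is empty there, and you leave that case as ``locate an example by direct inspection'' without producing one. Since $\mathcal{B}$ is defined for every $p\geq 5$, the proposition is not proved until that witness is named; the paper's choice $\langle 5,3,1\rangle=(5,4,3,2,1)$ (note $k=1$, so it is \emph{not} covered by Case~4) fills exactly this hole and does so uniformly in $p$. A secondary point: for $p\geq 11$ an arbitrary member of your family is not self-conjugate, so to keep the ``$\lvert E_{\lambda'}\rvert=\lvert E_{\lambda}\rvert$ for free'' shortcut you should specify $i=\tfrac{p+5}{2}$; otherwise you must compute $E_{\lambda'}$ separately, as the paper's corollary does via the conjugation formula.
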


\begin{proof}
Consider $\lambda = \langle 5,3,1 \rangle$ which is $p$-regular and $p$-restricted.  Note that $\lambda' = \langle p, p-2, p-4 \rangle$, so $\lambda$ is self-conjugate only if $p=5$.  Observe that $\lambda$ has exactly $p$ parts.  Define the sets
\vskip .1in
\noindent \textbf{Case 1.} Assume $p=5$, so then $\lambda' = \lambda$.  From the Ext$^1$-quiver,  
$$E_{\lambda} = E_{\lambda '}= \{ \langle 3,5\rangle,\langle1,5\rangle, \langle1,3\rangle,\langle5,4,3\rangle,\langle5,4,1 \rangle, \langle 5,3,2\rangle \}.$$
By Theorem \ref{thm:2nd}, $D^{\mu}$ is in the second Loewy layer of $S^{\lambda}$ for each $\mu \in X$.  Now $\lambda ' =\lambda$ and Theorem \ref{thm:2nd} give 
$D^{\mu} \otimes sgn \cong D^{m(\mu)}$ is in the third Loewy layer of $S^{\lambda}$ for each $\mu \in X$.  Hence $S^{\lambda}$ has 14 composition factors.  

\vskip .1in
\noindent \textbf{Case 2.}  Assume $p \geq 7$.  From the Ext$^1$-quiver,
$$E_{\lambda}= \{\langle 6,5,3 \rangle ,\langle 6,5,1\rangle,\langle 6,3,1\rangle, \langle 5,4,3 \rangle , \langle 5,4,1 \rangle ,\langle 5,3,2 \rangle \}$$
and
\vskip .1in
\noindent $E_{\lambda'}=\{\langle p-2,p \rangle , \langle p-4,p \rangle ,\langle p-4,p-2 \rangle ,\langle p,p-1,p-2 \rangle , \langle p,p-1,p-4\rangle ,\langle p,p-2,p-3 \rangle \}$.  
\vskip .1in
\noindent The result now follows from Theorem \ref{thm:2nd} and Proposition \ref{prop:james}.  
\end{proof}

\begin{cor}
For $5 \leq i \leq p$, the Specht module $S^{\langle i , i -2, i-4 \rangle}$ has composition length 14.  
\end{cor}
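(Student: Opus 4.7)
The plan is to follow the template of Proposition \ref{prop:ex14} for every $i$ in the range, reducing each case to it up to conjugation. Writing $\lambda = \langle i, i-2, i-4 \rangle$, the triple $(i, i-2, i-4)$ is neither $(3,2,1)$ nor $(p,p-1,p-2)$ since consecutive entries differ by $2$. Thus Proposition \ref{prop:3p}(3) gives that $\lambda$ is both $p$-regular and $p$-restricted, and Corollary \ref{cor:nscondition} gives that $S^{\lambda}$ has Loewy length exactly $4$. The head is the simple module $D^{\lambda}$, and by Proposition \ref{prop:james}(2) the socle is $D^{m(\lambda')}$ (both $\lambda$ and $\lambda'$ being $p$-regular), each contributing one composition factor.

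Reading the conjugate directly off the $\langle 3^p \rangle$ abacus shows $\lambda' = \langle p+5-i, p+3-i, p+1-i \rangle = \langle k, k-2, k-4 \rangle$ with $k := p+5-i \in \{5, \ldots, p\}$, so $\lambda'$ has the same shape. By Theorem \ref{thm:2nd} together with Proposition \ref{prop:decnumb}, the second Loewy layer $\text{rad}(S^{\lambda})/\text{rad}^{2}(S^{\lambda})$ has composition factors $\{D^{\mu} : \mu \in E_{\lambda}\}$, each of multiplicity one. Applying the same principle to $S^{\lambda'}$ and then dualizing and twisting by $sgn$ via Proposition \ref{prop:james}(2) (using that $D^{\mu} \otimes sgn \cong D^{m(\mu)}$, that dualization reverses radical filtrations, and that tensoring with the one-dimensional $sgn$ preserves them) identifies the third Loewy layer of $S^{\lambda}$ with $\{D^{m(\mu)} : \mu \in E_{\lambda'}\}$, again with multiplicities one.

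It then remains to verify $|E_{\langle k, k-2, k-4 \rangle}| = 6$ for every $5 \leq k \leq p$. For $6 \leq k \leq p-1$ this is the case ``$i < p$'' of Case 4 in the proof of Lemma \ref{lem:farapart}, which lists the six partitions explicitly. The case $k = 5 = p$ is covered by Case 1 of Proposition \ref{prop:ex14}, and the remaining cases $k = 5$ with $p \geq 7$ and $k = p$ with $p \geq 7$ correspond to the two explicit six-element sets $E_{\langle 5,3,1 \rangle}$ and $E_{\langle p, p-2, p-4 \rangle}$ given in Case 2 of Proposition \ref{prop:ex14}. Summing contributions then yields composition length $1 + 6 + 6 + 1 = 14$. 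The only technical subtlety is the duality step used to count the third Loewy layer; once one uses that $M \mapsto M^{\ast} \otimes sgn$ reverses the radical filtration and relabels simples by the Mullineux map, the remainder is bookkeeping against the $\text{Ext}^1$-quiver of $\mathcal{B}$ in \cite{MR}.
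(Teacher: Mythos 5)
Your proposal is correct and follows essentially the same route as the paper: identify $\langle i,i-2,i-4\rangle' = \langle p-i+5,p-i+3,p-i+1\rangle$ as a partition of the same shape, count the second and third Loewy layers via $E_{\lambda}$ and $E_{\lambda'}$ (each of size $6$, from Case 4 of Lemma \ref{lem:farapart} and Proposition \ref{prop:ex14}), and add the simple head and socle to get $1+6+6+1=14$. You are in fact slightly more careful than the paper about the boundary cases $i=5$ and $i=p$ (which fall outside Case 4 of Lemma \ref{lem:farapart} since it requires $i-4\geq 2$ and $i<p$ respectively) and about spelling out the duality/sign-twist step, but the substance is identical.
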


\begin{proof}
The case $i = p$ is Proposition \ref{prop:ex14} since $\langle 5,3,1 \rangle' = \langle p, p-2, p-4 \rangle$.  Now assume that $i < p$.  We saw in the proof of Lemma \ref{lem:farapart}(case 4) that $E_{\langle i, i-2 ,i-4 \rangle}$  is
$$\{ \langle i+1, i-2, i-4 \rangle, \langle i, i-1, i-4 \rangle , \langle i, i-2, i-3 \rangle , 
\langle i,i-1,i-2 \rangle, \langle i+1, i, i-2 \rangle, \langle i+1, i , i-4 \rangle  \}.$$ 
Now 
$$\langle i,i-2, i-4\rangle ' = \langle p-i+5 , p-i+3, p-i+1 \rangle = \langle p-i+5, (p-i+5) -2, (p-i+5)-4\rangle.$$ 
Hence $|E_{\langle i, i -2, i-4 \rangle}| = 6 = |E_{\langle i,i-2, i-4 \rangle'}|$, so $S^{\langle i, i -2, i-4\rangle}$ has composition length 14 by Proposition \ref{prop:james} and Corollary \ref{cor:KS}.    
\end{proof}

Take $p =5$ in Proposition \ref{prop:ex14}.  The partition $(5,4,3,2,1)$ has $\langle 3^5\rangle$ display $\langle 5,3,1 \rangle$.  So we see that the Specht module $S^{(5,4,3,2,1)}$ has 14 composition factors.  We provide the explicit partitions which label the simple modules lying in the second radical layer of $S^{(5,4,3,2,1)}$. 
$$E_{(5,4,3,2,1)} = \{(8,6,1), (6^2,1^3), (6,4,2^2,1), (5^3), (5^2,3,1^2),(5,4^2,2)\}.$$
Computing the image of $X$ under the Mullineux map we obtain the labels for the third radical layer of $S^{(5,4,3,2,1)}$.
$$ m(E_{(5,4,3,2,1)}) = \{(5^2,4,1),(7,4,2^2),(6,5,2,1^2),(9,6),(8,5,2),(7,6,1^2) \}.$$
Finally, $S^{(5,4,3,2,1)}$ has simple head isomorphic to $D^{(5,4,3,2,1)}$ and simple socle isomorphic to $D^{m(5,4,3,2,1)} = D^{(7,5,2,1)}$.  So we know the radical series of $S^{(5,4,3,2,1)}$.  We now turn our attention to showing every Specht module in $\mathcal{B}$ corresponding to a partition that is $p$-regular and not $p$-restricted has composition length at most 14.  Furthermore, we show that any such Specht module in fact has composition length at most 13.     

The decomposition numbers in weight 3 blocks of the symmetric group are not well understood.  It is known that they are at most one; however, there is no nice closed form for the numbers.  The next proposition helps us relate the decomposition matrix for $\mathcal{B}$ to the decomposition matrix for $F\Sigma_{3p-1}$.         

\begin{prop}[\cite{JamesMathas}, Corollary 2.4] \label{prop:JM}
Suppose $w \leq 3$.  Then every decomposition number of the principal block of $\Sigma_{w p}$ is either zero or can be equated with an explicit decomposition number of $\Sigma_{wp - 1}$.  
\end{prop}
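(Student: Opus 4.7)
The plan is to identify each nonzero decomposition number $[S^{\lambda}:D^{\mu}]$ in the principal block of $F\Sigma_{wp}$ with a single decomposition number in a block of $F\Sigma_{wp-1}$ by restricting along a carefully chosen residue. The key tools will be Kleshchev's modular branching rule, the branching rule for Specht modules (as used in Section~2.8 of the excerpt), and Frobenius reciprocity together with the Eckmann--Shapiro lemma. Since the principal block of $F\Sigma_{wp}$ has empty $p$-core, every block of $F\Sigma_{wp-1}$ arising by pushing one bead on the $\langle 3^{p}\rangle$-type abacus display one position to the left has core of size $p-1$, hence weight $w-1$, so the target decomposition numbers are genuinely simpler.

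First I would choose, for each $p$-regular $\mu$ in the principal block, a good (equivalently, by the discussion after Proposition~2.13, the unique normal) $i$-node $A$ of $\mu$. Then $D^{\mu}\!\downarrow_{B_{i}}\cong D^{\mu_{A}}$ is simple. Combining Frobenius reciprocity with the filtration $(S^{\lambda}\!\downarrow_{B_{i}})\!\uparrow^{\mathcal{B}}\sim S^{\lambda}+S^{\sigma_{i}(\lambda)}$ (when $\lambda$ has a removable bead on runner $i\geq 2$; the $i=1$ case uses the analogous triple filtration) gives the identity
\[
[S^{\lambda}:D^{\mu}]\;=\;[S^{\lambda}\!\downarrow_{B_{i}}:D^{\mu_{A}}]\;-\;[S^{\sigma_{i}(\lambda)}:D^{\mu}].
\]
Iterating this identity reduces the computation to decomposition numbers of Specht modules in the defect $w-1$ block $B_{i}$ of $F\Sigma_{wp-1}$, provided the recursion terminates in a controlled way.

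Second, I would show termination and explicit identification. For $w\leq 3$, the abacus analysis in Section~2.7 and Proposition~3.1 of the excerpt shows that the orbit of $\lambda$ under the operators $\sigma_{i}$ is finite and every element eventually has $D^{\mu}\!\downarrow_{B_{i}}=0$, at which point the remainder vanishes. Concretely, one picks $i$ so that $\mu$ is \emph{not} in $\Lambda_{i}$ beyond a single step, making the second term in the identity above either zero or a strictly simpler decomposition number that is already known (for instance, when $\sigma_{i}(\lambda)$ fails to dominate $\mu$, Proposition~\ref{prop:rad}(2) forces $[S^{\sigma_{i}(\lambda)}:D^{\mu}]=0$). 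The weight hypothesis $w\leq 3$ is essential here because it limits the number of beads off the core and therefore the complexity of the recursion; for $w\leq 3$ every recursive step lands in a block of $F\Sigma_{wp-1}$ of weight at most $w-1$, whose decomposition matrix is known (Peel for $w=1$, Richards/Chuang--Tan for $w=2$).

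The main obstacle, and the delicate part of the argument, is to guarantee that the identification is clean enough to ``equate'' the original decomposition number with a single explicit one, rather than an alternating sum. This will require the combinatorial input that, for partitions in the principal block, one can always arrange for the restriction of $D^{\mu}$ to a block $B_{i}$ to pick up exactly one composition factor of $S^{\lambda}\!\downarrow_{B_{i}}$ contributing nontrivially, which ultimately follows from the dominance lemma (Proposition~\ref{prop:rad}(2)) combined with the explicit abacus bookkeeping of Proposition~\ref{prop:images}. Once that combinatorial reduction is in place, the desired equality of decomposition numbers is immediate, and the ``explicit'' partitions $\widetilde{\lambda},\widetilde{\mu}$ of $wp-1$ are simply $\lambda_{B}$ and $\mu_{A}$ for the chosen nodes.
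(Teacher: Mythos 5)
The paper offers no proof of this statement: it is quoted verbatim as Corollary 2.4 of James--Mathas, so there is nothing internal to compare against. Judged on its own, your sketch has a genuine gap at its central step. The identity
\[
[S^{\lambda}:D^{\mu}]\;=\;[S^{\lambda}\downarrow_{B_i}:D^{\mu_A}]\;-\;[S^{\sigma_i(\lambda)}:D^{\mu}]
\]
does not follow from Frobenius reciprocity and is false in general: Frobenius reciprocity computes $\text{Hom}$ spaces, not composition multiplicities. What the short exact sequence $0\to S^{\sigma_i(\lambda)}\to (S^{\lambda}\downarrow_{B_i})\uparrow^{\mathcal{B}}\to S^{\lambda}\to 0$ actually gives is
\[
[S^{\lambda}:D^{\mu}]+[S^{\sigma_i(\lambda)}:D^{\mu}]\;=\;\sum_{\tilde{\nu}}\,[S^{\lambda}\downarrow_{B_i}:D^{\tilde{\nu}}]\cdot[D^{\tilde{\nu}}\uparrow^{\mathcal{B}}:D^{\mu}],
\]
and the right-hand side is not $[S^{\lambda}\downarrow_{B_i}:D^{\mu_A}]$, because induced simples are far from simple: the paper itself records that $D^{\tilde{\lambda}}\uparrow^{\mathcal{B}}$ has Loewy length $3$ and satisfies $[D^{\tilde{\lambda}}\uparrow^{\mathcal{B}}:D^{\sigma_i(\lambda)}]=1$. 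Your recursion is therefore built on a false equation, and its termination criterion (``every element of the orbit eventually has $D^{\mu}\downarrow_{B_i}=0$'') is not meaningful, since that vanishing depends only on $\mu$ and $i$, not on the orbit of $\lambda$.

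The workable mechanism runs through restriction, not induction, and needs no subtraction. Exactness of restriction gives $[S^{\lambda}\downarrow_{B_i}:D^{\mu_A}]=\sum_{\nu}[S^{\lambda}:D^{\nu}]\,[D^{\nu}\downarrow_{B_i}:D^{\mu_A}]$; because a $p$-regular partition in $\mathcal{B}$ has at most one normal node of each residue, each $D^{\nu}\downarrow_{B_i}$ is zero or simple, and injectivity of the ``remove the good node'' operator forces the sum to collapse to the single term $[S^{\lambda}:D^{\mu}]$. The genuine content of the cited corollary --- which your sketch defers to ``abacus bookkeeping'' --- is to produce, for \emph{each} pair $(\lambda,\mu)$ with $\mu$ not strictly dominating issues aside, a residue for which simultaneously $\mu$ has a normal node of that residue and $\lambda$ has exactly \emph{one} removable node of that residue (otherwise $S^{\lambda}\downarrow_{B_i}$ is filtered by several Specht modules and one obtains a sum rather than a single decomposition number). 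That simultaneous choice is where the hypothesis $w\le 3$ and the principal-block assumption enter, and it is not addressed by your argument.
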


Suppose $S^{\lambda}$ and $D^{\mu}$ are in the principal block $\mathcal{B}$ with $[S^{\lambda} : D^{\mu}] \neq 0$.  Let $A_1, \ldots, A_k$ be the removable nodes for $\lambda$.  By Proposition \ref{prop:JM}, $\mu$ has at least one normal node of $p$-residue res$A_i$ for some $1 \leq i \leq k$.  It is known that a Specht module in defect 2 has at most 5 composition factors (cf. \cite{DE}, Theorem 5.6) and a Specht module in defect 1 has at most 2 composition factors.  In light of this we obtain an upper bound on the composition length of $S^{\lambda}$ (by Lemma 2.13 of \cite{BK}).  In this case 
$5k$ is an upper bound for the composition length of $S^{\lambda}$.  Now suppose $\lambda$ is $p$-regular with $\ell$ normal nodes.  If $\ell \geq 2$ then we can improve upon our upper bound, for we have over counted the composition factor $D^{\lambda}$ by $\ell -1$ (this counting argument works since $[S^{\lambda}:D^{\lambda}] = 1$).  Hence $5k - \ell + 1$ is an upper bound on the composition length of $S^{\lambda}$.                         

We restrict our attention to non-hook partitions $\lambda$ in $\mathcal{B}$ that are $p$-regular but not $p$-restricted.  Any such partition must have $\langle 3^p \rangle$ notation of the form:
	\begin{enumerate}
		\item $\langle i,p \rangle $ where $1\leq i \leq p-1$,
		\item $\langle i,j \rangle$ where $1 \leq j < i \leq p$, or
		\item $\langle p,p-1,p-2 \rangle$.  
	\end{enumerate}
By Lemma \ref{lem:help}, $S^{\langle p,p-1,p-2 \rangle}$ has at most 13 composition factors since the Specht module $S^{\langle p,p-1p-3 \rangle}$ has at most 14 composition factors.  So we only consider the partitions from (1) and (2) above.

Define $\Omega = \{ \langle i,p \rangle : 1 \leq i \leq p-1  \} \cup \{\langle i,j \rangle: 1 \leq j <i \leq p \}$ and define the collection 
$\Gamma =\{\langle i,j \rangle : 1 \leq j < i \leq p-1, \ i-j \geq 2\} \cup \{\langle p ,1 \rangle \} \subset \Omega$.  The discussion that follows Proposition \ref{prop:JM} together with Propositions \ref{prop:JM}, \ref{prop:nremovables} gives: If $\lambda$ is in $\Omega  - \Gamma$ then the Specht module $S^{\lambda}$ has at most 14 composition factors.  Now, consider $\lambda = \langle p,1 \rangle$.  The display $\langle p,1 \rangle$ has removable beads on runners $s = 1$, $2$, and $p$, so $S^{\langle p,1 \rangle}$ has a non-zero restriction to one defect 1 and two defect 2 blocks of $F\Sigma_{3p-1}$. Using our counting argument we see that $S^{\langle p,1 \rangle}$ has at most $2+5 + 5 - 1 + 1 = 12$ composition factors.  Hence we consider partitions that lie in 
$\Gamma - \{\langle p,1 \rangle \}$.

First fix $3 \leq i \leq p-1$.  The removable beads for $\langle i,1 \rangle $ lie on runners $s = 2, i, i+1$, and the normal bead lies on runner $s = i$.  So $\langle i,1 \rangle \in \Lambda_{i}$.  Furthermore, the partition $\Theta_m(\langle i,1 \rangle)$ is $p$-regular and not $p$-restricted ($m = 2, i, i+1$).  Now fix $i,j$ that satisfy $2 \leq j < i \leq p-1$ and $i-j \geq 2$.  The removable beads for $\langle i,j \rangle$ lie on runners $s = j, j+1, i , i+1$, and the normal beads for $\langle i,j \rangle$ lie on runners $s = j, i$.  So $\langle i,j \rangle \in \Lambda_{j} \cap \Lambda_{i}$.  Finally, the partition $\Theta_m(\langle i,j \rangle)$ is $p$-regular and not $p$-restricted ($m = j, j+1, i , i+1$).  By Propositions \ref{prop:w2}, \ref{prop:spechtsimpleinduce} we conclude the following.

\begin{prop} \label{prop:lastcase} The following statements hold.  
	\begin{enumerate} 
		\item For each $3 \leq i \leq p-1$ and $m = 2 , i , i+1$, the Specht module $S^{\Theta_m(\langle i,1 \rangle)}$ has Loewy length 2.
		\item For each $2 \leq j < i \leq p-1$ with $i-j \geq 2$ and $m = j, j+1, i,i+1$, the Specht module $S^{\Theta_m(\langle i,j \rangle)}$ has Loewy 2.
	\end{enumerate}
\end{prop}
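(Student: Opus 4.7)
My plan is to combine the Loewy-length bound from Proposition \ref{prop:w2} with the ``furthermore'' half of Proposition \ref{prop:spechtsimpleinduce}, exactly as hinted immediately before the statement. Both arguments run uniformly in cases (1) and (2), so I shall describe the workflow for one family and note that the other is identical.

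For the upper bound, the paragraph preceding the proposition has already checked (via direct analysis of the $\langle 3^p \rangle$-displays for $\langle i,1 \rangle$ and $\langle i,j \rangle$) that each $\Theta_m(\lambda)$ is $p$-regular but not $p$-restricted. Proposition \ref{prop:w2} then immediately gives that $S^{\Theta_m(\lambda)}$ has Loewy length at most $2$.

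For the lower bound, I would rule out that $S^{\Theta_m(\lambda)}$ is irreducible. Since $S^{\lambda} \downarrow_{B_m} \cong S^{\Theta_m(\lambda)}$, irreducibility of $S^{\Theta_m(\lambda)}$ would force $\lambda$ to appear as a Specht factor in one of the filtrations listed in Proposition \ref{prop:spechtsimpleinduce}. The total list of Specht factors $S^{\mu}$ appearing in those filtrations has
\[
\mu \in \{\langle s \rangle,\, \langle s,s \rangle,\, \langle s,s,s \rangle \colon 1 \leq s \leq p\} \cup \{\langle 2,2,1 \rangle,\, \langle 2,1,1 \rangle,\, \langle p,p-1 \rangle,\, \langle p-1,p \rangle\}.
\]
Neither $\langle i,1 \rangle$ with $3 \leq i \leq p-1$ nor $\langle i,j \rangle$ with $2 \leq j < i \leq p-1$ and $i-j \geq 2$ matches any of the indicated shapes (each has two distinct occupied runners of the wrong weights, and neither equals $\langle p,p-1\rangle$ or $\langle p-1,p\rangle$), so $\lambda$ is not on the list. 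Hence $S^{\Theta_m(\lambda)}$ is reducible, and combined with the upper bound this gives Loewy length exactly $2$.

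There is no substantive obstacle here; the argument reduces to a short finite enumeration plus two already-proved propositions. The one point worth flagging is a potential notational pitfall: the sets $X_m$ in Lemma \ref{lem:simpledefect2} are displayed in $\langle 3^{m-2},4,2,3^{p-m} \rangle$ notation, while $\Theta_m(\lambda)$ in Proposition \ref{prop:images} is displayed in the non-left-to-right $\langle 2,3^{p-m},2^{m-2},3 \rangle$ notation, so naively matching labels across these two conventions would be misleading. Routing the reducibility argument through the converse of Proposition \ref{prop:spechtsimpleinduce}, which is phrased entirely in the $\langle 3^p \rangle$ notation for $\mathcal{B}$, cleanly avoids this translation.
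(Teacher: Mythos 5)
Your proposal is correct and follows essentially the same route as the paper, which likewise deduces the upper bound from Proposition \ref{prop:w2} (using that each $\Theta_m(\lambda)$ is $p$-regular but not $p$-restricted) and rules out irreducibility of the restriction via Proposition \ref{prop:spechtsimpleinduce}. Your explicit enumeration of the Specht factors appearing in those filtrations, and the remark about staying in $\langle 3^p \rangle$ notation, simply make explicit what the paper leaves to the reader.
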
   

\noindent Proposition \ref{prop:lastcase} allows one to use the Ext$^1$-quivers given by Martin and Russell (see \cite{MR} Figures 5-8) to place the desired upper bound on the composition length of the Specht module $S^{\lambda}$ where $\lambda \in \Gamma$.  Henceforth, we assume $\Theta_s(\lambda)$ is expressed in $\langle 2, 3^{p-2}, 2^{s-2}, 3 \rangle$ notation (see Proposition \ref{prop:images}).    

Begin by fixing $3 \leq i \leq p-1$.  Using Proposition \ref{prop:images}(1) and Figures 5-8 of \cite{MR}, deduce that $\Theta_m(\langle i,1 \rangle)$ is connected to at most 3 vertices in the Ext$^1$-quiver of the block $B_m$ for each $m = 2, i, i+1$.  In other words, $S^{\Theta_m(\langle i,1 \rangle)}$ has at most 4 composition factors ($m = 2, i , i+1$). Hence, the Specht module $S^{\langle i,1 \rangle}$ has at most $3\cdot 4 -1 + 1 = 12$ composition factors.  Next fix $i,j$ that satisfy $2 \leq j < i \leq p-1$ and $i-j \geq 2$.  By the same argument given above the Specht module $S^{\langle i,j \rangle}$ has at most $4 \cdot 4 - 2+1 = 15$ composition factors.  We use the following argument to prove $S^{\lambda}$ has at most 13 composition factors where $\lambda = \langle i,j \rangle$.  Find all blocks $B_m$ ($2 \leq m \leq p$) of $F\Sigma_{3p-1}$ such that $S^{\lambda} \downarrow_{B_m} \neq 0$ and $D^{\lambda} \downarrow_{B_m} = 0$.  Choose the unique partition $\mu \in \mathcal{B}$, which is necessarily $p$-regular, such that $\mu > \lambda$ and  $S^{\mu}\downarrow_{B_m} \cong S^{\lambda} \downarrow_{B_m}$.  Then $D^{\mu} \downarrow_{B_m} \neq 0$.  Proposition \ref{prop:CP} will tell us that we have over counted the composition factor $D^{\mu}$ of $S^{\lambda}$.  From above, the removable beads for $\langle i,j \rangle$ lie on runners $s = j, j+1, i , i+1$ and $\langle i,j \rangle \in \Lambda_{j} \cap \Lambda_{i}$.  Observe that

$$S^{\langle i,j+1 \rangle} \downarrow_{B_{j+1}} \cong S^{\langle i,j \rangle}\downarrow_{B_{j+1}} \text{ and } S^{\langle i+1,j\rangle} \downarrow_{B_{i+1}} \cong S^{\langle i,j \rangle}\downarrow_{B_{i+1}}.$$
By Propositions \ref{prop:decnumb} and \ref{prop:CP}, $[S^{\langle i,j \rangle} : D^{\langle j+1,i \rangle}] = 1$ and $[S^{\langle i,j \rangle}: D^{\langle j,i+1 \rangle}] = 1$.  Now $\langle i,j+1 \rangle \in \Lambda_{i} \cap \Lambda_{j+1}$ and $\langle i+1,j \rangle \in \Lambda_{j} \cap \Lambda_{i+1}$ implies we have over counted the composition factors $D^{\langle i,j+1 \rangle }$ and $D^{\langle i+1,j \rangle}$ of $S^{\langle i,j \rangle}$.  Hence $S^{\langle i,j \rangle}$ has at most 13 composition factors.          

The argument  used to lower the composition length of $S^{\langle i,j \rangle}$ from 15 to 13 may be used to show that for any $\lambda \in \Omega$ the corresponding Specht module $S^{\lambda}$ has at most 13 composition factors.  We summarize the main result of the section in the following theorem.


\begin{thm} \label{thm:complengthbound}
Suppose the Specht module $S^{\lambda}$ lies in the principal block $\mathcal{B}$ of $F\Sigma_{3p}$.  Then $S^{\lambda}$ has at most 14 composition factors.  Furthermore, if $S^{\lambda}$ has composition length 14 then $\lambda$ is both $p$-regular and $p$-restricted.     
\end{thm}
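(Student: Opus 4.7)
The plan is to dispose of this final theorem by a straightforward case analysis on $\lambda$, since nearly all of the work has already been done in the preceding discussion; what remains is to glue the cases together and cover the $p$-singular, $p$-restricted case (which was not explicitly handled above) by a duality trick.

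First I would partition the set of partitions in $\mathcal{B}$ into four classes according to Proposition \ref{prop:3p}: (a) both $p$-regular and $p$-restricted, (b) $p$-regular and not $p$-restricted, (c) $p$-singular and $p$-restricted, and (d) neither $p$-regular nor $p$-restricted. Case (a) is handled directly by Corollary \ref{cor:compregrest}, which gives at most $14$ composition factors. Case (d) is handled by Proposition \ref{prop:3p}(6), which shows that any such $\lambda$ has $\langle 3^p \rangle$ display $\langle i,i \rangle$ and hence is a hook $(p+i, 1^{2p-i})$; Proposition \ref{prop:peel} then gives at most $2$ composition factors.

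For case (b), I would further split into hooks and non-hooks. Hooks again fall under Proposition \ref{prop:peel}. For non-hook $\lambda$ that is $p$-regular and not $p$-restricted, the preceding discussion (the case $\lambda = \langle p,p-1,p-2 \rangle$ handled by Lemma \ref{lem:help}, the other cases by the counting argument using Propositions \ref{prop:JM}, \ref{prop:nremovables}, the branching pairs of Theorem \ref{thm:gotit}, and the Ext$^1$-quivers of the defect-2 blocks $B_m$ from \cite{MR} together with Proposition \ref{prop:lastcase} and the over-counting correction via Propositions \ref{prop:decnumb} and \ref{prop:CP}) already produces an upper bound of $13$, which is strictly less than $14$.

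For case (c) I would use Proposition \ref{prop:james}: $S^{\lambda} \otimes \mathrm{sgn} \cong (S^{\lambda'})^*$. Tensoring with the sign representation permutes the simple $F\Sigma_{3p}$-modules (as each simple is self-dual and $D^{\mu} \otimes \mathrm{sgn} \cong D^{m(\mu)}$), and dualization preserves composition length, so $S^{\lambda}$ and $S^{\lambda'}$ have the same composition length. Since $\lambda'$ is $p$-regular and not $p$-restricted, case (b) applies to $\lambda'$, giving a bound of $13$. Combining the four cases yields the bound of $14$, with equality possible only in case (a). The main potential subtlety, and the thing I would double-check carefully, is simply the case (c) duality step: one must verify that $S^{\lambda}$ and $(S^{\lambda'})^*$ really have identical composition-factor multiplicities, which follows because $\mathrm{sgn}$-tensoring and $F$-duality are exact auto-equivalences of the module category that permute simples.
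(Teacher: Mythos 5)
Your proposal is correct and follows essentially the same route as the paper: the theorem is stated there as a summary of Section 6, whose content is exactly your case analysis (Corollary \ref{cor:compregrest} for the $p$-regular, $p$-restricted case; Peel's result for hooks; the embedding of Lemma \ref{lem:help} plus the branching/quiver counting arguments for the remaining $p$-regular non-restricted partitions; and the $\mathrm{sgn}$-tensor/duality reduction, which the paper leaves implicit, for the $p$-singular $p$-restricted case). Your explicit verification that $S^{\lambda}$ and $S^{\lambda'}$ have equal composition length is a sound completion of that implicit step.
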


\section{Conclusion and future research} \label{section:conclude}        
In this section we summarize the results of this paper, indicate what is still not known about the Specht modules in $\mathcal{B}$, and discuss Specht modules in other weight 3 blocks of the symmetric group.  We summarize the results of this paper in the following proposition.    

 \begin{prop}
 Suppose the Specht module $S^{\lambda}$ is in the principal block of $F\Sigma_{3p}$.
 	\begin{enumerate}
		\item The Loewy length of $S^{\lambda}$ is at most 4.  
				\begin{enumerate}
				\item $S^{\lambda}$ has Loewy length 1 if and only if $\lambda  \in \{(3p), (1^{3p}) \}$.
				\item $S^{\lambda}$ has Loewy length 2 if and only if $\lambda \in T$. (see Theorem \ref{thm:spechtlength2})  
				\item $S^{\lambda}$ has Loewy length 3 if and only if  $\lambda$ is not a hook and either $\lambda$ is 
				$p$-regular and not $p$-restricted or $p$-singular and $p$-restricted.
				\item$S^{\lambda}$ has Loewy length 4 if and only if $\lambda$ is $p$-regular and $p$-restricted. 
				\end{enumerate}
		\item If $\lambda$ is $p$-regular and $p$-restricted then $\text{Ext}^1_{\Sigma_{3p}} (D^{\lambda}, D^{m(\lambda ' )}) = 0$.  
		\item The module $S^{\lambda}$ has composition length at most 14.  Furthermore, if $S^{\lambda}$ has composition length 14 then $\lambda$ is $p$-regular and $p$-restricted.  
		\item Proposition \ref{prop:KS} holds for all $p$-regular partitions in $\mathcal{B}$.
		\item Suppose $S^{\mu}$ lies in a defect 3 block of $F\Sigma_{n}$.  If the partition $\mu$ is $p$-regular and $p$-restricted then $S^{\mu}$ has Loewy length 4.    
	\end{enumerate}	
 \end{prop}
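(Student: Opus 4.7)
The plan is to deduce each of the five assertions by invoking results already established in Sections 3 through 6, since this proposition is a summary collecting the main theorems of the paper. The overall strategy is simply to match each claim to the earlier theorem or corollary that establishes it, and to check that in part (1) the four Loewy length cases together exhaust the partitions in $\mathcal{B}$.

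For part (1), the upper bound of 4 on the Loewy length and the Loewy length 4 characterization are the content of Corollary~\ref{cor:nscondition}. For (1)(a), $S^{\lambda}$ has Loewy length 1 iff $S^{\lambda}$ is simple, which by Proposition~\ref{prop:irredSpecht} is equivalent to $\lambda$ being a $p$-JM partition; Theorem~\ref{thm:jm} then identifies those partitions in $\mathcal{B}$ as exactly $(3p)$ and $(1^{3p})$. Claim (1)(b) is Theorem~\ref{thm:spechtlength2}, and claim (1)(c) is Corollary~\ref{cor:length3}. To close (1) one needs to verify that the four cases are exhaustive; this follows from the $\langle 3^p \rangle$ stratification of $\mathcal{B}$ given in Proposition~\ref{prop:3p} (hook; $p$-regular and $p$-restricted; $p$-regular non-$p$-restricted non-hook; $p$-singular $p$-restricted non-hook; neither $p$-regular nor $p$-restricted), combined with the observation that hooks in $\mathcal{B}$ different from $(3p)$ and $(1^{3p})$ have Loewy length exactly 2 by Proposition~\ref{prop:peel}, while Corollary~\ref{cor:nscondition} isolates (1)(d).

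Part (2) is exactly Proposition~\ref{prop:mullextend}. Part (3) is Theorem~\ref{thm:complengthbound}, which gives both the upper bound of 14 and the fact that attaining 14 forces $\lambda$ to be both $p$-regular and $p$-restricted. Part (4) is Corollary~\ref{cor:KS}, which removes the restriction on the number of nonzero parts from Proposition~\ref{prop:KS} in the case of $p$-regular partitions in $\mathcal{B}$. Finally, part (5) is Theorem~\ref{thm:RR}, proved via the projective cover identification $Y^{\lambda} \cong P(D^{m(\lambda')})$ of Proposition~\ref{prop:Young}(1), the common Loewy length 7 for projective indecomposables in a weight 3 block, and self-duality of $Y^{\lambda}$.

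There is no genuine obstacle here since every ingredient has been developed in the preceding sections. The only step requiring any care is the exhaustiveness check in part (1), where one must confirm that the Propositions~\ref{prop:3p} and \ref{prop:peel} descriptions, combined with Theorem~\ref{thm:spechtlength2} and Corollaries~\ref{cor:nscondition} and \ref{cor:length3}, leave no partition in $\mathcal{B}$ unaccounted for and assign each partition a unique Loewy length among $\{1,2,3,4\}$.
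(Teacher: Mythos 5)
Your proposal is correct and coincides with the paper's own (implicit) proof: the proposition is explicitly introduced as a summary of the paper's results, and each part is exactly the earlier theorem you cite --- Corollary \ref{cor:nscondition}, Theorem \ref{thm:jm} with Proposition \ref{prop:irredSpecht}, Theorem \ref{thm:spechtlength2}, Corollary \ref{cor:length3}, Proposition \ref{prop:mullextend}, Theorem \ref{thm:complengthbound}, Corollary \ref{cor:KS}, and Theorem \ref{thm:RR} --- with your exhaustiveness check in part (1) being the only step requiring any argument. One caveat worth noting: claim (1)(c) as literally stated is not identical to Corollary \ref{cor:length3}, which additionally excludes the four partitions $\langle p,p-1\rangle$, $\langle p-1,p\rangle$, $\langle 2,1,1\rangle$, $\langle 2,2,1\rangle$ (these lie in $T_4$ and have Loewy length $2$), so your citation establishes the corrected version of (1)(c) rather than the version printed in the summary.
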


We have discovered a number of results related to the structure of Specht modules in $\mathcal{B}$; however, we still do not have a general description of the socle of many of the modules.  Let $\lambda$ be a $p$-regular partition in $\mathcal{B}$. If $S^{\lambda}$ has Loewy length 2 then the socle of $S^{\lambda}$ is determined by the Ext$^1$-quiver of $\mathcal{B}$ (by Corollary \ref{cor:KS}).  If $S^{\lambda}$ has Loewy length 3 then $\text{soc}(S^{\lambda})$ is known in theory.  Indeed, one may pick a prime $p \geq 5$ and list the composition factors of $S^{\lambda}$.  In this case, by the bipartite nature of the Ext$^1$-quiver and Theorem \ref{thm:socles}, the socle of $S^{\lambda}$ is the direct sum of all composition factors $D^{\mu}$ of 
$S^{\lambda}$ such that $\mu \neq \lambda$ and $\mathcal{P} \mu = \mathcal{P} \lambda$.  So in theory we know the socles of the modules for each fixed prime $p \geq 5$, but we do not have a general answer.  For example, in the case of $p=5$ we found that every Specht module in the block has simple head and simple socle, except $S^{(9,4,2)}$ and $S^{(3^2,2^2,1^5)}$.  Explicitly, $\text{soc} (S^{(9,4,2)}) = D^{(15)} \oplus D^{(10,5)}$ and 
$S^{(3^2,2^2,1^5)}/\text{rad} S^{(3^2,2^2,1^5)} = D^{(4^3,3)} \oplus D^{(3^2,2^3,1^3)}$.  We are left with the question: Is there only one $p$-regular partition in $\mathcal{B}$ such that $S^{\lambda}$ does not have a simple socle?

One may wonder whether the results (2) - (4) extend to all defect 3 blocks of the symmetric group (over characteristic at least 5).  The author knows of no example where $D^{\lambda}$ extends $D^{m(\lambda')}$ in a defect 3 block of the symmetric group.  Perhaps this can be resolved using the algorithm, which constructs the Ext$^1$-quiver of a weight 3 block from the Ext$^1$-quiver of another weight 3 block, given by Martin and Tan in Proposition 4.25 of \cite{MT}.  

Finally, we saw that $S^{\lambda}$ in $\mathcal{B}$ has Loewy length 4 if and only if the corresponding partition is $p$-regular and $p$-restricted.  We proved the `only if' direction for any defect 3 block of $F\Sigma_n$.  It remains open whether the `if' direction holds.

\vskip .1in
\noindent \textbf{Acknowledgements.} I would like to thank my advisor, Dr. David Hemmer, for introducing me to this problem and for all of his guidance throughout this research.  I would also like to thank the referee for his or her helpful comments and suggestions.



\newpage

\end{document}